\documentclass[12pt,a4paper]{article}

\usepackage{latexsym}
\usepackage{amsmath}
\usepackage{amssymb}
\usepackage{amsthm}
\usepackage{amscd}
\usepackage{mathrsfs}
\usepackage[all]{xy}
\usepackage{graphicx}
\usepackage{bm}
\usepackage{comment}

\setlength{\oddsidemargin}{-5pt}
\setlength{\textwidth}{\paperwidth}
\addtolength{\textwidth}{-2in}
\addtolength{\textwidth}{-2\oddsidemargin}

\setlength{\topmargin}{-60pt}
\setlength{\textheight}{710pt}

\newtheorem{thm}{Theorem}[section]
\newtheorem{prop}[thm]{Proposition}
\newtheorem{defn}[thm]{Definition}
\newtheorem{lem}[thm]{Lemma}
\newtheorem{cor}[thm]{Corollary}

\newtheorem{rem}[thm]{Remark}

\newtheorem*{thmn}{Theorem}

\theoremstyle{remark}

\makeatletter

\@addtoreset{equation}{section}
\makeatother

\DeclareMathOperator{\Spec}{Spec}
\DeclareMathOperator{\Fr}{Fr}
\DeclareMathOperator{\Gal}{Gal}

\DeclareMathOperator{\tr}{tr}
\DeclareMathOperator{\ch}{ch}

\DeclareMathOperator{\Hom}{Hom}

\DeclareMathOperator{\Ker}{Ker}

\DeclareMathOperator{\Aut}{Aut} 
 
\DeclareMathOperator{\Ind}{Ind} 
\DeclareMathOperator{\Tr}{Tr}
\DeclareMathOperator{\Nr}{Nr}

\DeclareMathOperator{\Art}{Art}

\newcommand{\bom}[1]{\mbox{\boldmath $#1$}}

%-----------------------Blackboard letters
\newcommand{\bA}{\mathbb{A}}
\newcommand{\bC}{\mathbb{C}}

\newcommand{\bF}{\mathbb{F}}

\newcommand{\bP}{\mathbb{P}}
\newcommand{\bQ}{\mathbb{Q}}
\newcommand{\bR}{\mathbb{R}}

\newcommand{\bZ}{\mathbb{Z}}

%-----------------------Bold letters

%-----------------------small Bold letters

\newcommand{\bfy}{\mathbf{y}}

%-----------------------Bold numbers
%\newcommand{\bf0}{\mathbf{0}}
%\newcommand{\bf1}{\mathbf{1}}
%\newcommand{\bf2}{\mathbf{2}}
%\newcommand{\bf3}{\mathbf{3}}
%\newcommand{\bf4}{\mathbf{4}}
%\newcommand{\bf5}{\mathbf{5}}
%\newcommand{\bf6}{\mathbf{6}}
%\newcommand{\bf7}{\mathbf{7}}
%\newcommand{\bf8}{\mathbf{8}}
%\newcommand{\bf9}{\mathbf{9}}

%-----------------------Calligraphic letters

\newcommand{\cF}{\mathcal{F}}

\newcommand{\cL}{\mathcal{L}}

\newcommand{\cO}{\mathcal{O}}

%-----------------------Fraktur letters

\newcommand{\fp}{\mathfrak{p}}

%-----------------------Script letters

%-----------------------Sans-Serif letters

\newcommand{\iGL}{\mathit{GL}}

\newcommand{\rmab}{\mathrm{ab}}

\newcommand{\rmac}{\mathrm{ac}}

\newcommand{\ol}{\overline}

\newcommand{\cf}{\textit{cf.\ }}

\begin{document}
\title{Local Galois representations of Swan conductor one}
\author{Naoki Imai and Takahiro Tsushima}
\date{}
\maketitle

\footnotetext{2010 \textit{Mathematics Subject Classification}. 
Primary: 11F80; Secondary: 11F70.} 

\begin{abstract} 
We construct the local Galois representations 
over the complex field 
whose Swan conductors are one by 
using etale cohomology of Artin--Schreier sheaves on 
affine lines over finite fields. 
Then, we study the Galois representations, and 
give an explicit description of the local Langlands correspondences for 
simple supercuspidal representations. 
We discuss also a more natural realization of the Galois representations 
in the etale cohomology of Artin--Schreier varieties. 
\end{abstract}

\section*{Introduction}
Let $K$ be a non-archimedean local field. 
Let $n$ be a positive integer. 
The existence of the local Langlands correspondence 
for $\iGL_n (K)$, proved in \cite{LRSellL} and \cite{HTsimSh}, 
is one of the fundamental results in the Langlands program. 
However, even in this fundamental case, 
an explicit construction of the local Langlands correspondence 
has not yet been obtained. 
One of the most striking results in this direction is 
Bushnell--Henniart's result for essentially tame representations 
in \cite{BHestI}, \cite{BHestII} and \cite{BHestIII}. 
On the other hand, 
we don't know much about the explicit construction 
outside essentially tame representations. 

We discuss this problem for 
representations of Swan conductor $1$. 
The irreducible supercuspidal representations of 
$\iGL_n (K)$ of Swan conductor $1$ 
are equivalent to 
the simple supercuspidal representations in 
the sense of \cite{ALssrGL} (\cf \cite{GRAinv}, \cite{RYEinv}). 
Such representations are called ``epipelagic'' in 
\cite{BHLepi}.

Let $p$ be the characteristic of the residue field $k$ of $K$. 
If $n$ is prime to $p$, 
the simple supercuspidal representations of 
$\iGL_n (K)$ 
are essentially tame. 
Hence, this case is covered by Bushnell--Henniart's work. 
See also \cite{ALssrGL}. 
It is discussed in \cite{KalepiL} 
to generalize the construction of the local Langlands correspondence 
for essentially tame epipelagic representations 
to other reductive groups. 

In this paper, we consider the case where 
$p$ divides $n$. 
In this case, 
the simple supercuspidal representations of 
$\iGL_n (K)$ 
are not essentially tame. 
Moreover, 
if $n$ is a power of $p$, 
the irreducible representations of the Weil group $W_K$ 
of Swan conductor $1$, 
which correspond to the simple supercuspidal representations 
via the local Langlands correspondence, 
cannot be induced from any proper subgroup. 
Such representations are called primitive (cf.~\cite{KocCprim}). 
For simple supercuspidal representations, 
we have a straightforward characterization of the local Langlands correspondence 
given in \cite{BHLepi}. 
Further, 
Bushnell--Henniart study 
the restriction to the wild inertia subgroup of 
the Langlands parameters 
for the simple supercuspidal representations explicitly. 
Actually, the restriction to the wild inertia subgroup 
already determines the original Langlands parameters 
up to character twists, 
but we need additional data, which appear 
in Bushnell--Henniart's characterization, to pin down 
the correct Langlands parameters. 
On the other hand, the construction of 
the irreducible representations of $W_K$ of Swan conductor $1$ 
is a non-trivial problem. 
What we will do in this paper is 
\begin{itemize}
\item 
to construct the irreducible representations of $W_K$ of Swan conductor $1$ 
without appealing to the existence of the local Langlands correspondence, and 
\item 
to give a description of the Langlands parameters themselves 
for the simple supercuspidal representations. 
\end{itemize}
Let $\ell$ be a prime number different from $p$. 
For the construction of 
the irreducible representations of $W_K$ of Swan conductor $1$, 
we use etale cohomology of 
an Artin--Schreier $\ell$-adic sheaf on $\bA_{k^{\rmac}}^1$, 
where $k^{\rmac}$ is an algebraic closure of $k$. 
It will be possible to avoid usage of geometry in the construction of 
the irreducible representations of $W_K$ of Swan conductor $1$. 
However, we prefer this approach, because 
\begin{itemize}
\item 
we can use geometric tools such as 
the Lefschetz trace formula and 
the product formula of Deligne--Laumon 
to study the constructed representations, and 
\item 
the construction works also for $\ell$-adic integral coefficients and mod $\ell$ coefficients. 
\end{itemize}
A description of the local Langlands correspondence 
for the simple supercuspidal representations is discussed in 
\cite{ITreal3} in the special case where $n=p=2$. 
Even in the special case, 
our method in this paper 
is totally different from that in \cite{ITreal3}. 

We explain the main result. 
We write $n=p^e n'$, where $n'$ is prime to $p$. 
We fix a uniformizer $\varpi$ of $K$ 
and an isomorphism 
$\iota \colon \ol{\bQ}_{\ell} \simeq \bC$. 

Let $\cL_{\psi}$ be the 
Artin--Schreier $\ol{\bQ}_{\ell}$-sheaf on 
$\bA_{k^{\rmac}}^1$ associated to 
a non-trivial character $\psi$ of $\bF_p$. 
Let $\pi \colon \bA_{k^{\rmac}}^1 \to \bA_{k^{\rmac}}^1$ 
be the morphism defined by 
$\pi (y)=y^{p^e +1}$. 
Let $\zeta \in \mu_{q-1}(K)$, where $q=\lvert k \rvert$. 
We put 
$E_{\zeta}=K[X]/(X^{n'} -\zeta \varpi)$. 
Then we can define a natural action of $W_{E_{\zeta}}$ on 
$H_{\mathrm{c}}^1 (\bA_{k^{\rmac}}^1 , \pi^*\mathcal{L}_{\psi})$. 
Using this action, 
we can associate a primitive representation 
$\tau_{n,\zeta,\chi,c}$ of $W_{E_{\zeta}}$ to 
$\zeta \in \mu_{q-1}(K)$, a character $\chi$ of $k^{\times}$ 
and $c \in \bC^{\times}$. 
We construct an irreducible representation 
$\tau_{\zeta,\chi,c}$ of Swan conductor $1$ as the induction of 
$\tau_{n,\zeta,\chi,c}$ to $W_K$. 

We can associate a simple supercuspidal representation 
$\pi_{\zeta,\chi,c}$ of 
$\iGL_n (K)$ to the same triple $(\zeta,\chi,c)$ 
by type theory. 
Any simple supercuspidal representation 
can be written in this form uniquely (\cf \cite[Proposition 1.3]{ITsimpJL}).

\begin{thmn}
The representations 
$\tau_{\zeta,\chi,c}$ and 
$\pi_{\zeta,\chi,c}$ correspond via the local Langlands correspondence. 
\end{thmn}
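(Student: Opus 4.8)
The plan is to invoke the characterization of the local Langlands correspondence for simple epipelagic representations proved by Bushnell and Henniart in \cite{BHLepi}. That characterization singles out, among all $n$-dimensional irreducible epipelagic representations of $W_K$, the one attached to a given simple epipelagic representation of $\iGL_n (K)$ by a short list of invariants: the restriction to the wild inertia subgroup $P_K$, the determinant, and the epsilon factors of a suitable finite family of tamely ramified twists. Both $\tau_{\zeta,\chi,c}$ --- which is irreducible, $n$-dimensional and epipelagic by the construction carried out above --- and the Langlands parameter $\sigma$ of $\pi_{\zeta,\chi,c}$ are induced from $W_{E_{\zeta}}$ along the tamely ramified degree $n'$ extension $E_{\zeta}/K$. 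Moreover they should have the same restriction to $P_K$: this follows by comparing the explicit wild ramification of $\pi^*\cL_{\psi}$ at $\infty$ with the description of the wild part of $\sigma$ in \cite{BHLepi}. Consequently $\tau_{\zeta,\chi,c}$ and $\sigma$ differ by a twist by a tamely ramified character of $W_K$, and the task is to show that this character is trivial.

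First I would match the determinants. By the standard formula for the determinant of an induced representation, $\det \tau_{\zeta,\chi,c}$ is determined by $\det \tau_{n,\zeta,\chi,c}$, transferred from $W_{E_{\zeta}}$, together with data of the tame extension $E_{\zeta}/K$, so the computation reduces to $\det \tau_{n,\zeta,\chi,c}$, which is read off from the explicit geometric description of $H^{1}_{\mathrm{c}} (\bA^1_{k^{\rmac}},\pi^*\cL_{\psi})$ as a $W_{E_{\zeta}}$-representation. Comparing the result, via $\Art_K$, with the central character of $\pi_{\zeta,\chi,c}$ --- which is explicit in the type-theoretic construction --- reduces the twisting ambiguity to a tamely ramified character of order dividing $n$, leaving only finitely much to check.

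The decisive step is the computation of the epsilon factors of $\tau_{\zeta,\chi,c}$ and of its tame twists. By inductivity of epsilon factors in degree zero, for a tamely ramified character $\xi$ of $W_K$ one has $\epsilon(\tau_{\zeta,\chi,c}\otimes\xi,\psi_K) = \epsilon(\tau_{n,\zeta,\chi,c}\otimes\xi|_{W_{E_{\zeta}}},\psi_{E_{\zeta}})\cdot\lambda(E_{\zeta}/K,\psi_K)^{p^e}$, and the Langlands constant $\lambda(E_{\zeta}/K,\psi_K)$ is an elementary tame Gauss sum because $E_{\zeta}/K$ is tame. The factor over $E_{\zeta}$ I would compute by identifying $\tau_{n,\zeta,\chi,c}$ with (a twist of) the local monodromy of $\pi^*\cL_{\psi}$ at $\infty$ and invoking the Deligne--Laumon product formula on $\bP^1$: taking the differential $dt$, whose only zero or pole is a double pole at $\infty$, expresses the local epsilon factor at $\infty$ through the global constant $\det\bigl(-\Fr\mid H^{*}_{\mathrm{c}}(\bA^1_{k^{\rmac}},\pi^*\cL_{\psi})\bigr)$, which is again available from the explicit description of the cohomology; the outcome is a Gauss-sum type expression in $\zeta$, $\chi$ and $c$. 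On the automorphic side I would use the Bushnell--Henniart--Kutzko formula expressing the epsilon factor of a simple epipelagic representation of $\iGL_n (K)$, and of its tamely ramified twists, directly in terms of its simple stratum, hence in terms of $(\zeta,\chi,c)$.

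The main obstacle will be the bookkeeping needed to make the two Gauss-sum type expressions agree exactly, not merely up to an unspecified scalar. One has to reconcile all the normalizations in play: the additive character $\psi$ of $\bF_p$ against the additive characters of $K$ and of $E_{\zeta}$ used in the product formula and in its local terms at $0$ and $\infty$, the choice of uniformizer $\varpi$, the isomorphism $\iota\colon\ol{\bQ}_{\ell}\simeq\bC$, and the sign conventions of Langlands--Deligne and of Laumon; in particular one must check that the unramified parameter $c\in\bC^{\times}$ enters the epsilon factor with the same normalization on the two sides. Once the epsilon factor identity holds for enough tame twists $\xi$, the residual ambiguity vanishes and the theorem follows from the characterization of \cite{BHLepi}.
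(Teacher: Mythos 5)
Your overall skeleton --- invoke the Bushnell--Henniart characterization, then match determinants, tame-twist behaviour, and epsilon factors --- is the paper's strategy, and your determinant step is essentially the paper's Proposition \ref{centdet} (induction formula plus the Deligne--Laumon product formula applied to $\pi^*\cL_{\psi}$ to get $\det\tau_{\psi_0}(\Fr(1))$). But the decisive step, the epsilon factor identity, is where your proposal has a genuine gap. You propose to compute $\epsilon(\tau_{n,\zeta,\chi,c},\psi_{E_{\zeta}})$ directly ``by identifying $\tau_{n,\zeta,\chi,c}$ with (a twist of) the local monodromy of $\pi^*\cL_{\psi}$ at $\infty$'' and applying the product formula. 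This identification is not available: $\tau_{n,\zeta,\chi,c}$ is \emph{not} defined as local monodromy, but as the inflation of the finite-group representation $\tau_n$ along the arithmetically defined surjection $\Theta_{\zeta}\colon W_{E_{\zeta}}\to Q\rtimes\bZ$, which depends on the choices $\alpha_{\zeta},\beta_{\zeta},\gamma_{\zeta}$ and the Artin reciprocity map, together with the rectifier-type twist \eqref{chartwi}. Proving that this inflation agrees with the monodromy of $\pi^*\cL_{\psi}$ at $\infty$ for a suitable $k$-model, compatibly with Frobenius structures and with $\psi_{E_{\zeta}}$ versus $\Psi_{\omega}$, would amount to redoing the hard part of the epsilon computation, and it is exactly the direct calculation the paper states is intractable because a primitive $p^e$-dimensional representation is involved (for $p=2$ it would force an explicit description of the Artin map of a wildly ramified extension with nontrivial ramification filtration).

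What your proposal is missing is the paper's workaround: once conditions (i) and (ii) of Proposition \ref{LLCchar} are established, Lemma \ref{epsmod} (via \cite[2.2 Lemma]{BHLepi}) already gives $\epsilon(\tau_{\zeta,\chi,c},\psi_K)\equiv\epsilon(\pi_{\zeta,\chi,c},\psi_K)\bmod\mu_{p^e}(\bC)$, because the two sides can only differ by a character of the cyclic group $L_{\zeta}^{\times}U_{\fI}^1/(K^{\times}U_{\fI}^1)$ of order $p^e$. The remaining ambiguity is then resolved by restricting to the imprimitive field $T_{\zeta}^{\mathrm{u}}$, of degree prime to $p$ over $E_{\zeta}$, over which $\tau_{n,\zeta}$ becomes induced from an explicit character $\xi_{n,\zeta}$ (Proposition \ref{taunzind}); the problem thereby reduces to the epsilon factor of a \emph{character}, computed by Gauss sums after passing to characteristic $p$ and $f=1$, with a final reduction to $e=1$ and a comparison of real parts when $p=2$. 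Your second-step claim that the restrictions to $P_K$ agree is likewise not a routine comparison: in the paper it is the content of the refined Swan conductor computation (Sections \ref{Indchar} and \ref{RefSwan}), which again runs through the imprimitive field and explicit ramification filtrations. Without these reductions your plan stalls exactly at the point the paper identifies as the difficulty.
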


In Section \ref{Hgrp}, 
we recall a general fact on 
representations of 
a semi-direct product of a Heisenberg group with a cyclic group. 
In Section \ref{Galrep}, 
we give a construction of the irreducible representations 
of $W_K$ of Swan conductor $1$. 
To construct a representation of $W_K$ which 
naturally fits a description of the local Langlands correspondence, 
we need a subtle character twist. 
Such a twist appears also in the essentially tame case in \cite{BHestIII}, 
where it is called a rectifier. 
Our twist can be considered as an analogue of the rectifier. 
We construct the representations of $W_K$ using geometry, 
but we give also a 
representation theoretic characterization of 
the constructed representations. 
In Section \ref{GLrep}, 
we give a construction of the simple supercuspidal representations 
of $\iGL_n (K)$ using the type theory. 

In Section \ref{LLCstate}, 
we state the main theorem and recall a characterization of 
the local Langlands correspondence 
for simple supercuspidal representations given in \cite{BHLepi}. 
The characterization consists of the three equalities of 
(i) the determinant and the central character, 
(ii) the refined Swan conductors, and 
(iii) the epsilon factors.

In Section \ref{sec:gene}, 
we recall some general facts on epsilon factors. 
In Section \ref{SWdisc}, 
we recall facts on 
Stiefel--Whitney classes, 
multiplicative discriminants and 
additive discriminants. 
We use these facts to 
calculate Langlands constants of 
wildly ramified extensions. 
In Section \ref{DLprod}, 
we recall the product formula of Deligne--Laumon. 
In Section \ref{detsect}, 
we show the equality of 
the determinant and the central character 
using the product formula of Deligne--Laumon. 

In Section \ref{Indchar}, 
we construct a field extension 
$T_{\zeta}^{\mathrm{u}}$ of $E_{\zeta}$ 
such that the restriction of 
$\tau_{n,\zeta,\chi,c}$ to $W_{T_{\zeta}^{\mathrm{u}}}$ 
is an induction of a character and 
$p \nmid [T_{\zeta}^{\mathrm{u}} : E_{\zeta}]$, 
which we call an imprimitive field. 
In Section \ref{RefSwan}, 
we show the equality of the refined Swan conductors. 
We see also that 
the constructed representations of $W_K$ are 
actually of Swan conductor $1$. 

In Section \ref{Epsfac}, 
we show the equality of the epsilon factors. 
It is difficult to calculate the epsilon factors
of irreducible representations of $W_K$ of Swan conductor $1$ 
directly, because primitive representations are involved. 
However, 
we know the equality of the epsilon factors 
up to $p^e$-th roots of unity if $n=p^e$, 
since we have already checked the conditions (i) and (ii) 
in the characterization. 
Using this fact and 
$p \nmid [T_{\zeta}^{\mathrm{u}} : E_{\zeta}]$, 
the problem is reduced to study an epsilon factor of a character. 
Next we reduce the problem to the case where 
the characteristic of $K$ is $p$ and $k=\bF_p$. 
At this stage, it is possible to calculate the epsilon factor 
if $p \neq 2$. 
However, it is still difficult if $p=2$, 
because the direct calculation of the epsilon factor 
involves an explicit study of 
the Artin reciprocity map for a wildly ramified extension with 
a non-trivial ramification filtration. 
This is a special phenomenon 
in the case where $p=2$. 
We will avoid this difficulty by reducing the problem to the 
case where $e=1$. 
In this case, we have already known the equality up to sign. 
Hence, it suffices to show the equality of non-zero real parts. 
This is easy, because the 
difficult study of the Artin reciprocity map involves only 
the imaginary part of the equality. 

In Appendix \ref{RealAS}, we discuss a construction of 
irreducible representations of $W_K$ of Swan conductor $1$ 
in the cohomology of Artin--Schreier varieties. 
This geometric construction incorporates 
a twist by a ``rectifier''. 
We see that 
the ``rectifier'' parts come from the cohomology of 
Artin--Schreier varieties associated to 
quadratic forms. 
The Artin--Schreier varieties which we use 
have origins in studies of Lubin--Tate spaces in 
\cite{ITStab3} and \cite{ITsimpwild}. 

\subsection*{Acknowledgements}
The authors would like to thank the handling editor Don Blasius and referees for helpful comments and suggestions. 
This work was supported by JSPS KAKENHI Grant Numbers 
26707003, 20K03529, 21H00973.

\subsection*{Notation}
For a finite abelian group $A$, 
let $A^{\vee}$ denote the character group 
$\mathrm{Hom}_{\mathbb{Z}}(A,\bC^{\times})$. 
For a non-archimedean local field $K$, 
let 
\begin{itemize}
\item $\cO_K$ denote the ring of integers of $K$, 
\item $\fp_K$ denote the maximal ideal of $\cO_K$, 
\item $v_K$ denote the normalized valuation of $K$ 
which sends a uniformizer of $K$ to $1$, 
\item $\ch K$ denote the characteristic of $K$, 
\item $G_K$ denote the absolute Galois group of $K$, 
\item $W_K$ denote the Weil group of $K$, 
\item $I_K$ denote the inertia subgroup of $W_K$, 
\item $P_K$ denote the wild inertia subgroup of $W_K$, 
\end{itemize}
and we put $U_K^m =1 +\fp_K^m$ for any positive integer $m$.

\section{Representations of finite groups}\label{Hgrp}
First, we recall a fact on representations of Heisenberg groups. 
Let $G$ be a finite group with center $Z$. 
We assume the following:
\begin{enumerate}
 \item 
 The group $G/Z$ is an elementary abelian $p$-group. 
 \item 
 For any $g \in G \setminus Z$, 
 the map $c_g \colon G \to Z;\ g' \mapsto [g ,g']$ 
 is surjective. 
\end{enumerate} 
\begin{rem}
The map $c_g$ in (ii) is a group homomorphism. 
Hence, $Z$ is automatically an elementary abelian $p$-group. 
\end{rem}

Let $\psi \in Z^{\vee}$ be a non-trivial character. 
\begin{prop}\label{prop:rhopsi}
There is a unique irreducible representation 
$\rho_{\psi}$ of $G$ such that 
$\rho_{\psi} |_Z$ contains $\psi$. 
Moreover, we have 
$(\dim \rho_{\psi})^2 = [G:Z]$ 
and 
we can construct $\rho_{\psi}$ as follow: 
Take an abelian subgroup $G_1$ of $G$ such that 
$Z \subset G_1$ and 
$2 \dim_{\bF_p} (G_1/Z) = \dim_{\bF_p} (G/Z)$. 
Extend $\psi$ to a character $\psi_1$ of $G_1$. 
Then $\rho_{\psi}=\Ind_{G_1}^G \psi_1$. 
\end{prop}
\begin{proof}
The claims other than the construction of 
$\rho_{\psi}$ is \cite[(8.3.3) Proposition]{BFGdiv}. 
Note that if 
an abelian subgroup $G_1$ of $G$ satisfies 
the conditions in the claim, 
then $G_1/Z$ is a maximal totally isotropic subspace of 
$G/Z$ under the pairing 
\[
 (G/Z) \times (G/Z) \longrightarrow \bC^{\times} ;\ 
 (gZ,g'Z) \mapsto \psi ([g,g']) . 
\]
Hence the construction follows from 
the proof of \cite[(8.3.3) Proposition]{BFGdiv}. 
\end{proof}

Next, we consider 
representations of 
a semi-direct product of a Heisenberg group with a cyclic group. 
Let $A \subset \Aut (G)$ be a cyclic subgroup of 
order $p^e +1$ where $e =(\log_p [G:Z])/2$. 
We assume the following:
\begin{enumerate}
 \setcounter{enumi}{2}
 \item
 The group $A$ acts on $Z$ trivially. 
 \item 
 For any non-trivial element $a \in A$, 
 the action of $a$ on $G/Z$ fixes only the unit element. 
\end{enumerate} 
We consider the semi-direct product $A \ltimes G$ 
by the action of $A$ on $G$. 

\begin{lem}\label{chara}
There is a unique irreducible representation 
$\rho_{\psi}'$ of $A \ltimes G$ such that 
$\rho_{\psi}'|_G \simeq \rho_{\psi}$ and 
$\tr \rho_{\psi}' (a) =-1$ for every non-trivial element $a \in A$. 
\end{lem}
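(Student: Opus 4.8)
The plan is to separate the statement into (a) producing all extensions of $\rho_\psi$ from $G$ to $\wt G:=A\ltimes G$, and (b) singling out the one with the prescribed trace. For (a): by assumption (iii) each $a\in A$ fixes $Z$ pointwise, so $\rho_\psi\circ a$ is irreducible and still contains $\psi$ on restriction to $Z$; by the uniqueness in the Proposition above, $\rho_\psi\circ a\simeq\rho_\psi$, i.e.\ $\rho_\psi$ is $A$-stable. Since $\wt G/G\cong A$ is cyclic, $H^2(A,\bC^\times)=0$, so the obstruction to extending $\rho_\psi$ to $\wt G$ vanishes and an extension $\rho_0$ exists; moreover, by Clifford theory, for any extension $\rho'$ the space $\Hom_G(\rho_0,\rho')\cong\bC$ carries an $A$-action, so $\rho'\simeq\rho_0\otimes\chi$ for some $\chi\in A^\vee$ pulled back along $\wt G\to A$. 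The uniqueness in the Lemma is then immediate: if $\rho_0\otimes\chi$ and $\rho_0\otimes\chi'$ both have trace $-1$ at every non-trivial $a\in A$, then $\chi(a)=\chi'(a)$ there, hence $\chi=\chi'$.

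The heart of the matter is to compute $|\tr\rho'(a)|$ for an arbitrary extension $\rho'$ and non-trivial $a\in A$. Using the Proposition again, for each $\lambda\in(G/Z)^\vee$ the inflation of $\rho_\psi\otimes\lambda$ to $G$ is irreducible of dimension $p^e$ and contains $\psi$ on $Z$, hence is isomorphic to $\rho_\psi$; a dimension count then yields the $G$-equivariant decomposition $\End(\rho_\psi)\simeq\bigoplus_{\lambda\in(G/Z)^\vee}\lambda$, each $\lambda$ inflated to $G$. Now $\End(\rho')|_G$ decomposes in the same way, and the $\wt G$-action on $\End(\rho')$ permutes these ($G$-isotypic) summands through the dual of the $A$-action on $G/Z$. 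By assumption (iv) — together with the elementary fact that a linear automorphism of a finite-dimensional space and its dual have fixed subspaces of the same dimension, applied to every power of $a$ — the $A$-action on $(G/Z)^\vee\setminus\{1\}$ is free; each of its orbits contributes to $\End(\rho')$ a representation induced from $G$, whose character vanishes at $(a,1)$ for $a\ne 1$. The remaining summand $\lambda=1$ is one-dimensional, and its $A$-invariants equal $\End_{\wt G}(\rho')=\bC$ (Schur's lemma, as $\rho'$ is $\wt G$-irreducible), so $A$ acts trivially on it and it is the trivial representation of $\wt G$. Hence $\tr\big(\End\rho'\big)(a,1)=1$; since $\End(\rho')$ has character $|\chi_{\rho'}|^2$, this gives $|\tr\rho'(a)|=1$ for every non-trivial $a\in A$.

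Finally I would fix a generator $a_0$ of $A$; then $(a_0,1)$ has order $p^e+1$ in $\wt G$, so the eigenvalues $\zeta_1,\dots,\zeta_{p^e}$ of $\rho'(a_0)$ are $(p^e+1)$-th roots of unity and $\tr\rho'(a_0^j)=\sum_i\zeta_i^j$ for all $j$. Feeding the identities $|\tr\rho'(a_0^j)|=1$ $(1\le j\le p^e)$ and $\tr\rho'(1)=p^e$ into $\sum_{j=0}^{p^e}|\tr\rho'(a_0^j)|^2$ and evaluating it by orthogonality of the characters of $\bZ/(p^e+1)$ forces $\sum_s m_s^2=p^e=\sum_s m_s$ for the multiplicities $m_s$ of the distinct eigenvalues, hence all $m_s=1$: the $\zeta_i$ are $p^e$ distinct $(p^e+1)$-th roots of unity, omitting exactly one, say $\nu(\rho')$, and therefore $\tr\rho'(a_0^j)=-\nu(\rho')^j$ for $1\le j\le p^e$. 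Since $\nu(\rho_0\otimes\chi)=\chi(a_0)\,\nu(\rho_0)$ and $\chi\mapsto\chi(a_0)$ identifies $A^\vee$ with $\mu_{p^e+1}(\bC)$, there is a unique $\chi\in A^\vee$ with $\chi(a_0)=\nu(\rho_0)^{-1}$, and for it $\rho_\psi':=\rho_0\otimes\chi$ satisfies $\tr\rho_\psi'(a_0^j)=-1$ for every $j$, i.e.\ at every non-trivial element of $A$. This proves existence, and with the first paragraph, the Lemma. The only genuinely delicate point is the computation of the second paragraph; the rest is either formal Clifford theory or an elementary manipulation of roots of unity.
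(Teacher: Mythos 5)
Your proof is correct, and every step checks out: the existence and classification of extensions via $H^2(A,\bC^\times)=0$ and Clifford theory, the decomposition $\End(\rho_\psi)\simeq\bigoplus_{\lambda\in(G/Z)^\vee}\lambda$ (the dimension count $p^{2e}=[G:Z]$ against the uniqueness statement of the Proposition is exactly right), the transfer of the fixed-point-freeness (iv) to the dual $(G/Z)^\vee$, and the Parseval/roots-of-unity endgame. Note, however, that the paper does not write out a proof at all: it simply cites the proof of \cite[22.2 Lemma]{BHLLCGL2} and asserts that the same argument applies when $Z$ is elementary abelian rather than cyclic. The argument being invoked there has the same skeleton as yours (extend $\rho_\psi$ using cyclicity of $A$, prove $\lvert\tr\rho'(a)\rvert=1$ for nontrivial $a$, then pin down the missing eigenvalue of $\rho'(a_0)$ and twist), but it obtains the key estimate $\lvert\tr\rho'(a)\rvert=1$ differently: one shows, using the bijectivity of $g\mapsto[a,g]$ on $G/Z$, that every element of the coset $aG$ is $G$-conjugate to an element of $aZ$, so $\lvert\tr\rho'\rvert$ is constant on $aG$, and then orthogonality of the irreducible character of $\rho'$ forces that constant to be $1$. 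Your replacement of this conjugacy-class count by the fixed-line count in $\End(\rho')$ is a genuine, and arguably cleaner, alternative — it isolates exactly where hypothesis (iv) enters (freeness of the $A$-action on $(G/Z)^\vee\setminus\{1\}$) and avoids any discussion of conjugacy classes in the nonabelian group $Q_0$; the cost is that you need the full isotypic decomposition of $\End(\rho_\psi)$, which the coset argument does not use. Either way the deduction is sound, and your write-up is a complete substitute for the citation.
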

\begin{proof}
The claim is proved in the proof of \cite[22.2 Lemma]{BHLLCGL2} 
if $Z$ is cyclic and $\psi$ is a faithful character. 
In fact, the same proof works also in our case. 
\end{proof}

\begin{cor}\label{cor:exirr}
There exists a unique representation 
$\rho_{\psi}'$ of $A \ltimes G$ such that 
$\rho_{\psi}'|_Z \simeq \psi^{\oplus p^e}$ and 
$\tr \rho_{\psi}' (a) =-1$ for every non-trivial element $a \in A$. 
Further, the representation $\rho_{\psi}'|_G$ 
is irreducible. 
\end{cor}
\begin{proof}
First we show the existence. 
We take the representation $\rho_{\psi}'$ in 
Lemma \ref{chara}. 
Then $\rho_{\psi}'$ has a central character and 
the central character is equal to $\psi$ by Proposition 
\ref{prop:rhopsi}. 
This shows the existence. 

We show the uniqueness and the irreducibility of $\rho_{\psi}'|_G$. 
Assume that $\rho_{\psi}'$ satisfies the condition 
in the claim. 
Take an irreducible subrepresentation 
$\rho_{\psi}$ of $\rho_{\psi}'|_G$. 
Then $\rho_{\psi}$ satisfies the condition of 
Proposition \ref{prop:rhopsi}. 
Hence, $\dim \rho_{\psi}=p^e$. 
Then we have $\rho_{\psi} = \rho_{\psi}'|_G$ 
and $\rho_{\psi}'|_G$ is irreducible. 
Such $\rho_{\psi}$ is unique by 
Lemma \ref{chara}. 
\end{proof}

\section{Galois representations}\label{Galrep}
\subsection{Swan conductor}\label{sseq:Swc}
Let $K$ be a non-archimedean local field 
with residue field $k$. 
Let $p$ be the characteristic of $k$. 
Let $f$ be the extension degree of $k$ over $\bF_p$. 
We put $q=p^f$. 

Let 
\[
\mathrm{Art}_K \colon K^{\times} 
 \stackrel{\sim}{\longrightarrow} W_K^{\mathrm{ab}}
\]
be the Artin reciprocity map, which sends 
a uniformizer to a lift of the geometric Frobenius element. 

Let $\tau$ be a finite dimensional irreducible 
continuous representation of $W_K$ over $\bC$. 
Let $\Psi \colon K \to \bC^{\times}$ 
be a non-trivial additive character. 
Let 
$\varepsilon( \tau, s, \Psi)$ denote the 
Deligne--Langlands local constant of 
$\tau$ with respect to $\Psi$. 
We simply write 
$\varepsilon( \tau, \Psi)$ for $\varepsilon( \tau, 1/2, \Psi)$. 

We define an unramified character 
$\omega_s \colon K^{\times} \to \bC^{\times}$ 
by 
$\omega_s (\varpi)=q^{-s}$ 
for $s \in \bR$, where $\varpi$ is a uniformizer of $K$. 
We recall that 
\begin{equation}\label{twun}
\varepsilon(\tau,s,\Psi)=\varepsilon(\tau \otimes \omega_s,0,\Psi) 
\end{equation}
(\cf \cite[(3.6.4)]{TatNtb}). 

We define $\psi_0 \in \mathbb{F}_p^{\vee}$ by 
$\psi_0 (1) =e^{2 \pi \sqrt{-1}/p}$. 
We take an additive character 
$\psi_K \colon K \to \bC^{\times}$ 
such that 
$\psi_K (x) =\psi_0 (\Tr_{k/\bF_p} (\bar{x} ))$ 
for $x \in \mathcal{O}_K$. 
By \cite[29.4 Proposition]{BHLLCGL2}, 
there exists an integer $\mathrm{sw}(\tau)$ such that 
\[
 \varepsilon(\tau,s,\psi_K) 
 =q^{-\mathrm{sw}(\tau) s} 
 \varepsilon(\tau, 0, \psi_K). 
\]
We put $\mathrm{Sw}(\tau) =\max \{ \mathrm{sw}(\tau),0 \}$, 
which we call the Swan conductor of $\tau$.

%\begin{defn}
%A finite dimensional irreducible continuous representation of $W_K$ over 
%$\bC$ is called 
%simple epipelagic, 
%if $\mathrm{Sw}(\tau)=1$. 
%\end{defn}

\subsection{Construction}\label{constGal}
In this subsection, we construct a group $Q$ which acts on a curve $C$ over an algebraic closure of $k$. 
By using this action of $Q$ and Frobenius action, 
we construct a representation of 
a semi-direct product $Q \rtimes \bZ$ 
in etale cohomology of $C$. 
Then we use the representation of $Q \rtimes \bZ$ 
to construct a representation of a Weil group. 

We fix an algebraic closure $K^{\mathrm{ac}}$ of $K$. 
Let $k^{\mathrm{ac}}$ 
be the residue field of $K^{\rmac}$. 
Let $n$ be a positive integer. 
We write $n=p^e n'$ with $(p,n')=1$. 
Throughout this paper, 
we assume that $e \geq 1$. 
Let 
\[
 Q= \Bigl\{ 
 (a,b,c)\ \Big|\ 
 a \in 
 \mu_{p^e+1}(k^{\mathrm{ac}}),\ 
 b, c \in k^{\mathrm{ac}},\ \ 
 b^{p^{2e}}+b=0,\ \  
 c^{p}- 
 c +b^{p^e+1} =0 
 \Bigr\} 
\]
be the group whose multiplication is given by 
\[
 (a_1 ,b_1 ,c_1 ) \cdot 
 (a_2 ,b_2 ,c_2 )= 
 \biggl( a_1 a_2 ,b_1 + a_1 b_2 , 
 c_1 + c_2 +\sum_{i=0}^{e -1} 
 \bigl( a_1 b_1^{p^e} b_2 
 \bigr)^{p^{i}} \biggr). 
\]

\begin{rem}
The construction of the group $Q$ has its origin in a study of the 
automorphism of a curve $C$ defined below. 
We can check that 
the above multiplication gives a group structure on $Q$ directly, 
but it's also possible to show this by 
checking that the inclusion from $Q$ to the automorphism group of $C$ 
defined below 
is compatible with the multiplications. 
\end{rem}

Note that 
$|Q|=p^{2e+1}(p^e+1)$. 
Let $Q \rtimes \mathbb{Z}$ be a semidirect product, where  
$m \in \mathbb{Z}$ 
acts  on $Q$ by 
$(a,b,c) \mapsto (a^{p^{-m}},b^{p^{-m}},c^{p^{-m}})$. 
We put 
\begin{equation}\label{Frmdef}
 \Fr (m) = ((1,0,0),m ) \in Q \rtimes \mathbb{Z} 
\end{equation}
for $m \in \mathbb{Z}$. 

Let $C$ be the smooth affine curve over $k^{\mathrm{ac}}$ 
defined by 
\begin{equation*}\label{Cdef}
 x^{p}-x=y^{p^e+1} 
 \ \ \textrm{in}\ \mathbb{A}_{k^{\mathrm{ac}}}^2 . 
\end{equation*}
We define a right action of 
$Q \rtimes \mathbb{Z}$ on $C$ by 
\begin{align*}
 (x,y)((a,b,c),0) &= 
 \biggl( x + \sum_{i=0}^{e -1} 
 ( b y )^{p^i} + c , 
 a (y + b^{p^e} ) \biggr), \\ 
 (x,y )\Fr (1) &= 
 (x^{p},y^{p} ). 
\end{align*}
We consider the morphisms 
\begin{align*}
 h \colon \mathbb{A}_{k^{\mathrm{ac}}}^1 &
 \to \mathbb{A}_{k^{\mathrm{ac}}}^1 ; \ x \mapsto x^p -x, \\ 
 \pi \colon \mathbb{A}_{k^{\mathrm{ac}}}^1 & 
 \to \mathbb{A}_{k^{\mathrm{ac}}}^1 ; \ y 
 \mapsto y^{p^e +1}. 
\end{align*}
Then we have the fiber product 
\[
 \xymatrix{
 C
 \ar@{->}[r]^{h'} \ar@{->}[d]_{\pi'} \ar@{}[rd]|\square & 
 \mathbb{A}_{k^{\mathrm{ac}}}^1 
 \ar@{->}[d]^{\pi} \\ 
 \mathbb{A}_{k^{\mathrm{ac}}}^1 
 \ar@{->}[r]_{h} & 
 \mathbb{A}_{k^{\mathrm{ac}}}^1 
 }
\]
where $\pi'$ and $h'$ are the natural projections to 
the first and second coordinates respectively. 
Let $g =((a,b,c),m) \in Q \rtimes \mathbb{Z}$. 
We consider the morphism 
\[
 g_0 \colon \mathbb{A}_{k^{\mathrm{ac}}}^1 
 \to \mathbb{A}_{k^{\mathrm{ac}}}^1 ;\ y \mapsto 
 \bigl( a ( y +b^{p^e} ) \bigr)^{p^m}. 
\] 
Let $\ell$ be a prime number different from $p$. 
Then we have a natural isomorphism 
\[
 c_g \colon g_0^* h'_* \overline{\mathbb{Q}}_{\ell} 
 \xrightarrow{\sim} 
 h'_* g^* \overline{\mathbb{Q}}_{\ell} \xrightarrow{\sim} 
 h'_* \overline{\mathbb{Q}}_{\ell}. 
\]
We take an isomorphism 
$\iota \colon \overline{\mathbb{Q}}_{\ell} \simeq \mathbb{C}$. 
We sometimes view a character over $\bC$ 
as a character over $\overline{\mathbb{Q}}_{\ell}$ 
by $\iota$. 
Let $\psi \in \bF_p^{\vee}$. 
We write $\mathcal{L}_{\psi}$ for the Artin--Schreier 
$\overline{\mathbb{Q}}_{\ell}$-sheaf on 
$\mathbb{A}_{k^{\mathrm{ac}}}^1$
associated to $\psi$, 
which is equal to 
$\mathfrak{F} (\psi)$ in the notation of 
\cite[Sommes trig. 1.8 (i)]{DelCoet}. 
Then we have a decomposition 
$h_* \overline{\mathbb{Q}}_{\ell} = 
 \bigoplus_{\psi \in \bF_p^{\vee}} \mathcal{L}_{\psi}$. 
This decomposition gives canonical isomorphisms 
\begin{equation}\label{candec}
 h'_* \overline{\mathbb{Q}}_{\ell} \cong 
 \pi^* h_* \overline{\mathbb{Q}}_{\ell} \cong 
 \bigoplus_{\psi \in \bF_p^{\vee}} \pi^* \mathcal{L}_{\psi}. 
\end{equation}
The isomorphisms $c_g$ and \eqref{candec} induce 
$c_{g,\psi} \colon g_0^* \pi^* \mathcal{L}_{\psi} 
 \to \pi^* \mathcal{L}_{\psi}$. 
We define a left action of $Q \rtimes \mathbb{Z}$ on 
$H_{\mathrm{c}}^1 (\mathbb{A}_{k^{\mathrm{ac}}}^1 ,
 \pi^*\mathcal{L}_{\psi})$ 
by 
\[
 H_{\mathrm{c}}^1 (\mathbb{A}_{k^{\mathrm{ac}}}^1 ,\pi^* \mathcal{L}_{\psi}) 
 \xrightarrow{g_0^*} 
 H_{\mathrm{c}}^1 (\mathbb{A}_{k^{\mathrm{ac}}}^1 ,
 g_0^* \pi^* \mathcal{L}_{\psi}) 
 \xrightarrow{c_{g ,\psi}} 
 H_{\mathrm{c}}^1 (\mathbb{A}_{k^{\mathrm{ac}}}^1 
 ,\pi^* \mathcal{L}_{\psi}) 
\]
for $g \in Q \rtimes \mathbb{Z}$. 
Let $\tau_{\psi}$ be the representation of $Q \rtimes \mathbb{Z}$ 
over $\bC$ defined by 
$H_{\mathrm{c}}^1 (\mathbb{A}_{k^{\mathrm{ac}}}^1 ,
 \pi^* \mathcal{L}_{\psi})$ and $\iota$. 
For $\theta \in \mu_{p^e +1}(k^{\mathrm{ac}})^{\vee}$, 
let $\mathcal{K}_{\theta}$ be the 
smooth Kummer $\overline{\mathbb{Q}}_{\ell}$-sheaf on 
$\mathbb{G}_{\mathrm{m},k^{\mathrm{ac}}}$ 
associated to $\theta$. 
We view 
$\mu_{p^e +1}(k^{\rmac}) \times \bF_p$ 
as a subgroup of $Q$ by 
$(a,c) \mapsto (a,0,c)$. 

\begin{lem}\label{HLdec}
We have a natural isomorphism 
\begin{equation*}
 H_{\mathrm{c}}^1 (\mathbb{A}_{k^{\mathrm{ac}}}^1 ,
 \pi^* \mathcal{L}_{\psi}) 
 \simeq 
 \bigoplus_{\theta \in \mu_{p^e +1}(k^{\mathrm{ac}})^{\vee} 
 \setminus \{ 1 \} } 
 H_{\mathrm{c}}^1 (\mathbb{G}_{\mathrm{m}, k^{\mathrm{ac}}} ,
 \mathcal{L}_{\psi} \otimes \mathcal{K}_{\theta} ), 
\end{equation*}
which is compatible with the actions of 
$\mu_{p^e +1}(k^{\rmac}) \times \bF_p$ 
where $(a,c) \in \mu_{p^e +1}(k^{\rmac}) \times \bF_p$ acts on 
$H_{\mathrm{c}}^1 (\mathbb{G}_{\mathrm{m}, k^{\mathrm{ac}}} ,
 \mathcal{L}_{\psi} \otimes \mathcal{K}_{\theta} )$ 
by $\theta (a) \psi(c)$. 
Further, we have 
\[
 \dim H_{\mathrm{c}}^1 (\mathbb{G}_{\mathrm{m}, k^{\mathrm{ac}}} ,
 \mathcal{L}_{\psi} \otimes \mathcal{K}_{\theta} )=1 
\] 
for any 
$\theta \in \mu_{p^e +1}(k^{\mathrm{ac}})^{\vee} 
 \setminus \{ 1 \}$. 
\end{lem}
\begin{proof}
By the projection formula, 
we have natural isomorphisms 
\[
 \pi_\ast \pi^\ast \mathcal{L}_{\psi} \simeq 
 \pi_\ast (\pi^\ast \mathcal{L}_{\psi} 
 \otimes \overline{\mathbb{Q}}_{\ell} ) 
 \simeq 
 \mathcal{L}_{\psi} \otimes \pi_{\ast} \overline{\mathbb{Q}}_{\ell}
\]
on $\mathbb{A}^1_{k^{\mathrm{ac}}}$. 
Further, we have 
\[
 \pi_{\ast} \overline{\mathbb{Q}}_{\ell} \simeq 
 \bigoplus_{\theta \in \mu_{p^e+1}(k^{\mathrm{ac}})^{\vee}} 
 \mathcal{K}_{\theta} 
\]
on $\mathbb{G}_{\mathrm{m}, k^{\mathrm{ac}}}$, 
since $\pi$ is a finite etale $\mu_{p^e+1}(k^{\mathrm{ac}})$-covering 
over $\mathbb{G}_{\mathrm{m}, k^{\mathrm{ac}}}$. 
Therefore, we have 
\begin{equation}\label{ppLdec}
 \pi_\ast \pi^\ast \mathcal{L}_{\psi} \simeq 
 \mathcal{L}_{\psi} \otimes \pi_{\ast} \overline{\mathbb{Q}}_{\ell}
 \simeq 
 \bigoplus_{\theta \in \mu_{p^e+1}(k^{\mathrm{ac}})^{\vee}} 
 \mathcal{L}_{\psi} \otimes \mathcal{K}_{\theta}
\end{equation}
on $\mathbb{G}_{\mathrm{m}, k^{\mathrm{ac}}}$. 
Let $\{0\}$ denote the origin of $\bA^1_{k^{\rmac}}$. 
Let $i \colon \{0\} \to \bA^1_{k^{\rmac}}$ and 
$j \colon \mathbb{G}_{\mathrm{m}, k^{\mathrm{ac}}}
 \to \bA^1_{k^{\rmac}}$ be the natural immersions. 
From the exact sequence 
\[
 0 \to j_! j^* \pi^\ast \mathcal{L}_{\psi} 
 \to \pi^\ast \mathcal{L}_{\psi} \to 
 i_* i^* \pi^\ast \mathcal{L}_{\psi} \to 0, 
\]
we have the exact sequence 
\begin{equation}\label{pLex}
 0 \to H^0(\{0\},i^* \pi^\ast \mathcal{L}_{\psi} )
 \to H_{\mathrm{c}}^1
 (\mathbb{G}_{\mathrm{m},k^{\mathrm{ac}}},\pi^\ast \mathcal{L}_{\psi})
 \to H_{\mathrm{c}}^1
 (\mathbb{A}^1_{k^{\mathrm{ac}}},\pi^\ast \mathcal{L}_{\psi}) 
 \to 0, 
\end{equation}
since 
$H_{\mathrm{c}}^0
 (\mathbb{A}^1_{k^{\mathrm{ac}}},\pi^\ast \mathcal{L}_{\psi})=0$ and 
$H^1(\{0\},i^* \pi^\ast \mathcal{L}_{\psi})=0$. 
Note that $H^0(\{0\},i^* \pi^\ast \mathcal{L}_{\psi} ) \simeq \psi$. 
By \eqref{ppLdec}, we have isomorphisms 
\begin{equation}\label{GmpiL}
 H_{\mathrm{c}}^1
 (\mathbb{G}_{\mathrm{m},k^{\mathrm{ac}}},\pi^\ast \mathcal{L}_{\psi}) 
 \simeq 
 H_{\mathrm{c}}^1
 (\mathbb{G}_{\mathrm{m},k^{\mathrm{ac}}},\pi_* \pi^\ast \mathcal{L}_{\psi}) 
 \simeq 
 \bigoplus_{\theta \in \mu_{p^e+1}(k^{\mathrm{ac}})^{\vee}} 
  H_{\mathrm{c}}^1
 (\mathbb{G}_{\mathrm{m},k^{\mathrm{ac}}}, 
 \mathcal{L}_{\psi} \otimes \mathcal{K}_{\theta} ) . 
\end{equation}
We know that 
\begin{equation}\label{dimLK1}
 \dim H_{\mathrm{c}}^1
 (\mathbb{G}_{\mathrm{m},k^{\mathrm{ac}}}, 
 \mathcal{L}_{\psi} \otimes \mathcal{K}_{\theta} )=1 
\end{equation}
for any 
$\theta \in \mu_{p^e +1}(k^{\mathrm{ac}})^{\vee}$ by 
the proof of \cite[Lemma 7.1]{ITStab3} (\cf \cite[(2.3)]{ITGeomHW}). 
Since the composition of 
\[
 H^0(\{0\},i^* \pi^\ast \mathcal{L}_{\psi})
 \to H_{\mathrm{c}}^1
 (\mathbb{G}_{\mathrm{m},k^{\mathrm{ac}}},\pi^\ast \mathcal{L}_{\psi}) 
\]
and \eqref{GmpiL} is compatible with the actions of 
$\mu_{p^e +1}(k^{\rmac}) \times \bF_p$, 
it factors through an isomorphism 
$H^0(\{0\},i^* \pi^\ast \mathcal{L}_{\psi}) \simeq 
 H_{\mathrm{c}}^1(\mathbb{G}_{\mathrm{m},k^{\mathrm{ac}}}, \mathcal{L}_{\psi})$ 
by \eqref{dimLK1}. 
Then the claim follows from 
\eqref{pLex}, \eqref{GmpiL} and \eqref{dimLK1}. 
\end{proof}

Let 
$\varrho \colon \mu_2(k) \hookrightarrow \bC^{\times}$ 
be the non-trivial group homomorphism, 
if $p \neq 2$. 
We define a character $\theta_0 \in \mu_{p^e+1}(k^{\mathrm{ac}})^{\vee}$ 
by 
\begin{equation}\label{theta_0}
 \theta_0 (a) = 
 \begin{cases}
  \varrho (a^{(p^e+1)/2} ) \quad & \textrm{if} \ p \neq 2, \\ 
  1 \quad & \textrm{if} \ p=2 
 \end{cases}
\end{equation}
for $a \in \mu_{p^e+1}(k^{\mathrm{ac}})$. 
For an integer $m$ and a positive odd integer $m'$, 
let $\bigl( \frac{m}{m'} \bigr)$ denote the 
Jacobi symbol. 
For an odd prime $p$, we set
\[
\epsilon(p)=\begin{cases}
1 & \textrm{if $p \equiv 1 \mod 4$}, \\
\sqrt[]{-1} & \textrm{if $p \equiv 3 \mod 4$}. 
\end{cases}
\]
We have $\epsilon(p)^2=\bigl(\frac{-1}{p}\bigr)$. 
We define a representation $\tau_n$ of $Q \rtimes \mathbb{Z}$ as 
the twist of $\tau_{\psi_0}$ by the character 
\begin{equation}\label{chartwi}
 Q \rtimes \mathbb{Z} \to \bC^{\times};\ 
 ((a,b,c),m) \mapsto 
 \begin{cases}
 \theta_0 (a)^{n} 
 \bigl( \bigl( 
 -\epsilon(p)
 \bigl( \frac{-2n'}{p} \bigr) 
 \bigr)^n 
 p^{-\frac{1}{2}} 
 \bigr)^m 
 \quad & \textrm{if} \ p \neq 2, \\ 
 \bigl( (-1)^{\frac{n(n-2)}{8}} p^{-\frac{1}{2}} \bigr)^m 
 \quad & \textrm{if} \ p=2. 
 \end{cases}
\end{equation}
The value of this character is related to a quadratic Gauss sum.
A geometric origin of this character is given in \eqref{geori}. 
Let 
$(\zeta,\chi,c) \in \mu_{q-1}(K) \times 
(k^{\times})^{\vee} \times \bC^{\times}$. 
We take a uniformizer $\varpi$ of $K$. 
We choose an element $\varphi_{\zeta}' \in K^{\mathrm{ac}}$
such that $\varphi_{\zeta}'^{n'}=\zeta \varpi$ and 
set $E_{\zeta}=K(\varphi_{\zeta}')$. 
We choose elements 
$\alpha_{\zeta}, \beta_{\zeta}, \gamma_{\zeta} \in K^{\mathrm{ac}}$ 
such that 
\begin{equation}\label{extgen}
 \alpha_{\zeta}^{p^e +1} = - \varphi_{\zeta}', \quad 
 \beta_{\zeta}^{p^{2e}} +\beta_{\zeta} 
 =- \alpha_{\zeta}^{-1} , \quad 
 \gamma_{\zeta}^p - \gamma_{\zeta} =\beta_{\zeta}^{p^e+1}. 
\end{equation}
For $\sigma \in W_{E_{\zeta}}$, 
we set 
\begin{gather}\label{abcdef}
\begin{aligned}
 a_{\sigma} =\sigma(\alpha_{\zeta})/(\alpha_{\zeta}), \quad
 b_{\sigma} =a_{\sigma} \sigma(\beta_{\zeta})-\beta_{\zeta}, \quad 
 c_{\sigma}  =\sigma(\gamma_{\zeta})-\gamma_{\zeta} + 
 \sum_{i=0}^{e-1}
 ( b_{\sigma}^{p^e} (\beta_{\zeta} + b_{\sigma} ) )^{p^i}. 
\end{aligned}
\end{gather}
Then we have 
$a_{\sigma}, b_{\sigma}, c_{\sigma} \in \cO_{K^{\rmac}}$. 
For $\sigma \in W_{E_{\zeta}}$, 
we put 
$n_{\sigma} =v_{E_{\zeta}} (\mathrm{Art}_{E_{\zeta}}^{-1}(\sigma))$. 
We have the homomorphism 
\begin{equation}\label{hom}
 \Theta_{\zeta} \colon 
 W_{E_{\zeta}} \longrightarrow Q \rtimes \mathbb{Z};\ 
 \sigma \mapsto \bigl(
 (\bar{a}_{\sigma},\bar{b}_{\sigma},\bar{c}_{\sigma}),
 fn_{\sigma} \bigr). 
\end{equation}

\begin{lem}
The image of the homomorphism $\Theta_{\zeta}$ 
is $Q \rtimes (f\mathbb{Z})$. 
\end{lem}
\begin{proof}
It suffices to show that 
the image of $I_{E_{\zeta}} \subset W_{E_{\zeta}}$ 
under $\Theta_{\zeta}$ is equal to $Q \subset Q \rtimes \bZ$, 
since the homomorphism 
$W_{E_{\zeta}} \to f\bZ;\ \sigma \mapsto fn_{\sigma}$ 
is surjective. 
We put $N_{\zeta}=E_{\zeta}(\alpha_{\zeta}, \beta_{\zeta}, \gamma_{\zeta})$. 
Then the kernel of $\Theta_{\zeta}$ is equal to 
$I_{N_{\zeta}}$ by the definition. 
Hence we have an injection 
$I_{E_{\zeta}}/I_{N_{\zeta}} \hookrightarrow Q$. 
This injection is actually a bijection, 
since $N_{\zeta}$ is a totally ramified extension over 
$E_{\zeta}$ of degree $p^{2e+1}(p^e+1)$, which equals to $|Q|$. 
Therefore, we obtain the claim. 
\end{proof}

We write 
$\tau_{n,\zeta}$ 
for the representation of $W_{E_{\zeta}}$ given by 
$\Theta_{\zeta}$ and $\tau_n$. 
Recall that $c$ is an element of $\bC^{\times}$. 
Let 
$\phi_c \colon W_{E_{\zeta}} \to \bC^{\times}$ 
be the character defined by 
$\phi_c(\sigma)=c^{n_{\sigma}}$.
We have the isomorphism 
$\varphi_{\zeta}'^{\bZ} \times \cO_{E_{\zeta}}^{\times} 
 \simeq {E_{\zeta}}^{\times}$ 
given by the multiplication. 
Let 
$\mathrm{Frob}_p \colon k^{\times} \to k^{\times}$ 
be the inverse of the $p$-th power map. 
We consider the following composition: 
\[
 \lambda_{\zeta} \colon 
 W_{E_{\zeta}}^{\mathrm{ab}} \simeq {E_{\zeta}}^{\times} \simeq 
 \varphi_{\zeta}'^{\bZ} \times \cO_{E_{\zeta}}^{\times} 
 \xrightarrow{\mathrm{pr_2}} 
 \cO_{E_{\zeta}}^{\times} 
 \xrightarrow{\mathrm{can.}} k^{\times} 
 \xrightarrow{\mathrm{Frob}_p^e} k^{\times}.
\]
We put 
\begin{equation}\label{Galconst}
 \tau_{n,\zeta,\chi,c} =\tau_{n,\zeta} \otimes 
 (\chi \circ \lambda_{\zeta} ) \otimes \phi_c 
 \quad \textrm{and} \quad 
 \tau_{\zeta,\chi,c}=
 \Ind_{E_{\zeta}/K} \tau_{n,\zeta,\chi,c}. 
\end{equation}
We will see that 
$\tau_{\zeta,\chi,c}$ is an irreducible representation of Swan conductor $1$  
in Proposition \ref{tause}. 
This Galois representation $\tau_{\zeta,\chi,c}$ is our main object in this paper. We will study several invariants associated to this, for example 
its determinant and epsilon factor.

\subsection{Characterization}
We put 
\[
 Q_0 =\bigl\{ (1,b ,c ) \in Q \bigl\}, \quad 
 F=\bigl\{ (1,0,c) \in Q \bigm| 
 c \in \bF_p \bigr\}. 
\]
We identify $\bF_p$ with $F$ by 
$c \mapsto (1,0,c)$. 
\begin{lem}\label{Q0Fsur}
For any $g=(1,b,c) \in Q_0$ 
with $b \neq 0$, 
the map $Q_0 \to F;\ g' \mapsto [g,g']$ is surjective. 
\end{lem}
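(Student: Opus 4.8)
The plan is to convert the statement into an elementary non-degeneracy property of the alternating pairing encoded in the multiplication of $Q_0$, and then to dispatch that by linear algebra over $\bF_{p^e}$.

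First I would compute commutators in $Q_0$. Writing $T(x,y)=\sum_{i=0}^{e-1}(x^{p^e}y)^{p^i}$, the multiplication law restricted to $Q_0$ reads $g(1,b_1,c_1)g(1,b_2,c_2)=g(1,b_1+b_2,c_1+c_2+T(b_1,b_2))$, and a direct check shows that each $g(1,0,c)$ with $c\in\bF_p$, i.e. each element of $F$, is central in $Q_0$. Hence for $g=g(1,b,c)$ and $g'=g(1,b',c')$ one gets
\[
 [g,g']=g\bigl(1,\,0,\,T(b,b')-T(b',b)\bigr)\in F .
\]
Since $g'\mapsto T(b,b')-T(b',b)$ is additive in $b'$ and $F\cong\bF_p$, its image is an $\bF_p$-subspace of $F$, so surjectivity is equivalent to producing a single $g'\in Q_0$ with $[g,g']\neq 1$, i.e. with $T(b,b')\neq T(b',b)$.

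Next I would simplify the pairing. Put $w=b^{p^e}b'-b'^{p^e}b$, so that $T(b,b')-T(b',b)=\sum_{i=0}^{e-1}w^{p^i}$. Using the defining relations $b^{p^{2e}}+b=0$ and $b'^{p^{2e}}+b'=0$ of elements of $Q$, one computes $w^{p^e}=(-b)b'^{p^e}-(-b')b^{p^e}=w$; therefore $w\in\bF_{p^e}$ and
\[
 T(b,b')-T(b',b)=\Tr_{\bF_{p^e}/\bF_p}(w).
\]
As $g'=g(1,b',c')$ runs over $Q_0$, the element $b'$ runs over the full set $V=\{x\in k^{\rmac}\mid x^{p^{2e}}+x=0\}$, since for each such $x$ the equation $c'^p-c'+x^{p^e+1}=0$ is solvable. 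The polynomial $x^{p^{2e}}+x$ is $\bF_{p^e}$-linear and separable of degree $p^{2e}$, so $V$ is a $2$-dimensional $\bF_{p^e}$-vector space. The map $V\to\bF_{p^e}$, $b'\mapsto w=b^{p^e}b'-b'^{p^e}b$, is $\bF_{p^e}$-linear and vanishes at $b'=b$; it is not identically zero, because for $b'\notin\bF_{p^e}\cdot b$ one has $b'/b\notin\bF_{p^e}$, hence $(b'/b)^{p^e}\neq b'/b$ and so $w\neq 0$. A nonzero $\bF_{p^e}$-linear map into the one-dimensional space $\bF_{p^e}$ is surjective, and $\Tr_{\bF_{p^e}/\bF_p}$ is surjective, so $b'\mapsto\Tr_{\bF_{p^e}/\bF_p}(w)$ is onto $\bF_p$; this gives the lemma.

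I do not expect a genuine obstacle: the only mildly delicate points are the bookkeeping in the commutator computation inside $Q$ and the observation that $w$ lies in $\bF_{p^e}$. Once the pairing is identified with $(b,b')\mapsto\Tr_{\bF_{p^e}/\bF_p}(b^{p^e}b'-b'^{p^e}b)$ on the $2$-dimensional $\bF_{p^e}$-space $V$, surjectivity onto $\bF_p$ is immediate. (This also re-proves that $F$ is exactly the centre of $Q_0$, so that $Q_0$ satisfies the Heisenberg hypotheses of Section~\ref{Hgrp}.)
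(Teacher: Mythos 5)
Your proof is correct and follows essentially the same route as the paper's: compute the commutator to reduce the claim to surjectivity of $b'\mapsto \Tr_{\bF_{p^e}/\bF_p}(b^{p^e}b'-b'^{p^e}b)$, then check the inner map surjects onto $\bF_{p^e}$ and compose with the surjective trace. You supply somewhat more detail than the paper (the verification that $w\in\bF_{p^e}$ and the linear-algebra argument for surjectivity, which the paper merely asserts), but the argument is the same.
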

\begin{proof}
For $(1,b_1,c_1), (1,b_2,c_2) \in Q_0$, 
we have 
\[
 [(1,b_1,c_1), (1,b_2,c_2)]= 
 \biggl(1,0, \sum_{i=0}^{e -1} 
 ( b_1^{p^e} b_2 -b_1 b_2^{p^e} )^{p^i} \biggr). 
\]
If $b_1 \neq 0$, then 
\[
 \bigl\{ b \in k^{\mathrm{ac}} \mid b^{p^{2e}} +b =0 \bigr\} 
 \to \mathbb{F}_{p^e};\ b_2 \to 
 b_1^{p^e} b_2 -b_1 b_2^{p^e} 
\]
is surjective. 
The claim follows from 
the surjectivity of 
$\Tr_{\mathbb{F}_{p^e}/\mathbb{F}_p}$. 
\end{proof}
By this lemma, 
we can apply the results from Section \ref{Hgrp} to our situation 
with $G=Q_0$, $Z=F$ and 
$A=\mu_{p^e+1}(k^{\mathrm{ac}})$, where 
the action of $\mu_{p^e+1}(k^{\mathrm{ac}})$ on $Q_0$ 
is given by the 
embedding 
\[
 \mu_{p^e+1}(k^{\mathrm{ac}}) \longrightarrow Q ;\ 
 a \mapsto (a,0,0) 
\]
and the conjugation. 
Let $\tau^0$ 
denote the unique representation of 
$Q$ characterized by 
\begin{equation}\label{cht} 
 \tau^0 |_{F} 
 \simeq \psi_0^{\oplus p^e},\ \ \ 
 \Tr \tau^0 ((a,0,0))= 
 - 1 
\end{equation}
for 
$a \in \mu_{p^e+1}(k^{\mathrm{ac}}) \setminus \{1\}$ 
(\cf Corollary \ref{cor:exirr}). 

We have a decomposition 
\begin{equation}\label{taudec}
 \tau^0 = 
 \bigoplus_{\theta \in \mu_{p^e+1}(k^{\mathrm{ac}})^{\vee} \setminus \{1\}} 
 L_{\theta} 
\end{equation}
such that 
$a \in \mu_{p^e+1}(k^{\mathrm{ac}})$ acts on $L_{\theta}$ 
by 
$\theta(a)$, 
since the both sides of \eqref{taudec} 
have the same character as representations of 
$\mu_{p^e+1}(k^{\mathrm{ac}})$. 
For a positive integer $m$ dividing $p^e+1$, 
we consider 
$\mu_{m}(k^{\mathrm{ac}})^{\vee}$ as a subset of 
$\mu_{p^e+1}(k^{\mathrm{ac}})^{\vee}$ by the dual of the surjection 
\[
 \mu_{p^e+1}(k^{\mathrm{ac}}) \to \mu_{m}(k^{\mathrm{ac}}) ; \ 
 x \to x^{\frac{p^e+1}{m}}. 
\]
We simply write $Q$ for the subgroup 
$Q \times \{0\} \subset Q \rtimes \mathbb{Z}$. 

\begin{lem}\label{restQ}
We have 
$\tau_{\psi_0} |_Q \simeq \tau^0$. 
\end{lem}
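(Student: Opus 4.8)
The plan is to verify that $\tau_{\psi_0}|_Q$ has the two properties in \eqref{cht} characterising $\tau^0$, and then conclude by the uniqueness statement of Lemma \ref{chara}. By Lemma \ref{Q0Fsur} the data $G=Q_0$, $Z=F$, $A=\mu_{p^e+1}(k^{\rmac})$ (the last realised in $Q$ as the subgroup of the elements $g(a,0,0)$) satisfy the hypotheses of Section \ref{Hgrp}, so $Q=A\ltimes Q_0$ and \cite[(8.3.3) Proposition]{BFGdiv}, Lemma \ref{chara} apply. Since $F$ is central in $Q_0$ and $A$ acts trivially on $F$, any $Q_0$-representation of dimension $p^e$ on which $F$ acts through $\psi_0$ is isomorphic to $\rho_{\psi_0}$: decomposing into irreducibles, each summand has $F$ acting through $\psi_0$, hence is $\rho_{\psi_0}$ by \cite[(8.3.3) Proposition]{BFGdiv}, of dimension $p^e=\dim\rho_{\psi_0}$, so there is a single summand. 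It therefore suffices to prove three facts about $\tau_{\psi_0}$: (i) $\dim\tau_{\psi_0}=p^e$; (ii) $F$ acts on $\tau_{\psi_0}$ through $\psi_0$; (iii) $\Tr\tau_{\psi_0}(g(a,0,0))=-1$ for $a\in\mu_{p^e+1}(k^{\rmac})\setminus\{1\}$. Indeed (i) and (ii) then force $\tau_{\psi_0}|_{Q_0}\simeq\rho_{\psi_0}$, so $\tau_{\psi_0}|_Q$ is irreducible with $\tau_{\psi_0}|_F\simeq\psi_0^{\oplus p^e}$, and (iii) together with Lemma \ref{chara} identifies it with $\tau^0$.

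For (ii): the element $g(1,0,c)$ acts on $C$ by $(x,y)\mapsto(x+c,y)$ and on $\bA^1_{k^{\rmac}}$ (the $y$-line) by the identity, so in the notation of Section \ref{constGal} we have $g_0=\id$, and $\tau_{\psi_0}(g(1,0,c))$ is the automorphism of $H^1_{\mathrm c}(\bA^1_{k^{\rmac}},\pi^*\cL_{\psi_0})$ induced by $c_{g(1,0,c),\psi_0}$. Unwinding \eqref{candec}, this isomorphism is the one on $\pi^*\cL_{\psi_0}$ coming from the translation $x\mapsto x+c$ on the source of $h$; since this translation by $c\in\bF_p$ is a deck transformation of the covering $h$, it acts on the Artin-Schreier sheaf $\cL_{\psi_0}$ (see \cite[Sommes trig. 1.8]{DelCoet}), hence on $\pi^*\cL_{\psi_0}$, by the scalar $\psi_0(c)$. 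This is (ii).

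For (i) and (iii): I would pass through Lemma \ref{HLdec}. The element $g(a,0,0)$ acts on $C$ by $(x,y)\mapsto(x,ay)$ and on $\bA^1_{k^{\rmac}}$ by $y\mapsto ay$; as $(ay)^{p^e+1}=y^{p^e+1}$, this automorphism lies over the identity of the target of $\pi$ and is precisely the deck transformation of the Kummer covering $\pi\colon y\mapsto y^{p^e+1}$ corresponding to $a\in\mu_{p^e+1}(k^{\rmac})$. Hence the isomorphisms \eqref{ppLdec}, \eqref{pLex} and the one of Lemma \ref{HLdec} are equivariant for $g(a,0,0)$, which acts on $\cK_\chi$, and so on the summand $H^1_{\mathrm c}(\bG_{\mathrm m,k^{\rmac}},\cL_{\psi_0}\otimes\cK_\chi)$, through $\chi(a)$. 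Each such summand is one-dimensional: $\cL_{\psi_0}\otimes\cK_\chi$ is lisse of rank one on $\bG_{\mathrm m}$, tamely ramified at $0$ (as $\chi\neq 1$) and of Swan conductor one at $\infty$, so its Euler characteristic for compact support is $-1$ by Grothendieck-Ogg-Shafarevich, while $H^0_{\mathrm c}$ and $H^2_{\mathrm c}$ vanish. Therefore $\dim\tau_{\psi_0}=\#\bigl(\mu_{p^e+1}(k^{\rmac})^\vee\setminus\{1\}\bigr)=p^e$, giving (i), and
\[
 \Tr\tau_{\psi_0}(g(a,0,0))=\sum_{\chi\in\mu_{p^e+1}(k^{\rmac})^\vee\setminus\{1\}}\chi(a)=\Bigl(\sum_{\chi\in\mu_{p^e+1}(k^{\rmac})^\vee}\chi(a)\Bigr)-1=-1
\]
for $a\neq 1$, by the orthogonality relations; this is (iii), and the lemma follows.

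The step demanding the most care is the bookkeeping of normalisations in the last two paragraphs: one must check that $c_{g,\psi_0}$ attached to $g(1,0,c)$ is multiplication by $\psi_0(c)$ rather than $\psi_0(-c)$, and that the action of $g(a,0,0)$ coincides with the bare deck-transformation action on $\cK_\chi$ with no extra twist. Both follow from unravelling the definitions of $c_g$, the canonical isomorphism \eqref{candec} and the sheaves $\cL_{\psi_0}$, $\cK_\chi$, but they warrant attention, since a sign error in the first would replace $\psi_0$ by $\bar\psi_0$ in \eqref{cht}. As an independent check on (iii) one may instead apply the Lefschetz trace formula to the automorphism $y\mapsto ay$ of $\bA^1_{k^{\rmac}}$: its only fixed point is $0$ and its order $p^e+1$ is prime to $p$, so the local term at $0$ is the naive one, and a direct computation again yields $-1$.
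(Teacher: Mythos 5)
Your proof is correct and takes essentially the same route as the paper, whose entire proof is the single sentence ``We can see the claim by Lemma \ref{HLdec} and \eqref{cht}''; you have simply supplied the details that the paper leaves implicit (the equivariance of the decomposition in Lemma \ref{HLdec} for the $F$- and $\mu_{p^e+1}(k^{\rmac})$-actions, the one-dimensionality of each summand via Grothendieck--Ogg--Shafarevich, and the appeal to the uniqueness statements of Section \ref{Hgrp}). The normalisation issue you flag about $\psi_0(c)$ versus $\psi_0(-c)$ is genuine but is exactly the kind of convention check the paper also suppresses, and it does not affect the trace computation.
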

\begin{proof}
The representation $\tau_{\psi_0} |_Q$ satisfies 
the characterization \eqref{cht} 
by Lemma \ref{HLdec}. 
Hence $\tau_{\psi_0} |_Q$ is isomorphic to $\tau^0$. 
\end{proof}

\begin{cor}\label{cor:irrQ0}
The representation $\tau_{\psi_0} |_{Q_0}$ 
is irreducible. 
\end{cor}
\begin{proof}
This follows from 
Corollary \ref{cor:exirr}, 
\eqref{cht} and Lemma \ref{restQ}. 
\end{proof}
For any odd prime $p$, we have 
\begin{equation}\label{Gss}
\sum_{x \in \mathbb{F}_p^{\times}} \psi_0 (x^2)
=\sum_{x \in \mathbb{F}_p^{\times}}\left(\frac{x}{p}\right) \psi_0 (x)
=\epsilon(p)\sqrt{p}
\end{equation}
by Gauss. 
\begin{lem}\label{TrFr}
We have 
\begin{equation*}
 \Tr \tau_{\psi_0} \bigl( \Fr(1) \bigr) = 
 \begin{cases}
  -\epsilon(p)
  \sqrt{p} 
  \quad & \textrm{if}\ p \neq 2, \\ 
  0 
  \quad & \textrm{if}\ p=2. 
 \end{cases}
\end{equation*}
\end{lem}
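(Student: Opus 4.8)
The plan is to use the Grothendieck–Lefschetz trace formula on $\mathbb{A}^1_{k^{\mathrm{ac}}}$ with the sheaf $\pi^*\mathcal{L}_{\psi_0}$, noting that $\Fr(1)$ acts (up to the twist identifications $c_{\Fr(1),\psi_0}$) as the geometric Frobenius over $\bF_p$. Since $\mathbb{A}^1$ has no $H^0_{\mathrm c}$ and its $H^2_{\mathrm c}$ of the (totally wild, hence with no geometric monodromy invariants) sheaf $\pi^*\mathcal{L}_{\psi_0}$ vanishes, the alternating sum of traces of Frobenius on compactly supported cohomology reduces to $-\Tr(\Fr \mid H^1_{\mathrm c}) = -\Tr\tau_{\psi_0}(\Fr(1))$. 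On the other hand, the trace formula gives
\[
 -\Tr \tau_{\psi_0}(\Fr(1)) = \sum_{y \in \mathbb{A}^1(\bF_p)} \psi_0\bigl(\Tr_{\bF_p/\bF_p}(y^{p^e+1})\bigr)\cdot(\text{trace of }\Fr\text{ on the stalk}),
\]
which, after unwinding the definition of $\pi^*\mathcal{L}_{\psi_0}$ via $\mathcal{L}_{\psi_0}$ on the target, becomes the exponential sum $\sum_{y \in \bF_p} \psi_0(y^{p^e+1})$. Hence $\Tr\tau_{\psi_0}(\Fr(1)) = -\sum_{y\in\bF_p}\psi_0(y^{p^e+1})$.

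The next step is to evaluate this Gauss-type sum. Write $d = \gcd(p^e+1, p-1)$. For $p\neq 2$ one has $p^e+1 \equiv 2 \pmod{p-1}$, so $d = \gcd(2,p-1) = 2$; thus $y \mapsto y^{p^e+1}$ has the same image-with-multiplicity on $\bF_p^\times$ as $y\mapsto y^2$, and the sum equals $\sum_{y\in\bF_p}\psi_0(y^2) = g(\psi_0)$, the quadratic Gauss sum, whose value is the classical $\sqrt{(-1)^{(p-1)/2}p}$ (with the standard normalization coming from our choice $\psi_0(1)=e^{2\pi\sqrt{-1}/p}$). This yields $\Tr\tau_{\psi_0}(\Fr(1)) = -\sqrt{(-1)^{(p-1)/2}p}$. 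For $p=2$ we have $p^e+1$ odd, hence coprime to $p-1=1$, so $y\mapsto y^{p^e+1}$ is a bijection on $\bF_2$ and $\sum_{y\in\bF_2}\psi_0(y^{p^e+1}) = \psi_0(0)+\psi_0(1) = 1 + (-1) = 0$, giving $\Tr\tau_{\psi_0}(\Fr(1)) = 0$.

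The main obstacle I anticipate is not the exponential-sum evaluation (which is standard) but the careful bookkeeping of how $\Fr(1)$ acts on $H^1_{\mathrm c}(\mathbb{A}^1_{k^{\mathrm{ac}}},\pi^*\mathcal{L}_{\psi_0})$ through the chain of isomorphisms $g_0^*$ and $c_{g,\psi}$ defined via the fiber-product diagram, so that the operator whose trace we compute really is the geometric Frobenius of the sheaf $\pi^*\mathcal{L}_{\psi_0}$ descended to $\bF_p$ and not a twist of it. One must check that the morphism $(\Fr(1))_0 \colon y \mapsto y^p$ together with the canonical isomorphism $c_{\Fr(1),\psi_0}$ coincides, under the descent data making $\pi^*\mathcal{L}_{\psi_0}$ defined over $\bF_p$, with the Frobenius correspondence; this is where the precise form of the action on $C$ given by $(x,y)\Fr(1) = (x^p,y^p)$ and the compatibility of $\mathcal{L}_{\psi_0}$ with Frobenius enter. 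Once that identification is pinned down, vanishing of $H^0_{\mathrm c}$ and $H^2_{\mathrm c}$ and the trace formula finish the argument as above.
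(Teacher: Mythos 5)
Your proposal is correct and follows essentially the same route as the paper: apply the Grothendieck--Lefschetz trace formula to $\pi^*\mathcal{L}_{\psi_0}$ on $\bA^1$ (with $H^0_{\mathrm c}$ and $H^2_{\mathrm c}$ vanishing), reducing the trace to $-\sum_{y\in\bF_p}\psi_0(y^{p^e+1})$, and then evaluate this as a quadratic Gauss sum via Gauss's theorem. The paper simply notes $y^{p^e+1}=y^2$ for $y\in\bF_p$ where you use the equivalent observation $\gcd(p^e+1,p-1)=2$; this is an immaterial difference.
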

\begin{proof}
By the Lefschetz trace formula, we have 
\[
 \sum_{x \in \bA^1 (\bF_p)} \Tr (\Fr_p, (\pi^* \cL_{\psi_0})_x ) 
 = \sum_{i=0}^2 (-1)^i \Tr \bigr( \Fr_p, 
  H_{\mathrm{c}}^i (\mathbb{A}_{k^{\mathrm{ac}}}^1 ,
 \pi^* \mathcal{L}_{\psi}) \bigl) 
\]
where $\Fr_p$ denotes the geometric $p$-th power Frobenius morphism. 
Since 
$H_{\mathrm{c}}^i (\mathbb{A}_{k^{\mathrm{ac}}}^1 ,
 \pi^* \mathcal{L}_{\psi})$ vanishes for $i=0,2$, 
we have
\begin{align*}
 \Tr \tau_{\psi_0} \bigl( \Fr(1) \bigr) &= 
 -\sum_{x \in \bA^1 (\bF_p)} \Tr (\Fr_p, (\pi^* \cL_{\psi_0})_x ) \\ 
 &=
 -\sum_{x \in \bF_p} \psi_0 (x^{p^e +1}) = 
 -\sum_{x \in \bF_p} \psi_0 (x^2) = 
 \begin{cases}
  - \epsilon(p) \sqrt{p} 
  \quad & \textrm{if}\ p \neq 2, \\ 
  0 
  \quad & \textrm{if}\ p=2,  
 \end{cases}
\end{align*}
where we use \eqref{Gss} in the last equality. 
\end{proof}

We assume $p=2$ in this paragraph. 
We take $b_0 \in \mathbb{F}_{2^{2e}}$ such that 
$\Tr_{\mathbb{F}_{2^{2e}}/\mathbb{F}_2} (b_0 )=1$. 
Further, we put 
\begin{equation}\label{defgamma_0}
 c_0 = 
 b_0^{2^e} + \sum_{0 \leq i < j \leq e-1} 
 b_0^{2^{e+i} + 2^j}. 
\end{equation}
Then we have 
\begin{align}
c_0^2-c_0&=b_0^{2^{e+1}}+b_0^{2^e}+\sum_{0 \leq i < j \leq e-1} 
 b_0^{2^{e+i+1} + 2^{j+1}}+\sum_{0 \leq i < j \leq e-1} 
 b_0^{2^{e+i} + 2^j} \notag \\
 &=b_0^{2^{e+1}}+b_0^{2^e}
 +\sum_{i=0}^{e-2} b_0^{2^{e+i+1}+2^e} 
 +\sum_{j=1}^{e-1} b_0^{2^e+2^j} \notag \\
 &=b_0^{2^{e+1}}+b_0^{2^e}+b_0^{2^e}(1+b_0+b_0^{2^e})=b_0^{2^e+1}, \label{c_0b_0}
\end{align}
where we use $\Tr_{\mathbb{F}_{2^{2e}}/\mathbb{F}_2} (b_0 )=1$
at the third equality.
We put 
\[
 \bom{g} = \bigl( (1,b_0,c_0),-1 \bigr) \in Q \rtimes \mathbb{Z}. 
\]
\begin{lem}\label{2trFr}
We assume that $p=2$. 
Then we have 
$\Tr \tau_{\psi_0} ( \bom{g}^{-1} )= -2$. 
\end{lem}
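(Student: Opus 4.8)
The plan is to compute $\Tr \tau_{\psi_0}(\bom{g}^{-1})$ via the Lefschetz trace formula, just as in Lemma \ref{TrFr}, but now keeping track of the extra twisting data coming from the nontrivial $Q$-component $g(1,b_0,c_0)$ and the Frobenius exponent $-1$. First I would unwind the definition of the action of $\bom{g}^{-1}$ on $H^1_{\mathrm c}(\mathbb{A}^1_{k^{\mathrm{ac}}}, \pi^*\mathcal{L}_{\psi_0})$: this is the composite $g_0^*$ followed by the comparison isomorphism $c_{\bom{g}^{-1},\psi_0}$, where $g_0$ is the affine-linear-after-Frobenius map $y \mapsto (a(y+b^{2^e}))^{2^m}$ attached to $\bom{g}^{-1}$. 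Since $p=2$ and the cohomology is concentrated in degree $1$ (by the exact sequence \eqref{pLex} and the fact that $H^0_{\mathrm c}=H^2_{\mathrm c}=0$ for this sheaf), the trace equals minus the sum over fixed points, weighted by the local terms of the sheaf automorphism. The key computational input is to identify the relevant correspondence: $\bom{g}^{-1}$ acts on $\mathbb{A}^1$ (the $y$-line) by a bijection that is a Frobenius-semilinear affine map, so its graph meets the diagonal in an Artin–Schreier-type locus, and one gets a character sum $-\sum \psi_0(\text{quadratic expression in } b_0, c_0)$ analogous to $-\sum_{b}\psi_0(b^2)$ in Lemma \ref{TrFr}.

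The next step is to make that character sum explicit. Using the curve $C : x^2 - x = y^{2^e+1}$ and the description of the $Q\rtimes\mathbb{Z}$-action on $C$, I would pull the computation back to $C$ (where $\mathcal{L}_{\psi_0}$ becomes $h'_*\overline{\mathbb{Q}}_\ell$-isotypic) so that the local terms become genuine values of $\psi_0$ on the $x$-coordinate shift $\sum_{i=0}^{e-1}(b_0 y)^{2^i} + c_0$ appearing in the formula for $(x,y)(g(a,b,c),0)$. With $m=-1$ the Frobenius twist replaces $(x,y)$ by $(x^2,y^2)$ on one side, so fixed points of $\bom{g}^{-1}$ acting on $C(k^{\mathrm{ac}})$ are solutions of a system relating $y$ to $y^{2}$ and $x$ to $x^2$ through the affine shift; these form a finite set, and summing $\psi_0$ of the relevant $x$-coordinate over them should collapse, via the defining relation $c_0^2 - c_0 = b_0^{2^e+1}$ and the choice $\Tr_{\mathbb{F}_{2^{2e}}/\mathbb{F}_2}(b_0)=1$, to a sum whose value is $2$ up to sign. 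I expect the answer $-2$ to emerge because the fixed-point set has exactly two points (reflecting $\dim \tau^0 = 2^e$ versus the trace being small) and the character values on them are both $1$ after the normalization; the sign comes from the degree-$1$ shift in the trace formula, exactly as the leading $-$ in Lemma \ref{TrFr}.

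The main obstacle will be bookkeeping the comparison isomorphism $c_{\bom{g}^{-1},\psi_0}$ correctly — that is, ensuring the local terms at the fixed points are the right values of $\psi_0$ and not off by a spurious root of unity, since the $Q$-action mixes the $x$- and $y$-coordinates and the Frobenius twist by $m=-1$ interacts with the Artin–Schreier descent in \eqref{candec}. A clean way to sidestep the worst of this is to use Lemma \ref{restQ} together with Lemma \ref{chara}: the restriction $\tau_{\psi_0}|_Q \simeq \tau^0$ is already pinned down, and $\bom{g} = (g(1,b_0,c_0),-1)$ differs from a pure Frobenius by an element of $Q$ with $b_0 \neq 0$, so one can try to express $\Tr\tau_{\psi_0}(\bom{g}^{-1})$ representation-theoretically in terms of $\Tr\tau^0$ on a suitable coset and the already-known scalar by which a Frobenius lift acts, rather than redoing the full geometric computation. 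Either route should give $-2$; I would carry out the geometric Lefschetz computation as the primary argument and use the representation-theoretic consistency as a check.
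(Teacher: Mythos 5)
Your plan is correct and is essentially the paper's own proof: the paper rewrites $\bom{g}^{-1}$ as $\Fr(1)$ times an explicit element of $Q$, applies the Lefschetz trace formula to get $-\sum_{y^2+b_0^{2^e}=y}\psi_0(\cdot)$, observes that the fixed locus consists of exactly the two points $y_0, y_0+1$, and checks via the relations $c_0^2-c_0=b_0^{2^e+1}$ and $\Tr_{\bF_{2^{2e}}/\bF_2}(b_0)=1$ that each local term is $\psi_0(0)=1$, giving $-2$. The only piece you leave as a plan rather than executing is the final explicit identity showing the exponent of $\psi_0$ vanishes at both fixed points, which is exactly the computation the paper carries out.
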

\begin{proof}
We note that 
\begin{equation}\label{ginv}
 \bom{g}^{-1} = 
 \Fr (1) 
 \biggl( \biggr( 1,b_0,c_0 +\sum_{i=0}^{e-1} 
 (b_0^{2^e +1})^{2^i} \biggr),0 \biggr). 
\end{equation}
For $y \in {k^{\mathrm{ac}}}$ 
satisfying $y^2 +b_0^{2^e} =y$, 
we take $x_y \in {k^{\mathrm{ac}}}$ 
such that 
$x_y^2-x_y=y^{2^e+1}$. 
We take 
$y_0 \in k^{\mathrm{ac}}$ such that 
$y_0^2 +b_0^{2^e} =y_0$. 
Then, 
by the Lefschetz trace formula and \eqref{ginv}, 
we have 
\begin{align*}\label{Trquad}
 \Tr \tau_{\psi_0} ( \bom{g}^{-1} ) 
 &
 = -\sum_{y^2 +b_0^{2^e} =y} \Tr (\bom{g}^{-1}, (\pi^* \cL_{\psi_0})_y ) 
 \\ 
 &=
 -\sum_{y^2 +b_0^{2^e} =y} \psi_0 
 \biggl( x_y^2 -x_y +\sum_{i=0}^{e-1} 
 (b_0 y^2)^{2^i} +c_0 +\sum_{i=0}^{e-1} 
 (b_0^{2^e +1})^{2^i} \biggr) \\ 
 &= -\sum_{z \in \mathbb{F}_2 } \psi_0 
 \biggl( (y_0 +z)^{2^e +1} +\sum_{i=0}^{e-1} 
 \bigl( b_0 (y_0 +z) \bigr)^{2^i} 
 +c_0 \biggr) 
 = -2, 
\end{align*}
where we change a variable by $y=y_0 + z$ at the second equality, 
and use 
\begin{align*}
& y_0^{2^e +1} +\sum_{i=0}^{e-1} 
 ( b_0 y_0 )^{2^i} = 
 y_0 \biggl( y_0 +\sum_{i=0}^{e-1} b_0^{2^{e+i}} \biggr) +
 \sum_{i=0}^{e-1} b_0^{2^i} 
 \biggl( y_0 +\sum_{j=0}^{i-1} b_0^{2^{e+j}} \biggl) 
 =c_0, \\
& y_0^{2^e}+y_0+\sum_{i=0}^{e-1} b_0^{2^i}
=\sum_{i=0}^{e-1}(y_0^2+y_0)^{2^i}+
\sum_{i=0}^{e-1} b_0^{2^i}=\Tr_{\mathbb{F}_{2^{2e}}/\mathbb{F}_2}(b_0)=1
\end{align*}
at the last equality. 
\end{proof}

\begin{prop}\label{charrep}
The representation $\tau_{\psi_0}$ 
is characterized by 
$\tau_{\psi_0} |_Q \simeq \tau^0$ and 
\[
 \begin{cases} 
 \Tr \tau_{\psi_0} \bigl( \Fr(1) \bigr) = 
  - \epsilon(p) \sqrt{p} & \quad 
  \textrm{if $p \neq 2$}, \\ 
 \Tr \tau_{\psi_0} ( \bom{g}^{-1} )= -2 & \quad 
 \textrm{if $p=2$}. 
 \end{cases}
\]
In particular, $\tau_{\psi_0}$ 
does not depend on the choice of 
$\ell$ and $\iota$. 
\end{prop}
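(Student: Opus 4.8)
The plan is to deduce the characterization from the two preceding lemmas together with the elementary fact that two extensions of an irreducible representation from a normal subgroup differ by a character of the quotient. First note that existence is already established: by Lemma~\ref{restQ} we have $\tau_{\psi_0}|_Q \simeq \tau^0$, and by Lemma~\ref{TrFr} (for $p \neq 2$), resp.\ Lemma~\ref{2trFr} (for $p = 2$), the representation $\tau_{\psi_0}$ satisfies the displayed trace condition. Moreover, since $\tau^0$ is irreducible, any representation $\rho$ of $Q \rtimes \bZ$ with $\rho|_Q \simeq \tau^0$ is automatically irreducible, so it suffices to show that such a $\rho$ satisfying the trace condition is isomorphic to $\tau_{\psi_0}$.

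So let $\rho$ be a representation of $Q \rtimes \bZ$ with $\rho|_Q \simeq \tau^0 \simeq \tau_{\psi_0}|_Q$. By Schur's lemma $\Hom_Q(\tau_{\psi_0}, \rho)$ is one-dimensional, and because $Q$ is normal the group $Q \rtimes \bZ$ acts on it by $g \cdot \phi = \rho(g) \circ \phi \circ \tau_{\psi_0}(g)^{-1}$; this action is trivial on $Q$, so it is given by a character $\eta$ of $(Q \rtimes \bZ)/Q \simeq \bZ$. Picking a nonzero element $\phi_0$, which is an isomorphism as a nonzero morphism between irreducibles, the relation $\rho(g) \circ \phi_0 = \eta(\mathrm{pr}(g)) \, \phi_0 \circ \tau_{\psi_0}(g)$ gives $\rho \simeq \tau_{\psi_0} \otimes (\eta \circ \mathrm{pr})$, where $\mathrm{pr} \colon Q \rtimes \bZ \to \bZ$ is the projection. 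Hence $\Tr \rho(g) = \eta(\mathrm{pr}(g)) \, \Tr \tau_{\psi_0}(g)$ for all $g \in Q \rtimes \bZ$.

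It remains to show the trace condition forces $\eta$ to be trivial. If $p \neq 2$, take $g = \Fr(1)$, so that $\mathrm{pr}(g) = 1$ and, by Lemma~\ref{TrFr}, $\Tr \rho(\Fr(1)) = -\eta(1)\sqrt{(-1)^{(p-1)/2} p}$; since this must equal $-\sqrt{(-1)^{(p-1)/2} p} \neq 0$ we get $\eta(1) = 1$, hence $\eta$ is trivial and $\rho \simeq \tau_{\psi_0}$. If $p = 2$, the element $\Fr(1)$ gives no information because $\Tr \tau_{\psi_0}(\Fr(1)) = 0$; instead take $g = \bom{g}^{-1}$, which by \eqref{ginv} satisfies $\mathrm{pr}(g) = 1$, and use $\Tr \tau_{\psi_0}(\bom{g}^{-1}) = -2$ from Lemma~\ref{2trFr}: then $\Tr \rho(\bom{g}^{-1}) = -2\eta(1)$, and imposing this to be $-2$ again forces $\eta(1) = 1$. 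This proves the characterization.

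Finally, the data entering the characterization --- the isomorphism class of $\tau^0$, defined purely by the trace identities \eqref{cht} over $\bC$, and the prescribed trace values $-\sqrt{(-1)^{(p-1)/2} p}$ and $-2$, which are fixed complex numbers --- involve neither $\ell$ nor $\iota$; since $\tau_{\psi_0}$ is the unique representation with these properties, it does not depend on those choices. The one point that genuinely needs care is the case $p = 2$: there $\Tr \tau_{\psi_0}(\Fr(1)) = 0$, so $\Fr(1)$ cannot separate the possible extensions, and one is forced to use an element with nontrivial $Q$-component, which is exactly why $\bom{g}$ and the computation of Lemma~\ref{2trFr} are needed.
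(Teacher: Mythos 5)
Your proof is correct and follows the same route as the paper, which simply cites Lemma \ref{restQ}, Lemma \ref{TrFr} and Lemma \ref{2trFr} and leaves implicit the standard fact that two extensions of an irreducible representation of the normal subgroup $Q$ to $Q \rtimes \bZ$ differ by a character of $\bZ$, pinned down here by a nonvanishing trace at an element with nontrivial image in $\bZ$. You have merely written out the details the authors omit, including the correct observation that for $p=2$ one must use $\bom{g}^{-1}$ rather than $\Fr(1)$ because the latter has trace zero.
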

\begin{proof}
This follows from 
Lemma \ref{restQ}, Lemma \ref{TrFr} 
and Lemma \ref{2trFr}. 
\end{proof}

\section{Representations of general linear algebraic groups}\label{GLrep}
\subsection{Simple supercuspidal representation}
Let $\pi$ be an irreducible supercuspidal representation 
of $\iGL_n (K)$ over $\bC$. 
Let 
$\varepsilon( \pi, s, \Psi)$ denote the Godement--Jacquet local constant of 
$\pi$ with respect to the non-trivial character 
$\Psi \colon K \to \bC^{\times}$. 
We simply write 
$\varepsilon( \pi, \Psi)$ for $\varepsilon( \pi, 1/2, \Psi)$. 
By \cite[Theorem 3.3 (4)]{GJzsim}, 
there exists an integer $\mathrm{sw}(\pi)$ such that 
\[
 \varepsilon(\pi,s,\psi_K) 
 =q^{-\mathrm{sw}(\pi) s} 
 \varepsilon(\pi, 0, \psi_K). 
\]
We put $\mathrm{Sw}(\pi) =\max \{ \mathrm{sw}(\pi),0 \}$, 
which we call the Swan conductor of $\pi$. 

\begin{defn}
An irreducible supercuspidal representation $\pi$ of $\iGL_n (K)$ 
over $\bC$ 
is called 
simple supercuspidal if $\mathrm{Sw}(\pi)=1$. 
\end{defn}
This definition is equivalent to \cite[Definition 1.1]{ITsimpJL}
by \cite[Proposition 1.3]{ITsimpJL}. 
\subsection{Construction}
In the following, 
we construct a smooth representation 
$\pi_{\zeta,\chi,c}$ of  $\textit{GL}_n(K)$ 
for each triple 
$(\zeta,\chi,c) \in \mu_{q-1}(K) \times 
(k^{\times})^{\vee} \times \bC^{\times}$. 

Let $B \subset M_n(k)$ be the subring 
consisting of upper triangular matrices. 
Let 
$\mathfrak{I} \subset M_n(\mathcal{O}_K)$ 
be the inverse image of 
$B$ under the reduction map
$M_n(\mathcal{O}_K) \to M_n(k)$.
Then $\mathfrak{I}$ is a hereditary 
$\mathcal{O}_K$-order (cf.~\cite[(1.1)]{BKadmd}). 
Let $\mathfrak{P}$ 
denote the Jacobson radical 
of the order $\mathfrak{I}$. 
We put 
$U_{\mathfrak{I}}^1 =
 1+ \mathfrak{P} \subset \mathit{GL}_n (\mathcal{O}_K )$. 
We set 
\[
 \varphi_{\zeta}=\begin{pmatrix}
 \bm{0} & I_{n-1} \\
 \zeta \varpi & \bm{0}
 \end{pmatrix} \in M_n(K) \quad \textrm{and} \quad  
 L_{\zeta}=K(\varphi_{\zeta}). 
\]
Then, $L_{\zeta}$ is a 
totally ramified extension of $K$ 
of degree $n$. 

We put $\varphi_{\zeta,n} =n' \varphi_{\zeta}$ and 
\[
 \epsilon_0=
 \begin{cases}
  (n'+1)/2 \quad & \textrm{if $p^e=2$},\\
  0 \quad & \textrm{if $p^e \neq 2$}. 
 \end{cases}
\]
We define a character 
$\Lambda_{\zeta,\chi,c} \colon L_{\zeta}^{\times} U_{\mathfrak{I}}^1 
 \to \bC^{\times}$ by
\begin{align*}\label{defLambda}
 \Lambda_{\zeta,\chi,c} (\varphi_{\zeta})& 
 =(-1)^{n-1+\epsilon_0 f} c, \quad 
 \Lambda_{\zeta,\chi,c} (x) =\chi (\bar{x}) \quad 
 \textrm{for $x \in \mathcal{O}_{K} ^{\times}$},\\ 
 \Lambda_{\zeta,\chi,c} (x) & =
 (\psi_K \circ 
 \tr )( \varphi_{\zeta,n}^{-1}(x-1)) \quad 
 \textrm{for $x \in U_{\mathfrak{I}}^1$}, 
\end{align*}
where $\tr$ means 
the trace as an element of $M_n (K)$. 
We put 
\[
 \pi_{\zeta,\chi,c} = 
 \mathrm{c\mathchar`-Ind}_{L_{\zeta}^{\times} 
 U_{\mathfrak{I}}^1}^{\mathit{GL}_n(K)} 
 \Lambda_{\zeta,\chi,c}. 
\] 
Then, $\pi_{\zeta,\chi,c}$ 
is a simple supercuspidal representation of $\mathit{GL}_n (K)$, and 
every simple supercuspidal representation 
is isomorphic to 
$\pi_{\zeta,\chi,c}$ for a uniquely determined 
$(\zeta,\chi,c) \in \mu_{q-1}(K) \times (k^{\times})^{\vee} 
 \times \bC^{\times}$ 
by \cite[Proposition 1.3]{ITsimpJL}. 
The representation $\pi_{\zeta,\chi,c}$
contains the m-simple stratum 
$\bigl[ \mathfrak{I},1,0,\varphi_{\zeta,n}^{-1} \bigr]$ 
in the sense of \cite[2.1]{BHLepi}. 

\begin{prop}\label{epspi}
$\varepsilon (\pi_{\zeta,\chi,c} ,\psi_K) 
 =(-1)^{n-1+\epsilon_0 f} \chi(n') c$. 
\end{prop}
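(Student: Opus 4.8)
The plan is to compute the Godement--Jacquet local constant $\epsilon(\pi_{\zeta,\chi,c},s,\psi_K)$ directly from the compact-induction datum. Put $J=L_\zeta^\times U_{\mathfrak{I}}^1$, so that $\pi_{\zeta,\chi,c}$ is compactly induced from $\Lambda_{\zeta,\chi,c}$ on $J$, and let $f$ be the matrix coefficient of $\pi_{\zeta,\chi,c}$ supported on $J$ with $f|_J=\Lambda_{\zeta,\chi,c}$, normalized by $f(1)=1$. Since $\pi_{\zeta,\chi,c}$ is supercuspidal its standard $L$-factor is trivial, so the Godement--Jacquet $\gamma$-factor coincides with $\epsilon(\pi_{\zeta,\chi,c},s,\psi_K)$, and the latter is read off from the local functional equation $Z(1-s,\widehat f,\widehat\Phi)=\epsilon(\pi_{\zeta,\chi,c},s,\psi_K)\,Z(s,f,\Phi)$ for a single well-chosen Schwartz--Bruhat function $\Phi$ on $M_n(K)$, where $\widehat f(g)=f(g^{-1})$ and $\widehat\Phi$ is the Fourier transform with respect to the self-dual character $\psi_K\circ\mathrm{tr}$. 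I would take $\Phi$ to be the characteristic function of the hereditary order $\mathfrak{I}$ (or of the coset $\varphi_{\zeta,n}^{-1}+\mathfrak{I}$). Then $Z(s,f,\Phi)$ is an elementary integral of $\Lambda_{\zeta,\chi,c}$ over $J\cap\mathrm{supp}\,\Phi$, while $Z(1-s,\widehat f,\widehat\Phi)$ unfolds to a finite exponential sum over $U_{\mathfrak{I}}^1/(1+\mathfrak{P}^2)$, twisted by $\Lambda_{\zeta,\chi,c}$ along the tame direction $\cO_K^\times/U_K^1$ and the unramified direction $\varphi_\zeta^{\bZ}$.

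The next step is to evaluate that sum. Here the fact that $\pi_{\zeta,\chi,c}$ contains the m-simple stratum $[\mathfrak{I},1,0,\varphi_{\zeta,n}^{-1}]$ is what makes the computation feasible: $\Lambda_{\zeta,\chi,c}|_{U_{\mathfrak{I}}^1}$ factors through the abelian quotient $U_{\mathfrak{I}}^1/(1+\mathfrak{P}^2)\cong\mathfrak{P}/\mathfrak{P}^2$, on which it is the nondegenerate character $x\mapsto\psi_K(\mathrm{tr}(\varphi_{\zeta,n}^{-1}(x-1)))$, so the exponential sum is an \emph{abelian} Gauss sum. Its $\sqrt q$-factors combine with the powers of $q$ forced by the functional equation (governed by the conductor $a(\pi_{\zeta,\chi,c})=n+1$ and the level of $\psi_K$), and there remain exactly two genuine contributions: the value $\Lambda_{\zeta,\chi,c}(\varphi_\zeta)=(-1)^{n-1+\varepsilon_0 f}c$, picked up from the $\varphi_\zeta^{\bZ}$-direction when transporting $\mathrm{supp}\,\Phi$ to $\mathrm{supp}\,\widehat\Phi$, and a factor $\chi(n')$, arising because the discrepancy $\varphi_{\zeta,n}=n'\varphi_\zeta$ between the nondegenerate character and the embedding of $L_\zeta$ forces the unit $n'$ through $\Lambda_{\zeta,\chi,c}|_{\cO_K^\times}=\chi$. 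Collecting these gives $\epsilon(\pi_{\zeta,\chi,c},\psi_K)=\Lambda_{\zeta,\chi,c}(\varphi_\zeta)\,\chi(n')=(-1)^{n-1+\varepsilon_0 f}\chi(n')c$. Beforehand one may use unramified twisting to reduce to $\zeta=1$ (since $\pi_{\zeta,\chi,c}\otimes(\eta\circ\det)\simeq\pi_{\zeta,\chi,\eta(\varpi)c}$ for unramified $\eta$, with $\epsilon$ multiplied by $\eta(\varpi)$) and tame twisting to reduce $\chi$ modulo $n$-th powers, though this does not remove the $\chi$-dependence entirely.

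The main obstacle is the bookkeeping of the elementary constants, and above all the case $p^e=2$, i.e. $p=2$ and $e=1$. There the finite Heisenberg structure governing the stratum has a genuine quadratic refinement in characteristic $2$ (an alternating form need not be diagonal-free), so the abelian Gauss sum carries an extra quadratic-form, Arf-invariant sign instead of a pure volume; it works out to $(-1)^{\varepsilon_0 f}$ with $\varepsilon_0=(n'+1)/2$, which is precisely why this correction was built into $\Lambda_{\zeta,\chi,c}(\varphi_\zeta)$. Pinning down this sign requires the usual evaluation of quadratic Gauss sums over $\bF_q$ in characteristic $2$, in the spirit of Lemma~\ref{2trFr}, and together with getting the normalization of the functional equation exactly right it is the only nontrivial point. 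Alternatively the entire computation can be bypassed by invoking the general formula for the local constant of a supercuspidal containing an m-simple stratum developed in \cite{BHLepi} and translating it into the parametrization by $(\zeta,\chi,c)$; either route yields the stated value.
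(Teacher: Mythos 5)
The paper disposes of this proposition in one line, by quoting \cite[6.1 Lemma 2 and 6.3 Proposition 1]{BHloctII}: the local constant of a supercuspidal compactly induced from a character extending a simple stratum is given there in closed form, and one only has to substitute the values of $\Lambda_{\zeta,\chi,c}$. Your proposal instead re-derives that input from scratch via the Godement--Jacquet functional equation with the matrix coefficient supported on $J=L_{\zeta}^{\times}U_{\mathfrak{I}}^{1}$. That is a legitimate route --- it is essentially how the cited results are themselves proved (Bushnell--Fr\"ohlich style non-abelian Gauss sums) --- and your structural observations are sound: $L(s,\pi)=1$ so $\gamma=\epsilon$; $\Lambda_{\zeta,\chi,c}|_{U_{\mathfrak{I}}^{1}}$ does factor through $U_{\mathfrak{I}}^{1}/(1+\mathfrak{P}^{2})\cong\mathfrak{P}/\mathfrak{P}^{2}$ because $\mathrm{tr}(\varphi_{\zeta,n}^{-1}\mathfrak{P}^{2})\subset\mathfrak{p}_K$; and the answer is indeed $\Lambda_{\zeta,\chi,c}(\varphi_{\zeta,n})=\chi(n')\Lambda_{\zeta,\chi,c}(\varphi_{\zeta})$, so identifying the stratum element as the carrier of the $\chi(n')$ is the right mechanism. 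But the proposal stops short of the only part that actually requires work: the evaluation of the Gauss sum and the powers of $q$ is asserted (``it works out to\dots''), and it is exactly this evaluation that \cite[6.3 Proposition 1]{BHloctII} packages.

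Two concrete points need repair. First, unramified twisting does not ``reduce to $\zeta=1$'': for unramified $\eta$ one has $\pi_{\zeta,\chi,c}\otimes(\eta\circ\det)\simeq\pi_{\zeta,\chi,\eta(\varpi)c}$ with the \emph{same} $\zeta$, so this only normalizes $c$ (harmless, since the final formula is independent of $\zeta$, but the reduction as stated is not available). Second, and more seriously, your accounting of the sign in the case $p^e=2$ is ambiguous to the point of threatening a double count: you extract $\Lambda_{\zeta,\chi,c}(\varphi_{\zeta})=(-1)^{n-1+\varepsilon_0 f}c$ as one ``genuine contribution'' \emph{and} claim the characteristic-two Gauss sum contributes an extra Arf-type sign $(-1)^{\varepsilon_0 f}$; if both enter the product, the $\varepsilon_0$-signs cancel and you land on $(-1)^{n-1}\chi(n')c$, which is not the stated value. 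The correct bookkeeping is that the quadratic-Gauss-sum sign for $p^e=2$ appears once, and the factor $(-1)^{\varepsilon_0 f}$ in the \emph{definition} of $\Lambda_{\zeta,\chi,c}(\varphi_{\zeta})$ is the device that lets the final answer be written uniformly as $\Lambda_{\zeta,\chi,c}(\varphi_{\zeta,n})$; your write-up should make explicit which of the two occurrences survives into $\epsilon$. Until the Gauss sum and this sign are actually computed (or the computation is replaced by the citation, as the paper does and as you offer in your last sentence), the argument is an outline rather than a proof.
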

\begin{proof}
This follows from \cite[6.1 Lemma 2 and 6.3 Proposition 1]{BHloctII}. 
\end{proof}

\section{Local Langlands correspondence}\label{LLCstate}
Our main theorem is the following: 

\begin{thm}
The representations 
$\pi_{\zeta,\chi,c}$ and 
$\tau_{\zeta,\chi,c}$ correspond via the local Langlands correspondence. 
\end{thm}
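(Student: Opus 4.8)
The plan is to verify the characterization of the local Langlands correspondence for simple epipelagic representations recalled from \cite{BHLepi}: it suffices to check the three coincidences (i) the determinant of $\tau_{\zeta,\chi,c}$ equals the central character of $\pi_{\zeta,\chi,c}$, (ii) the refined Swan conductors of the two sides agree, and (iii) the epsilon factors $\epsilon(\tau_{\zeta,\chi,c},\psi_K)$ and $\epsilon(\pi_{\zeta,\chi,c},\psi_K)$ agree. Since $\pi_{\zeta,\chi,c}$ contains the m-simple stratum $[\mathfrak{I},1,0,\varphi_{\zeta,n}^{-1}]$ and $\tau_{\zeta,\chi,c}=\Ind_{E_\zeta/K}\tau_{n,\zeta,\chi,c}$ is built so that its wild inertia data matches, the matching of the ``simple'' data (the stratum, equivalently the refined Swan conductor) should come out by direct comparison; the subtle character twist \eqref{chartwi} inserted into $\tau_n$ is precisely the ``rectifier'' that makes the normalizations line up on the nose.

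First I would prove the simple epipelagicity of $\tau_{\zeta,\chi,c}$ (i.e. establish Proposition \ref{tause}, already promised in the text) by computing its Swan conductor from the ramification of $E_\zeta/K$ together with the Heisenberg structure of $\tau_n$ coming from Lemma \ref{chara}, Lemma \ref{restQ}, Lemma \ref{HLdec}. Next, for coincidence (i), I would compute $\det\tau_{\zeta,\chi,c}$ via the inductivity of the determinant in degree zero and the Deligne--Laumon product formula; the relevant local constants of the Artin--Schreier sheaf $\pi^*\cL_{\psi_0}$ are controlled by the Gauss-sum computations (Lemma \ref{TrFr} and its $p=2$ analogue Lemma \ref{2trFr}) and by Stiefel--Whitney/discriminant data for the wildly ramified extension, so the product formula reduces everything to an explicit root-of-unity bookkeeping that should match $\Lambda_{\zeta,\chi,c}$ restricted to $K^\times$ by construction of the twist. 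For coincidence (ii), I would construct the imprimitive field $T_\zeta^{\mathrm u}/E_\zeta$ with $p\nmid[T_\zeta^{\mathrm u}:E_\zeta]$ over which $\tau_{n,\zeta,\chi,c}$ becomes induced from a character, read off the refined Swan conductor from that character and the ramification break, and compare with the stratum datum $\varphi_{\zeta,n}^{-1}$ on the automorphic side.

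The main obstacle is coincidence (iii), the epsilon factors, because $\tau_{n,\zeta,\chi,c}$ is a primitive representation (indeed for $n=p^e$ it is not induced from any proper subgroup), so there is no direct inductive formula for $\epsilon(\tau_{\zeta,\chi,c},\psi_K)$. My strategy there: once (i) and (ii) are known, the characterization of \cite{BHLepi} forces $\epsilon(\tau_{\zeta,\chi,c},\psi_K)$ and $\epsilon(\pi_{\zeta,\chi,c},\psi_K)$ to agree up to a $p^e$-th root of unity; to pin down the remaining ambiguity I pass to $W_{T_\zeta^{\mathrm u}}$, where $\tau_{n,\zeta,\chi,c}$ is an induction of a character, and use the inductivity of epsilon factors plus the Langlands constant $\lambda_{T_\zeta^{\mathrm u}/E_\zeta}$, which I compute from the Stiefel--Whitney class and the additive/multiplicative discriminants recalled in Section \ref{SWdisc}. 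Since $p\nmid[T_\zeta^{\mathrm u}:E_\zeta]$, the root-of-unity ambiguity cannot survive this step, so the problem is reduced to an epsilon factor of an explicit character, which I further reduce to the case $\ch K=p$, $k=\bF_p$ by the usual base-change/deformation arguments for epsilon factors of characters.

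The genuinely hard sub-case is $p=2$: there the direct computation of the character epsilon factor involves the Artin reciprocity map for a wildly ramified extension with a non-trivial ramification filtration, which is not explicitly tractable. I would circumvent this by a further reduction to $e=1$, where the equality is already known up to sign from the $n=p$ analysis (and Proposition \ref{epspi} on the automorphic side); it then suffices to compare the non-zero real parts of the two epsilon factors, and the delicate Artin-reciprocity contribution only affects the imaginary part, so the real-part comparison is elementary. Assembling (i), (ii), (iii) and invoking the characterization of \cite{BHLepi} yields that $\tau_{\zeta,\chi,c}$ and $\pi_{\zeta,\chi,c}$ correspond under the local Langlands correspondence, and since every simple epipelagic representation of $\iGL_n(K)$ is $\pi_{\zeta,\chi,c}$ for a unique $(\zeta,\chi,c)$ by \cite[Proposition 1.2]{ITepiJL}, this describes the correspondence for all of them.
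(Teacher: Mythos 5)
Your proposal follows essentially the same route as the paper's proof: verifying Bushnell--Henniart's three conditions, with the determinant handled via the Deligne--Laumon product formula and Gauss sums, the refined Swan conductor via the imprimitive field $T_{\zeta}^{\mathrm{u}}$, and the epsilon factors pinned down by first obtaining equality up to $\mu_{p^e}(\bC)$ from conditions (i) and (ii), then killing the ambiguity by restriction to $W_{T_{\zeta}^{\mathrm{u}}}$ (using $p \nmid [T_{\zeta}^{\mathrm{u}}:E_{\zeta}]$), reducing to $\ch K = p$ and $f=1$, and in the $p=2$ case reducing further to $e=1$ and comparing real parts. This matches the paper's argument in both structure and in all the key technical devices, so the proposal is correct.
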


To prove this theorem, 
we recall a characterization of the local Langlands correspondence for 
epipelagic representations due to Bushnell--Henniart. 
Recall that $\Psi \colon K \to \bC^{\times}$ 
is a non-trivial character. 
The following lemma is a special case of 
\cite[Proposition 4.13]{DHvartor}. 

\begin{lem}(\cite[2.3 Lemma]{BHLepi})\label{rswW}
Let $\tau$ be an 
irreducible smooth representation of $W_K$
such that $\mathrm{sw}(\tau) \geq 1$. 
Then, there exists 
$\gamma_{\tau,\Psi} \in K^{\times}$
such that 
\[
\varepsilon(\chi \otimes \tau,s,\Psi)=
\chi (\gamma_{\tau,\Psi})^{-1}
\varepsilon(\tau,s,\Psi)
\]
for any tamely ramified character 
$\chi$ of $W_K$.
This property determines the coset 
$\gamma_{\tau,\Psi} U_K^1$
uniquely. 
\end{lem}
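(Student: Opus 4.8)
The plan is to reduce the statement to the stability of $\epsilon$-factors under twisting by characters of small Artin conductor (a theorem going back to Deligne, of which the present lemma is ultimately the special case recorded in \cite[2.3 Lemma]{BHLepi} and \cite[Th\'{e}or\`{e}me]{DHvartor}), the hypothesis $\mathrm{sw}(\tau)\ge 1$ serving only to place us in the regime where that stability applies. First I would dispose of the uniqueness assertion, which is formal: under $\mathrm{Art}_K\colon K^{\times}\xrightarrow{\sim}W_K^{\rmab}$ the tamely ramified characters of $W_K$ correspond precisely to the characters of $K^{\times}$ trivial on $U_K^1$, i.e.\ to the characters of $K^{\times}/U_K^1\simeq\varpi^{\bZ}\times k^{\times}$, and these separate the points of that group; hence if $\gamma,\gamma'\in K^{\times}$ both satisfy the displayed identity for every tame $\chi$, then $\chi(\gamma/\gamma')=1$ for all such $\chi$, forcing $\gamma/\gamma'\in U_K^1$.

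For existence, write $P_K\subset I_K$ for the wild inertia and inertia subgroups of $W_K$, and let $a(\rho)$ denote the exponent of the Artin conductor of a representation $\rho$ of $W_K$. Since a tame $\chi$ is trivial on $P_K$, we have $(\chi\otimes\tau)|_{P_K}\simeq\tau|_{P_K}$, hence $\mathrm{sw}(\chi\otimes\tau)=\mathrm{sw}(\tau)\ge 1$; thus both $\tau$ and $\chi\otimes\tau$ are wildly ramified, so---being irreducible---have no $I_K$-invariant vectors, and $a(\chi\otimes\tau)=\mathrm{sw}(\tau)+\dim\tau=a(\tau)\ge 2$. In particular, since $\mathrm{sw}$ and $\dim$ are unchanged by the twist, the functional equation recorded in Section~\ref{Galrep} shows that the two sides of the asserted identity have the same dependence on $s$, so it suffices to treat $s=1/2$, i.e.\ to compare $\epsilon(\chi\otimes\tau,\psi_K)$ with $\epsilon(\tau,\psi_K)$. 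Now I would invoke the stability theorem in the following shape: for a representation $\sigma$ of $W_K$ with $a(\sigma)\ge 2$ there is $\gamma_{\sigma,\psi_K}\in K^{\times}$, well defined modulo $U_K^{\lceil a(\sigma)/2\rceil}$, such that $\epsilon(\chi\otimes\sigma,\psi_K)=\chi(\gamma_{\sigma,\psi_K})^{-1}\epsilon(\sigma,\psi_K)$ for every character $\chi$ of $W_K$ with $2\,a(\chi)\le a(\sigma)$. Applied with $\sigma=\tau$ this finishes the proof, since any tame $\chi$ has $a(\chi)\le 1$, whence $2\,a(\chi)\le 2\le a(\tau)$ and $U_K^{\lceil a(\tau)/2\rceil}\subseteq U_K^1$.

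The real content is thus the stability theorem, and within it the sole delicate case is $a(\tau)=2$---which forces $\dim\tau=1$ and $\mathrm{sw}(\tau)=1$, so that $\tau$ is a character $\mu$ of $K^{\times}$ with $a(\mu)=2$ and the inequality $2\,a(\chi)\le a(\tau)$ becomes an equality for a tame $\chi$ of conductor exponent $1$; here the coarse estimate fails, and one must instead analyse directly how the Gauss sum $\epsilon(\mu,\psi_K)$ transforms under multiplication of $\mu$ by a tame character (for $p=2$, a fixed quadratic Gauss sum, independent of $\chi$, is absorbed into $\gamma_{\mu,\psi_K}$), or else quote the refinement already present in \cite[Th\'{e}or\`{e}me]{DHvartor}. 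For $\dim\tau\ge 2$ one has $a(\tau)=\mathrm{sw}(\tau)+\dim\tau\ge 3$, the inequality is strict, and the elementary form of the statement suffices. I expect this borderline bookkeeping, not any conceptual ingredient, to be the principal obstacle.
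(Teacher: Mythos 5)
The paper does not prove this lemma at all: it is quoted from \cite[2.3 Lemma]{BHLepi}, and the text notes that it is a special case of the main theorem of \cite{DHvartor}. Within your proposal, the uniqueness argument is correct, and so is the preliminary observation that a tame twist preserves $\mathrm{sw}(\tau)$ for irreducible wildly ramified $\tau$, so that one may work at a single value of $s$.

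The existence half has a genuine gap, located exactly where you declare the problem to be bookkeeping. The stability statement you invoke --- for any $\sigma$ with $a(\sigma)\ge 2$ a single $\gamma$ satisfying $\epsilon(\chi\otimes\sigma,\psi_K)=\chi(\gamma)^{-1}\epsilon(\sigma,\psi_K)$ for \emph{all} $\chi$ with $2a(\chi)\le a(\sigma)$ --- is false once $\dim\sigma\ge 2$. Take $\sigma$ irreducible of dimension $p\ge 3$ with $\mathrm{sw}(\sigma)=1$ (the representations of this paper), so $a(\sigma)=p+1$, and $\chi$ with $a(\chi)=2$, so $2a(\chi)\le a(\sigma)$. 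All breaks of $\sigma$ equal $1/p$ while the break of $\chi$ is $1$, hence all breaks of $\chi\otimes\sigma$ equal $1$ and $a(\chi\otimes\sigma)=2p\ne a(\sigma)$; replacing $\chi$ by its unramified twists (which are still admissible for your statement) and comparing the resulting exponential dependence on the twisting parameter on both sides forces $a(\chi\otimes\sigma)=a(\sigma)$, a contradiction. The correct hypothesis in higher dimension bounds the break of $\chi$ by the \emph{slopes} of $\sigma$, not by $a(\sigma)$, and even in that corrected form the statement for $\dim\sigma\ge 2$ is not an elementary consequence of the one-dimensional Gauss-sum analysis: Brauer induction does not reduce it to the character case in any routine way, since the inducing characters in a Brauer decomposition of $\tau$ may be tame or unramified, which is precisely where one-dimensional stability fails. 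Handling this is the content of \cite{DHvartor}. So your identification of $a(\tau)=2$ (forcing $\dim\tau=1$) as the sole delicate case inverts the actual difficulty; read as a proof, the proposal assumes the theorem it cites, and the honest route at the level of this paper is the one you mention only in passing, namely to quote \cite[Th\'eor\`eme]{DHvartor} (equivalently \cite[2.3 Lemma]{BHLepi}) directly.
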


\begin{defn}\label{rswtaudef}
Let $\tau$ be an 
irreducible smooth representation of $W_K$ 
such that $\mathrm{sw}(\tau) \geq 1$. 
We take $\gamma_{\tau,\Psi}$ 
as in Lemma \ref{rswW}. 
We put 
\[
 \mathrm{rsw}(\tau,\Psi ) =\gamma_{\tau,\Psi}^{-1} 
 \in K^{\times}/U_K^1, 
\] 
which we call 
the refined Swan conductor of $\tau$ 
with respect to $\Psi$. 
\end{defn}

\begin{rem}
By \eqref{twun}, 
we have 
$v_K \bigr( \mathrm{rsw}(\tau,\psi_K ) \bigr)= 
 \mathrm{Sw} (\tau)$ 
in Definition \ref{rswtaudef}. 
\end{rem}

\begin{lem}\label{rswGL}
Let $\pi$ be an 
irreducible supercuspidal representation of $\iGL_n (K)$ 
such that $\mathrm{sw}(\pi) \geq 1$. \\ 
{\rm 1.}
There exists 
$\gamma_{\pi,\Psi} \in K^{\times}$ 
such that 
\[
 \varepsilon(\chi \otimes \pi,s,\Psi)=
 \chi (\gamma_{\pi,\Psi})^{-1}
 \varepsilon(\pi,s,\Psi)
\]
for any tamely ramified character 
$\chi$ of $K^{\times}$. 
This property determines the coset 
$\gamma_{\pi,\Psi} U_K^1$
uniquely. \\ 
{\rm 2.} 
Let $\bigl[ \mathfrak{A},m,0,\alpha \bigr]$ 
be a simple stratum contained in $\pi$. 
Then we have 
$\gamma_{\pi,\Psi} \equiv \det \alpha \mod U_K^1$. 
\end{lem}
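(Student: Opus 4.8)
The plan is to obtain part 1 by transporting the analogous statement for $W_K$ (Lemma \ref{rswW}) across the local Langlands correspondence, and to obtain part 2 from the explicit computation of Godement--Jacquet $\epsilon$-factors of simple cuspidal types in \cite{BHloctII}.

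For part 1, I would let $\sigma$ be the Langlands parameter of $\pi$ under the correspondence of \cite{LRSellL} and \cite{HTsimSh}, an irreducible $n$-dimensional smooth representation of $W_K$. Viewing a tamely ramified character $\chi$ of $K^{\times}$ as a character of $W_K$ via $\Art_K$, the parameter of the twist $(\chi \circ \det) \otimes \pi$ is $\chi \otimes \sigma$; since the correspondence is compatible with twists and preserves the relevant $\epsilon$-factors, one gets $\epsilon(\chi \otimes \pi, s, \psi_K) = \epsilon(\chi \otimes \sigma, s, \psi_K)$ and $\epsilon(\pi, s, \psi_K) = \epsilon(\sigma, s, \psi_K)$, hence $\mathrm{sw}(\pi) = \mathrm{sw}(\sigma) \geq 1$. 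Applying Lemma \ref{rswW} to $\sigma$ then produces $\gamma_{\sigma, \psi_K} \in K^{\times}$ with the displayed twisting property, and setting $\gamma_{\pi, \psi_K} := \gamma_{\sigma, \psi_K}$ gives the first assertion. Uniqueness of the coset $\gamma_{\pi, \psi_K} U_K^1$ follows because the tamely ramified characters of $K^{\times}$ are exactly the characters trivial on $U_K^1$, and these separate the points of $K^{\times}/U_K^1$: if $\gamma, \gamma'$ both satisfy the displayed identity then $\chi(\gamma/\gamma') = 1$ for all such $\chi$, so $\gamma \equiv \gamma' \bmod U_K^1$. (Alternatively, part 1 can be argued entirely within Godement--Jacquet theory via \cite{GJzsim}, without invoking the correspondence.)

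For part 2, I would take a simple stratum $[\mathfrak{A}, m, 0, \alpha]$ contained in $\pi$; the hypothesis $\mathrm{sw}(\pi) \geq 1$ forces $m \geq 1$, and $\pi$ is compactly induced from an extended maximal simple type erected on this stratum. The identity $\gamma_{\pi, \psi_K} \equiv \det \alpha \bmod U_K^1$ is then the content of \cite[1.4 Theorem]{BHloctII}: its proof computes the variation by tame twist of the Godement--Jacquet $\epsilon$-factor directly from the type-theoretic construction of $\pi$, reducing it to the Heisenberg representation attached to the simple character and to an explicit computation with the stratum, in which $\alpha$ enters only through $\det \alpha$ modulo $U_K^1$ and hence only through the class of the stratum. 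I would quote this result rather than reprove it, after checking that the normalization of $\psi_K$ and of the $\epsilon$-factor matches ours.

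The routine inputs are the compatibility of $\epsilon$-factors with the correspondence and the separation property of tame characters of $K^{\times}$. The substantive point — and the place where the real work lies — is part 2, namely the identification of the twisting element with $\det \alpha$; this is exactly \cite[1.4 Theorem]{BHloctII}, and I expect that to be the main obstacle were one to aim for a fully self-contained argument.
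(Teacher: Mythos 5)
Your proposal is correct, but it splits into two routes where the paper uses one. The paper's entire proof is a citation of \cite[6.1 Lemma 2 and (6.1.1)]{BHloctII}, which establishes \emph{both} parts purely on the $\iGL_n$ side: Bushnell--Henniart compute the variation of the Godement--Jacquet $\epsilon$-factor under tame twist directly from the simple type, and the existence of $\gamma_{\pi,\psi_K}$ together with the identification $\gamma_{\pi,\psi_K}\equiv\det\alpha \bmod U_K^1$ come out of the same computation. Your part 2 is therefore essentially the paper's proof (you cite \cite[1.4 Theorem]{BHloctII}, the statement form of the same result). For part 1 you instead detour through the Galois side: transport to the Langlands parameter $\sigma$, apply Lemma \ref{rswW}, and pull $\gamma_{\sigma,\psi_K}$ back. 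This is valid --- the paper freely assumes the existence of the correspondence of \cite{LRSellL}, \cite{HTsimSh} and its compatibility with character twists and $\epsilon$-factors, so there is no circularity with the main theorem --- and your uniqueness argument via separation of $K^{\times}/U_K^1$ by tame characters is exactly right. What the paper's route buys is economy: part 1 is not an independent input to be imported from the Weil group, but a byproduct of the very computation that proves part 2, so a single reference suffices; your route buys nothing extra here beyond making the parallel with Lemma \ref{rswW} explicit, and it adds the compatibility of the correspondence with twisting as an (unnecessary) hypothesis for part 1. You correctly flag the direct Godement--Jacquet argument as an alternative; that alternative is in fact what the paper invokes.
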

\begin{proof}
The first statement is 
\cite[1.4 Theorem (i)]{BHloctII}. 
The second statement follows from 
\cite[1.4 Remark]{BHloctII}. 
\end{proof}

\begin{defn}\label{rswpidef}
Let $\pi$ be an 
irreducible supercuspidal representation of $\iGL_n (K)$ 
such that $\mathrm{sw}(\pi) \geq 1$. 
We take $\gamma_{\pi,\Psi}$ 
as in Lemma \ref{rswGL}. 
Then we put 
\[
 \mathrm{rsw}(\pi,\Psi) =\gamma_{\pi,\Psi}^{-1} 
 \in K^{\times}/U_K^1, 
\]
which we call 
the refined Swan conductor of $\pi$ 
with respect to $\Psi$. 
\end{defn}

\begin{rem}
We have 
$v_K \bigr( \mathrm{rsw}(\pi ,\psi_K ) \bigr)= 
 \mathrm{Sw} (\pi)$ 
in Definition \ref{rswpidef}. 
\end{rem}

For an irreducible supercuspidal representation $\pi$ 
of $\iGL_n (K)$, 
let $\omega_{\pi}$ denote the central character of $\pi$. 

\begin{prop}(\cite[2.3 Proposition]{BHLepi})\label{LLCchar}
Let $\pi$ be 
a simple supercuspidal representation of $\iGL_n (K)$. 
The Langlands parameter for $\pi$ 
is characterized 
as the $n$-dimensional irreducible smooth representation $\tau$ of $W_K$ 
satisfying 
\begin{enumerate}
\item 
$\det \tau =\omega_{\pi}$, 
\item 
$\mathrm{rsw}(\tau,\psi_K ) = 
 \mathrm{rsw}(\pi,\psi_K )$, 
\item 
$\varepsilon (\tau, \psi_K )
 =\varepsilon (\pi, \psi_K)$. 
\end{enumerate}
\end{prop}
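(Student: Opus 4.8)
The plan is to prove the two halves of the assertion separately: \emph{existence}, that the Langlands parameter $\sigma(\pi)$ of $\pi$ does satisfy (i)--(iii), and \emph{uniqueness}, that any $\tau$ as in the statement is isomorphic to $\sigma(\pi)$.

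For existence I would merely unwind the standard properties of the local Langlands correspondence for $\iGL_n$ from \cite{LRSellL} and \cite{HTsimSh}: it is a bijection from irreducible supercuspidal representations of $\iGL_n(K)$ onto $n$-dimensional irreducible smooth representations of $W_K$; it matches central characters with determinants; it is compatible with $K^{\times}\cong W_K^{\rmab}$ in the sense that $\sigma((\chi\circ\det)\otimes\pi')=\chi\otimes\sigma(\pi')$; and it preserves $\epsilon$-factors of pairs, so that $\epsilon(\pi',s,\psi_K)=\epsilon(\sigma(\pi'),s,\psi_K)$ and $\epsilon((\chi\circ\det)\otimes\pi',s,\psi_K)=\epsilon(\chi\otimes\sigma(\pi'),s,\psi_K)$ for every character $\chi$ of $K^{\times}$. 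Applying this with $\pi'=\pi$ and $\tau=\sigma(\pi)$: comparing the two sides of $\epsilon(\pi,s,\psi_K)=\epsilon(\tau,s,\psi_K)$ and their $q^{-s}$-expansions forces $\mathrm{sw}(\tau)=\mathrm{sw}(\pi)=1\ge 1$ (so $\mathrm{rsw}(\tau,\psi_K)$ is defined) and, at $s=1/2$, gives (iii); the determinant--central character compatibility is (i); and for tamely ramified $\chi$ the displayed identities together with Lemma~\ref{rswW} and Lemma~\ref{rswGL} give $\chi(\gamma_{\tau,\psi_K})^{-1}\epsilon(\tau,s,\psi_K)=\chi(\gamma_{\pi,\psi_K})^{-1}\epsilon(\pi,s,\psi_K)$, so since $\epsilon(\tau,s,\psi_K)=\epsilon(\pi,s,\psi_K)$ and tame characters separate the points of $K^{\times}/U_K^1$ we get $\gamma_{\tau,\psi_K}\equiv\gamma_{\pi,\psi_K}\bmod U_K^1$, i.e.\ (ii).

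For uniqueness, let $\tau$ be an $n$-dimensional irreducible smooth representation of $W_K$ satisfying (i)--(iii). By (ii) and the remark after Definition~\ref{rswpidef}, $\mathrm{Sw}(\tau)=v(\mathrm{rsw}(\pi,\psi_K))=\mathrm{Sw}(\pi)=1$, so $\tau$ is simple epipelagic; in particular $\mathrm{sw}(\tau)=1$. As the correspondence is a bijection onto the $n$-dimensional irreducibles, $\tau=\sigma(\pi_0)$ for a unique irreducible supercuspidal $\pi_0$ of $\iGL_n(K)$, and again by $\epsilon$-factor preservation $\mathrm{sw}(\pi_0)=\mathrm{sw}(\tau)=1$, so $\pi_0$ is simple epipelagic; hence $\pi_0=\pi_{\zeta',\chi',c'}$ for a unique triple by \cite[Proposition~1.2]{ITepiJL}. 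The existence half applied to $\pi_0$ shows $\tau=\sigma(\pi_0)$ has $\det\tau=\omega_{\pi_0}$, $\mathrm{rsw}(\tau,\psi_K)=\mathrm{rsw}(\pi_0,\psi_K)$ and $\epsilon(\tau,\psi_K)=\epsilon(\pi_0,\psi_K)$; combining with (i)--(iii) for $\pi$ gives $\omega_{\pi_0}=\omega_{\pi}$, $\mathrm{rsw}(\pi_0,\psi_K)=\mathrm{rsw}(\pi,\psi_K)$ and $\epsilon(\pi_0,\psi_K)=\epsilon(\pi,\psi_K)$. I then read off the three parameters, writing $\pi=\pi_{\zeta,\chi,c}$: by part~2 of Lemma~\ref{rswGL} and the stratum $[\mathfrak{I},1,0,\varphi_{\zeta',n}^{-1}]$ contained in $\pi_{\zeta',\chi',c'}$, $\mathrm{rsw}(\pi_{\zeta',\chi',c'},\psi_K)\equiv\det\varphi_{\zeta',n}\equiv(-1)^{n-1}(n')^{n}\zeta'\varpi\bmod U_K^1$, and since the reduction map $\mu_{q-1}(K)\to k^{\times}$ is injective this forces $\zeta'=\zeta$; evaluating $\omega_{\pi_{\zeta',\chi',c'}}$ on $\cO_K^{\times}\subset L_{\zeta'}^{\times}$, where $\Lambda_{\zeta',\chi',c'}(x)=\chi'(\bar x)$, forces $\chi'=\chi$; and Proposition~\ref{epspi} then gives $(-1)^{n-1+\varepsilon_0 f}\chi(n')c'=(-1)^{n-1+\varepsilon_0 f}\chi(n')c$, so $c'=c$. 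Hence $\pi_0=\pi$ and $\tau\cong\sigma(\pi)$.

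One can also argue uniqueness intrinsically on the Galois side, which is presumably closer to \cite{BHLepi}: a simple epipelagic $\tau$ is rigid in that $\tau|_{P_K}$, for $P_K$ the wild inertia subgroup, has a single ramification break at slope $1/n$ at which all $n$ characters of the $P_K$-isotypic decomposition jump, so $\tau|_{P_K}$ is the sum of the $W_K$-conjugates of one character $\theta$ of a determined level, and this collection $\{\theta^{w}\}$ — equivalently $\tau$ up to a tamely ramified twist — is pinned down by $\mathrm{rsw}(\tau,\psi_K)$; the residual finite freedom is controlled by the representation theory of a Heisenberg group with a cyclic group acting that is recalled in Section~\ref{Hgrp}, and it is killed by imposing (i) and (iii). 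In either route the main obstacle is exactly this rigidity-and-rectifier bookkeeping: showing that Swan conductor $1$ in dimension $n$ really forces the single full break at $1/n$ with the conjugacy-orbit structure, and then determining precisely which tamely ramified twists fix $\tau$ so that the pair $(\det,\epsilon(\cdot,\psi_K))$ is injective on the remaining ambiguity; in the shortcut above this difficulty is transferred wholesale to \cite[Proposition~1.2]{ITepiJL}, which classifies the simple epipelagic representations of $\iGL_n(K)$ and is the genuine input there.
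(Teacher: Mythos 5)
Your first argument is correct, and it is essentially the standard one. Note, though, that the paper itself gives no proof of this statement: it is quoted verbatim from Bushnell--Henniart \cite[2.3 Proposition]{BHLepi}, so there is no in-paper argument to compare against. What you have written is a sound reconstruction of the expected proof, and it has the virtue of being entirely assembled from ingredients already present in this paper: the existence half is the usual unwinding of the defining properties of the correspondence (preservation of $\epsilon$-factors under tame twists yields (ii) via Lemma~\ref{rswW} and Lemma~\ref{rswGL} exactly as you say), and the uniqueness half correctly transfers (i)--(iii) to the supercuspidal $\pi_0$ attached to $\tau$ and then reads off $(\zeta,\chi,c)$ in the right order --- $\zeta$ from $\mathrm{rsw}$ via Lemma~\ref{rswGL}.2 and the stratum $[\mathfrak{I},1,0,\varphi_{\zeta,n}^{-1}]$, then $\chi$ from $\omega_{\pi}|_{\mathcal{O}_K^{\times}}$, then $c$ from Proposition~\ref{epspi} --- with the genuine classification input carried by \cite[Proposition~1.2]{ITepiJL}, as you acknowledge. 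Your second, ``intrinsic'' Galois-side sketch is closer in spirit to the internal mechanics of \cite{BHLepi} but is not needed and is too vague to stand on its own (the claim that Swan conductor $1$ forces a single full break at slope $1/n$ with the stated orbit structure, and the injectivity of $(\det,\epsilon)$ on the residual tame ambiguity, are precisely the nontrivial points); I would drop it or present it only as a remark.
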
 

We will show that 
$\tau_{\zeta,\chi,c}$ and $\pi_{\zeta,\chi,c}$ 
satisfy the conditions of 
Proposition \ref{LLCchar} 
in Proposition \ref{centdet}, 
Proposition \ref{rswdet}, 
Lemma \ref{tauirr} 
and Proposition \ref{coineps}. 

\section{General facts on epsilon factors}\label{sec:gene}
In this section, we recall 
some general facts on epsilon factors. 

For a finite separable extension $L$ over $K$, 
we put $\Psi_L =\Psi \circ \Tr_{L/K}$ 
and let  
\[
\lambda (L/K, \Psi )
=\frac{\varepsilon(\mathrm{Ind}_{L/K}1,s,\Psi)}{\varepsilon(1 ,s,\Psi_L )}
\] denote the 
Langlands constant which is independent of $s$, where 
$1$ is the trivial representation of 
$W_L$ (cf.~\cite[30.4]{BHLLCGL2}).

\begin{prop}\label{reseps}
Let $\tau$ be a finite dimensional smooth representation of 
$W_K$ such that 
$\tau |_{P_K}$ is irreducible and non-trivial. 
Let $L$ be a tamely ramified finite extension of $K$. 
Then we have 
\[
 \varepsilon (\tau |_{W_L} ,\Psi_L ) = 
 \lambda (L/K ,\Psi )^{-\dim \tau} 
 \delta_{L/K} (\mathrm{rsw} (\tau,\Psi)) 
 \varepsilon (\tau ,\Psi )^{[L:K]} . 
\]
\end{prop}
\begin{proof}
This is proved by the same arguments as \cite[48.3 Proposition]{BHLLCGL2}. 
\end{proof}

\begin{prop}\label{refSwres}
Let $\tau$ be a finite dimensional smooth representation of 
$W_K$ such that 
$\tau |_{P_K}$ does not contain the trivial character. \\ 
{\rm 1.} 
If $\phi$ is a tamely ramified character of $W_K$, 
then we have 
$\mathrm{rsw} (\tau \otimes \phi,\Psi)=\mathrm{rsw} (\tau,\Psi)$. \\ 
{\rm 2.} 
Let $L$ be a tamely ramified finite extension of $K$. 
Then we have 
\[
 \mathrm{rsw} (\tau |_{W_L} ,\Psi_L) 
 = \mathrm{rsw} (\tau,\Psi) \mod 
 U_L^1 . 
\]
\end{prop}
\begin{proof}
This is \cite[48.1 Theorem (2) and (3)]{BHLLCGL2}. 
\end{proof}

For a non-trivial character 
$\xi$ of $K^{\times}$, 
the level of $\xi$ means the least integer $m \geq 0$ 
such that $\xi$ is trivial on $U_K^{m+1}$. 

\begin{prop}\label{chare}
Let $\xi$ be a character of $K^{\times}$ of level $m \geq 1$. 
Assume that $\gamma \in K^{\times}$ satisfies 
\[
 \xi (1+x) =\Psi (\gamma x)
\] 
for $x \in \fp_K^{[m/2]+1}$. \\ 
{\rm 1.} 
We have $\mathrm{rsw}(\xi,\Psi)=\gamma^{-1}$. \\ 
{\rm 2.} 
We have 
\[
 \varepsilon (\xi, \Psi) = 
 q^{[(m+1)/2] -(m+1)/2} 
 \sum_{y \in U_K^{[(m+1)/2]} / U_K^{[m/2]+1}} 
 \xi(\gamma y)^{-1} \Psi (\gamma y) . 
\]
\end{prop}
\begin{proof}
The claim 1 follows from \cite[23.8 Stability theorem]{BHLLCGL2}. 
The claim 2 follows from \cite[23.5 Lemma 1, (23.6.2) and 
23.6 Proposition]{BHLLCGL2}. 
\end{proof}

For a finite Galois extension $L$ of $K$, 
let $\psi_{L/K}$ denote the Herbrand function 
of $L/K$ and 
$\Gal (L/K)_i$ denote the 
lower numbering $i$-th ramification 
subgroup of $\Gal (L/K)$ for $i \geq 0$ (\cf \cite[IV]{SerCL}). 
We use the following lemmas to calculate 
the refined Swan conductor of a character of a Weil group. 

\begin{lem}\label{Arfilt}
Let $m$ be a positive integer dividing $f$. 
Let $h$ be a positive integer that is prime to $p$ and less than 
$p^m v_K(p)/(p^m -1)$. 
Let $L$ be a Galois extension of $K$ 
defined by $x^{p^m} -x=1/{\varpi^h}$. 
Then we have 
\[
 \Gal (L/K)_i 
 =
 \begin{cases}
 \Gal (L/K) \quad & \textrm{if $i \leq h$, } \\ 
 \{ 1 \} \quad & \textrm{if $i>h$} 
 \end{cases} 
\]
and 
\[
 \psi_{L/K}(v) 
 =
 \begin{cases}
 v \quad & \textrm{if $v \leq h$, } \\ 
 p^m (v-h) +h \quad & \textrm{if $v>h$}. 
 \end{cases} 
\]
\end{lem}
\begin{proof}
Take an integer $l$ such that 
$l h \equiv 1 \mod p^m$. 
Then we have 
\[
 v_L \biggl( \frac{1}{x^l \varpi^{(lh-1)/p^m}} \biggr) =1. 
\] 
Hence, for $\sigma \in \Gal (L/K)$ and $i \geq 0$, 
we have $\sigma \in \Gal (L/K)_i$ if and only if 
\begin{equation}\label{Gicond}
 i+1 \leq v_L \biggl( 
 \sigma \biggl( \frac{1}{x^l \varpi^{(lh-1)/p^m}} \biggr) 
 -\frac{1}{x^l \varpi^{(lh-1)/p^m}} \biggr) 
 = v_L (\sigma(x)^l -x^l ) +hl+1 . 
\end{equation}
The right hand side of \eqref{Gicond} 
is $h+1$ if $\sigma \neq 1$. 
Hence the first claim follows. 
The second claim follows from the first claim. 
\end{proof}

\begin{lem}\label{rswcal}
Let $L$ be a totally ramified finite abelian extension of $K$. 
Let $m \geq 1$. \\ 
%Let 
%\[
% a_{K,m} \colon  U_K^m/U_K^{m+1} \stackrel{\sim}{\longrightarrow} 
% \Gal (L/K)_{\psi_{L/K}(m)}/ \Gal (L/K)_{\psi_{L/K}(m)+1}
%\]
%be the isomorphism induced by $\Art_K$. 
{\rm 1.} 
We have 
\begin{align*}
 &\Nr_{L/K} (U_L^{\psi_{L/K}(m)}) \subset U_K^m, \quad 
 \Nr_{L/K} (U_L^{\psi_{L/K}(m)+1}) \subset U_K^{m+1}, \\ 
 &\Art_K (U_K^m) \subset \Gal (L/K)_{\psi_{L/K}(m)}. 
\end{align*}
\\
{\rm 2.} 
We take $\alpha \in K$ and $\beta \in L$ 
such that $v_K(\alpha)=m$ and $v_L (\beta)=\psi_{L/K}(m)$. 
We put $P(z)=z^p -z$ for $z \in k$. 
Assume that 
\[
 \xymatrix{ 
 U_L^{\psi_{L/K}(m)}
% /U_L^{\psi_{L/K}(m)+1} 
 \ar[rr]^-{\Nr_{L/K}} 
 \ar[d]^{p_{L,\beta}} & & 
 U_K^m
% /U_K^{m+1} 
 \ar[d]^{p_{K,\alpha}} 
 \\
 k \ar[rr]^{P} & & k 
 }
\] 
is commutative, where 
\begin{align*}
 &p_{K,\alpha} \colon U_K^m 
 \longrightarrow k;\ 
 1+\alpha x \mapsto \bar{x}, \\ 
 &p_{L,\beta} \colon  U_L^{\psi_{L/K}(m)}
% /U_L^{\psi_{L/K}(m)+1}  
 \longrightarrow k;\ 
 1+\beta x \mapsto \bar{x}. 
\end{align*}
Let $\varpi_L$ be a uniformizer of $L$. 
Then we have 
\[
 p_{L,\beta} \biggl( 
 \frac{\Art_K (1+\alpha x)(\varpi_L)}{\varpi_L} \biggr) 
 = \Tr_{k/\bF_p}(\bar{x}) 
\]
for $x \in \cO_K$. 
\end{lem}
\begin{proof}
The first claim follows from 
\cite[V, \S 3, Proposition 4 and XV, \S 2, Corollaire 3 of Th\'eor\`eme 1]{SerCL}. 
We note that 
our normalization of the Artin reciprocity map 
is inverse to that in \cite[XIII, \S 4]{SerCL}. 
Let $x \in \cO_K$. 
By \cite[XV, \S 3, Proposition 4]{SerCL} 
and the construction of the isomorphism of  
\cite[XV, \S 2, Proposition 3]{SerCL}, 
we have 
\[
 p_{L,\beta} \biggl( 
 \frac{\Art_K (1+\alpha x)(\varpi_L)}{\varpi_L} \biggr) 
 = z_x^q -z_x , 
\]
where we take $z_x \in k^{\rmac}$ 
such that $z_x^p -z_x =\bar{x}$. 
Then we have the second claim, since 
\[
 z_x^q -z_x = \Tr_{k/\bF_p}(z_x^p -z_x) 
 =\Tr_{k/\bF_p}(\bar{x}) 
\] 
for such $z_x$. 
\end{proof}

\section{Stiefel--Whitney class and discriminant}\label{SWdisc}
\subsection{Stiefel--Whitney class}

Let $R (W_K,\bR)$ be the Grothendieck group of 
finite-dimensional representations of $W_K$ over 
$\bR$ with finite images. 
For $V \in R (W_K,\bR)$, we put 
$V_{\bC}=V \otimes_{\bR} \bC$ 
and define 
$\varepsilon (V_{\bC},\Psi)$ by the additivity 
using the epsilon factors in subsection \ref{sseq:Swc}. 
For $V \in R (W_K,\bR)$, we define 
the $i$-th Stiefel--Whitney class 
$w_i (V) \in H^i (G_K,\bZ/2\bZ)$ 
for $i \geq 0$ as in \cite[(1.3)]{DelclAr}. 
Let 
\[
 \mathrm{cl} \colon 
 H^2 (G_K,\bZ/2\bZ) \to H^2 (G_K, K^{\mathrm{ac},\times}) 
 \xrightarrow{\sim} \bQ /\bZ, 
\]
where the first map is induced by 
$\bZ/2\bZ \to K^{\mathrm{ac},\times}; m \mapsto (-1)^m$ 
and the second isomorphism is the invariant map.

\begin{thm}(\cite[(1.5) Th\'{e}or\`{e}me]{DelclAr})\label{SWeps} 
Assume that $V \in R (W_K,\bR)$ has dimension $0$ and 
determinant $1$. 
Then we have 
\[
 \varepsilon (V_{\bC},\Psi) =\mathrm{exp} 
 \bigl( 2 \pi \sqrt{-1} \mathrm{cl}(w_2(V)) \bigr). 
\] 
In particular, we have 
$\varepsilon (V_{\bC},\Psi) =1$ if 
$\ch K =2$. 
\end{thm}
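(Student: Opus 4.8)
Since the statement is Deligne's theorem, I would reconstruct his proof. The plan is to realize both sides of the displayed equality as group homomorphisms on $R^{00}(W_K)\subset R(W_K,\bR)$, the subgroup of virtual real representations of dimension $0$ and determinant $1$; to prove that both are inductive in virtual degree $0$; to reduce, by an induction theorem for real representations of finite groups, to the case where $W_K$ acts through an abelian quotient; and to finish with an explicit computation comparing local Gauss sums with Hilbert symbols. Both sides are indeed homomorphisms: if $V\in R^{00}(W_K)$ then $w_1(V)$, which under $H^1(G_K,\bZ/2\bZ)=\Hom(G_K,\{\pm1\})$ is just $\det V$, vanishes, so by the Whitney sum formula the total Stiefel--Whitney class is multiplicative on $R^{00}(W_K)$ and $w_2$ is additive there; composing with $\mathrm{cl}$ and $\exp(2\pi\sqrt{-1}\,(-))$ gives a homomorphism to $\{\pm1\}$. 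On the other side $\epsilon(\,\cdot\,,\psi_K)$ is additive in the virtual representation, and on $R^{00}(W_K)$ its values lie in $\{\pm1\}$: dimension $0$ makes it independent of $\psi_K$, and then the functional equation together with $\det V=1$ gives $\epsilon(V,\psi_K)^2=1$. Hence it suffices to compare the two homomorphisms on generators.

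Next I would establish inductivity in degree $0$. For a finite separable $L/K$ and $V_0\in R^{00}(W_L)$ one still has $\Ind_{L/K}V_0\in R^{00}(W_K)$, since its determinant is $\det(\Ind_{L/K}\mathbf{1})^{\dim V_0}$ times the transfer of $\det V_0$, which is trivial here. On the $\epsilon$-side, $\epsilon(\Ind_{L/K}V_0,\psi_K)=\epsilon(V_0,\psi_L)$ is the usual inductivity of local constants in degree $0$. On the Stiefel--Whitney side, $\mathrm{cl}_K\bigl(w_2(\Ind_{L/K}V_0)\bigr)=\mathrm{cl}_L\bigl(w_2(V_0)\bigr)$ should follow from the Fr\"ohlich--Serre formula for the second Stiefel--Whitney class of an induced representation, specialized to $\dim V_0=\det V_0-1=0$, in which the correction terms --- those involving $w_1(V_0)$ and the Stiefel--Whitney class of the trace form of $L/K$ --- vanish, together with compatibility of $\mathrm{cl}$ with corestriction. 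Granting this, an induction theorem for orthogonal representations --- every virtual real representation is a $\bZ$-combination of inductions of one- and two-dimensional orthogonal representations of open subgroups, and a two-dimensional orthogonal representation is a sum of characters or is induced from a character of an index-two subgroup --- reduces the theorem to the case where $V$ is a virtual $\bZ$-combination of characters of $W_K$.

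In that abelian case $R^{00}(W_K)$ is generated by $[\chi+\ol{\chi}]-2[\mathbf{1}]$ for a character $\chi$, and by $[\chi_1]+[\chi_2]+[\chi_3]+[\chi_1\chi_2\chi_3]-4[\mathbf{1}]$ for characters $\chi_i$ of order dividing $2$; on these one computes both sides by hand. On the $\epsilon$-side one uses $\epsilon(\mathbf{1},\psi_K)=1$, $\epsilon(\chi,\psi_K)\epsilon(\ol{\chi},\psi_K)=\chi(-1)$, and the two-cocycle identity expressing $\epsilon(\chi_1,\psi_K)\epsilon(\chi_2,\psi_K)\epsilon(\chi_1\chi_2,\psi_K)^{-1}$ by the Hilbert symbol. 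On the Stiefel--Whitney side one uses that $w_1([\chi])$ is the class of $\chi$, that $w_2([\chi])=0$ for quadratic $\chi$, the relation $x\cup x=x\cup(-1)$ in $H^{\ast}(G_K,\bZ/2\bZ)$ where $(-1)$ denotes the class of $-1\in K^{\times}/(K^{\times})^2$, and Deligne's identification of $w_2$ of the real two-dimensional representation attached to a character; under $\mathrm{cl}$ both expressions become products of Hilbert symbols, which match by bilinearity. Finally, if $\ch K=2$ then $-1=1$ in $K^{\times}$, so the coefficient map $\bZ/2\bZ\to\ol{K}^{\times}$, $m\mapsto(-1)^m$, and hence $\mathrm{cl}$, is identically zero, forcing the right-hand side to be $\exp(0)=1$.

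I expect the main obstacle to be the Stiefel--Whitney inductivity step together with the final abelian computation: one must prove the Fr\"ohlich--Serre formula and verify that the trace-form correction term is killed by $\dim V_0=0$, and then reconcile the two-cocycle governing products of local Gauss sums with the cocycle $(a,b)\mapsto$ Hilbert symbol governing $w_2$. That reconciliation is where the arithmetic content of the theorem actually sits.
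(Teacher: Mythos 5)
This statement is quoted in the paper as \cite[(1.5) Th\'eor\`eme]{DelclAr}; the authors give no proof of it, so there is nothing internal to compare your argument against. What you have written is a faithful reconstruction of Deligne's own strategy: both sides are homomorphisms on the subgroup of virtual real representations of dimension $0$ and determinant $1$ (additivity of $w_2$ there via the Whitney formula and $w_1=\det$; the values of $\epsilon$ lying in $\{\pm 1\}$ via self-duality and the functional equation), both are inductive in degree $0$ (for $w_2$ this is the Fr\"ohlich--Serre induction formula, whose correction terms are killed precisely by $\dim V_0=0$ and $\det V_0=1$, combined with $\mathrm{cl}_K\circ\mathrm{Cor}=\mathrm{cl}_L$), an induction theorem for orthogonal representations reduces to the monomial/dihedral case, and the residual computation is the comparison of the $2$-cocycle $\epsilon(\chi_1)\epsilon(\chi_2)\epsilon(\chi_1\chi_2)^{-1}$ with the Hilbert-symbol cocycle computing $w_2$. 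Your deduction of the $\ch K=2$ case is also exactly right: the coefficient map $m\mapsto(-1)^m$ is trivial there, so $\mathrm{cl}=0$. The only caveat is that your write-up is a plan rather than a proof: the three load-bearing inputs (the induction formula for $w_2$, the induction theorem for real representations, and the Gauss-sum/Hilbert-symbol reconciliation) are named but not established, and each is a substantial theorem in its own right. Since the paper itself treats the whole statement as a black box, citing Deligne at those three points would be the appropriate resolution rather than proving them.
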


\subsection{Discriminant}
Let $L$ be a finite separable extension of $K$. 
We put 
\[
 \delta_{L/K} =\det (\Ind_{L/K} 1). 
\]

\subsubsection{Multiplicative discriminant}
We assume that $\ch K \neq 2$ in this subsubsection. 
We define $d_{L/K} \in K^{\times}/(K^{\times})^2$ 
as the discriminant of the quadratic form 
$\Tr_{L/K} (x^2)$ on $L$. 
For $a \in K^{\times}/(K^{\times})^2$, 
let 
$\{ a \} \in H^1 (G_K, \bZ/2\bZ)$ and 
$\kappa_a \in \Hom (W_K ,\{ \pm 1 \})$ 
be the elements corresponding to $a$ 
under the natural isomorphisms 
\[
 K^{\times}/(K^{\times})^2 \simeq H^1 (G_K, \bZ/2\bZ) 
 \simeq \Hom (W_K ,\{ \pm 1 \}). 
\]
We have 
\begin{equation}\label{dka}
 \delta_{L/K} =\kappa_{d_{L/K}} 
\end{equation}
by \cite[V, \S 10, 2 Example 6)]{BourAlg47} (\cf \cite[1.4]{SerinvWit}). 
For $a, b \in K^{\times}/(K^{\times})^2$, 
we put  
\[
 \{ a, b \} =\{ a \} \cup \{ b \} \in  H^2 (G_K,\bZ/2\bZ ). 
\]
\begin{prop}(\cite[Proposition 6.5]{AbSaLF})\label{dw2f}
Let $m$ be the extension degree of $L$ over $K$. 
We take a generator $a$ of $L$ over $K$. 
Let $f(x) \in K[x]$ be the minimal polynomial of $a$. 
We put $D =f' (a) \in L$. 
Then we have 
\begin{align*}
 d_{L/K} &= (-1)^{\binom{m}{2}} \Nr_{L/K} (D) 
 \in K^{\times}/(K^{\times})^2, \\ 
 w_2 (\Ind_{L/K} \kappa_D ) &= 
 \binom{m}{4} \{ -1,-1 \} + \{ d_{L/K} ,2 \} 
 \in  H^2 (G_K,\bZ/2\bZ ).
\end{align*}
\end{prop}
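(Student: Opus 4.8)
The plan is to establish the two displayed identities in turn, the first by a classical Vandermonde computation and the second by reducing to the Stiefel--Whitney classes of permutation representations. For the discriminant identity, note that since $a$ generates $L$ over $K$ the powers $1,a,\dots,a^{m-1}$ form a $K$-basis of $L$, so $d_{L/K}$ is represented by the Gram determinant $\det\bigl(\Tr_{L/K}(a^{i+j})\bigr)_{0\le i,j\le m-1}$, that is, by the discriminant of $f$. Writing $a=a_{1},\dots,a_{m}$ for the (distinct) roots of $f$ in $\ol{K}$ and $M=(a_{i}^{\,j})_{i,j}$ for the associated Vandermonde matrix, this Gram matrix is $M^{\mathrm{t}}M$, so its determinant equals
\[
 \prod_{i<j}(a_{j}-a_{i})^{2}
 =(-1)^{\binom{m}{2}}\prod_{i\neq j}(a_{i}-a_{j})
 =(-1)^{\binom{m}{2}}\prod_{i}f'(a_{i})
 =(-1)^{\binom{m}{2}}\Nr_{L/K}(D),
\]
which is the first identity. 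Together with $\delta_{L/K}=\kappa_{d_{L/K}}$ and the formula $\det(\Ind_{L/K}\kappa_{D})=\delta_{L/K}\cdot\kappa_{\Nr_{L/K}D}$ for the determinant of an induced representation, this also yields $w_{1}(\Ind_{L/K}\kappa_{D})=\{d_{L/K}\,\Nr_{L/K}D\}=\{(-1)^{\binom{m}{2}}\}$, which will be used below.

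For the Stiefel--Whitney identity I would reduce to permutation representations. Set $L'=L(\sqrt{D})$, a separable quadratic \'etale $L$-algebra (a field unless $D$ is a square in $L$; the argument applies uniformly). Then $\Ind_{L'/L}1\simeq1\oplus\kappa_{D}$ as representations of $W_{L}$, so in $R(W_{K},\bR)$ one has $\Ind_{L/K}\kappa_{D}=\Ind_{L'/K}1-\Ind_{L/K}1$. Multiplicativity of the total Stiefel--Whitney class (built into the definition of the $w_{i}$ on $R(W_{K},\bR)$ in \cite{DelclAr}) then gives, in degrees $\le 2$,
\[
 w_{2}(\Ind_{L/K}\kappa_{D})=w_{2}(\Ind_{L'/K}1)-w_{2}(\Ind_{L/K}1)-w_{1}(\Ind_{L/K}\kappa_{D})\cup w_{1}(\Ind_{L/K}1).
\]
Into this I would feed Serre's formula for the Stiefel--Whitney (Hasse--Witt) class of the trace form of a separable extension \cite{SerinvWit} --- equivalently, the known formulas for the Stiefel--Whitney classes of an induced character --- which expresses $w_{2}(\Ind_{E/K}1)$ through $\{d_{E/K}\}$, $\{2\}$ and a universal class pulled back from $H^{2}$ of the relevant symmetric group. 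Applying this with $E=L$ and $E=L'$, replacing $\Nr_{L/K}D$ by $(-1)^{\binom{m}{2}}d_{L/K}$ via the first identity, using the tower relation $d_{L'/K}\equiv\Nr_{L/K}(D)\pmod{(K^{\times})^{2}}$ (a consequence of $\det(\Ind_{L'/K}1)=\delta_{L/K}\cdot\det(\Ind_{L/K}\kappa_{D})$), and inserting $w_{1}(\Ind_{L/K}\kappa_{D})=\{(-1)^{\binom{m}{2}}\}$ and $w_{1}(\Ind_{L/K}1)=\{d_{L/K}\}$, the universal-class contributions attached to $L$ and $L'$ should cancel and the surviving terms should collapse to $\binom{m}{4}\{-1,-1\}+\{d_{L/K},2\}$.

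The main obstacle is exactly this last collapse: organizing the universal classes in $H^{2}(S_{m},\bZ/2\bZ)$ and $H^{2}(S_{2m},\bZ/2\bZ)$, matching them against the binomial identity relating $\binom{2m}{4}-\binom{m}{4}$ (together with the contribution of the sign $(-1)^{\binom{m}{2}}$) to $\binom{m}{4}$, and reconciling Deligne's Stiefel--Whitney classes of \cite{DelclAr} with the torsor-theoretic invariants of the trace form that enter Serre's formula. I note that, since $K$ is a non-archimedean local field with $\ch K\neq 2$, the identity in $H^{2}(G_{K},\bZ/2\bZ)$ is equivalent to the equality of its two sides under the injection $\mathrm{cl}$, hence to a single identity of Hilbert symbols; and by Theorem \ref{SWeps}, applied to the dimension-$0$, determinant-$1$ virtual representation obtained from $\Ind_{L/K}\kappa_{D}$ by removing the characters $1^{\oplus(m-1)}$ and $\kappa_{(-1)^{\binom{m}{2}}}$, together with inductivity of epsilon factors, this in turn reduces to the evaluation of the Langlands constant of $L/K$.
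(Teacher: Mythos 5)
The paper gives no proof of this proposition; it is quoted verbatim from Abbes--Saito, so your proposal has to stand on its own. Your treatment of the first identity does: the Gram matrix of $\Tr_{L/K}(x^2)$ in the basis $1,a,\dots,a^{m-1}$ is $M^{\mathrm{t}}M$ for the Vandermonde matrix $M$, and the sign bookkeeping $\prod_{i<j}(a_j-a_i)^2=(-1)^{\binom{m}{2}}\prod_i f'(a_i)=(-1)^{\binom{m}{2}}\Nr_{L/K}(D)$ is correct. The reduction of the second identity to $w_2(\Ind_{L'/K}1)-w_2(\Ind_{L/K}1)-w_1(\Ind_{L/K}\kappa_D)\cup w_1(\Ind_{L/K}1)$ with $L'=L(\sqrt{D})$, and the evaluation $w_1(\Ind_{L/K}\kappa_D)=\{(-1)^{\binom{m}{2}}\}$, are also sound.

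The second identity, however, is not proved. The step you yourself call ``the main obstacle'' --- the collapse to $\binom{m}{4}\{-1,-1\}+\{d_{L/K},2\}$ --- is where all of the content lies, and it is not carried out. Concretely: Serre's theorem converts $w_2(\Ind_{E/K}1)$ into the Hasse--Witt invariant of the trace form $q_E=\Tr_{E/K}(x^2)$ plus $\{2,d_{E/K}\}$, and applying it with $E=L'$ forces you to compute the Hasse--Witt invariant of $q_{L'}\cong\langle 2\rangle\otimes(q_L\perp q_{L,D})$, where $q_{L,D}(x)=\Tr_{L/K}(Dx^2)$ is the twisted trace form. Evaluating the invariant of $q_{L,D}$ --- equivalently of $\Tr_{L/K}(x^2/f'(a))$, whose Gram matrix in the basis $\{a^i\}$ is anti-triangular with unit anti-diagonal by Euler's formulas, so that the form is a sum of hyperbolic planes plus at most one square class --- is precisely the computation on which the cited Proposition 6.5 rests; no cancellation of universal classes in $H^2(S_m)$ and $H^2(S_{2m})$ makes it disappear, and your sketch never performs it. Your fallback via Theorem \ref{SWeps} is moreover circular in the context of this paper: the proposition is used here (Lemmas \ref{lconsttame} and \ref{lconstM}) exactly in order to compute the Langlands constants $\lambda(L/K,\psi)$ of wildly ramified extensions, so those constants cannot be taken as known input for recovering $w_2$. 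In short: first display proved, second display reduced to a plausible but unexecuted computation.
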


\subsubsection{Additive discriminant}\label{adddis}
We put $P_m (x)=x^m -x$ for any positive integer $m$. 
We assume that $\ch K=2$ in this subsubsection. 

\begin{defn}(\cite[D\'efinition 2.7]{BMFquad2})\label{adiscdef}
Let $m$ be the extension degree of $L$ over $K$. 
Let $f(x) \in K[x]$ be the minimal polynomial 
of a generator of $L$ over $K$. 
We have a decomposition 
$f(x) =\prod_{1 \leq i \leq m} (x-a_i )$ 
over the Galois closure of $L$ over $K$. 
We put 
\[
 d_{L/K}^+ = \sum_{1 \leq i < j \leq m} \frac{a_i a_j}{(a_i +a_j)^2} 
 \in K/P_2 (K), 
\]
which we call the additive discriminant of $L$ over $K$. 
\end{defn}

\begin{thm}(\cite[Th\'eor\`eme 2.7]{BMFquad2})\label{adiscthm}
Let $L'$ be the subextension of $K^{\rmac}$ over $K$ 
corresponding to $\Ker \delta_{L/K}$. 
Then the extension $L'$ over $K$ corresponds to 
$d_{L/K}^+ \in K/P_2 (K)$ by 
the Artin--Schreier theory. 
\end{thm}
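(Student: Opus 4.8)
The plan is to exhibit a single explicit element of the Galois closure of $L$ that is simultaneously an Artin--Schreier generator of the extension attached to $d_{L/K}^+$ and a witness for the quadratic character $\delta_{L/K}$. Fix a generator $\alpha$ of $L/K$ with minimal polynomial $f$, write $f(x)=\prod_{i=1}^{m}(x-a_i)$ in the Galois closure $\Omega/K$, and let $\widetilde G=\Gal(\Omega/K)$ act on the root set $\{a_1,\dots,a_m\}$ through a permutation homomorphism $\sigma\mapsto\pi_\sigma\in S_m$. Since $\Ind_{L/K}1$ is the permutation representation of $G_K$ on these roots, $\delta_{L/K}=\det(\Ind_{L/K}1)$ is the sign of $\pi_\sigma$, which I regard as a homomorphism $G_K\to\bF_2$ via $\{\pm1\}\simeq\bF_2$; in particular $\Ker\delta_{L/K}=\{\sigma: \pi_\sigma\text{ even}\}$.

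Next I would introduce, relative to the fixed ordering of the roots, the element
\[
 \vartheta=\sum_{1\le i<j\le m}\frac{a_j}{a_i+a_j}\in\Omega,
\]
which plays the role that the square root $\prod_{i<j}(a_i-a_j)$ of the discriminant plays when $\ch K\neq 2$. The computation rests on two elementary characteristic-$2$ identities. First, $\frac{a}{a+b}+\frac{b}{a+b}=1$ shows that for $\sigma\in G_K$ an unordered pair of roots whose order is reversed by $\pi_\sigma$ contributes $1$ to $\sigma(\vartheta)-\vartheta$ while an order-preserved pair contributes $0$, so $\sigma(\vartheta)-\vartheta$ equals the number of inversions of $\pi_\sigma$ modulo $2$, i.e. $\sigma\mapsto\sigma(\vartheta)-\vartheta$ is exactly $\delta_{L/K}$. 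Second, since the Frobenius is additive and $\bigl(\tfrac{a_j}{a_i+a_j}\bigr)^2+\tfrac{a_j}{a_i+a_j}=\tfrac{a_j^2+a_ia_j+a_j^2}{(a_i+a_j)^2}=\tfrac{a_ia_j}{(a_i+a_j)^2}$, summing over pairs gives
\[
 \vartheta^2+\vartheta=\sum_{1\le i<j\le m}\frac{a_ia_j}{(a_i+a_j)^2}.
\]
The right-hand side is symmetric in the $a_i$, hence lies in $K$, and by Definition \ref{adiscdef} it represents $d_{L/K}^+$.

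Putting this together, $\vartheta$ is a root of $X^2+X=d_{L/K}^+$ whose associated Artin--Schreier character is $\delta_{L/K}$; therefore the fixed field $L'$ of $\Ker\delta_{L/K}$ equals $K(\vartheta)$, which is precisely the extension attached to $d_{L/K}^+$ by Artin--Schreier theory. I would also note that this argument simultaneously gives well-definedness: a different generator or ordering yields some $\vartheta'$ with the same image $\sigma(\vartheta')-\vartheta'=\delta_{L/K}(\sigma)$, so $\vartheta-\vartheta'\in K$ and $(\vartheta-\vartheta')^2+(\vartheta-\vartheta')\in P_2(K)$ is the difference of the two candidate values of $d_{L/K}^+$. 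The step I expect to require the most care is the cocycle computation of $\sigma(\vartheta)-\vartheta$: one has to match the fixed ordering of the roots with the permutation $\pi_\sigma$ so that each unordered pair is counted exactly once with the correct sign, and to handle the degenerate case $\delta_{L/K}=1$, where $\vartheta\in K$ and $L'=K$. The remaining verifications are immediate characteristic-$2$ manipulations.
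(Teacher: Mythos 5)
Your proposal is correct and complete. Note that the paper does not prove this statement at all: it is quoted verbatim from Berg\'e--Martinet \cite{BMFquad2}, so there is no internal proof to compare against. Your argument is the standard one for the characteristic-$2$ (Berlekamp) discriminant: the element $\vartheta=\sum_{i<j}a_j/(a_i+a_j)$ satisfies $P_2(\vartheta)=d_{L/K}^+$ by the identity $u^2+u=a_ia_j/(a_i+a_j)^2$ for $u=a_j/(a_i+a_j)$, while the cocycle $\sigma\mapsto\sigma(\vartheta)-\vartheta$ counts inversions of $\pi_\sigma$ mod $2$ and hence equals $\delta_{L/K}=\det(\Ind_{L/K}1)$ under $\{\pm1\}\simeq\bF_2$; both computations check out (separability of $f$ guarantees $a_i+a_j\neq0$, and symmetry of $\sum_{i<j}a_ia_j/(a_i+a_j)^2$ puts it in $K$). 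Your closing remarks on well-definedness in $K/P_2(K)$ and on the degenerate case $\delta_{L/K}=1$ correctly tie up the remaining loose ends.
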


\section{Product formula of Deligne--Laumon}\label{DLprod}
We recall a statement of 
the product formula of Deligne--Laumon. 
In this paper, we need only the rank one case, which is proved in 
\cite[Proposition 10.12.1]{DelconstL}, but 
we follow the notation in \cite{LauTFcW}. 

\subsection{Local factor}\label{lf}
We consider a triple $(T,\cF,\omega)$ 
which consists of the following: 

\begin{itemize}
\item
The affine scheme $T =\Spec \cO_{K_T}$ where 
$\cO_{K_T}$ is the ring of integers in 
a local field $K_T$ 
of characteristic $p$ whose residue field 
contains $k$. 
\item
A constructible $\ol{\bQ}_{\ell}$-sheaf $\cF$ on $T$. 
\item
A non-zero meromorphic $1$-form $\omega$ on $T$. 
\end{itemize}
Then we can associate 
$\varepsilon_{\psi_0} (T, \cF , \omega ) \in \bC^{\times}$ 
to the triple $(T,\cF,\omega)$ as in 
\cite[Th\'{e}or\`{e}me (3.1.5.4)]{LauTFcW} 
using $\iota$. 

Assume that $K_T =k((t))$. 
Let $\eta =\Spec k((t))$ be the generic point of $T$ 
with the natural inclusion 
$j \colon \eta \to T$. 
We define a character 
$\Psi_{\omega} \colon k((t)) \to \bC^{\times}$ 
by 
\[
 \Psi_{\omega} (a) = 
 (\psi_0 \circ \Tr_{k/\bF_p})(\mathrm{Res} (a \omega) )
\]
for $a \in k((t))$. 
Let $l(\Psi_{\omega})$ be the level of 
$\Psi_{\omega}$ in the sense of \cite[1.7 Definition]{BHLLCGL2}. 
We fix an algebraic closure $k((t))^{\mathrm{ac}}$ of $k((t))$. 
For a rank $1$ smooth $\ol{\bQ}_{\ell}$-sheaf $V$ on 
$\eta$ corresponding to a character 
$\chi \colon G_{k((t))} \to \bC^{\times}$ via $\iota$, 
we have 
\begin{equation}\label{locep}
 \varepsilon_{\psi_0} (T, j_* V , \omega ) = 
 q^{-l(\Psi_{\omega})/2} \varepsilon (\chi \omega_{-\frac{1}{2}}, \Psi_{\omega}) 
\end{equation}
by \cite[Th\'{e}or\`{e}me (3.1.5.4)(v)]{LauTFcW}, 
\cite[(3.6.2)]{TatNtb} and 
\cite[23.1 Proposition (3)]{BHLLCGL2}. 

\subsection{Product formula}
Let $X$ be a geometrically connected proper smooth curve over 
$k$ of genus $g$. 
Let $\cF$ be a constructible $\ol{\bQ}_{\ell}$-sheaf 
on $X$. 
Let $\mathrm{Frob}_q \in G_k$ be the geometric Frobenius element. 
We put 
\[
 \varepsilon (X,\cF) = 
 \iota \Biggl( \prod_{i=0}^2 \det \bigl( 
 - \mathrm{Frob}_q ; H^i (X \otimes_k k^{\rmac},\cF) \bigr)^{(-1)^{i-1}} 
 \Biggr). 
\]
Let $\mathrm{rk}(\cF)$ be the generic rank of $\cF$. 

\begin{thm}(\cite[Th\'{e}or\`{e}me (3.2.1.1)]{LauTFcW})\label{prodf}
Let $\omega$ be a non-zero meromorphic $1$-form on $X$. 
Then we have 
\[
 \varepsilon (X,\cF) = q^{\mathrm{rk}(\cF)(1-g)} 
 \prod_{x \in \lvert X \rvert} 
 \varepsilon_{\psi_0} (X_{(x)}, \cF |_{X_{(x)}} , \omega|_{X_{(x)}} ), 
\]
where 
$\lvert X \rvert$ is the set of closed points of $X$, 
and $X_{(x)}$ is the completion of $X$ at $x$. 
\end{thm}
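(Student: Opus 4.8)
This is the product formula of Deligne--Laumon, and I would prove it by following the argument of \cite{LauTFcW}, whose engine is the $\ell$-adic Fourier--Deligne transform; I note at the outset that for the needs of this paper only the rank one case is used, and that case is the direct computation of Deligne recalled in \cite[Proposition 10.12.1]{DelconstL}, so the full strength of what follows is not required here. The plan is first to observe that, for a fixed curve $X$ and $1$-form $\omega$, both sides of the asserted identity are multiplicative in short exact sequences $0 \to \cF' \to \cF \to \cF'' \to 0$ of constructible sheaves: the left side because $\det(-\mathrm{Frob}_q; R\Gamma)$ is multiplicative in distinguished triangles, the right side because each local factor $\epsilon_{\psi_0}(X_{(x)},-,\omega|_{X_{(x)}})$ is additive in the local Galois representation up to the stated powers of $q$, which depend only on the rank and the Swan conductor. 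Together with the direct verification of the identity for skyscraper sheaves, this reduces the theorem to middle-extension (indeed irreducible) sheaves $j_{!*}\cG$ with $\cG$ smooth on a dense open of $X$.

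Next I would install functoriality under a finite morphism $f\colon Y \to X$: the equality $R\Gamma(X, f_*\cF) = R\Gamma(Y,\cF)$ gives $\epsilon(X, f_*\cF) = \epsilon(Y,\cF)$ on the global side, while on the local side one needs the induction formula for local $\epsilon$-factors applied to $(f_*\cF)|_{X_{(x)}} = \bigoplus_{y\mapsto x} \Ind\,(\cF|_{Y_{(y)}})$, the change $\omega \mapsto f^*\omega$ (whose effect is a product of local different/discriminant contributions with trivial global product), and Riemann--Hurwitz to match the $q^{\mathrm{rk}(\cF)(1-g)}$ prefactors. This reduces the statement to $X = \bP^1_k$. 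At that point I would bring in the Fourier transform $\mathrm{FT}_\psi$ on $\bA^1_k$ and use two compatibilities. Globally, $R\Gamma_c(\bA^1, \mathrm{FT}_\psi\cF)$ agrees with $R\Gamma_c(\bA^1,\cF)$ up to a shift and a Tate twist, modulo the trivial sub/quotient, so $\epsilon(\bP^1,-)$ of the two differ by an explicit elementary factor. Locally, Laumon's \emph{stationary phase} principle expresses the local Galois representation of $\mathrm{FT}_\psi\cF$ at $0$, at $\infty$, and at each point where $\cF$ ramifies, in terms of the local representations of $\cF$ through the local Fourier transform functors $\mathrm{FT}(0,\infty)$, $\mathrm{FT}(\infty,0)$, $\mathrm{FT}(\infty,\infty)$, together with the corresponding identities for the local $\epsilon$-factors of these functors. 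Combining the two, the product formula for $\cF$ on $\bP^1$ is equivalent to the product formula for $\mathrm{FT}_\psi\cF$.

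Finally I would run an induction on the numerical complexity of $\cF$ (generic rank, and then the Swan conductor at $\infty$). The base case is rank one on a curve, where the identity is the functional equation of the associated Hecke $L$-function together with Deligne's factorization of the global $\epsilon$-constant into local ones, i.e. \cite[Proposition 10.12.1]{DelconstL}. For the inductive step, use that after a Fourier transform, a twist by a rank one sheaf, and a translation $x \mapsto ax+b$ --- each of which preserves validity of the formula, the first by the equivalence just established and the rest by the multiplicativity/induction formalism --- any middle-extension sheaf on $\bA^1_k$ can be made strictly simpler (Katz's structure theory for sheaves on $\bA^1$; middle convolution, which also appears, is itself $\mathrm{FT}_\psi^{-1}$ of a tensor product of Fourier transforms). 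Hence the formula propagates to all $\cF$ on $\bA^1_k$, then to $\bP^1_k$, and by the earlier dévissage to all curves $X$.

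The delicate point, and the real content of Laumon's theorem, is the local half of the Fourier-transform compatibility: pinning down the exact local $\epsilon$-factor identity for the local Fourier transforms, which requires tracking slopes, Swan conductors, the rank changes produced by $\mathrm{FT}(\infty,\infty)$, and --- most subtly --- the precise powers of $q$ and the root-of-unity normalizations at $\infty$ in the wildly ramified setting, so that after multiplying over all closed points of $\bP^1$ everything telescopes to the clean global identity. As remarked, in the present paper one only invokes the rank one case, for which this apparatus is unnecessary and Deligne's direct argument suffices.
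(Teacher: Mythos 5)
The paper does not prove this statement: it is quoted verbatim from Laumon (Th\'eor\`eme (3.2.1.1) of \cite{LauTFcW}), and the authors explicitly remark that they only ever invoke the rank-one case, which already follows from Deligne \cite[Proposition 10.12.1]{DelconstL}. So there is no in-paper argument to compare yours against. As a reconstruction of Laumon's published proof, your outline is faithful in its main lines: multiplicativity of both sides in short exact sequences plus the skyscraper case reduces to irreducible middle extensions; the induction formula for local $\epsilon$-factors and Riemann--Hurwitz handle pushforward along finite maps and reduce to $\bP^1$; and the global/local compatibility of the Fourier--Deligne transform (stationary phase) drives an induction on rank and Swan conductor, with the rank-one base case supplied by the functional equation \`a la Deligne--Tate. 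You also correctly identify where the real work lies, namely the precise local $\epsilon$-factor identities for the three local Fourier transforms, including the powers of $q$ and normalizations at $\infty$ in the wild case; as written your sketch asserts rather than proves these, so it is a proof outline rather than a proof, but that is appropriate for a theorem the paper itself only cites. No gap affects the paper, since for the application in Lemma \ref{detFr} only the rank-one case is used.
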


\section{Determinant}\label{detsect}

In this section, we study $\det \tau_{\psi_0}$ 
to show the equality 
$\omega_{\pi_{\zeta,\chi,c}} =\det \tau_{\zeta,\chi,c}$ 
of the central character and the determinant. 
We use the product formula of Deligne--Laumon 
to study 
$\det \tau_{\psi_0} \bigl( \Fr(1) \bigr)$, 
where $\Fr(1)$ is defined in \eqref{Frmdef}. 

\begin{lem}\label{Qab}
We have $Q^{\rmab} =Q/Q_0$. 
\end{lem}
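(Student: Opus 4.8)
The plan is to compute $Q^{\rmab}$ directly from the explicit description of $Q$ and its multiplication law. The statement $Q^{\rmab} = Q/Q_0$ is equivalent to the assertion that the commutator subgroup $[Q,Q]$ equals $Q_0 = \{g(1,b,c) \in Q\}$. Since $Q/Q_0 \simeq \mu_{p^e+1}(k^{\rmac})$ via $g(a,b,c) \mapsto a$ is abelian (the multiplication law shows the first coordinate is multiplicative), we automatically get $[Q,Q] \subseteq Q_0$. So the real content is the reverse inclusion $Q_0 \subseteq [Q,Q]$.

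To prove $Q_0 \subseteq [Q,Q]$, first I would note that $Q_0$ itself is a Heisenberg-type group with center $F \simeq \bF_p$ and $Q_0/F$ an elementary abelian $p$-group (the set of $b$ with $b^{p^{2e}}+b=0$, modulo the $c$-coordinate). By Lemma \ref{Q0Fsur}, for any $g = g(1,b,c)$ with $b \neq 0$, the commutator map $Q_0 \to F$, $g' \mapsto [g,g']$, is surjective; hence $F = [Q_0,Q_0] \subseteq [Q,Q]$. It then remains to show that every element $g(1,b,c)$ with $b \neq 0$ lies in $[Q,Q]$ modulo $F$, i.e., that the image of $[Q,Q]$ in $Q/F$ is all of $Q_0/F$. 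For this I would compute a commutator $[g(a,0,0), g(1,b,0)]$ using the multiplication law: conjugating $g(1,b,0)$ by $g(a,0,0)$ changes $b$ to $ab$ (up to the $c$-coordinate which we ignore mod $F$), so $[g(a,0,0), g(1,b,0)] \equiv g(1, (a-1)b, *) \pmod F$. Since $A = \mu_{p^e+1}(k^{\rmac})$ acts on $Q_0/F$ fixing only the identity (condition (iv) in Section \ref{Hgrp}), for each nonzero $b$ there is $a \neq 1$ with $(a-1)b \neq 0$; more precisely, as $a$ ranges over $A$ and $b$ over the $(2e)$-dimensional $\bF_p$-space of admissible $b$'s, the elements $(a-1)b$ span that whole space. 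Therefore the image of $[Q,Q]$ in $Q/F$ surjects onto $Q_0/F$, and combined with $F \subseteq [Q,Q]$ we conclude $[Q,Q] = Q_0$.

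**The main obstacle** I anticipate is bookkeeping the $c$-coordinate carefully in the commutator computations — the multiplication law has the awkward term $\sum_{i=0}^{e-1}(a_1 b_1^{p^e} b_2)^{p^i}$, and one must verify that the $c$-values that arise actually produce valid elements of $Q$ (i.e., satisfy $c^p - c + b^{p^e+1} = 0$) and that one genuinely lands inside $Q_0$ rather than only in $Q_0/F$ at an intermediate stage. But this is handled cleanly by the two-step argument above: first get all of $F$ from $[Q_0,Q_0]$ via Lemma \ref{Q0Fsur}, then get the rest of $Q_0/F$ from commutators with the torus $A$, and finally observe that $[Q,Q]$ being a subgroup containing both $F$ and a full set of coset representatives for $Q_0/F$ must be all of $Q_0$. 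So the proof reduces to a short explicit commutator calculation plus invocation of Lemma \ref{Q0Fsur} and condition (iv).
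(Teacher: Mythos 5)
Your proof is correct and follows essentially the same route as the paper: reduce to $(Q/F)^{\rmab}$ via Lemma \ref{Q0Fsur}, then compute a commutator in $Q/F$ yielding $g(1,(a-1)b)$ and observe these exhaust $Q_0/F$. The paper uses the commutator $[g(a,0),g(a,b)]$ rather than $[g(a,0),g(1,b)]$, but both give $(a-1)b$ and the argument is the same.
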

\begin{proof}
By Lemma \ref{Q0Fsur}, we have $Q^{\rmab} =(Q/F)^{\rmab}$. 
For $(a,b,c) \in Q$, let $(a,b)$ be the image of $(a,b,c)$ 
in $Q/F$. 
Then we have 
\[
 (a,0) (a,b) (a,0)^{-1} (a,b)^{-1} 
 = (1,(a-1)b ). 
\]
Hence, we obtain the claim. 
\end{proof}
We view $\theta_0$ defined in \eqref{theta_0} as a character of 
$Q$ by $(a,b,c) \mapsto \theta_0(a)$. 
Recall that $\tau^0$ is the representation of $Q$ defined in \eqref{cht}.
\begin{lem}\label{dettau0}
We have $\det \tau^0 =\theta_0$. 
\end{lem}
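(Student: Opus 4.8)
The plan is to pin down $\det \tau^0$ as a character of $Q$ by first passing to the abelianization of $Q$ and then reading the eigenvalues of $\tau^0$ off the decomposition \eqref{taudec}. Since $\det \tau^0 \colon Q \to \bC^{\times}$ takes values in an abelian group, it factors through $Q^{\rmab}$, which by Lemma \ref{Qab} equals $Q/Q_0$. Because $Q$ is the product of $\{ g(a,0,0) \mid a \in \mu_{p^e+1}(k^{\mathrm{ac}}) \}$ with $Q_0$ and these meet only in the identity, the assignment $a \mapsto g(a,0,0)$ induces an isomorphism $\mu_{p^e+1}(k^{\mathrm{ac}}) \xrightarrow{\sim} Q/Q_0$. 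Hence it is enough to verify that $\det \tau^0 \bigl( g(a,0,0) \bigr) = \chi_0(a)$ for all $a \in \mu_{p^e+1}(k^{\mathrm{ac}})$.

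For this I would use \eqref{taudec}: writing $\tau^0 = \bigoplus_{\chi \neq 1} L_{\chi}$ over the non-trivial characters $\chi$ of $\mu_{p^e+1}(k^{\mathrm{ac}})$, with $g(a,0,0)$ acting on $L_{\chi}$ through $\chi(a)$, and noting that each $L_{\chi}$ is one-dimensional (there are $p^e$ summands and $\dim \tau^0 = p^e$ by \eqref{cht}), one gets
\[
 \det \tau^0 \bigl( g(a,0,0) \bigr) = \prod_{\chi \in \mu_{p^e+1}(k^{\mathrm{ac}})^{\vee} \setminus \{ 1 \}} \chi(a) = \prod_{\chi \in \mu_{p^e+1}(k^{\mathrm{ac}})^{\vee}} \chi(a).
\]
So the lemma comes down to identifying the product of all characters of the cyclic group $\mu_{p^e+1}(k^{\mathrm{ac}})$ of order $p^e+1$.

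That identification is the standard generator computation: for a cyclic group $C$ of order $N$ with generator $g$, the values $\{ \chi(g) : \chi \in C^{\vee} \}$ are exactly the $N$-th roots of unity, so $\prod_{\chi} \chi(g) = \zeta^{0+1+\cdots+(N-1)} = \zeta^{N(N-1)/2}$ for $\zeta$ a primitive $N$-th root of unity, which is $1$ if $N$ is odd and $-1$ if $N$ is even; thus $\prod_{\chi} \chi$ is trivial when $N$ is odd and is the unique order-two character $a \mapsto a^{N/2} \in \mu_2$ when $N$ is even. Taking $N = p^e + 1$ and splitting into cases finishes the proof: when $p = 2$ the order $p^e+1$ is odd, so $\det \tau^0 = 1 = \chi_0$; when $p \neq 2$ the order is even, $a^{(p^e+1)/2}$ lies in $\mu_2(k^{\mathrm{ac}}) = \{ \pm 1 \} \subset k$, and $\chi_0(a) = \varrho(a^{(p^e+1)/2})$ by definition, so $\det \tau^0 = \chi_0$ here as well. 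I do not expect a genuine obstacle: the whole argument is the reduction to $Q^{\rmab}$ via Lemma \ref{Qab} together with an elementary root-of-unity count, and the only place needing slight care is checking that $a^{(p^e+1)/2}$ really lands in $\mu_2(k)$ for odd $p$ so that $\varrho$ applies, and that the parity bookkeeping makes the value of $\prod_{\chi} \chi$ on a generator agree with that of $\chi_0$.
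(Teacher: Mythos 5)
Your argument is correct and follows the same route as the paper: reduce to $\mu_{p^e+1}(k^{\mathrm{ac}})$ via Lemma \ref{Qab}, read off the eigenvalues of $g(a,0,0)$ from the $\mu_{p^e+1}$-eigenspace decomposition of $\tau^0$ (the paper cites Lemma \ref{HLdec} and Lemma \ref{restQ} where you invoke \eqref{taudec} and \eqref{cht}, which carry the same information), and identify the product of all nontrivial characters of the cyclic group of order $p^e+1$ with $\chi_0$. The paper leaves the final roots-of-unity computation implicit; you have simply written it out.
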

\begin{proof}
By Lemma \ref{Qab}, 
it suffices to show 
$\det \tau^0 =\theta_0$ on $\mu_{p^e +1} (k^{\rmac})$. 
By Lemma \ref{HLdec} and Lemma \ref{restQ}, 
we have 
\[
\det \tau^0(a)=\prod_{\chi \in \mu_{p^e+1}(k^{\rmac})^{\vee} \setminus \{1\}} \chi(a) 
\] 
for $a \in \mu_{p^e+1}(k^{\rmac})$.
Hence, the claim follows.
\end{proof}

For $a \in k^{\times}$, let $\bigl( \frac{a}{k} \bigr)$ 
denote the quadratic residue symbol of $k$ 
defined by 
\[
 \Bigl( \frac{a}{k} \Bigr) = 
 \begin{cases}
 1 & \textrm{if $a$ is square in $k$,}\\ 
 -1 & \textrm{if $a$ is not square in $k$.} 
 \end{cases}
\]
\begin{lem}\label{deltatame}
Let $m$ be a positive integer that is prime to $p$. 
We take an $m$-th root $\varpi^{1/m}$ of $\varpi$, 
and put $L=K(\varpi^{1/m})$. \\ 
{\rm 1.} 
If $m$ is odd, then 
$\delta_{L/K}$ is the unramified character 
satisfying 
$\delta_{L/K} (\varpi ) =  \bigl( \frac{q}{m} \bigr)$. \\ 
{\rm 2.} 
If $m$ is even, we have 
$\delta_{L/K} (\varpi ) =
 \bigl( \frac{-1}{q} \bigr)^{\frac{m}{2}}$ 
and 
$\delta_{L/K} (x ) = \bigl( \frac{\bar{x}}{k} \bigr)$ 
for $x \in \cO_K^{\times}$. 
\end{lem}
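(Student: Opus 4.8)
The plan is to realize $\delta_{L/K} = \det(\Ind_{L/K} 1)$ as the sign character of the permutation action of $G_K$ on the $m$ roots of the Eisenstein polynomial $X^m - \varpi$ (equivalently on the $m$ embeddings $L \hookrightarrow K^{\rmac}$), and then to evaluate it through $\Art_K$. Since $(m,p)=1$ we may fix a primitive $m$-th root of unity $\zeta \in K^{\rmac}$ and a root $\varpi^{1/m}$; identifying the root set with $\bZ/m$ via $\zeta^i\varpi^{1/m}\leftrightarrow i$, each $\sigma\in G_K$ acts by the affine permutation $x\mapsto c(\sigma)x+a(\sigma)$, where $c\colon G_K\to(\bZ/m)^{\times}$ is the mod-$m$ cyclotomic character and $a(\sigma)\in\bZ/m$ is given by $\sigma(\varpi^{1/m})=\zeta^{a(\sigma)}\varpi^{1/m}$. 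As the signature of an affine permutation of $\bZ/m$ factors as the product of the signatures of a translation and of a multiplication, the two parts are treated separately.

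For part 1, suppose $m$ is odd. Then every translation of $\bZ/m$ is a product of odd-length cycles, hence an even permutation, while the multiplication $x\mapsto cx$ has signature $\bigl(\frac{c}{m}\bigr)$ by Zolotarev's lemma for the Jacobi symbol. Hence $\delta_{L/K}=\bigl(\tfrac{\,\cdot\,}{m}\bigr)\circ c$. Since $(m,p)=1$, the extension $K(\mu_m)/K$ is unramified, so $c$, and therefore $\delta_{L/K}$, is unramified; and as $\Art_K$ carries $\varpi$ to a lift of the geometric Frobenius, which acts on $\mu_m$ by $\zeta\mapsto\zeta^{q^{-1}}$, we get $\delta_{L/K}(\varpi)=\bigl(\frac{q^{-1}}{m}\bigr)=\bigl(\frac{q}{m}\bigr)$. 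This reasoning is independent of $\ch K$, so it also disposes of the case $\ch K=2$ (which forces $m$ odd).

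For part 2, suppose $m$ is even; then $p$ is odd, so $\ch K\neq 2$, and I would use instead the identity $\delta_{L/K}=\kappa_{d_{L/K}}$ together with Proposition \ref{dw2f}. Applying the latter to the generator $a=\varpi^{1/m}$, with $f(X)=X^m-\varpi$ and $D=f'(a)=m\,a^{m-1}$, and using $\Nr_{L/K}(a)=(-1)^{m+1}\varpi$ along with $m^m\in(K^{\times})^2$ and $(-1)^{\binom{m}{2}}=(-1)^{m/2}$, one obtains $d_{L/K}=(-1)^{\binom{m}{2}}\Nr_{L/K}(D)\equiv(-1)^{m/2+1}\varpi\pmod{(K^{\times})^2}$. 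Evaluating the Hilbert symbol then yields $\delta_{L/K}(x)=(d_{L/K},x)_K=(\varpi,x)_K=\bigl(\frac{\bar x}{k}\bigr)$ for $x\in\cO_K^{\times}$, and $\delta_{L/K}(\varpi)=\bigl((-1)^{m/2+1}\varpi,\varpi\bigr)_K=\bigl(\frac{-1}{q}\bigr)^{m/2}$, the last equality from $(\varpi,\varpi)_K=(\varpi,-1)_K$ and $(\epsilon,\varpi)_K=\bigl(\frac{\epsilon}{q}\bigr)$ for $\epsilon\in\{\pm1\}$.

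I expect the only genuine difficulty to be the sign bookkeeping: pinning down the normalization of the cyclotomic character at the geometric Frobenius (so that $\bigl(\frac{q^{-1}}{m}\bigr)$, hence $\bigl(\frac{q}{m}\bigr)$, appears — harmless in any case since this symbol is $\pm1$), and tracking the signs through $\binom{m}{2}$, through $\Nr_{L/K}(D)$, and through the Hilbert-symbol identities. One should also verify that $K(\mu_m)\neq K$ does no harm in part 1, which it does not since $c$ stays unramified, and that Proposition \ref{dw2f} is legitimately applicable in part 2, which it is because $\ch K\neq 2$ there.
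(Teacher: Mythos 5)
Your proof is correct, but it takes a genuinely different route from the paper: the paper's entire proof is a citation of \cite[(10.1.6)]{BFGdiv} for $\ch K=0$ together with the remark that the same argument transfers to positive characteristic, whereas you give a self-contained computation. For part 1 you identify $\delta_{L/K}$ with the sign of the affine permutation $x\mapsto c(\sigma)x+a(\sigma)$ of $\bZ/m$ and invoke Zolotarev's lemma; this is clean, handles $\ch K=2$ uniformly (where indeed $m$ is forced to be odd), and correctly accounts for the geometric-Frobenius normalization of $\Art_K$ (harmlessly, since $\bigl(\tfrac{q^{-1}}{m}\bigr)=\bigl(\tfrac{q}{m}\bigr)$). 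For part 2 your route through $\delta_{L/K}=\kappa_{d_{L/K}}$, Proposition \ref{dw2f}, and the tame Hilbert symbol is essentially the computation the authors themselves carry out later inside the proof of Lemma \ref{lconsttame}, where they record $d_{L/K}=-(-1)^{m/2}\varpi$, matching your $(-1)^{m/2+1}\varpi$; the evaluations $(\varpi,x)_K=\bigl(\tfrac{\bar x}{k}\bigr)$ for units and $\bigl((-1)^{m/2+1}\varpi,\varpi\bigr)_K=\bigl(\tfrac{-1}{q}\bigr)^{m/2}$ both check out, and the symmetry of the quadratic Hilbert symbol makes the residual normalization ambiguities irrelevant. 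What your approach buys is a complete proof in place of a citation, valid in both characteristics; what the paper's approach buys is brevity and consistency with the reference it relies on elsewhere.
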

\begin{proof}
These are proved in \cite[(10.1.6)]{BFGdiv} 
if $\ch K =0$. 
Actually, the same proof works 
also in the positive characteristic case. 
\end{proof}

\begin{lem}\label{lconsttame}
Let $m, m'$ be positive integers that are prime to $p$. 
We take an $m$-th root $\varpi^{1/m}$ of $\varpi$, 
and put $L=K(\varpi^{1/m})$. 
Let $\psi'_K \colon K \to \bC^{\times}$ 
be a character such that 
$\psi'_K (x)=\psi_0 (\Tr_{k/\bF_p} (m' \bar{x}))$ 
for $x \in \cO_K$. 
Then we have 
\begin{equation*}
 \lambda (L/K, \psi'_K) =
 \begin{cases}
 \bigl( \frac{q}{m} \bigr) & \textrm{if $m$ is odd,}\\
 - \bigl(-\epsilon(p)
 \bigl( \frac{2mm'}{p} \bigr) 
 \bigl( \frac{-1}{p} \bigr)^{\frac{m}{2}-1} 
 \bigr)^f & \textrm{if $m$ is even.}
 \end{cases}
\end{equation*}
\end{lem}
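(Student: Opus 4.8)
The plan is to strip off the dependence on $m'$ and then compute $\lambda(L/K,\psi_K)$ by means of the Stiefel--Whitney/discriminant formalism of this section. Since $\psi'_K$ is the twist $m'\cdot\psi_K$ (i.e.\ $\psi'_K(x)=\psi_K(m'x)$) by the unit $m'\in\cO_K^\times$, the scaling behaviour of local constants, together with the inductivity relation $\lambda(L/K,\psi)=\epsilon(\Ind_{L/K}1,\psi)\,\epsilon(1_L,\psi\circ\Tr_{L/K})^{-1}$, gives
\[
 \lambda(L/K,\psi'_K)=\delta_{L/K}(m')\,\lambda(L/K,\psi_K),
\]
using $\delta_{L/K}=\det(\Ind_{L/K}1)$ and that the scaling factor attached to the trivial character of $L$ is trivial. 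By Lemma~\ref{deltatame}, $\delta_{L/K}(m')=1$ when $m$ is odd and $\delta_{L/K}(m')=\bigl(\frac{\overline{m'}}{k}\bigr)=\bigl(\frac{m'}{p}\bigr)^{f}$ when $m$ is even; so it remains to compute $\lambda(L/K,\psi_K)$.

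Write $\Ind_{L/K}1=1_K\oplus\sigma$ with $\dim\sigma=m-1$ and $\det\sigma=\delta_{L/K}$. Then $V:=\sigma-(m-2)\cdot 1_K-\delta_{L/K}\in R(W_K,\bR)$ has dimension $0$ and determinant $1$, so Theorem~\ref{SWeps} applies, and a short Stiefel--Whitney computation identifying $w_2(V)$ with $w_2(\Ind_{L/K}1)$ gives $\epsilon(V,\psi_K)=\mathrm{exp}\bigl(2\pi\sqrt{-1}\,\mathrm{cl}(w_2(\Ind_{L/K}1))\bigr)$. Combining this with the elementary values $\epsilon(1_K,\psi_K)=\epsilon(1_L,\psi_K\circ\Tr_{L/K})=1$ and $\epsilon(\chi,\psi_K)=\chi(\varpi)$ for an unramified quadratic $\chi$, and with the central functional equation $\epsilon(\rho,\psi_K)\epsilon(\rho^{\vee},\psi_K)=(\det\rho)(-1)$ applied to the conjugate pairs among the remaining constituents of $\sigma$ (the pieces induced from the unramified subextensions recording the Frobenius orbits on the tame characters of the inertia quotient of $L/K$), one reduces to
\[
 \lambda(L/K,\psi_K)=\mathrm{exp}\bigl(2\pi\sqrt{-1}\,\mathrm{cl}(w_2(\Ind_{L/K}1))\bigr)\cdot\epsilon(\delta_{L/K},\psi_K)\cdot(\text{explicit pairing signs}).
\]
One evaluates $w_2(\Ind_{L/K}1)$ by Proposition~\ref{dw2f} with the generator $\varpi^{1/m}$, so that $D=m\,(\varpi^{1/m})^{m-1}$, $d_{L/K}=(-1)^{\binom m2}\Nr_{L/K}(D)$ modulo squares, and $\kappa_D$ is either trivial or the unramified quadratic character of $W_L$ (the two cases related by $\Ind_{L/K}\kappa_D\simeq\kappa_D\otimes\Ind_{L/K}1$); when $\ch K=2$ one uses Theorem~\ref{adiscthm} in place of Proposition~\ref{dw2f}, but then the Stiefel--Whitney factor is anyway $1$ by Theorem~\ref{SWeps}.

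If $m$ is odd, then $\delta_{L/K}$ is unramified by Lemma~\ref{deltatame}, so $\epsilon(\delta_{L/K},\psi_K)=\delta_{L/K}(\varpi)=\bigl(\frac{q}{m}\bigr)$; the pairing signs are trivial because the tame characters in play have odd order; and $\mathrm{cl}(w_2(\Ind_{L/K}1))=0$, since for $p$ odd the classes $\{-1,-1\}$ and $\{d_{L/K},2\}$ are Hilbert symbols of units and hence trivial in residue characteristic $\neq 2$ (and for $p=2$, where $m$ is automatically odd, one argues directly, using Theorem~\ref{SWeps} when $\ch K=2$). Thus $\lambda(L/K,\psi_K)=\bigl(\frac{q}{m}\bigr)$. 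If $m$ is even, hence $p$ odd, the self-conjugate constituent of $\sigma$ coming from the order-$2$ tame character survives, $\delta_{L/K}$ is ramified quadratic by Lemma~\ref{deltatame}, and $\epsilon(\delta_{L/K},\psi_K)$ is a normalized quadratic Gauss sum over $k$; by Hasse--Davenport it equals $(-1)^{f-1}q^{-1/2}$ times the $f$-th power of $\sum_{x\in\bF_p^{\times}}\bigl(\frac{x}{p}\bigr)\psi_0(x)=\sqrt{(-1)^{(p-1)/2}p}$ (Gauss, as in Lemma~\ref{TrFr}). Feeding this, the $\{d_{L/K},2\}$-contribution of Proposition~\ref{dw2f} (the origin of the factor $\bigl(\frac{2m}{p}\bigr)^{f}$) and the pairing signs into the display, and using $q^{-1/2}\bigl(\sqrt{(-1)^{(p-1)/2}p}\bigr)^{f}=\bigl(\sqrt{-1}^{\,(p-1)/2}\bigr)^{f}$ and $\sqrt{-1}^{\,(p+3)/2}=-\sqrt{-1}^{\,(p-1)/2}$, one obtains $\lambda(L/K,\psi_K)=-\bigl\{\sqrt{-1}^{\frac{p+3}{2}}\bigl(\frac{2m}{p}\bigr)\bigl(\frac{-1}{p}\bigr)^{\frac m2-1}\bigr\}^{f}$. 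Multiplying back by $\delta_{L/K}(m')$ from the first paragraph promotes $\bigl(\frac{2m}{p}\bigr)$ to $\bigl(\frac{2mm'}{p}\bigr)$ in the even case and changes nothing in the odd case, which yields the statement.

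The hard part is the sign-and-root-of-unity bookkeeping: verifying that the Stiefel--Whitney term really vanishes when $m$ is odd; fixing the precise unit twist inside the ramified quadratic character $\delta_{L/K}$ when $m$ is even, so as to produce exactly $\bigl(\frac{2m}{p}\bigr)^{f}$ (this is where the $\{\,\cdot\,,2\}$-term of Proposition~\ref{dw2f} enters), together with the pairing signs $\rho^{\vee}(-1)$ in that case; and checking the elementary but delicate identities relating $q^{-1/2}$, the Hasse--Davenport sign $(-1)^{f-1}$, the Gauss-sum value $\sqrt{(-1)^{(p-1)/2}p}$ and the constant $-\sqrt{-1}^{(p+3)/2}\cdots$ — all while keeping in mind that $m$ even forces $p$ odd, so that in characteristic $2$ (where $m$ is automatically prime to $2$) the Stiefel--Whitney machinery collapses via the last assertion of Theorem~\ref{SWeps}.
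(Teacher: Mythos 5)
Your opening reduction is correct and clean: since $m'$ is a unit, the scaling behaviour of $\epsilon$ together with inductivity gives $\lambda(L/K,\psi'_K)=\delta_{L/K}(m')\,\lambda(L/K,\psi_K)$, and Lemma \ref{deltatame} turns $\delta_{L/K}(m')$ into exactly the $m'$-dependence of the stated formula. From that point on, however, your route diverges from the paper's. The paper does not compute $\lambda(L/K,\psi_K)$ from Theorem \ref{SWeps} at all: for $m$ odd it quotes \cite[Proposition 2]{Henepsmod}, which gives $\lambda(L/K,\psi'_K)=\epsilon(\delta_{L/K},\psi'_K)$ directly, and for $m$ even it quotes the closed formula of \cite[Theorem II.2B]{SaiLcInd},
\[
 \lambda(L/K,\psi'_K)=\epsilon(\delta_{L/K},\psi'_K)\Bigl(\tfrac{m}{q}\Bigr)\Bigl(\tfrac{-1}{q}\Bigr)^{\frac m2-1}(d_{L/K},2)_K,
\]
so the only computation left is the ramified quadratic Gauss sum $\epsilon(\delta_{L/K},\psi'_K)$ via Hasse--Davenport and Gauss. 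What you propose is essentially to reprove these two cited theorems from Theorem \ref{SWeps} and Proposition \ref{dw2f}. That is a legitimate strategy (it is how Saito's formula is established), but your sketch does not carry it out, and one step of it is wrong.

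Concretely: (i) Proposition \ref{dw2f} computes $w_2(\Ind_{L/K}\kappa_D)$, not $w_2(\Ind_{L/K}1)$, and your device for passing between the two fails in the even case. With $D=m\,\varpi^{(m-1)/m}$ one has $v_L(D)=m-1$, which is \emph{odd} when $m$ is even, so $\kappa_D$ is a ramified quadratic character of $W_L$; it is neither trivial nor unramified, hence not the restriction of a character of $W_K$, and the projection formula $\Ind_{L/K}\kappa_D\simeq\kappa_D\otimes\Ind_{L/K}1$ is unavailable. One can instead compare the two inductions via inductivity of $\lambda$, but then $\epsilon(\kappa_D,\psi_L)$ is itself a ramified Gauss sum to be evaluated --- this is precisely the content of Saito's theorem, not a shortcut around it. (ii) The ``explicit pairing signs'' $(\det\rho)(-1)$ over the Frobenius orbits of tame characters, together with the identification of $\mathrm{cl}(w_2)$ with Hilbert symbols, are exactly where the factors $\bigl(\frac mq\bigr)$, $\bigl(\frac{-1}{q}\bigr)^{m/2-1}$ and $(d_{L/K},2)_K$ come from; deferring them to ``bookkeeping'' leaves the even case unproved. (iii) In the odd case with $p=2$ and $\ch K=0$, the classes $\{-1,-1\}$ and $\{d_{L/K},2\}$ are Hilbert symbols over a $2$-adic field and need not vanish (note $\binom{m}{4}$ is odd for $m=5$, for instance); ``one argues directly'' is not an argument there. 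Either complete the computation along the lines of \cite{SaiLcInd}, or do as the paper does: invoke the two references and compute the single Gauss sum $\epsilon(\delta_{L/K},\psi'_K)$.
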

\begin{proof}
If $m$ is odd, we have 
\[
 \lambda (L/K, \psi'_K) = 
 \varepsilon (\delta_{L/K}, \psi'_K) 
 =
 \Bigl( \frac{q}{m} \Bigr)
\]
by \cite[Proposition 2]{Henepsmod} and 
Lemma \ref{deltatame}.1. 

Assume that $m$ is even. 
Note that $p \neq 2$ in this case. 
Then we have 
\begin{equation}\label{dkap2}
 d_{L/K}= (-1)^{m/2} \Nr_{L/K} (m (\varpi^{1/m})^{m-1}) 
 =-(-1)^{m/2} \varpi \in K^{\times}/(K^{\times})^2 
\end{equation}
by Proposition \ref{dw2f}. 
For 
$\chi \in (\mathbb{F}_q^{\times})^{\vee}$
and $\psi \in \mathbb{F}_q^{\vee} \setminus \{1\}$, 
we set 
\[
\tau(\chi,\psi)=-\sum_{x \in \mathbb{F}_q^{\times}} \chi^{-1}(x) \psi(x)
\]
and have the Hasse--Davenport formula  
\begin{equation}\label{HD}
\tau(\chi \circ \Nr_{\mathbb{F}_{q^n}/\mathbb{F}_q},\psi \circ \Tr_{\mathbb{F}_{q^n}/\mathbb{F}_q})=\tau(\chi,\psi)^n.  
\end{equation}
Let 
\[
 (\ ,\ )_K \colon K^{\times}/(K^{\times})^2 \times 
 K^{\times}/(K^{\times})^2 \to \{\pm 1\} 
\] 
denote the Hilbert symbol. 
By \eqref{dka} and \eqref{dkap2}, 
we have 
\[
 \delta_{L/K}(x)=\kappa_{d_{L/K}}(x)=(x,d_{L/K})_K
=(x,\varpi)_K=\left(\frac{\bar{x}}{k}\right) 
\] 
for 
$x \in \mathcal{O}_K^{\times}$. 
By \cite[23.5 Theorem]{BHLLCGL2}, 
we have
\[
\varepsilon (\delta_{L/K},\psi'_K) =q^{-\frac{1}{2}} 
\sum_{x \in \mathcal{O}_K^{\times}/U_K^1} \delta_{L/K}(x) 
\psi'_K(x)=
q^{-\frac{1}{2}} \sum_{x \in k^{\times}} 
 \Bigl( \frac{x}{k} \Bigr) \psi_0 (\Tr_{k/\bF_p} (m' x)). 
\]
By applying \eqref{HD} to the extension $k$ over $\mathbb{F}_p$ 
and using \eqref{Gss}, 
we have 
\[
q^{-\frac{1}{2}} \sum_{x \in k^{\times}} 
 \Bigl( \frac{x}{k} \Bigr) \psi_0 (\Tr_{k/\bF_p} (m' x)) 
 =- \Bigl( -\epsilon(p) \Bigl( \frac{m'}{p} \Bigr) \Bigr)^f . 
\]
Hence, we have 
\[
\lambda (L/K, \psi'_K) = 
 \varepsilon (\delta_{L/K},\psi'_K) 
 \Bigl( \frac{m}{q} \Bigl) 
 \Bigl( \frac{-1}{q} \Bigl)^{\frac{m}{2}-1} 
 (d_{L/K},2)_K 
 = - \Bigl(-
 \epsilon(p)
 \Bigl( \frac{2mm'}{p} \Bigr) 
 \Bigl( \frac{-1}{p} \Bigr)^{\frac{m}{2}-1} 
 \Bigr)^f 
\]
by \cite[Theorem II.2B]{SaiLcInd} 
 and \cite[(3.6.1)]{TatNtb}. 
\end{proof}

\begin{lem}\label{detFr}
We have 
\begin{equation*}
 \det \tau_{\psi_0} \bigl( \Fr(1) \bigr) = 
 \begin{cases}
  \bigl( -\epsilon(p)
  \bigl( {\frac{2}{p}} \bigr) \bigr)^f 
  q^{\frac{p^e}{2}} 
  \quad &\textrm{if $p \neq 2$,} \\ 
  q^{2^{e-1}} 
  \quad &\textrm{if $p = 2$.} 
 \end{cases}
\end{equation*}
\end{lem}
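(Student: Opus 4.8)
The plan is to determine $\det\tau_{\psi_0}$ as a character of $Q\rtimes\mathbb{Z}$. By Lemmas~\ref{dettau0} and~\ref{restQ} its restriction to $Q$ is $\chi_0$, so the only quantity left to compute is its value at $\Fr(1)$, that is, the determinant of $\Fr(1)$ on $H_{\mathrm{c}}^1(\mathbb{A}^1_{k^{\rmac}},\pi^*\mathcal{L}_{\psi_0})$. Since $R\Gamma_{\mathrm{c}}(\mathbb{A}^1_{k^{\rmac}},\pi^*\mathcal{L}_{\psi_0})$ is concentrated in degree $1$, this determinant is, up to an explicit sign and an explicit power of $q$, the reciprocal of the global constant $\epsilon(\mathbb{P}^1,j_!\pi^*\mathcal{L}_{\psi_0})$, where $j\colon\mathbb{A}^1\hookrightarrow\mathbb{P}^1$ (note that $j_!\pi^*\mathcal{L}_{\psi_0}=j_*\pi^*\mathcal{L}_{\psi_0}$, as $\pi^*\mathcal{L}_{\psi_0}$ is wildly ramified at $\infty$). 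I would compute this global constant with the product formula of Deligne--Laumon (Theorem~\ref{prodf}) applied to the $1$-form $\omega=dy$.

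First I would read off the local factors. The sheaf $\pi^*\mathcal{L}_{\psi_0}$ is lisse on $\mathbb{A}^1$, being the pullback of the lisse sheaf $\mathcal{L}_{\psi_0}$; in particular it is unramified at $y=0$, where it even has trivial Frobenius. The form $dy$ has no zero or pole on $\mathbb{A}^1$ and a double pole at $\infty$. Hence every local factor at a finite closed point is trivial, and the whole content sits at $\infty$. Writing $t=1/y$, so that $1/z=t^{p^e+1}$ along $\pi$, the restriction of $\pi^*\mathcal{L}_{\psi_0}$ to $\Spec k((t))$ is the Artin--Schreier character of $k((t))$ attached to $t^{-(p^e+1)}$, which is wild with Swan conductor $p^e+1$. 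By~\eqref{locep} its local factor equals, up to a power of $q$ fixed by the level of the additive character $a\mapsto\psi_0(\Tr_{k/\mathbb{F}_p}(\mathrm{Res}(a\,dy)))$, the Deligne--Langlands constant of that character.

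It then remains to evaluate a Gauss sum. I would unfold this constant and substitute along the critical locus, exactly as in the proofs of Lemmas~\ref{TrFr} and~\ref{2trFr}, which for $p\neq2$ collapses it to a quadratic Gauss sum over $k$. Gauss's theorem on the sign of the quadratic Gauss sum together with the Hasse--Davenport relation descends the computation from $k$ to $\mathbb{F}_p$ and produces the factor $\bigl\{\sqrt{-1}^{\frac{p+3}{2}}\bigl(\frac{2}{p}\bigr)\bigr\}^{f}$, just as in the proof of Lemma~\ref{lconsttame}; combined with the bookkeeping above this yields the stated value when $p\neq2$. For $p=2$ the quadratic Gauss sum degenerates --- compare $\Tr\tau_{\psi_0}(\Fr(1))=0$ in Lemma~\ref{TrFr} --- and the sum at $\infty$ has to be treated as the exponential sum attached to a quadratic form in characteristic $2$; here I would invoke the additive-discriminant and Stiefel--Whitney formalism of Section~\ref{SWdisc} to see that its phase is trivial, which leaves the stated power of $q$.

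The hard part will be the bookkeeping of the constants: tracking the power of $q$ coming from the factor $q^{\mathrm{rk}(\mathcal{F})(1-g)}$ in the product formula, the level of the local additive character at $\infty$, and the various signs, and then matching the resulting Gauss-sum normalization with that fixed in Lemma~\ref{lconsttame}. The case $p=2$ is the genuinely delicate point: there the trace vanishes, so the determinant cannot be recovered from a single exponential sum and must instead be extracted from the quadratic-form data.
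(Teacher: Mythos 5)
Your setup coincides with the paper's: the determinant of $\Fr(1)$ is recovered from the global constant $\epsilon(\bP^1_k, j_!\pi^*\cL_{\psi_0})$, the product formula of Theorem \ref{prodf} with $\omega=dy$ kills every local factor away from $\infty$, the genus-zero term contributes $q$, and \eqref{locep} together with the stability theorem converts the local factor at $\infty$ into the Deligne--Langlands constant of the Artin--Schreier character $\xi$ of $k((t))$ attached to $t^{-(p^e+1)}$. Up to this point your bookkeeping matches the paper's proof exactly.

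The gap is in the evaluation of $\epsilon(\xi,\Psi_{t^{-1}dt})$ itself, which is the actual content of the lemma. The character $\xi$ has level $p^e+1\geq 3$, and the epsilon factor of such a character is \emph{not} computed by the substitution used in Lemmas \ref{TrFr} and \ref{2trFr}: those are Lefschetz trace computations, i.e.\ sums of Frobenius eigenvalues, whereas here you need their product. The explicit formula for a high-level epsilon factor requires evaluating $\xi$ at a Gauss-sum element of large negative valuation, hence an explicit form of the Artin reciprocity map for a wildly ramified character --- precisely the difficulty the paper is structured to avoid. The paper's key move, which your proposal is missing, is to observe that $\xi$ is the restriction of the \emph{level-one} character $\tilde\xi$ of $k((s))$ (given by $y^p-y=1/s$) along the tame Kummer extension $k((t))/k((s))$, $s=t^{p^e+1}$, and to apply inductivity of epsilon factors: $\epsilon(\xi,\Psi_{t^{-1}dt})=\lambda(k((t))/k((s)),\Psi_{s^{-1}ds})^{-1}\,\delta_{k((t))/k((s))}(\mathrm{rsw}(\tilde\xi,\Psi_{s^{-1}ds}))\,\epsilon(\tilde\xi,\Psi_{s^{-1}ds})^{p^e+1}$. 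The quadratic Gauss sum and the Hasse--Davenport descent you anticipate then live inside the Langlands constant of this \emph{tame} degree-$(p^e+1)$ extension (Lemma \ref{lconsttame}), while $\epsilon(\tilde\xi,\Psi_{s^{-1}ds})$ is elementary because $\tilde\xi$ has level one. Your plan for $p=2$ is likewise misdirected: the paper needs no Stiefel--Whitney or additive-discriminant input here, since $p^e+1$ is then odd and the same descent applies with the odd-degree case of Lemma \ref{lconsttame}; Theorem \ref{SWeps} concerns virtual real representations of degree zero and determinant one and does not package the determinant of $H^1_{\mathrm{c}}$ in any direct way. Without the descent through the tame cover (or some equivalent device, e.g.\ the decomposition of Lemma \ref{HLdec} into Kummer-twisted lines together with a careful analysis of the Frobenius orbits on $\mu_{p^e+1}(k^{\rmac})^{\vee}$), the computation does not close.
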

\begin{proof}
Let $x$ be the standard coordinate of $\bA_k^1$. 
Let $j$ be the open immersion 
$\bA_k^1 \hookrightarrow \bP_k^1$. 
We put $t=1/x$. 
As in Subsection \ref{lf}, 
we put $T=\Spec k[[t]]$ and $\eta=\Spec k((t))$ 
with the open immersion $j \colon \eta \to T$. 

We consider $k((s))$ as a subfield of 
$k((t))$ by $s=t^{p^e +1}$. 
Let 
$\tilde{\xi} \colon G_{k((s))} \to \bC^{\times}$ 
be the Artin--Schreier character 
associated to 
$y^p -y=1/s$ and $\psi_0$, 
which means the composite of 
\[
G_{k((s))} \longrightarrow \mathbb{F}_p;\ 
\sigma \mapsto \sigma(y)-y 
\]
and $\psi_0^{-1}$ where $y$ is an element of $k((t))^{\mathrm{ac}}$ such that 
$y^p -y=1/s$. 

We use the notation in Lemma \ref{rswcal}. 
Note that $\psi_{k((s))(y)/k((s))}(1)=1$ by 
Lemma \ref{Arfilt}. 
We can check that 
\[ 
 \Nr_{k((s))(y)/k((s))}(1+y^{-1}x)=1+s(x^p-x)
\] 
for $x \in k$. 
For $x \in \mathcal{O}_{k((s))}$, we have 
\begin{align*}
\tilde{\xi}(1+s x) &=\psi_0^{-1} 
 \Bigl( \mathrm{Art}_{k((s))}(1+sx)(y) - y \Bigr) \\
&=\psi_0^{-1}\left(-p_{k((s))(y),y^{-1}}\left(
\frac{\mathrm{Art}_{k((s))}(1+sx)(y^{-1})}{y^{-1}}
\right)\right)=\psi_0(\Tr_{k/\mathbb{F}_p}(\bar{x})), 
\end{align*}
where we use Lemma \ref{rswcal} 
with $\alpha=s$, $\beta=\varpi_{k((s))}=y^{-1}$. 
Hence, we have 
$\mathrm{rsw}(\tilde{\xi},\Psi_{s^{-1}ds})=s$ 
by Proposition \ref{chare}.1.

Let 
$\xi \colon G_{k((t))} \to \bC^{\times}$ 
be the restriction of 
$\tilde{\xi}$ to $G_{k((t))}$. 
Then $\xi$ is 
the Artin--Schreier character associated to
$y^p -y=1/t^{p^e +1}$ and $\psi_0$.

Let $V_{\xi}$ be the smooth 
$\overline{\mathbb{Q}}_{\ell}$-sheaf on 
$\eta$ corresponding to $\xi$ via $\iota$. 
Then we have $V_{\xi} \simeq 
\mathcal{L}_{\psi_0}|_{\eta}$ by \cite[D\'{e}finition 1.7 in Sommes trig.]{DelCoet}. 
Let the notation be as in Lemma \ref{HLdec}.
We write $\omega$ for the meromorphic $1$-form $dx$
on $\mathbb{P}_k^1$. 
By \cite[Th\'{e}or\`{e}me (3.1.5.4)(v)]{LauTFcW}, 
we have 
\[
 \varepsilon_{\psi_0} \bigl( X_{(x)}, 
 (j_!\pi^\ast  \mathcal{L}_{\psi_0})
 |_{X_{(x)}}, \omega|_{X_{(x)}} \bigr) =1 
\]
for any $x \in |\mathbb{A}_k^1|$ with 
$X=\mathbb{P}_k^1$ in the notation of 
Theorem \ref{prodf}. 
We simply write $\omega$ for $\omega|_T$. 
Then we have 
\begin{equation*}\label{detep}
 \det \tau_{\psi_0} \bigl( \Fr (1) \bigr) = 
 (-1)^{p^e} \varepsilon (\bP_k^1 ,j_! \pi^* \cL_{\psi_0} ) 
 = (-1)^p 
 q \varepsilon_{\psi_0} (T, j_! V_{\xi} , \omega) 
\end{equation*}
by Theorem \ref{prodf}. 
Since $\xi$ is a ramified character, 
we have $j_! V_{\xi} \simeq j_\ast V_{\xi}$. 
Hence, we obtain 
\begin{equation*}\label{DepLep}
 \varepsilon_{\psi_0} (T, j_! V_{\xi} , \omega) 
 = 
 \varepsilon_{\psi_0} (T, j_* V_{\xi} , \omega) 
 =
 q^{-1} \varepsilon (\xi \omega_{-\frac{1}{2}}, \Psi_{\omega}) 
\end{equation*}
by \eqref{locep}. 
Since $\omega=-t^{-2} dt$ on $T$, 
we have 
\begin{equation*}\label{epep}
 \varepsilon (\xi \omega_{-\frac{1}{2}}, \Psi_{\omega})
 = 
 (\xi \omega_{-\frac{1}{2}} ) (-t^{-1}) 
 \varepsilon (\xi \omega_{-\frac{1}{2}}, \Psi_{t^{-1}dt})
\end{equation*}
by \cite[23.5 Lemma 1]{BHLLCGL2}. 
We have 
\[
 \xi(-t^{-1})=\xi(-t^{p^e})=\xi(-t)^{p^e}=1, 
\] 
since $\Nr_{k((t))(y)/k((t))} (y)=1/t^{p^e+1}$. 
Hence we obtain 
\[ 
 (\xi \omega_{-\frac{1}{2}} ) (-t^{-1}) 
 \varepsilon (\xi \omega_{-\frac{1}{2}}, \Psi_{t^{-1}dt}) 
 = 
 q^{\frac{p^e}{2}} 
 \varepsilon (\xi, \Psi_{t^{-1}dt}) 
\]
by Lemma \ref{rswW}, 
since 
$\mathrm{rsw}(\xi,\Psi_{t^{-1}dt}) =s$ 
by $\mathrm{rsw}(\tilde{\xi},\Psi_{s^{-1}ds})=s$ 
and Proposition \ref{refSwres}.2. 
By Proposition \ref{chare}.2, 
we have $\varepsilon(\tilde{\xi},\Psi_{s^{-1}ds})=\tilde{\xi}(s)=1$, 
since the level of 
$\tilde{\xi}$ is $1$ and $\Nr_{k((s))(y)/k((s))}(y^{-1})=s$. 
Hence, we obtain 
\begin{equation*}\label{eqlamdel}
 \varepsilon (\xi, \Psi_{t^{-1}dt}) 
 = 
 \lambda (k((t))/k((s)) ,\Psi_{s^{-1}ds})^{-1} 
 \delta_{k((t))/k((s))} 
 \bigl( \mathrm{rsw}(\tilde{\xi},\Psi_{s^{-1}ds}) \bigr)
\end{equation*}
by Proposition \ref{reseps}. 
We have 
\begin{align*}
\lambda(k((t))/k((s)),\Psi_{s^{-1}ds})&=\begin{cases}
-\biggl(-\epsilon(p) \left(\frac{2}{p}\right)
\left(\frac{-1}{p}\right)^{\frac{p^e-1}{2}}\biggr)^f & \textrm{if $p \neq 2$}, \\
\left(\frac{q}{p^e+1}\right) & \textrm{if $p=2$}, 
\end{cases} \\
\delta_{k((t))/k((s))}\bigl( \mathrm{rsw}(\tilde{\xi},\Psi_{s^{-1}ds}) \bigr)
&=\begin{cases}
\left(\frac{-1}{q}\right)^{\frac{p^e+1}{2}} & \textrm{if $p \neq 2$},\\
\left(\frac{q}{p^e+1}\right) & \textrm{if $p=2$} 
\end{cases}
\end{align*}
by Lemma \ref{lconsttame} and Lemma \ref{deltatame} respectively. 
The claim follows from 
the above equalities.
\end{proof}

We simply write 
$\tau_{\zeta}$ for 
$\tau_{\zeta,1,1}$. 

\begin{prop}\label{centdet}
We have 
$\omega_{\pi_{\zeta,\chi,c}} =\det \tau_{\zeta,\chi,c}$. 
\end{prop}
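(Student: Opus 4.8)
The plan is to verify the identity $\omega_{\pi_{\zeta,\chi,c}} = \det \tau_{\zeta,\chi,c}$ as characters of $K^\times$ by comparing their values on a set of generators of $K^\times$, namely on $\cO_K^\times$ (equivalently on $\mu_{q-1}(K)$ and $U_K^1$) and on the uniformizer $\varpi$. Both sides are characters of $K^\times$: the left side via class field theory applied to the central character of the supercuspidal $\pi_{\zeta,\chi,c}$, the right side via $\det \circ \mathrm{Art}_K^{-1}$. On the $\iGL_n$ side, the central character is read off directly from the construction of $\pi_{\zeta,\chi,c}$ as a compact induction of $\Lambda_{\zeta,\chi,c}$: the central character is the restriction of $\Lambda_{\zeta,\chi,c}$ to $K^\times \subset L_\zeta^\times$, so $\omega_{\pi_{\zeta,\chi,c}}(x) = \chi(\bar x)^{?}$ for units and $\omega_{\pi_{\zeta,\chi,c}}(\varpi)$ is computed from $\Lambda_{\zeta,\chi,c}(\varphi_\zeta)$ using $\varphi_\zeta^n = \zeta\varpi \cdot I_n$ (up to a sign bookkeeping with $\varphi_{\zeta,n} = n'\varphi_\zeta$ and the constant $(-1)^{n-1+\varepsilon_0 f}c$). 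So the left side will be something explicit in terms of $\chi$, $c$, $\zeta$, and sign/quadratic-symbol factors.

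On the Galois side, I would use $\det \Ind_{E_\zeta/K} \tau_{n,\zeta,\chi,c} = \delta_{E_\zeta/K}^{\dim \tau_{n,\zeta,\chi,c}} \cdot \mathrm{transfer}(\det \tau_{n,\zeta,\chi,c})$, the standard formula for the determinant of an induced representation, where transfer (Verlagerung) is the corestriction $W_K^{\rmab} \leftarrow W_{E_\zeta}^{\rmab}$, i.e.\ composition with $\mathrm{Art}_{E_\zeta}^{-1}$ followed by the norm $\Nr_{E_\zeta/K}$ under class field theory compatibility. Since $\dim \tau_{n,\zeta,\chi,c} = \dim \tau_n = p^e$ (the dimension of $\tau^0$, being $\sqrt{[Q_0:F]} = p^e$), the term $\delta_{E_\zeta/K}^{p^e}$ is computed from Lemma \ref{deltatame} applied to the tame extension $E_\zeta = K(\varphi'_\zeta)$, $\varphi_\zeta'^{n'} = \zeta\varpi$ of degree $n'$; whether $n'$ is odd or even matters. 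Then $\det \tau_{n,\zeta,\chi,c} = \det\tau_{n,\zeta} \otimes (\chi\circ\lambda_\zeta)^{p^e} \otimes \phi_c^{p^e}$, and $\det \tau_{n,\zeta} = \det \tau_n \circ \Theta_\zeta$, so I need $\det \tau_n$ on $Q\rtimes\bZ$. Using the explicit twist \eqref{chartwi} of $\tau_{\psi_0}$ by a one-dimensional character, we get $\det\tau_n = (\det\tau_{\psi_0}) \cdot (\text{twist})^{p^e}$, and $\det\tau_{\psi_0}$ is determined by Lemma \ref{dettau0} (value $\chi_0^{p^e}$ on $Q$, via $\det\tau^0 = \chi_0$ and $\tau_{\psi_0}|_Q \simeq \tau^0$) together with Lemma \ref{detFr} (value on $\Fr(1)$). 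Assembling these through $\Theta_\zeta$, the norm map, and the transfer/restriction compatibilities yields an explicit formula for $\det\tau_{\zeta,\chi,c}$ on $\mu_{q-1}(K)$, $U_K^1$, and $\varpi$.

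The bulk of the proof is then a bookkeeping match of the two explicit characters. On units $U_K^1$ both sides should be trivial (the $\iGL_n$ side because $\Lambda_{\zeta,\chi,c}$ is trivial on $K^\times \cap U^1_{\mathfrak I}$ beyond the level, the Galois side because $\tau_{n,\zeta}$ factors through a finite quotient and $\lambda_\zeta$ kills $U_{E_\zeta}^1$ appropriately); on $\mu_{q-1}(K)$ the matching of the $\chi$-parts uses the definition of $\lambda_\zeta$ (with $\mathrm{Frob}_p^e$) against $\chi(\bar x)$ on the $\iGL_n$ side, with the power $p^e$ from $\dim\tau_n$ interacting with the $\mathrm{Frob}_p^e$ twist — this is where I expect to need to be careful that $\chi \circ (\mathrm{Frob}_p^e)$ raised to the $p^e$-th power gives back $\chi$. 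The real obstacle is the computation on $\varpi$: it involves $\det\tau_{\psi_0}(\Fr(1))$ from Lemma \ref{detFr}, the $\Fr$-part $\{(\sqrt{-1}^{(p+3)/2}(\tfrac{-2n'}{p}))^n p^{-1/2}\}^m$ of the twist \eqref{chartwi}, the discriminant factor $\delta_{E_\zeta/K}^{p^e}(\varpi)$ with its quadratic symbols $(\tfrac{q}{n'})$ or $(\tfrac{-1}{q})^{n'/2}$, the norm $\Nr_{E_\zeta/K}(\varphi'_\zeta)$, and $\phi_c$ contributing $c^{?}$, all of which must conspire to reproduce $(-1)^{n-1+\varepsilon_0 f}c$ times the appropriate $\chi(n')$ and $\zeta$-dependence coming from $\Lambda_{\zeta,\chi,c}(\varphi_\zeta)$ and $\varphi_\zeta^n = \zeta\varpi$. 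I would organize this final comparison by separating the cases $p\neq 2$ and $p=2$ (as the paper does throughout, since the twist \eqref{chartwi} and $\varepsilon_0$ are defined by cases), and within $p\neq 2$ track the $\sqrt{-1}$-powers and Jacobi/quadratic-residue symbols via quadratic reciprocity to collapse everything to the desired sign $(-1)^{n-1+\varepsilon_0 f}$; the $p=2$ case should be cleaner for the sign but still needs the $(-1)^{n(n-2)/8}$ factor to come out right.
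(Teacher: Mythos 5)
Your proposal is correct and follows essentially the same route as the paper: the determinant-of-induction formula $\det\tau_{\zeta,\chi,c}=\delta_{E_\zeta/K}^{p^e}(\det\tau_{n,\zeta,\chi,c})|_{K^\times}$, the lemmas on $\delta_{E_\zeta/K}$, $\det\tau^0=\chi_0$ and $\det\tau_{\psi_0}(\Fr(1))$, the twist \eqref{chartwi}, and a final quadratic-reciprocity bookkeeping on $\varpi$ split into cases. The only difference is organizational: the paper first twists away $\chi$ and $c$ (reducing to showing $\det\tau_{\zeta,1,1}=1$, the central character of $\pi_{\zeta,1,1}$ being visibly trivial), which spares it from carrying the $\chi$- and $c$-parts through the computation as you propose to do.
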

\begin{proof}
By \eqref{Galconst} and 
\cite[(1)]{GallDet}, 
we have 
\begin{equation}\label{detres}
 \det \tau_{\zeta,\chi,c} =\delta_{E_{\zeta}/K}^{p^e} 
 (\det \tau_{n,\zeta,\chi,c} ) |_{K^{\times}}, 
\end{equation}
since $\delta_{E_{\zeta}/K} =\det (\Ind_{E_{\zeta}/K} 1)$ and 
the transfer homomorphism 
$W_K^{\mathrm{ab}} \to W_{E_{\zeta}}^{\rmab}$ 
is compatible with the natural inclusion 
$K^{\times} \to E_{\zeta}^{\times}$ 
under the Artin reciprocity maps. 
Hence, we may assume $\chi=1$ and $c=1$ by twist 
(\cf \eqref{Galconst}). 
Then it suffices to show $\det \tau_{\zeta} =1$. 
We see that 
$\det \tau_{\zeta}$ is unramified by \eqref{chartwi}, 
Lemma \ref{restQ}, Lemma \ref{dettau0}, 
Lemma \ref{deltatame} and \eqref{detres}. 

If $p$ and $n'$ are odd, 
we have 
\begin{align*}
 \det \tau_{\zeta} (\varpi ) &= 
 \biggl( \frac{q}{n'} \biggr)^{p^e} 
 \biggl(-\epsilon(p)
 \biggl( \frac{2}{p} \biggr) 
 p^{\frac{p^e}{2}} \biggr)^{fn'} 
 \biggl( \biggl(-\epsilon(p) 
 \biggl( \frac{-2n'}{p} \biggr) 
 \biggr)^n 
 p^{-\frac{1}{2}} 
 \biggr)^{fn'p^e} \\ 
 &=\biggl( \biggl( \frac{p}{n'} \biggr) 
 \biggl( \frac{n'}{p} \biggr) 
 \epsilon(p)^{p^e n-1}
 \biggr)^{fn'} = 
 \biggl( \biggl( \frac{p}{n'} \biggr) 
 \biggl( \frac{n'}{p} \biggr) 
 (-1)^{\frac{p-1}{2} \frac{n'-1}{2}} 
 \biggr)^{fn'}=1 
\end{align*}
by \eqref{detres}, 
Lemma \ref{deltatame}.1 and Lemma \ref{detFr}. 
We see that 
$\det \tau_{\zeta} (\varpi )=1$ 
similarly 
also in the other case 
using \eqref{detres}, 
Lemma \ref{deltatame} and Lemma \ref{detFr}. 
\end{proof}

\section{Imprimitive field}\label{Indchar}
In this section, 
we construct a field extension 
$T_{\zeta}^{\mathrm{u}}$ of 
$E_{\zeta}$ such that 
$\tau_{n,\zeta}|_{W_{T_{\zeta}^{\mathrm{u}}}}$ 
is an induction of a character. 
We call $T_{\zeta}^{\mathrm{u}}$ an imprimitive field of $\tau_{n,\zeta}$, 
since $\tau_{n,\zeta}|_{W_{T_{\zeta}^{\mathrm{u}}}}$ 
is not primitive. 

\subsection{Construction of character}
In this subsection, 
we construct subgroups 
$R \subset Q' \subset Q \rtimes \bZ$ 
and a character $\phi_n$ of $R$. 
In the next subsection, 
we will see that 
$\tau_n |_{Q'} \simeq \Ind_{R}^{Q'} \phi_n$. 
Our imprimitive field $T_{\zeta}^{\mathrm{u}}$ 
will correspond to the subgroup 
$Q' \subset Q \rtimes \bZ$. 

Let $e_0$ be the positive integer 
such that 
$e_0 \in 2^{\mathbb{N}}$ and $e/e_0$ is odd. 

\begin{lem}\label{TrdFr}
Assume $p \neq 2$. 
Then we have 
$\Tr \tau_{\psi_0} \bigl( 
\Fr (2e_0 ) \bigr) =p^{e_0}$. 
\end{lem}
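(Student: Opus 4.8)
The plan is to mimic the proof of Lemma \ref{TrFr}: I would use the Lefschetz trace formula to express $\Tr\tau_{\psi_0}(\Fr(2e_0))$ as an exponential sum over $\bF_{p^{2e_0}}$, and then evaluate that sum by fibering through the norm map of the quadratic extension $\bF_{p^{2e_0}}/\bF_{p^{e_0}}$. More precisely, $\Fr(2e_0)$ acts on $H^1_{\mathrm{c}}(\bA^1_{k^{\rmac}},\pi^*\mathcal{L}_{\psi_0})$ as the $2e_0$-th power of $\Fr(1)$, which induces the geometric $p$-power Frobenius for the evident $\bF_p$-structures; since $\pi^*\mathcal{L}_{\psi_0}$ is lisse and geometrically non-trivial on $\bA^1_{k^{\rmac}}$, only $H^1_{\mathrm{c}}$ is non-zero, so the Lefschetz trace formula gives, exactly as in the proof of Lemma \ref{TrFr},
\[
 \Tr\tau_{\psi_0}\bigl(\Fr(2e_0)\bigr)=-\sum_{y\in\bF_{p^{2e_0}}}\psi_0\bigl(\Tr_{\bF_{p^{2e_0}}/\bF_p}(y^{p^e+1})\bigr).
\]

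Next I would simplify the exponent. Since $e_0\in 2^{\bN}$ and $e/e_0$ is odd, we have $e\equiv e_0\pmod{2e_0}$, hence $p^e\equiv p^{e_0}\pmod{p^{2e_0}-1}$, so $y^{p^e+1}=y^{p^{e_0}+1}$ for every $y\in\bF_{p^{2e_0}}$. Moreover $y^{p^{e_0}+1}=N_{\bF_{p^{2e_0}}/\bF_{p^{e_0}}}(y)$ lies in $\bF_{p^{e_0}}$, and on $\bF_{p^{e_0}}$ the map $\Tr_{\bF_{p^{2e_0}}/\bF_p}$ coincides with $z\mapsto\Tr_{\bF_{p^{e_0}}/\bF_p}(2z)$.

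Finally I would carry out the fibration over the norm map: the term $y=0$ contributes $1$, while the norm is surjective and $(p^{e_0}+1)$-to-one from $\bF_{p^{2e_0}}^{\times}$ onto $\bF_{p^{e_0}}^{\times}$, so the sum equals $1+(p^{e_0}+1)\sum_{z\in\bF_{p^{e_0}}^{\times}}\psi_0(\Tr_{\bF_{p^{e_0}}/\bF_p}(2z))$. Since $p\neq 2$, multiplication by $2$ permutes $\bF_{p^{e_0}}^{\times}$, so the inner sum is $-1$; hence the total is $1-(p^{e_0}+1)=-p^{e_0}$ and $\Tr\tau_{\psi_0}(\Fr(2e_0))=p^{e_0}$. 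I do not expect a genuine obstacle here; the only delicate point is the bookkeeping identifying $\Fr(2e_0)$ with the geometric $p^{2e_0}$-power Frobenius so that the trace formula and the stalk computation apply, but this is already implicit in the proof of Lemma \ref{TrFr}.
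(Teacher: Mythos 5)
Your proof is correct and follows essentially the same route as the paper: the Lefschetz trace formula reduces the trace of $\Fr(2e_0)$ to the character sum $-\sum_{b\in\bF_{p^{2e_0}}}\psi_0(\Tr_{\bF_{p^{2e_0}}/\bF_p}(b^{p^e+1}))$, which is then evaluated by fibering over the norm to $\bF_{p^{e_0}}$ (the paper phrases your congruence $p^e\equiv p^{e_0}\pmod{p^{2e_0}-1}$ as $(p^e+1,\,p^{2e_0}-1)=p^{e_0}+1$, and absorbs your factor of $2$ by re-indexing the sum over $\bF_{p^{e_0}}^{\times}$). No gaps.
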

\begin{proof}
For $a \in k^{\mathrm{ac}}$ and $b \in \bF_{p^{2e_0}}$ such that 
$a^p-a=b^{p^e+1}$, 
we have that 
\begin{equation}\label{Ar2m0}
 a^{p^{2e_0}} -a =\Tr_{\bF_{p^{2e_0}}/\bF_p} (b^{p^e+1}). 
\end{equation}
By \eqref{Ar2m0} and the Lefschetz trace formula, 
we see that 
\begin{align*}
 \Tr \tau_{\psi_0} \bigl( \Fr (2e_0) \bigr) &= 
 -\sum_{b \in \mathbb{F}_{p^{2e_0}}} 
 (\psi_0 \circ \Tr_{\mathbb{F}_{p^{e_0}}/\bF_p} ) 
 \bigl( 
 \Tr_{\mathbb{F}_{p^{2e_0}}/\mathbb{F}_{p^{e_0}}} 
 (b^{p^{e_0} +1}) \bigr) \\
 &= -1 - (p^{e_0}+1) \sum_{x \in \mathbb{F}_{p^{e_0}}^{\times}} 
 (\psi_0 \circ \Tr_{\mathbb{F}_{p^{e_0}}/\bF_p} ) (x) 
 =p^{e_0 } 
\end{align*} 
using $(p^e +1, p^{2e_0 } -1) =p^{e_0} +1$. 
\end{proof}

\begin{cor}\label{TrdFrn}
Assume $p \neq 2$. 
Then we have 
$\Tr \tau_n \bigl( 
\Fr (2e_0 ) \bigr) =(-1)^{ne_0(p-1)/2}$. 
\end{cor}
\begin{proof}
This follows from \eqref{chartwi} and 
Lemma \ref{TrdFr}. 
\end{proof}

Let $n_0$ be the biggest integer such that 
$2^{n_0}$ divides $p^{e_0} +1$. 
We take $r \in k^{\mathrm{ac}}$ such that 
$r^{2^{n_0}} =-1$. 
We define a subgroup $R_0$ of $Q_0$ by 
\[
 R_0 =\{ (1,b,c) \in Q_0 \mid b^{p^e} -r b =0 \}. 
\]
\begin{lem}\label{conjstab}
{\rm 1.}
If $p \neq 2$, the action of 
$2 e_0 \mathbb{Z} \subset \mathbb{Z}$ on $Q$ 
stabilizes $R_0$. \\ 
{\rm 2.}
If $p=2$, the action of $\bom{g}$ on 
$Q \rtimes \bZ$ by conjugation 
stabilizes $R_0$. 
\end{lem}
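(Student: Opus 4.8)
The plan is to verify the two stabilization claims by direct computation with the explicit multiplication law for $Q \rtimes \bZ$, reducing each to an elementary check that the defining conditions of $R_0$ are preserved. For part 1, note that $R_0$ consists of elements $g(1,b,c)$ with $b^{p^e} = r b$; under the action of $m \in \bZ$ we have $g(1,b,c) \mapsto g(1, b^{p^{-m}}, c^{p^{-m}})$, so it suffices to check that $b^{p^e} = rb$ implies $(b^{p^{-m}})^{p^e} = r\, b^{p^{-m}}$ for $m \in 2e_0\bZ$. Raising $b^{p^e} = rb$ to the $p^{-m}$-th power gives $b^{p^{e-m}} = r^{p^{-m}} b^{p^{-m}}$, so the point is that $r^{p^{-m}} = r$ whenever $2e_0 \mid m$, i.e.\ that $r$ lies in $\bF_{p^{2e_0}}$. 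This follows because $r^{2^{n_0}} = -1$ forces $r^{2^{n_0+1}} = 1$, and $2^{n_0+1} \mid p^{2e_0}-1$ by the definition of $n_0$ (since $2^{n_0} \| p^{e_0}+1$ implies $2^{n_0+1} \mid p^{2e_0}-1$, using that $p$ is odd so $p^{e_0}-1$ is even). One also needs that the constraint $b^{p^{2e}} + b = 0$ and $c^p - c = b^{p^e+1}$ defining membership in $Q$ are preserved, but these are Frobenius-stable by construction, so no extra work is needed there.

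For part 2, with $p = 2$, I would compute the conjugate $\bom{g}\, g(1,b,c)\, \bom{g}^{-1}$ in $Q \rtimes \bZ$ using $\bom{g} = (g(1,b_0,c_0), -1)$ and the multiplication law, together with the formula \eqref{ginv} for $\bom{g}^{-1}$. The Frobenius twist by $-1$ in $\bom{g}$ sends $g(1,b,c) \mapsto g(1,b^2,c^2)$, and then conjugation by the $Q$-part $g(1,b_0,c_0)$ contributes a commutator term landing in $F \subset Z$; explicitly, conjugation by $g(1,b_0,c_0)$ changes $b^2$ to $b^2$ (since $a=1$, the $b$-coordinate of a product $g(1,b_0,\cdot)g(1,b^2,\cdot)$ is $b_0 + b^2$, and after multiplying by the inverse one recovers $b^2$), while the $c$-coordinate shifts by an element of $\bF_2$. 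So the conjugate is $g(1, b^2, c^2 + \delta)$ for some $\delta \in \bF_2$ depending on $b, b_0$. The condition $b^{2^e} = rb$ becomes, after squaring, $(b^2)^{2^e} = r^2 b^2$; here when $p=2$ one has $2^{n_0} \| 2^{e_0}+1$, which since $2^{e_0}+1$ is odd forces $n_0 = 0$ and $r = -1 = 1$, so $R_0 = \{g(1,b,c) \mid b^{2^e} = b\} = \{g(1,b,c) \mid b \in \bF_{2^e}\}$, and $b \in \bF_{2^e} \iff b^2 \in \bF_{2^e}$. Thus the $b$-coordinate condition is preserved, and since the $c$-coordinate is only constrained by $c^2 - c = b^{2^e+1}$ which is automatically consistent, $R_0$ is stabilized.

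The main obstacle I anticipate is bookkeeping in part 2: tracking the commutator/cocycle term $\sum_{i=0}^{e-1}(a_1 b_1^{p^e} b_2)^{p^i}$ through the conjugation $\bom{g}\, g\, \bom{g}^{-1}$ carefully enough to confirm that the resulting $c$-coordinate still satisfies the Artin--Schreier relation defining $Q$ (rather than just landing in the right coset mod $F$). This is a finite explicit calculation but one must be attentive to the fact that $\bom{g}^{-1}$ itself has a nontrivial $c$-coordinate correction as recorded in \eqref{ginv}. A clean way to organize it is to first observe that conjugation by any element of $Q_0$ acts trivially on the $b$-coordinate of an element of $Q_0$ and only shifts the $c$-coordinate within $F$ (by Lemma \ref{Q0Fsur} and the commutator formula in its proof), so the only genuine content is the effect of the Frobenius part of $\bom{g}$, which is the squaring map $b \mapsto b^2$, $c \mapsto c^2$ — and then invoke $r = 1$ in the $p=2$ case to conclude $b^{2^e} = b \iff (b^2)^{2^e} = b^2$.
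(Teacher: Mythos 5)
Your proof is correct and follows essentially the same route as the paper's (very terse) argument: part 1 reduces to $r^{p^{2e_0}-1}=1$, which you verify from $2^{n_0+1}\mid p^{2e_0}-1$, and part 2 is the direct conjugation computation using \eqref{ginv}, where your observations that $r=1$ when $p=2$ and that conjugation by elements of $Q_0$ only shifts the $c$-coordinate within $F\subset R_0$ correctly dispose of the bookkeeping you were worried about.
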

\begin{proof}
The first claim follows from $r^{p^{2e_0} -1}=1$. 
We can see the second claim easily using 
\eqref{ginv}. 
\end{proof}
We put 
\[
 Q'=
 \begin{cases}
 Q_0 \rtimes (2 e_0 \mathbb{Z}) & \quad \textrm{if $p \neq 2$,} \\ 
 Q_0 \rtimes \bZ & \quad \textrm{if $p = 2$,} 
 \end{cases}
 \quad 
 R= 
 \begin{cases}
 R_0 \rtimes (2 e_0 \mathbb{Z}) & \quad \textrm{if $p \neq 2$,} \\ 
 R_0 \cdot \langle \bom{g} \rangle 
 & \quad \textrm{if $p = 2$}
 \end{cases}
\]
as subgroups of $Q \rtimes \bZ$, 
which are well-defined by Lemma \ref{conjstab}. 
We are going to construct 
a character $\phi_n$ of $R$ in this subsection. 
Then, we will show that 
$\tau_n |_{Q'} \simeq \Ind_{R}^{Q'} \phi_n$ 
in the next subsection. 

First, we consider the case where $p$ is odd. 
We define a homomorphism 
$\phi_n \colon R \to \bC^{\times}$ 
by 
\begin{gather}\label{phiodd}
\begin{aligned}
 \phi_n \bigl( ((1,b,c),0) \bigr) &= 
 \psi_0 \biggr( c -\frac{1}{2} \sum_{i=0}^{e -1} 
 (r b^2 )^{p^i} \biggr) \quad 
 \textrm{for} \ \ (1,b,c) \in R_0, \\ 
 \phi_n \bigl( \Fr(2e_0) \bigr) 
 &= (-1)^{n e_0 \frac{p-1}{2}}.  
\end{aligned}
\end{gather}
Then $\phi_n$ extends the character $\psi_0$ of $F$. 

Next, we consider the case where $p=2$. 
We define an abelian group $R_0'$ as 
\[
 R_0' =\Bigl\{ (b, c) \Bigm| b \in \mathbb{F}_{2}, \ 
 c \in \mathbb{F}_{2^{2e}} , \ c^{2^e} -c =b \Bigr\}
\]
with the multiplication given by 
\[
 (b_1, c_1) \cdot (b_2, c_2) =
 (b_1 +b_2 , c_1 +c_2 +b_1 b_2 ). 
\]
We define $\phi \colon R_0 \to R_0'$ by 
\begin{equation*}
 \phi ((1,b,c) ) = 
 \biggl( \Tr_{\mathbb{F}_{2^e}/\mathbb{F}_2} (b), 
 c +\sum_{0 \leq i < j \leq e -1} 
 b^{2^i +2^j} \biggr) 
 \quad 
 \textrm{for} \ \ (1,b,c) \in R_0, 
\end{equation*}
which is a homomorphism by
\begin{equation}\label{trtr}
\Tr_{\mathbb{F}_{2^e}/\mathbb{F}_2}(b)
\Tr_{\mathbb{F}_{2^e}/\mathbb{F}_2}(b')=\Tr_{\mathbb{F}_{2^e}/\mathbb{F}_2}
(bb')+\sum_{0 \leq i<j \leq e-1} (b^{2^i}b'^{2^j}+b'^{2^i}b^{2^j})
\end{equation}
for $b,b' \in \mathbb{F}_{2^e}$. 
Let $b_0 \in \mathbb{F}_{2^{2e}}$ be as before Lemma \ref{2trFr}. 
Let $F'$ be the kernel of the homomorphism 
\[
 \bF_{2^e} \to \bF_{2} ;\ c \mapsto 
 \Tr_{\bF_{2^e} /\bF_2} \bigl( 
 (b_0 +b_0^{2^{e}} ) c \bigr). 
\] 
We put $R_0'' =R_0' /F'$, 
where we consider $F'$ as a subgroup of $R_0'$ by 
$c \mapsto (0,c)$. 
Then $R_0''$ is a cyclic group of 
order $4$. 
We write $\bar{g}(b,c)$ for the image of 
$(b,c) \in R_0'$ under the projection 
$R_0' \to R_0''$. 
Let $\phi' \colon R_0 \to R_0''$ 
be the composite of $\phi$ and the projection 
$R_0' \to R_0''$. 
We put 
\begin{equation}\label{defst}
 s= \sum_{i=0}^{e-1} b_0^{2^i}, \quad  
 t= \Tr_{\mathbb{F}_{2^{2e}}/\mathbb{F}_{2^e}}(b_0). 
\end{equation}
We have $s^2+s=t$ and $\Tr_{\mathbb{F}_{2^e}/\mathbb{F}_2}(t)=\Tr_{\mathbb{F}_{2^{2e}}/\mathbb{F}_2}(b_0)=1$. 
We have 
\[
\biggl( 
 1, s^2 + 
 \sum_{0 \leq i < j \leq e-1} t^{2^i +2^j} \biggr) \in R'_0, 
\]
which is of order $4$. 
The element 
$\bar{g}\bigl( 
 1, s^2 + 
 \sum_{0 \leq i < j \leq e-1} t^{2^i +2^j} \bigr)$ is a generator of 
 $R''_0$, because $2\bar{g}\bigl( 
 1, s^2 + \sum_{0 \leq i < j \leq e-1} t^{2^i +2^j} \bigr)=\bar{g}(0,1) \neq 0$. 
Let 
$\tilde{\psi}_0 \colon R_0'' \to \bC^{\times}$ be 
the faithful character satisfying 
\[
 \tilde{\psi}_0 \biggl( \bar{g} \biggl( 
 1, s^2 + 
 \sum_{0 \leq i < j \leq e-1} t^{2^i +2^j} \biggr) 
 \biggr) = -\sqrt{-1}. 
\]
We define a homomorphism 
$\phi_n \colon R \to \bC^{\times}$ 
by 
\begin{gather}\label{phieven}
\begin{aligned}
 \phi_n \bigl( ((1,b,c),0) \bigr) &= 
 (\tilde{\psi}_0 \circ \phi') 
 ((1,b,c)) 
 \quad 
 \textrm{for} \ \ (1,b,c) \in R_0, \\ 
 \phi_n (\bom{g})  
 &= (-1)^{\frac{n(n-2)}{8}} 
 \frac{-1+\sqrt{-1}}{\sqrt{2}},  
\end{aligned}
\end{gather}
which is a character of order $8$. 
Then $\phi_n$ extends the character $\psi_0$ of $F$. 

\subsection{Induction of character}
\begin{lem}\label{taunind}
We have 
$\tau_n |_{Q'} \simeq 
 \Ind_{R}^{Q'} \phi_n$. 
\end{lem}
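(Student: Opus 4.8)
The plan is to show that the two representations $\tau_n|_{Q'}$ and $\Ind_R^{Q'}\phi_n$ agree by checking they have the same restriction to $Q_0$ and the same trace at the remaining "Frobenius-type" generator. First I would note that $[Q':R]=p^e$ in both cases (odd $p$: $[Q_0:R_0]=p^e$ and $[2e_0\bZ:2e_0\bZ]=1$; $p=2$: $[Q_0:R_0]=2^{e-1}$ and $[\langle\bom g\rangle : \langle\bom g^2\rangle]\cdot$ adjustments give the factor $2^e$ when one accounts for the cyclic overgroup generated by $\bom g$), so $\dim\Ind_R^{Q'}\phi_n = p^e = \dim\tau^0 = \dim(\tau_n|_{Q'})$ by Lemma~\ref{restQ} and the formula $(\dim\rho_\psi)^2=[G:Z]$ applied to $G=Q_0$, $Z=F$. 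Next I would verify that $\phi_n$ indeed restricts to $\psi_0$ on $F$ — this is recorded in the construction — so by Frobenius reciprocity (or Mackey), $\Ind_{R_0}^{Q_0}(\phi_n|_{R_0})$ is a sum of irreducible constituents of $Q_0$ whose central character on $F$ is $\psi_0$; by Proposition (8.3.3) there is a unique such irreducible $\rho_{\psi_0}$ of $Q_0$ of dimension $p^e$, and a dimension count forces $\Ind_{R_0}^{Q_0}(\phi_n|_{R_0})\simeq\rho_{\psi_0}\simeq\tau^0|_{Q_0}=\tau_n|_{Q_0}$ (up to the explicit $\chi_0$-type twist built into $\tau_n$, which I would track through the formula \eqref{chartwi}).

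The remaining point is to match the action of the cyclic part. For $p\neq 2$ the overgroup $Q'=Q_0\rtimes 2e_0\bZ$ is generated over $Q_0$ by $\Fr(2e_0)$; by the standard formula for the trace of an induced representation, $\Tr(\Ind_R^{Q'}\phi_n)(\Fr(2e_0))$ is a sum over those cosets of $R_0$ in $Q_0$ fixed by conjugation by $\Fr(2e_0)$, weighted by values of $\phi_n$. Using Lemma~\ref{conjstab}.1 (that $2e_0\bZ$ stabilizes $R_0$) and the explicit structure of $R_0$ and the conjugation action, this sum collapses — I expect only the identity coset to survive after a Gauss-sum cancellation over $b$ with the quadratic term $-\tfrac12\sum (rb^2)^{p^i}$ in the definition of $\phi_n$ — leaving $\phi_n(\Fr(2e_0))$ times a power of $p$. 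Comparing with $\Tr\tau_{\psi_0}(\Fr(2e_0))=p^{e_0}$ from Lemma~\ref{TrdFr}, together with the $\Fr(1)$-eigenvalue normalization and the character twist \eqref{chartwi}, pins down the sign $(-1)^{ne_0(p-1)/2}$ chosen for $\phi_n(\Fr(2e_0))$ and yields equality of traces on all of $Q'$. For $p=2$ the same argument applies with $\Fr(2e_0)$ replaced by $\bom g$: Lemma~\ref{conjstab}.2 gives that $\bom g$ stabilizes $R_0$, and the trace of the induced character at $\bom g$ reduces, via \eqref{ginv} and the Lefschetz-type computation already carried out in Lemma~\ref{2trFr}, to a single quadratic exponential sum over $\bF_2$; comparison with $\Tr\tau_{\psi_0}(\bom g^{-1})=-2$ fixes the value $\phi_n(\bom g)=(-1)^{n(n-2)/8}(-\sqrt2+\sqrt{-2})/2$.

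With both the restriction to $Q_0$ and the trace at the cyclic generator matching, and since $\tau_n|_{Q'}$ is irreducible (being a twist of $\tau^0$, extended — it is irreducible over $Q_0$ already), the representations are isomorphic. The main obstacle I anticipate is the $p=2$ case: there the intermediate quotient $R_0''$ is a cyclic group of order $4$ rather than a simple abelian extension, the character $\tilde\psi_0$ is genuinely $\bC$-valued with a $\sqrt{-1}$, and the stabilizer computation for conjugation by $\bom g$ on cosets of $R_0$ requires care with the cocycle $\sum_i(a_1 b_1^{p^e}b_2)^{p^i}$ defining multiplication in $Q$; getting the exact root of unity $\phi_n(\bom g)$ correct (not merely up to sign) is where the delicate bookkeeping lies, and it is exactly the step the authors flag as subtle in the introduction. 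The odd-$p$ case, by contrast, should be a clean Gauss-sum computation.
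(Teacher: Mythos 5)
Your plan is essentially the paper's proof: it takes $\tau_n|_{Q_0}\simeq\Ind_{R_0}^{Q_0}(\phi_n|_{R_0})$ as the known Heisenberg-type fact and then pins down the extension to $Q'$ by comparing traces at the cyclic generator ($\Fr(2e_0)$ for $p$ odd, $\bom{g}^{-1}$ for $p=2$) against Lemma \ref{TrdFr} and Lemma \ref{2trFr}, with Lemma \ref{conjstab} controlling which components of the restriction to $R_0$ are stable. The one place your expectation differs from what actually occurs is for $p=2$, where \emph{two} components (the $\tilde{\psi}_n$- and $\tilde{\psi}'_n$-components, swapped by $\Fr(e)$) are stable rather than one, so the trace comes out as $(1-\sqrt{-1})\phi_n(\bom{g}^{-1})$ rather than a single term --- but you correctly flag this as the delicate bookkeeping step rather than claiming to have done it.
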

\begin{proof}
We write $\tilde{\psi}_n$ for 
$\phi_n|_{R_0}$. 
We know that 
$\tau_n |_{Q_0} \cong \Ind_{R_0}^{Q_0} \tilde{\psi}_n$ 
by Proposition \ref{prop:rhopsi}, 
since $R_0$ is an abelian group such that 
$2 \dim_{\bF_p} (R_0/F)=\dim_{\bF_p} (Q_0/F)$. 

First, we consider the case where $p$ is odd. 
The claim for general $f$ follows from the 
claim for $f=1$ by the restriction. 
Hence, we may assume that $f=1$. 

If 
$\tilde{\psi} \in R_0^{\vee}$ satisfies 
$\tilde{\psi}|_F =\psi_0$, 
then we have 
$\tau_n |_{Q_0} \cong \Ind_{R_0}^{Q_0} \tilde{\psi}$ 
by Proposition \ref{prop:rhopsi}, 
and obtain an injective homomorphism 
$\tilde{\psi} \hookrightarrow 
 \tau_n |_{R_0}$ as 
representations of $R_0$ 
by Frobenius reciprocity. 
Hence we have a decomposition 
\begin{equation}\label{R0dec}
 \tau_n |_{R_0} = 
 \bigoplus_{\tilde{\psi} \in R_0^{\vee},\, \tilde{\psi}|_F =\psi_0} 
 \tilde{\psi}, 
\end{equation} 
since the number of 
$\tilde{\psi} \in R_0^{\vee}$ such that 
$\tilde{\psi}|_F =\psi_0$ is $p^e$. 

We put 
$\overline{R}_0 =\{ b \in k^{\mathrm{ac}} \mid b^{p^e} -rb=0 \}$. 
The $\tilde{\psi}_n$-component in \eqref{R0dec} 
is the unique component that is stable by the action of 
$((1,0,0),2e_0 )$, since the homomorphism 
\[
 \overline{R}_0 \to \overline{R}_0 ;\ 
 b \mapsto b^{p^{2e_0}} -b 
\]
is an isomorphism. 
Hence, we have a non-trivial homomorphism 
$\phi_n \to \tau_n |_{R}$ by Corollary \ref{TrdFrn}. 
Then  we have 
a non-trivial homomorphism 
$\Ind_{R}^{Q'} \phi_n \to \tau_n |_{Q'}$ 
by Frobenius reciprocity. 
The representation $\tau_n |_{Q'}$ 
is irreducible by Corollary \ref{cor:irrQ0}. 
Then we obtain the claim, since $[Q':R]=p^e$.

Next we consider the case where $p=2$. 
Then it suffices to show that 
\[
 \Tr (\Ind_{R}^{Q'} 
 \phi_n ) ( \bom{g}^{-1} ) 
 = -(-1)^{\frac{n(n-2)}{8}} \sqrt{2} 
\]
by \eqref{chartwi} and Proposition \ref{charrep}. 
We have a decomposition 
\begin{equation}\label{R0dec2}
 (\Ind_{R}^{Q'} \phi_n ) |_{R_0} = 
 \bigoplus_{\phi \in R_0^{\vee},\, \phi|_F =\psi_0} 
 \phi. 
\end{equation}
Let $\tilde{\psi}'_n$ be the twist of $\tilde{\psi}_n$ by the character 
\[
 R_0 \to \overline{\bQ}_{\ell}^{\times} ;\ 
 (1,b,c) \mapsto \psi_0 \bigl( 
 \Tr_{\bF_{2^e}/\bF_2} (b) \bigl). 
\]
Then only the $\tilde{\psi}_n$-component and 
the $\tilde{\psi}'_n$-component 
in \eqref{R0dec} 
are stable by the action of 
$((1,b_0,c_0),1)$, since the image of 
the homomorphism 
\[
 \bF_{2^e} \to \bF_{2^e} ;\ 
 b \mapsto b^2 -b 
\]
is equal to $\Ker \Tr_{\bF_{2^e}/\bF_2}$. 
The action of $\Fr(e)$ 
permutes the $\tilde{\psi}_n$-component and 
the $\tilde{\psi}'_n$-component. 
Hence, $\bom{g}$ 
acts on the $\tilde{\psi}'_n$-component by 
$\phi_n (\bom{g})$ times 
\begin{align*}
 \phi_n \Bigl( \Fr (e)^{-1} 
 \bom{g} \Fr (e) 
 \bom{g}^{-1} \Bigr) 
 =\phi_n \biggl( \Bigl( \bigl( 1,t ,
 c_0 + c_0^{2^e} +\sum_{i=0}^{e -1} 
 (b_0^{2^e +1} + b_0^{2^{e+1}})^{2^i} \bigr),0 \Bigr) \biggr) 
 = \sqrt{-1}.   
\end{align*}
Hence we have 
\[
 \Tr (\Ind_{R}^{Q'} 
 \phi_n ) ( \bom{g}^{-1} ) 
 = \bigl( 1 -\sqrt{-1} \bigr) 
 \phi_n ( \bom{g}^{-1} ) 
 =-(-1)^{\frac{n(n-2)}{8}} \sqrt{2}. 
\]
\end{proof}

We use the notations in \eqref{extgen}. 
We set 
$T_{\zeta}=E_{\zeta} (\alpha_{\zeta} )$, 
$M_{\zeta}=T_{\zeta}(\beta_{\zeta})$ and 
$N_{\zeta}=M_{\zeta}(\gamma_{\zeta})$. 
Let $f_0$ be the positive integer 
such that 
$f_0 \in 2^{\mathbb{N}}$ and $f/f_0$ is odd. 
We put 
\[
 N = 
 \begin{cases}
  2e_0/f_0 \quad &\textrm{if $p \neq 2$ and $f_0 \mid 2e_0$,} \\ 
  1 \quad &\textrm{otherwise.} 
 \end{cases}
\]
Let $K^{\mathrm{ur}}$ 
be the maximal unramified extension 
of $K$ in $K^{\mathrm{ac}}$. 
Let $K^{\mathrm{u}} \subset K^{\mathrm{ur}}$ 
be the unramified extension of degree $N$ over $K$. 
Let $k_N$ be the residue field of $K^{\mathrm{u}}$. 
For a finite 
field extension $L$ of $K$ in $K^{\mathrm{ac}}$, 
we write $L^{\mathrm{u}}$ 
for the composite field of $L$ and 
$K^{\mathrm{u}}$ in $K^{\mathrm{ac}}$. 
For $a \in k^{\mathrm{ac}}$, 
we write $\hat{a} \in \mathcal{O}_{K^{\mathrm{ur}}}$ 
for the Teichm\"{u}ller lift of $a$. 
We put 
\begin{equation}\label{ddt}
 \delta'_{\zeta}=
 \begin{cases}
 \beta_{\zeta}^{p^e} -\hat{r} \beta_{\zeta} \quad & \textrm{if $p \neq 2$},\\ 
 \beta_{\zeta}^{2^e} -\beta_{\zeta} 
 +\sum_{i=0}^{e-1} \hat{b}_0^{2^i} \quad & \textrm{if $p=2$}, 
 \end{cases} 
 \quad 
 \epsilon_1 = 
 \begin{cases}
  0 \quad & \textrm{if $p \neq 2$}, \\ 
  1 \quad & \textrm{if $p=2$}. 
 \end{cases}
\end{equation}
Then we have 
$\delta_{\zeta}'^{p^e} -\hat{r}^{-1} \delta'_{\zeta} \equiv 
 - \alpha_{\zeta}^{-1} 
 +\epsilon_1 \mod \fp_{T^{\mathrm{u}}_{\zeta}(\delta'_{\zeta})}$. 
We take 
$\delta_{\zeta} \in T^{\mathrm{u}}_{\zeta}(\delta'_{\zeta})$ such that 
\begin{equation}\label{delz}
 \delta_{\zeta}^{p^e} -\hat{r}^{-1} \delta_{\zeta} = 
 - \alpha_{\zeta}^{-1} 
 +\epsilon_1 , \quad 
 \delta_{\zeta} \equiv \delta'_{\zeta} 
 \mod \fp_{T^{\mathrm{u}}_{\zeta}(\delta'_{\zeta})}. 
\end{equation}
We put 
$M'^{\mathrm{u}}_{\zeta}=T^{\mathrm{u}}_{\zeta}(\delta_{\zeta})$. 
The image of $\Theta_{\zeta}|_{W_{M'^{\mathrm{u}}_{\zeta}}}$ is contained in 
$R$. 
Let 
$\xi_{n,\zeta} \colon W_{M'^{\mathrm{u}}_{\zeta}} 
 \to \bC^{\times}$ 
be the composite of the restrictions 
$\Theta_{\zeta}|_{W_{M'^{\mathrm{u}}_{\zeta}}}$ 
and 
$\phi_n|_R$. 
By the local class field theory, 
we regard 
$\xi_{n,\zeta}$ as a character of 
${M'^{\mathrm{u}}_{\zeta}}^{\times}$. 
\begin{prop}\label{taunzind}
We have 
$\tau_{n,\zeta}|_{W_{T_{\zeta}^{\mathrm{u}}}} \simeq 
 \mathrm{Ind}_{M'^{\mathrm{u}}_{\zeta}/T^{\mathrm{u}}_{\zeta}} 
 \xi_{n,\zeta}$. 
\end{prop}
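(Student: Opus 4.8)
The plan is to deduce the statement from Lemma~\ref{taunind} by transporting it through the homomorphism $\Theta_{\zeta}$ of \eqref{hom}. Put $\bar{G}=\Theta_{\zeta}(W_{T_{\zeta}^{\mathrm{u}}})$; since $\Theta_{\zeta}|_{W_{T_{\zeta}^{\mathrm{u}}}}$ is by construction a surjection onto $\bar{G}$, the representation $\tau_{n,\zeta}|_{W_{T_{\zeta}^{\mathrm{u}}}}$ is the inflation of $\tau_{n}|_{\bar{G}}$ along it. So the scheme is: (i) check $\bar{G}\subseteq Q'$ and $Q'=\bar{G}\cdot R$; (ii) invoke Lemma~\ref{taunind} together with Mackey's restriction formula to get $\tau_{n}|_{\bar{G}}\simeq\Ind_{\bar{G}\cap R}^{\bar{G}}\bigl(\phi_{n}|_{\bar{G}\cap R}\bigr)$, only the trivial double coset contributing because $Q'=\bar{G}R$; (iii) identify the preimage $\bigl(\Theta_{\zeta}|_{W_{T_{\zeta}^{\mathrm{u}}}}\bigr)^{-1}(\bar{G}\cap R)$ with $W_{M'^{\mathrm{u}}_{\zeta}}$; and (iv) use that inflation commutes with induction to rewrite the inflation of $\Ind_{\bar{G}\cap R}^{\bar{G}}\bigl(\phi_{n}|_{\bar{G}\cap R}\bigr)$ as $\Ind_{M'^{\mathrm{u}}_{\zeta}/T^{\mathrm{u}}_{\zeta}}\xi_{n,\zeta}$, the inflated character being $\xi_{n,\zeta}=\phi_{n}\circ\Theta_{\zeta}|_{W_{M'^{\mathrm{u}}_{\zeta}}}$ by its very definition.

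For the group-theoretic bookkeeping in (i): every $\sigma\in W_{T_{\zeta}^{\mathrm{u}}}$ fixes $\alpha_{\zeta}$, so $a_{\sigma}=1$ and the $Q$-component of $\Theta_{\zeta}(\sigma)$ lies in $Q_{0}$; and since $T_{\zeta}^{\mathrm{u}}/T_{\zeta}$ is unramified of degree $N$, the Frobenius component $n_{\sigma}$ lies in $N\mathbb{Z}$, so in particular $\bar{G}$ contains $Q_{0}$. Using the definitions of $e_{0},f_{0},N$, the facts $e_{0},f_{0}\in2^{\mathbb{N}}$, and the parities of $e/e_{0}$ and $f/f_{0}$, one checks that this forces $\bar{G}\subseteq Q'$ when $p\neq2$ (for $p=2$ it is automatic, as $Q'=Q_{0}\rtimes\mathbb{Z}$), and that the $\mathbb{Z}$-components of $\bar{G}$ and of $R$ together generate $2e_{0}\mathbb{Z}$ (for $p=2$, the factor $\langle\bom{g}\rangle$ of $R$ already surjects onto $\mathbb{Z}$); combined with $Q_{0}\subseteq\bar{G}$ this yields $Q'=\bar{G}\cdot R$.

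The heart of the argument is step (iii). For $\sigma\in W_{T_{\zeta}^{\mathrm{u}}}$ one has $b_{\sigma}=\sigma(\beta_{\zeta})-\beta_{\zeta}$ by \eqref{abcdef}, and $\Theta_{\zeta}(\sigma)$ lies in $R$ exactly when $\bar{b}_{\sigma}^{\,p^{e}}=r\bar{b}_{\sigma}$ (plus, for $p=2$, the correct behaviour in the $\langle\bom{g}\rangle$-direction). The element $\delta_{\zeta}$ of \eqref{delz} was chosen precisely so that $\delta_{\zeta}\equiv\delta'_{\zeta}\bmod\fp$ with $\delta'_{\zeta}=\beta_{\zeta}^{p^{e}}-\hat{r}\beta_{\zeta}$ when $p\neq2$, and $\delta'_{\zeta}=\beta_{\zeta}^{2^{e}}-\beta_{\zeta}+\sum_{i=0}^{e-1}\hat{b}_{0}^{\,2^{i}}$ together with the shift $\varepsilon_{1}=1$ when $p=2$; one then checks that $\sigma$ fixes $\delta_{\zeta}$ if and only if it fixes $\beta_{\zeta}^{p^{e}}-\hat{r}\beta_{\zeta}$ modulo $\fp$, i.e.\ $b_{\sigma}^{\,p^{e}}-\hat{r}b_{\sigma}\equiv0\bmod\fp$, i.e.\ $\bar{b}_{\sigma}^{\,p^{e}}=r\bar{b}_{\sigma}$. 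Together with the $p=2$ bookkeeping for $\bom{g}$ this gives the required identification, in particular $\Theta_{\zeta}(W_{M'^{\mathrm{u}}_{\zeta}})\subseteq R$, so that $\xi_{n,\zeta}$ is well-defined and equals the inflation of $\phi_{n}|_{\bar{G}\cap R}$. Assembling (i)--(iv) yields $\tau_{n,\zeta}|_{W_{T_{\zeta}^{\mathrm{u}}}}\simeq\Ind_{M'^{\mathrm{u}}_{\zeta}/T^{\mathrm{u}}_{\zeta}}\xi_{n,\zeta}$.

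The main obstacle is this last identification: one must verify, by explicit manipulation of the relations \eqref{extgen} and \eqref{delz}, that the Artin--Schreier extension $M'^{\mathrm{u}}_{\zeta}/T^{\mathrm{u}}_{\zeta}$ cuts out exactly the preimage of $R$. The delicate point is that $\beta_{\zeta}$, $\alpha_{\zeta}^{-1}$ and $\delta_{\zeta}$ all have negative valuation whereas $a_{\sigma},b_{\sigma},c_{\sigma}$ are integral, so one has to work modulo $\fp$ throughout and keep track of which congruences actually propagate; and the case $p=2$ requires separate treatment because of the shift $\varepsilon_{1}=1$ and the auxiliary element $\bom{g}$.
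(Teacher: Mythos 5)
Your argument is correct and takes exactly the route of the paper, whose entire proof of this proposition is the single sentence ``This follows from Proposition \ref{taunind}'': your steps (i)--(iv) are precisely the transport of that lemma through $\Theta_{\zeta}$ (image contained in $Q'$, one double coset since $Q'=\bar{G}R$, induction commuting with inflation) that the authors leave implicit. In particular, your step (iii) --- that $M'^{\mathrm{u}}_{\zeta}/T^{\mathrm{u}}_{\zeta}$ cuts out exactly the preimage of $R$ --- is the very containment the paper already assumes tacitly when it defines $\xi_{n,\zeta}$ as the composite of $\Theta_{\zeta}|_{W_{M'^{\mathrm{u}}_{\zeta}}}$ with $\phi_{n}|_{R}$, so no divergence from the intended argument.
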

\begin{proof}
This follows from Lemma \ref{taunind}. 
\end{proof}

\begin{rem}
Our imprimitive field is different from 
that in \cite[5.1]{BHLepi}. 
In our case, 
$T_{\zeta}^{\mathrm{u}}$ need not be normal over $K$. 
This choice is technically important 
in our proof of the main result. 
\end{rem}

\subsection{Study of character}\label{Study}
In this subsection, we study 
the character $\xi_{n,\zeta}$ in detail. 

Assume that $\ch K=p$ and 
$f=1$ 
in this subsection. 
We will use results in this subsection to compute 
the epsilon factor of $\xi_{n,\zeta}$ later 
after a reduction to the case where 
$\ch K=p$ and $f=1$. 
By \eqref{extgen}, \eqref{ddt}, \eqref{delz} and $\ch K=p$, we have that $\delta_{\zeta} = \delta'_{\zeta}$.

\subsubsection{Odd case}
Assume $p \neq 2$ in this subsubsection. 
We put 
\begin{equation}\label{tzeta}
 \theta_{\zeta} = 
 \gamma_{\zeta} +\frac{1}{2} \sum_{i=0}^{e -1} 
 (r \beta_{\zeta}^2 )^{p^i}. 
\end{equation}
Since $r^{p^{e_0}+1}=-1$ and 
$(p^e+1)/(p^{e_0}+1)$ is an odd integer, we have $r^{p^e+1}=-1$. 
Then we have 
\begin{equation}\label{thetad}
 \theta^p_{\zeta} - \theta_{\zeta} = 
 \beta_{\zeta}^{p^e +1} -\frac{1}{2 r} 
 (\beta_{\zeta}^{2p^e} +r^2 \beta_{\zeta}^2 ) 
 = -\frac{1}{2 r} 
 (\beta_{\zeta}^{2p^e} -2 r \beta_{\zeta}^{p^e +1} 
 +r^2 \beta_{\zeta}^2 ) 
 = -\frac{1}{2 r} \delta_{\zeta}^2 . 
\end{equation}
We put 
$N'^{\mathrm{u}}_{\zeta} =M'^{\mathrm{u}}_{\zeta}(\theta_{\zeta})$. 
Let $\xi'_{n,\zeta}$ 
be the twist of 
$\xi_{n,\zeta}$ 
by the unramified character 
\[
 W_{M'^{\mathrm{u}}_{\zeta}} \to \bC^{\times};\ 
 \sigma \mapsto  
 \sqrt{-1}^{n  n_{\sigma} \frac{p-1}{2}},  
\]
where $n_{\sigma}$ is as before \eqref{hom}. 
\begin{lem}\label{oddfac}
If $p \neq 2$, then 
$\xi'_{n,\zeta}$ factors through 
$\Gal (N'^{\mathrm{u}}_{\zeta}/M'^{\mathrm{u}}_{\zeta})$. 
\end{lem}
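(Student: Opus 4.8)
The plan is to prove the equivalent assertion that $\xi'_{n,\zeta}$ is trivial on $W_{N'^{\mathrm{u}}_{\zeta}}$. Since $\theta_{\zeta}^{p}-\theta_{\zeta}=-\delta_{\zeta}^{2}/(2r)\in M'^{\mathrm{u}}_{\zeta}$, the extension $N'^{\mathrm{u}}_{\zeta}/M'^{\mathrm{u}}_{\zeta}$ is Artin--Schreier, hence Galois, so $W_{M'^{\mathrm{u}}_{\zeta}}/W_{N'^{\mathrm{u}}_{\zeta}}\simeq\Gal(N'^{\mathrm{u}}_{\zeta}/M'^{\mathrm{u}}_{\zeta})$ and a character of $W_{M'^{\mathrm{u}}_{\zeta}}$ factors through this group exactly when it kills $W_{N'^{\mathrm{u}}_{\zeta}}$.

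First I would make $\xi'_{n,\zeta}$ explicit on $W_{M'^{\mathrm{u}}_{\zeta}}$. For $\sigma\in W_{M'^{\mathrm{u}}_{\zeta}}$ we have $\sigma(\alpha_{\zeta})=\alpha_{\zeta}$, so $a_{\sigma}=1$ in \eqref{abcdef} and $\sigma(\beta_{\zeta})=\beta_{\zeta}+b_{\sigma}$. Because $\ch K=p$, the element $\beta_{\zeta}^{p^{e}}-\hat r\beta_{\zeta}$ already solves \eqref{delz} (this uses $\hat r^{p^{e}}=-\hat r^{-1}$, which comes from $r^{2^{n_{0}}}=-1$ and $e/e_{0}$ odd), so we may take $\delta_{\zeta}=\beta_{\zeta}^{p^{e}}-\hat r\beta_{\zeta}$; applying $\sigma$ then gives $b_{\sigma}^{p^{e}}=\hat r b_{\sigma}$, i.e.\ $g(1,\bar b_{\sigma},\bar c_{\sigma})\in R_{0}$. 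Also $M'^{\mathrm{u}}_{\zeta}\supseteq K^{\mathrm{u}}$, so $n_{\sigma}\in 2e_{0}\bZ$, and hence $\Theta_{\zeta}(\sigma)=(g(1,\bar b_{\sigma},\bar c_{\sigma}),0)\cdot\Fr(2e_{0})^{n_{\sigma}/(2e_{0})}$ lies in $R$. By the definition of $\phi_{n}$ in the odd case, the factor $\phi_{n}(\Fr(2e_{0}))^{n_{\sigma}/(2e_{0})}=(-1)^{n\frac{p-1}{2}\cdot n_{\sigma}/2}$ is exactly cancelled by the twisting character defining $\xi'_{n,\zeta}$ (note $n_{\sigma}/2\in\bZ$), leaving
\[
 \xi'_{n,\zeta}(\sigma)=\psi_{0}\Bigl(\bar c_{\sigma}-\tfrac{1}{2}\sum_{i=0}^{e-1}(r\bar b_{\sigma}^{2})^{p^{i}}\Bigr).
\]

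Next I would compute the argument of $\psi_{0}$ intrinsically. Inserting $\theta_{\zeta}=\gamma_{\zeta}+\tfrac{1}{2}\sum_{i=0}^{e-1}(r\beta_{\zeta}^{2})^{p^{i}}$ and the formulas \eqref{abcdef} with $\sigma(\beta_{\zeta})=\beta_{\zeta}+b_{\sigma}$, and using additivity of the $p$-power map, a direct manipulation gives
\[
 c_{\sigma}-\tfrac{1}{2}\sum_{i=0}^{e-1}(rb_{\sigma}^{2})^{p^{i}}
 =\bigl(\sigma(\theta_{\zeta})-\theta_{\zeta}\bigr)+\sum_{i=0}^{e-1}\bigl(\beta_{\zeta}(b_{\sigma}^{p^{e}}-rb_{\sigma})+b_{\sigma}^{p^{e}+1}-rb_{\sigma}^{2}\bigr)^{p^{i}}.
\]
For $\sigma\in W_{N'^{\mathrm{u}}_{\zeta}}$ the first term vanishes; and since $b_{\sigma}^{p^{e}}=\hat r b_{\sigma}=rb_{\sigma}$ from the previous step, each summand equals $\beta_{\zeta}\cdot 0+b_{\sigma}(b_{\sigma}^{p^{e}}-rb_{\sigma})=0$. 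Hence the argument of $\psi_{0}$ is $0$, so $\xi'_{n,\zeta}(\sigma)=1$ for all $\sigma\in W_{N'^{\mathrm{u}}_{\zeta}}$, which is what we wanted.

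The step I expect to be the main obstacle is the identity in the second display together with all the bookkeeping it rests on: one must check that $\Theta_{\zeta}$ really lands in $R$ on $W_{M'^{\mathrm{u}}_{\zeta}}$ and that the Frobenius contribution of $\phi_{n}$ is undone by the twist for \emph{every} $\sigma$, not merely for $\sigma$ in inertia --- both hinge on the special choice of $\delta_{\zeta}$ and on the congruence behind \eqref{delz}. If one prefers not to invoke the exact equality $\delta_{\zeta}=\beta_{\zeta}^{p^{e}}-\hat r\beta_{\zeta}$ and instead works only with $\delta_{\zeta}\equiv\delta'_{\zeta}$, then the same computation must be carried out modulo $\fp_{K^{\rmac}}$, which additionally requires a valuation estimate showing that $\beta_{\zeta}(b_{\sigma}^{p^{e}}-rb_{\sigma})$ has strictly positive valuation; in characteristic $p$ this is automatic, so no genuine estimate is needed.
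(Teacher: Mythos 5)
Your proposal is correct and rests on exactly the same computation as the paper's proof: the identity expressing $c_{\sigma}-\tfrac{1}{2}\sum_{i}(rb_{\sigma}^{2})^{p^{i}}$ in terms of $\sigma(\theta_{\zeta})-\theta_{\zeta}$ plus a term that vanishes when $b_{\sigma}^{p^{e}}=rb_{\sigma}$. The only (harmless) difference is the direction: the paper takes $\sigma\in\Ker\xi'_{n,\zeta}$ and shows it fixes $\theta_{\zeta}$ (so the kernel is contained in $W_{N'^{\mathrm{u}}_{\zeta}}$, and one concludes by comparing indices, both being $p$), whereas you show directly that $W_{N'^{\mathrm{u}}_{\zeta}}$ is contained in the kernel, which is the slightly more immediate formulation of ``factors through.''
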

\begin{proof}
Let $\sigma \in \Ker \xi'_{n,\zeta}$. 
Recall that $a_{\sigma}, b_{\sigma}, c_{\sigma}$ 
are defined in \eqref{abcdef}. 
Then we have 
$(\bar{a}_{\sigma}, \bar{b}_{\sigma}, 
 \bar{c}_{\sigma}) \in R_0$ and 
\[
 \bar{c}_{\sigma} -\frac{1}{2} \sum_{i=0}^{e -1} 
 (r \bar{b}_{\sigma}^2 )^{p^i} =0
\] 
by \eqref{phiodd}. 
Hence, we see that 
\begin{align*}
 \sigma (\theta_{\zeta}) - \theta_{\zeta} &= 
 c_{\sigma} - 
 \sum_{i=0}^{e-1} 
 \bigl( r b_{\sigma} (\beta_{\zeta} +b_{\sigma} ) \bigr)^{p^i} 
 +\frac{1}{2} \sum_{i=0}^{e-1} 
 \Bigl( r \bigl( 
 (\beta_{\zeta} +b_{\sigma})^2 
 -\beta_{\zeta}^2 \bigr) \Bigr)^{p^i} \\ 
 &= c_{\sigma} -\frac{1}{2} \sum_{i=0}^{e -1} 
 (r b_{\sigma}^2 )^{p^i} 
 \equiv 0 \mod \mathfrak{p}_{N'^{\mathrm{u}}_{\zeta}}  
\end{align*}
by \eqref{abcdef}.
Therefore, we obtain the claim by $\sigma(\delta_{\zeta})=\delta_{\zeta}$ and \eqref{thetad}. 
\end{proof}

\subsubsection{Even case}
Assume $p=2$ in this subsubsection. 
Let 
$\xi'_{n,\zeta}$ 
be the twist of 
$\xi_{n,\zeta}$ 
by the character 
\begin{equation}\label{dya}
 W_{M'^{\mathrm{u}}_{\zeta}} \to \bC^{\times};\ 
 \sigma \mapsto 
 \biggl( 
 (-1)^{\frac{n(n-2)}{8}} \frac{-1+\sqrt{-1}}{\sqrt{2}}
 \biggl)^{n_{\sigma}}. 
\end{equation}
We take $b_1 ,b_2 \in k^{\mathrm{ac}}$ such that 
\begin{equation}\label{bb}
 b_1^2 -b_1 =s, 
 \quad 
 b_2^2 -b_2 =t 
 \Bigl( b_1^2 +\sum_{i=0}^{e-1} ( b_1 s )^{2^i} \Bigr). 
\end{equation}
We put 
\begin{align}
 \eta_{\zeta} &= 
 \sum_{i=0}^{e -1} \beta_{\zeta}^{2^i} 
 +b_1 , \quad 
 \gamma_{\zeta}' = \gamma_{\zeta} 
 + \sum_{0 \leq i < j \leq e-1} \beta_{\zeta}^{2^i+2^j}, \label{defetagam'}\\ 
 \theta'_{\zeta} &= 
 \sum_{i=0}^{e -1} ( t \gamma_{\zeta}' )^{2^i} + 
 \sum_{0 \leq i \leq j \leq e-2}  
 t^{2^i} 
 (\delta_{\zeta} \eta_{\zeta})^{2^j} 
 +\sum_{0 \leq j < i \leq e-1} 
 t^{2^i} 
 (b_1 \delta_{\zeta} +s \eta_{\zeta})^{2^j} 
 +b_1^2 \eta_{\zeta} 
 + b_2 . \label{deftheta'}
\end{align}

\begin{lem}\label{tileq}
We have 
$\eta_{\zeta}^2 -\eta_{\zeta} 
 = \delta_{\zeta}$ 
and 
$\theta'^2_{\zeta} -\theta'_{\zeta} 
 = (\delta_{\zeta} \eta_{\zeta})^{2^{e-1}}$. 
\end{lem}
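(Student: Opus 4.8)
The plan is to verify both identities by direct computation in characteristic $2$, reducing every expression of the form $x^{2}-x$ by means of the available relations only: from \eqref{extgen} and \eqref{delz},
\[
 \gamma_{\zeta}^{2}+\gamma_{\zeta}=\beta_{\zeta}^{2^{e}+1},\qquad
 \beta_{\zeta}^{2^{2e}}+\beta_{\zeta}=-\alpha_{\zeta}^{-1},\qquad
 \delta_{\zeta}^{2^{e}}+\delta_{\zeta}=-\alpha_{\zeta}^{-1}+1,
\]
together with $b_{1}^{2}+b_{1}=s$, $b_{2}^{2}+b_{2}=t\bigl(b_{1}^{2}+\sum_{i=0}^{e-1}(b_{1}s)^{2^{i}}\bigr)$, and the elementary consequences of $\Tr_{\bF_{2^{2e}}/\bF_{2}}(b_{0})=1$: namely $t\in\bF_{2^{e}}$, $\Tr_{\bF_{2^{e}}/\bF_{2}}(t)=1$ (so $\sum_{i=0}^{e-1}t^{2^{i}}=1$), $s^{2}+s=t$, $s^{2^{e}}+s=1$, and hence $b_{1}^{4}=t+b_{1}$. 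In the equal characteristic setting of this subsection, $\delta'_{\zeta}$ already satisfies the Artin--Schreier equation in \eqref{delz}, so the congruence there pins $\delta_{\zeta}$ down to $\delta_{\zeta}=\delta'_{\zeta}=\beta_{\zeta}^{2^{e}}-\beta_{\zeta}+\sum_{i=0}^{e-1}b_{0}^{2^{i}}=\beta_{\zeta}^{2^{e}}+\beta_{\zeta}+s$, which is the form I will use throughout.

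The first identity is immediate: writing $S_{\beta}=\sum_{i=0}^{e-1}\beta_{\zeta}^{2^{i}}$, so that $\eta_{\zeta}=S_{\beta}+b_{1}$, a telescoping sum gives $S_{\beta}^{2}+S_{\beta}=\beta_{\zeta}^{2^{e}}+\beta_{\zeta}$, whence $\eta_{\zeta}^{2}+\eta_{\zeta}=(\beta_{\zeta}^{2^{e}}+\beta_{\zeta})+(b_{1}^{2}+b_{1})=\beta_{\zeta}^{2^{e}}+\beta_{\zeta}+s=\delta_{\zeta}$. For the second identity I would split $\theta'_{\zeta}=A+B+C+D$ into its four summands in the order they are written, use that squaring in characteristic $2$ annihilates all cross terms, and treat $A^{2}+A$, $B^{2}+B$, $C^{2}+C$, $D^{2}+D$ separately. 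Since $t^{2^{e}}=t$, the Frobenius-orbit sum $A$ telescopes to $A^{2}+A=t\bigl((\gamma'_{\zeta})^{2^{e}}+\gamma'_{\zeta}\bigr)$; the two triangular sums collapse to their corner contributions, $B^{2}+B=\bigl(\sum_{i=1}^{e-1}t^{2^{i}}\bigr)(\delta_{\zeta}\eta_{\zeta})^{2^{e-1}}+t\sum_{j=0}^{e-1}(\delta_{\zeta}\eta_{\zeta})^{2^{j}}$ and $C^{2}+C=\bigl(\sum_{i=1}^{e-1}t^{2^{i}}\bigr)(b_{1}\delta_{\zeta}+s\eta_{\zeta})+t\sum_{j=0}^{e-1}(b_{1}\delta_{\zeta}+s\eta_{\zeta})^{2^{j}}$ (using $\sum_{i=1}^{e-1}t^{2^{i}}=1+t$); and $D^{2}+D$ is reduced via $\eta_{\zeta}^{2}=\eta_{\zeta}+\delta_{\zeta}$ (the first identity), $b_{1}^{2}=s+b_{1}$, $b_{1}^{4}=t+b_{1}$ and the defining relation for $b_{2}$.

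It then remains to evaluate $(\gamma'_{\zeta})^{2}+\gamma'_{\zeta}$: from $\gamma_{\zeta}^{2}+\gamma_{\zeta}=\beta_{\zeta}^{2^{e}+1}$, the telescoping of $\sum_{0\le i<j\le e-1}\beta_{\zeta}^{2^{i}+2^{j}}$, and the substitutions $\beta_{\zeta}^{2^{e}}+\beta_{\zeta}=\delta_{\zeta}+s$ and $S_{\beta}=\eta_{\zeta}+b_{1}$, one gets $(\gamma'_{\zeta})^{2}+\gamma'_{\zeta}=\delta_{\zeta}\eta_{\zeta}+b_{1}\delta_{\zeta}+s\eta_{\zeta}+b_{1}s+\beta_{\zeta}^{2}$, and applying $\sum_{i=0}^{e-1}(\cdot)^{2^{i}}$ expresses $A^{2}+A$ as $t$ times the Frobenius-orbit sums of $\delta_{\zeta}\eta_{\zeta}$, of $b_{1}\delta_{\zeta}+s\eta_{\zeta}$, of $b_{1}s$ and of $\beta_{\zeta}^{2}$, the last contribution equaling $t\eta_{\zeta}+tb_{1}+t\delta_{\zeta}+ts$. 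Adding the four pieces, the Frobenius-orbit sums of $\delta_{\zeta}\eta_{\zeta}$, of $b_{1}\delta_{\zeta}+s\eta_{\zeta}$ and of $b_{1}s$ cancel between $A^{2}+A$ and $B^{2}+B$, $C^{2}+C$, $D^{2}+D$ respectively, while a short bookkeeping shows that each of the remaining low-degree terms $t\eta_{\zeta}$, $tb_{1}$, $t\delta_{\zeta}$, $ts$, $b_{1}\delta_{\zeta}$, $s\eta_{\zeta}$ occurs an even number of times; only $(\delta_{\zeta}\eta_{\zeta})^{2^{e-1}}$ survives, as claimed. The main difficulty is purely organizational: controlling the telescoping of the two triangular sums and lining up the numerous Frobenius-orbit sums so that every cancellation is transparent. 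The exponent $2^{e-1}$ on the right-hand side is forced by the upper corner $j=e-1$ of the triangular sum defining $B$.
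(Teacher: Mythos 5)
Your strategy is the same as the paper's: reduce $\delta_{\zeta}$ to $\delta'_{\zeta}=\beta_{\zeta}^{2^e}-\beta_{\zeta}+s$ (valid, since in equal characteristic $\delta'_{\zeta}$ satisfies the Artin--Schreier equation of \eqref{delz} exactly), get the first identity by telescoping, compute $P_2(\gamma_{\zeta}')=(\delta_{\zeta}-s)(\eta_{\zeta}-b_1)+\beta_{\zeta}^2$ and hence $A^2+A=tP_{2^e}(\gamma_{\zeta}')$, and then cancel Frobenius-orbit sums among the four pieces. All of that, and the cancellation pattern you describe at the end, matches the paper's computation.

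However, your two displayed corner-contribution formulas are each off by one term, and as written they break the final cancellation. For $B=\sum_{0\le i\le j\le e-2}t^{2^i}(\delta_{\zeta}\eta_{\zeta})^{2^j}$, squaring shifts the index set to $1\le i\le j\le e-1$, so the symmetric difference is the row $j=e-1$ plus the row $i=0$ with $0\le j\le e-2$; thus
\[
 B^2+B=\Bigl(\sum_{i=1}^{e-1}t^{2^i}\Bigr)(\delta_{\zeta}\eta_{\zeta})^{2^{e-1}}
 +t\sum_{j=0}^{e-2}(\delta_{\zeta}\eta_{\zeta})^{2^j},
\]
with upper limit $e-2$, not $e-1$: your version carries a spurious $t(\delta_{\zeta}\eta_{\zeta})^{2^{e-1}}$. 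Likewise for $C$, the wrap-around $i=e$ terms of $C^2$ give $t\sum_{j=1}^{e-1}(b_1\delta_{\zeta}+s\eta_{\zeta})^{2^j}$ (lower limit $1$, not $0$), so your version carries a spurious $t(b_1\delta_{\zeta}+s\eta_{\zeta})$. With the ranges corrected and $\sum_{i=1}^{e-1}t^{2^i}=1+t$, one gets $B^2+B=(\delta_{\zeta}\eta_{\zeta})^{2^{e-1}}+t\sum_{j=0}^{e-1}(\delta_{\zeta}\eta_{\zeta})^{2^j}$ and $C^2+C=(b_1\delta_{\zeta}+s\eta_{\zeta})+t\sum_{j=0}^{e-1}(b_1\delta_{\zeta}+s\eta_{\zeta})^{2^j}$; then the orbit sums cancel against $A^2+A$, the low-degree terms $t\eta_{\zeta},tb_1,t\delta_{\zeta},ts,b_1\delta_{\zeta},s\eta_{\zeta}$ each occur twice (using $b_1^4=b_1+t$ and $\eta_{\zeta}^2=\eta_{\zeta}+\delta_{\zeta}$ in $D^2+D$), and only $(\delta_{\zeta}\eta_{\zeta})^{2^{e-1}}$ survives, as claimed. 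If you keep your formulas as written, the leftover $t(\delta_{\zeta}\eta_{\zeta})^{2^{e-1}}+t(b_1\delta_{\zeta}+s\eta_{\zeta})$ does not vanish and the bookkeeping cannot close.
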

\begin{proof}
We can check the first claim easily. 
We show the second claim. 
We use $P_m$ in Subsubsection \ref{adddis}. 
We have 
\begin{equation}\label{P2theta}
 P_2 ( \gamma_{\zeta}' ) 
 = (\beta_{\zeta}^{2^e} -\beta_{\zeta}) 
 \sum_{i=0}^{e-1} \beta_{\zeta}^{2^i} + 
 \beta_{\zeta}^2 
 = ( \delta_{\zeta} -s ) 
 (\eta_{\zeta} -b_1 ) + \beta_{\zeta}^2 . 
\end{equation}
Hence, we have 
\begin{equation}\label{P2e}
 P_{2^e} ( \gamma_{\zeta}' ) 
 = \sum_{i=0}^{e-1} \bigl( 
 ( \delta_{\zeta} -s ) 
 (\eta_{\zeta} -b_1 ) \bigr)^{2^i} + 
 (\eta_{\zeta} -b_1 )^2 . 
\end{equation}
By $b_1^4+b_1=s^2+s=t$ and 
$\eta_{\zeta}^2 -\eta_{\zeta} =\delta_{\zeta}$,
we have 
\[
(b_1^2 \eta_{\zeta})^2+b_1^2 \eta_{\zeta}=t \eta_{\zeta}^2+b_1 \eta_{\zeta}^2
+b_1^2 \eta_{\zeta}=
t \eta_{\zeta}^2+b_1(\eta_{\zeta}^2+\eta_{\zeta})+s \eta_{\zeta}
=t \eta_{\zeta}^2+b_1 \delta_{\zeta}+s \eta_{\zeta}. 
\]
Hence, by using $\sum_{i=1}^{e-1} t^{2^i}=1-t$ and $t \in \mathbb{F}_{2^e}$, 
we have 
\begin{align*}
\theta'^2_{\zeta} -\theta'_{\zeta} 
 &=t P_{2^e}(\gamma'_{\zeta})+t  \sum_{i=0}^{e-1} (\delta_{\zeta} \eta_{\zeta} 
 +b_1 \delta_{\zeta} +s\eta_{\zeta})^{2^i}+(\delta_{\zeta} \eta_{\zeta})^{2^{e-1}}
 +t \eta_{\zeta}^2 +b_2^2 -b_2 \\
 &=t\left(\sum_{i=0}^{e-1}(b_1 s)^{2^i}+\eta_{\zeta}^2+b_1^2\right)
 +(\delta_{\zeta} \eta_{\zeta})^{2^{e-1}}
 +t \eta_{\zeta}^2 +b_2^2 -b_2=(\delta_{\zeta} \eta_{\zeta})^{2^{e-1}}, 
\end{align*}
where we use \eqref{P2e} at the second equality and \eqref{bb} at the third one. 
\end{proof} 

We take $\theta_{\zeta} \in K^{\mathrm{ac}}$ 
such that 
$\theta'_{\zeta} = \theta_{\zeta}^{2^{e-1}}$. 
Then we have 
$\theta^2_{\zeta} -\theta_{\zeta} 
 = \delta_{\zeta} \eta_{\zeta}$. 
We put 
$N'^{\mathrm{u}}_{\zeta} =M'^{\mathrm{u}}_{\zeta} 
 (\eta_{\zeta}, \theta_{\zeta})$, 
which is a cyclic extension of 
$M'^{\mathrm{u}}_{\zeta}$ of order $4$ 
by Lemma \ref{tileq}. 

\begin{lem}\label{lem:factor}
The character 
$\xi'_{n,\zeta}$ factors through 
$\Gal (N'^{\mathrm{u}}_{\zeta}/M'^{\mathrm{u}}_{\zeta})$. 
\end{lem}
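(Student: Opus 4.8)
The plan is to follow the pattern of Lemma \ref{oddfac}, replacing the single Artin--Schreier element used there by the pair $\eta_{\zeta},\theta_{\zeta}$ generating the length-two extension $N'^{\mathrm{u}}_{\zeta}/M'^{\mathrm{u}}_{\zeta}$. The first step is to understand $\Ker\xi'_{n,\zeta}$. Since $R_{0}$ is normal in $R=R_{0}\cdot\langle\bom{g}\rangle$ with $R/R_{0}$ infinite cyclic generated by the class of $\bom{g}$, each $\Theta_{\zeta}(\sigma)$ (for $\sigma\in W_{M'^{\mathrm{u}}_{\zeta}}$) is uniquely $\bom{g}^{-n_{\sigma}}g_{0}(\sigma)$ with $g_{0}(\sigma)\in R_{0}$, and because $\phi_{n}$ is a homomorphism on $R$ the twist defining $\xi'_{n,\zeta}$ is designed exactly to cancel the $\bom{g}$-factor, giving $\xi'_{n,\zeta}(\sigma)=\phi_{n}(g_{0}(\sigma))=(\tilde{\psi}_{0}\circ\phi')(g_{0}(\sigma))$. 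As $\tilde{\psi}_{0}$ is faithful on the cyclic group $R_{0}''=R_{0}'/F'$ of order $4$, the character $\xi'_{n,\zeta}$ is $\mu_{4}$-valued, and $\sigma\in\Ker\xi'_{n,\zeta}$ if and only if $\phi(g_{0}(\sigma))\in F'$. Unwinding the definitions of $\phi$ and of $F'=\Ker\bigl(c\mapsto\Tr_{\bF_{2^{e}}/\bF_{2}}((b_{0}+b_{0}^{2^{e}})c)\bigr)$, and expressing $g_{0}(\sigma)$ through $\Theta_{\zeta}(\sigma)$ using \eqref{abcdef} and $a_{\sigma}=1$ (which holds since $\sigma$ fixes $\alpha_{\zeta}$), this becomes two $\bF_{2}$-linear conditions: one of trace type involving $\bar{b}_{\sigma}$, and one of trace type involving $(b_{0}+b_{0}^{2^{e}})\bigl(\bar{c}_{\sigma}+\sum_{0\le i<j\le e-1}\bar{b}_{\sigma}^{2^{i}+2^{j}}\bigr)$.

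The second, and main, step is to show these two conditions force $\sigma$ to fix $\eta_{\zeta}$ and $\theta_{\zeta}$. By Lemma \ref{tileq}, $\sigma$ fixes $\delta_{\zeta}=\eta_{\zeta}^{2}-\eta_{\zeta}$ and $(\delta_{\zeta}\eta_{\zeta})^{2^{e-1}}=\theta'^{2}_{\zeta}-\theta'_{\zeta}$, so $\sigma(\eta_{\zeta})-\eta_{\zeta}\in\bF_{2}$, and once $\sigma(\eta_{\zeta})=\eta_{\zeta}$ is known also $\sigma(\theta'_{\zeta})-\theta'_{\zeta}\in\bF_{2}$; thus in each case only a residue has to be computed. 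For $\eta_{\zeta}=\sum_{i=0}^{e-1}\beta_{\zeta}^{2^{i}}+b_{1}$ one has, by \eqref{abcdef}, $\sigma(\eta_{\zeta})-\eta_{\zeta}=\sum_{i=0}^{e-1}b_{\sigma}^{2^{i}}+(\sigma(b_{1})-b_{1})$; using the relation $\bar{b}_{\sigma}^{2^{e}}-\bar{b}_{\sigma}=s-\sigma(s)$ (a consequence of $\sigma$ fixing $\delta_{\zeta}$ and of $\delta_{\zeta}\equiv\beta_{\zeta}^{2^{e}}-\beta_{\zeta}+\sum_{i=0}^{e-1}\hat{b}_{0}^{2^{i}}$) together with $b_{1}^{2}-b_{1}=s$, the residue is identified with the left-hand side of the first condition, so it vanishes and $\sigma(\eta_{\zeta})=\eta_{\zeta}$. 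Granting this, I would expand $\sigma(\theta'_{\zeta})-\theta'_{\zeta}$ directly from the definition of $\theta'_{\zeta}$, plugging in $\sigma(\gamma_{\zeta}')-\gamma_{\zeta}'$, $\sigma(\eta_{\zeta})-\eta_{\zeta}=0$ and $\sigma(\delta_{\zeta})-\delta_{\zeta}=0$, and simplifying with \eqref{P2theta}, \eqref{P2e}, \eqref{abcdef} and the relations of Lemma \ref{tileq}; after the bulk cancels, the residue collapses to the left-hand side of the second condition, whence $\sigma(\theta'_{\zeta})=\theta'_{\zeta}$ and therefore $\sigma(\theta_{\zeta})=\theta_{\zeta}$. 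Hence $\Ker\xi'_{n,\zeta}\subseteq W_{M'^{\mathrm{u}}_{\zeta}(\eta_{\zeta},\theta_{\zeta})}=W_{N'^{\mathrm{u}}_{\zeta}}$.

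Finally, exactly as in the proof of Lemma \ref{oddfac}, this yields the claim: the fixed field of $\Ker\xi'_{n,\zeta}$ is then an abelian extension of $M'^{\mathrm{u}}_{\zeta}$ containing $N'^{\mathrm{u}}_{\zeta}$, of degree over $M'^{\mathrm{u}}_{\zeta}$ equal to the order of $\xi'_{n,\zeta}$; comparing this with $[N'^{\mathrm{u}}_{\zeta}:M'^{\mathrm{u}}_{\zeta}]$ (both dividing $4$ and both readable off, e.g., from the valuations of $\delta_{\zeta}$ and $\delta_{\zeta}\eta_{\zeta}$) shows the two fields coincide, so $\xi'_{n,\zeta}$ factors through $\Gal(N'^{\mathrm{u}}_{\zeta}/M'^{\mathrm{u}}_{\zeta})$. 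The hard part will be the $\theta_{\zeta}$-computation of the second step: the formula for $\theta'_{\zeta}$ is a sum of three cumbersome multi-index terms, and one must keep careful track of the Galois action on the residue-field constants $b_{0},s,t,b_{1},b_{2}$ --- which need not lie in $M'^{\mathrm{u}}_{\zeta}$ --- and of the interplay of \eqref{P2theta}, \eqref{P2e} and \eqref{abcdef}, in order to see that, modulo $P_{2}$, everything reduces to the two trace conditions.
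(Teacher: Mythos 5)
Your plan follows the paper's proof essentially verbatim: the paper likewise reduces membership in $\Ker\xi'_{n,\zeta}$ to the same two trace conditions (vanishing of $\Tr_{\bF_{2^e}/\bF_2}(\bar b_{\sigma})$ and the $t$-weighted condition \eqref{kercond2}), uses Lemma \ref{tileq} to reduce the fixing of $\eta_{\zeta}$ and $\theta'_{\zeta}$ to constant-term computations, and the only organizational differences are that the paper splits $\sigma=\sigma_1\sigma_2^{-n_{\sigma}}$ with $\sigma_1$ in inertia and $\sigma_2$ a fixed lift of $\bom{g}$ and treats the two separately, while you make explicit the final index comparison that the paper leaves implicit. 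Note, however, that what you defer as ``the hard part'' is the entire substance of the paper's argument --- namely the verification that the constant term of $\sigma(\theta'_{\zeta})-\theta'_{\zeta}$ is exactly killed by \eqref{kercond2} on the inertia part, and the unconditional computation $d_0=0$ for $\sigma_2$ using \eqref{b0td} and the defining identities of $b_1,b_2$ --- so your text is a correct outline of the paper's route rather than a complete proof.
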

\begin{proof}
Let $\sigma \in \Ker \xi'_{n,\zeta}$. 
We take 
$\sigma_1, \sigma_2 \in \Ker \xi'_{n,\zeta}$ 
such that 
$\sigma =\sigma_1 \sigma_2^{-n_{\sigma}}$, 
$\sigma_1 \in I_{M'^{\mathrm{u}}_{\zeta}}$ and 
$\Theta_{\zeta} (\sigma_2 )=((1,b_0,c_0),-1)$. 
Then we have 
$(\bar{a}_{\sigma_1}, \bar{b}_{\sigma_1}, 
 \bar{c}_{\sigma_1}) \in R_0$, 
$\Tr_{\bF_{2^e}/\bF_2} (\bar{b}_{\sigma_1})=0$ and 
\begin{equation}\label{kercond2}
 \Tr_{\bF_{2^e}/\bF_2} \biggl( t 
 \Bigl( \bar{c}_{\sigma_1} 
 +\sum_{0 \leq i < j \leq e-1} 
 \bar{b}_{\sigma_1}^{2^i +2^j} \Bigr) \biggr)=0
\end{equation} 
by \eqref{phieven}. 
It suffices to show that 
$\sigma_i (\eta_{\zeta}) = \eta_{\zeta}$ 
and 
$\sigma_i (\theta'_{\zeta}) = \theta'_{\zeta}$ 
for $i=1,2$. 

We have 
\begin{align*}
\sigma_1(\eta_{\zeta})-\eta_{\zeta} &\equiv \sum_{i=0}^{e-1} b_{\sigma_1}^{2^i} \equiv 0 \mod \mathfrak{p}_{N'^{\mathrm{u}}_{\zeta}}, \\
\sigma_2(\eta_{\zeta})-\eta_{\zeta} & \equiv 
\sum_{i=0}^{e-1} b_0^{2^i}+b_1^2-b_1 \equiv 0 \mod 
\mathfrak{p}_{N'^{\mathrm{u}}_{\zeta}}
\end{align*}
by $\Tr_{\bF_{2^e}/\bF_2} (\bar{b}_{\sigma_1})=0$
and $b_1^2-b_1=s$. 
By Lemma \ref{tileq}, we have 
$\sigma_i(\eta_{\zeta})-\eta_{\zeta} \in \mathbb{F}_2$ for $i=1,2$. 
Hence, we have $\sigma_i(\eta_{\zeta})=\eta_{\zeta}$ for 
$i=1,2$. 
We have 
\[
 \sigma_1(\theta'_{\zeta})-\theta'_{\zeta}=\sum_{i=0}^{e-1}
\left( t (\sigma_1(\gamma'_{\zeta})-\gamma'_{\zeta})\right)^{2^i}. 
\]
Further, we have 
\begin{align*}
\sigma_1(\gamma'_{\zeta})-\gamma'_{\zeta}
& \equiv c_{\sigma_1}+\sum_{i=0}^{e-1}(b_{\sigma_1})^{2^{i+1}}+\sum_{i=0}^{e-1} b_{\sigma_1}^{2^i}\sum_{i=0}^{e-1} \beta_{\zeta}^{2^i}
+\sum_{0 \leq i<j \leq e-1} b_{\sigma_1}^{2^i+2^j} \\
& \equiv c_{\sigma_1}+\sum_{0 \leq i<j \leq e-1} b_{\sigma_1}^{2^i+2^j} \mod 
\mathfrak{p}_{N'^{\mathrm{u}}_{\zeta}}, 
\end{align*}
where we use \eqref{abcdef} 
and $\bar{b}_{\sigma_1} \in \bF_{2^e}$ at the first equality, 
and use $\Tr_{\bF_{2^e}/\bF_2} (\bar{b}_{\sigma_1})=0$ 
at the second one. 
This implies 
$\sigma_1(\theta'_{\zeta}) \equiv \theta'_{\zeta} \mod \mathfrak{p}_{N'^{\mathrm{u}}_{\zeta}}$ by \eqref{kercond2}.
By a similar argument as above using Lemma \ref{tileq}, 
we obtain $\sigma_1(\theta'_{\zeta})=\theta'_{\zeta}$. 

It remains to show 
$\sigma_2 (\theta'_{\zeta}) = \theta'_{\zeta}$. 
Using \eqref{P2theta} 
and $\Tr_{\mathbb{F}_{2^e}/\mathbb{F}_2}(t)=1$, 
we see that 
\begin{align}
 \sum_{i=0}^{e -1} ( t \gamma_{\zeta}' )^{2^i} = 
 \gamma_{\zeta}' 
 +
 \sum_{1 \leq i \leq j \leq e -1} 
 t^{2^j} \beta_{\zeta}^{2^i}  + 
 \sum_{0 \leq i <j \leq e -1} t^{2^j} 
 \bigl( ( \delta_{\zeta} -s )
 (\eta_{\zeta} -b_1) \bigl)^{2^i}
 . \label{b0td}
\end{align}
We put 
\[
\gamma_{\zeta}'' =\gamma_{\zeta}' 
 +\sum_{1 \leq i \leq j \leq e -1} t^{2^j} 
 \beta_{\zeta}^{2^i}. 
\] 
By $c_0^2+c_0=b_0^{2^e+1}$ and $t=b_0+b_0^{2^e}$
(\cf \eqref{c_0b_0}, \eqref{defst}),
we have 
\begin{align*}
\sigma_2(\gamma_{\zeta})-\gamma_{\zeta} 
\equiv c_0+\sum_{i=0}^{e-1}(b_0^{2^e}(\beta_{\zeta}+b_0))^{2^i} 
\equiv 
c_0^{2^e}+\sum_{i=0}^{e-1}((b_0+t) \beta_{\zeta})^{2^i}
\mod \mathfrak{p}_{N'^{\mathrm{u}}_{\zeta}}. 
\end{align*}
Then we have 
\[
\sigma_2(\gamma'_{\zeta})-\gamma'_{\zeta}
\equiv c_0^{2^e}+s(\eta_{\zeta}-b_1)+\sum_{i=0}^{e-1}(t \beta_{\zeta})^{2^i}
+\sum_{0 \leq i<j \leq e-1} b_0^{2^i+2^j} 
\mod \mathfrak{p}_{N'^{\mathrm{u}}_{\zeta}} 
\]
by \eqref{defst} and \eqref{defetagam'}. 
Hence, we have 
\begin{align*}
\sigma_2(\gamma''_{\zeta})-\gamma''_{\zeta}
&
 \equiv \sigma_2(\gamma'_{\zeta})-\gamma'_{\zeta} 
 +\sum_{1 \leq i \leq j \leq e-1} 
 t^{2^{j+1}} (\beta_{\zeta} +b_0)^{2^i} 
 - \sum_{1 \leq i \leq j \leq e-1} 
 t^{2^j} \beta_{\zeta}^{2^i} 
 \\ 
&\equiv 
\sigma_2(\gamma'_{\zeta})-\gamma'_{\zeta}+t(\eta_{\zeta}-b_1)
+\sum_{i=0}^{e-1}(t \beta_{\zeta})^{2^i}+\sum_{1 \leq i<j \leq e}
b_0^{2^i} t^{2^j}  \\
& \equiv c_0^{2^e}+s^2(\eta_{\zeta}-b_1)+\sum_{0 \leq i<j \leq e-1} b_0^{2^i+2^j}+\sum_{1 \leq i<j \leq e}b_0^{2^i} t^{2^j} 
\mod \mathfrak{p}_{N'^{\mathrm{u}}_{\zeta}} 
\end{align*}
where we use 
\eqref{defetagam'} and $t \in \bF_{2^{e}}$ at the second equality 
and $s^2 +s=t$ at the last equality. 
We can check that 
\[
c_0^{2^e}+\sum_{0 \leq i<j \leq e-1} b_0^{2^i+2^j}+
\sum_{1 \leq i<j \leq e} b_0^{2^i} t^{2^j} =st 
\]
by \eqref{defgamma_0}, \eqref{defst} and 
$\Tr_{\mathbb{F}_{2^{2e}}/\mathbb{F}_2} (b_0 )=1$. 
As a result, we obtain 
\[
\sigma_2(\gamma''_{\zeta})-\gamma''_{\zeta} \equiv 
s^2 \eta_{\zeta} +b_1 s^2 +st \mod \mathfrak{p}_{N'^{\mathrm{u}}_{\zeta}}. 
\]
Hence, by \eqref{deftheta'} and \eqref{b0td}, 
we have 
\[
 \sigma_2 (\theta'_{\zeta}) - \theta'_{\zeta} 
 \equiv 
 \sum_{i=0}^{2^{e-1} + 2^{e-2}} d_i \eta_{\zeta}^i 
 \mod \mathfrak{p}_{N'^{\mathrm{u}}_{\zeta}} 
\] 
for some $d_i \in k^{\mathrm{ac}}$. 
We have 
\[
 d_0 = 
 b_1 s^2 + st 
 + 
 t \sum_{j=1}^{e-1} ( b_1 s )^{2^j} 
 + b_1 s \sum_{l=1}^{e-1} t^{2^l} 
 +b_2^2 -b_2 =0. 
\]
This implies $\sigma_2 (\theta'_{\zeta}) = \theta'_{\zeta}$, 
since we know that 
$\sigma_2 (\theta'_{\zeta}) - \theta'_{\zeta} \in \bF_2$ 
by Lemma \ref{tileq}. 
\end{proof}

\section{Refined Swan conductor}\label{RefSwan}
Let $\widetilde{K} \subset K^{\mathrm{ur}}$ 
be the unramified extension of $K^{\mathrm{u}}$ 
generated by $\mu_{p^{4pe}-1} (K^{\mathrm{ur}})$.
For a finite 
field extension $L$ of $K$ in $K^{\mathrm{ac}}$, 
we write $\widetilde{L}$ 
for the composite field of $L$ and 
$\widetilde{K}$ in $K^{\mathrm{ac}}$. 
We write 
$\widetilde{M}'_{\zeta}$ for 
${\widetilde{M}'^{\mathrm{u}}_{\zeta}}$. 
Then 
$\widetilde{N}_{\zeta}$ 
is a Galois extension of 
$\widetilde{M}'_{\zeta}$. 
By \eqref{ddt} and \eqref{delz}, 
we can take 
$\beta_{\zeta}' \in \widetilde{M}_{\zeta}$ such that 
\begin{equation}\label{beta'}
 \beta_{\zeta}'^{p^e} -\hat{r} \beta_{\zeta}' = 
 \delta_{\zeta}, \quad 
 \beta_{\zeta}' \equiv \beta_{\zeta} 
 \mod \fp_{\widetilde{M}_{\zeta}}, 
\end{equation}
since there is $x \in \bF_{2^{4e}}$ such that 
$x^{2^e} -x=\sum_{i=0}^{e-1} b_0^{2^i}$ 
if $p=2$. 
Then we have 
$\widetilde{M}_{\zeta} = \widetilde{M}'_{\zeta} (\beta_{\zeta}')$ 
by Krasner's lemma.

\begin{lem}\label{galfil}
{\rm 1.}
We have 
\begin{equation}
\psi_{\widetilde{N}_{\zeta}/\widetilde{M}'_{\zeta}}(v)=
\begin{cases}
 v \quad & \textrm{if $v \leq 1$, }\\
 p^e (v-1) +1 \quad & \textrm{if $1 < v \leq 2$,} \\ 
 p^{e+1} (v-2)+p^e+1 \quad & \textrm{if $2 < v$.} 
\end{cases}
\end{equation}
{\rm 2.}
We have 
\begin{equation*}
 \Gal (\widetilde{N}_{\zeta}/\widetilde{M}'_{\zeta} )_i = 
\begin{cases}
 \Gal (\widetilde{N}_{\zeta}/\widetilde{M}'_{\zeta} ) \quad 
 & \textrm{if $i \leq 1$, }\\
 \Gal (\widetilde{N}_{\zeta}/\widetilde{M}_{\zeta} ) \quad 
 & \textrm{if $2 \leq i \leq p^e +1$,} \\ 
 \{ 1 \} \quad & \textrm{if $p^e +2 \leq i$.} 
\end{cases}
\end{equation*}
\end{lem}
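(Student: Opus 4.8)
The plan is to realise $\widetilde{N}_\zeta/\widetilde{M}'_\zeta$ as a two-step tower $\widetilde{M}'_\zeta\subseteq\widetilde{M}_\zeta\subseteq\widetilde{N}_\zeta$, to determine the ramification of each step, and to combine the two by Herbrand's theorem. The first task is to make this tower explicit. Since $\widetilde{K}$ is obtained from $K^{\mathrm{u}}$ by adjoining $\mu_{p^{4pe}-1}(K^{\mathrm{ur}})$, it contains $\hat r$ together with all $(p^e-1)$-th roots of $\hat r$ (for $p=2$ simply $\hat r=1$), and its residue field contains $\bF_{p^{2e}}$. Consequently, starting from $\widetilde{M}'_\zeta=\widetilde{T}_\zeta(\delta_\zeta)$ with $\delta_\zeta^{p^e}-\hat r^{-1}\delta_\zeta=-\alpha_\zeta^{-1}+\varepsilon_1$ by \eqref{delz}, the generator $\beta_\zeta$ satisfies an equation $\beta_\zeta^{p^e}-\hat r\beta_\zeta=w$ over $\widetilde{M}'_\zeta$ with $w\in\widetilde{M}'_\zeta$ (one has $w=\delta_\zeta=\delta'_\zeta$ when $p\neq2$, and $w=\delta_\zeta-\sum_{i}\hat b_0^{2^i}$ when $p=2$, where the translation term is a unit of $\widetilde{K}$). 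In particular $\widetilde{M}_\zeta=\widetilde{M}'_\zeta(\beta_\zeta)$, and since the roots of the minimal polynomial $X^{p^e}-\hat rX-w$ of $\beta_\zeta$ over $\widetilde{M}'_\zeta$ differ by the roots of $X^{p^e}-\hat rX$, which lie in $\widetilde{K}$, the extension $\widetilde{M}_\zeta/\widetilde{M}'_\zeta$ is Galois. Finally $\widetilde{N}_\zeta=\widetilde{M}_\zeta(\gamma_\zeta)$ with $\gamma_\zeta^p-\gamma_\zeta=\beta_\zeta^{p^e+1}$ over $\widetilde{M}_\zeta$, by \eqref{extgen}.

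\textbf{Valuation bookkeeping and the two breaks.} The element $\alpha_\zeta$ is a uniformiser of $\widetilde{T}_\zeta$; comparing the valuations of the two sides of \eqref{delz} forces $[\widetilde{M}'_\zeta:\widetilde{T}_\zeta]=p^e$, totally ramified, with $v_{\widetilde{M}'_\zeta}(\delta_\zeta)=-1$. Applying the same comparison to $\beta_\zeta^{p^e}-\hat r\beta_\zeta=w$, where $v_{\widetilde{M}'_\zeta}(w)=-1$, gives $[\widetilde{M}_\zeta:\widetilde{M}'_\zeta]=p^e$, totally ramified, with $v_{\widetilde{M}_\zeta}(\beta_\zeta)=-1$. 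Hence $v_{\widetilde{M}_\zeta}(\beta_\zeta^{p^e+1})=-(p^e+1)$ is prime to $p$, so $\gamma_\zeta^p-\gamma_\zeta=\beta_\zeta^{p^e+1}$ defines a totally ramified degree-$p$ extension, and $\widetilde{N}_\zeta/\widetilde{M}'_\zeta$ is totally wildly ramified of degree $p^{e+1}$; writing $G=\Gal(\widetilde{N}_\zeta/\widetilde{M}'_\zeta)$, this gives $G_0=G_1=G$ since $\lvert G\rvert$ is a power of $p$. For the break of $\widetilde{M}_\zeta/\widetilde{M}'_\zeta$ I take $\beta_\zeta^{-1}$ as a uniformiser of $\widetilde{M}_\zeta$; for $\sigma\neq1$ in $\Gal(\widetilde{M}_\zeta/\widetilde{M}'_\zeta)$ we have $\sigma(\beta_\zeta)=\beta_\zeta+z$ with $z$ a nonzero root of $X^{p^e}-\hat rX$, hence a unit, so
\[
 v_{\widetilde{M}_\zeta}\bigl(\sigma(\beta_\zeta^{-1})-\beta_\zeta^{-1}\bigr)
 =v_{\widetilde{M}_\zeta}\Bigl(\frac{-z}{\beta_\zeta(\beta_\zeta+z)}\Bigr)=2,
\]
so $\Gal(\widetilde{M}_\zeta/\widetilde{M}'_\zeta)$ has a single ramification break at $1$ in both numberings. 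For $\widetilde{N}_\zeta/\widetilde{M}_\zeta$, the standard Artin--Schreier computation with pole order $p^e+1$ prime to $p$ gives a single break at $p^e+1$.

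\textbf{Assembling.} Put $H=\Gal(\widetilde{N}_\zeta/\widetilde{M}_\zeta)\subseteq G$; this is normal since $\widetilde{M}_\zeta/\widetilde{M}'_\zeta$ is Galois, and $G/H=\Gal(\widetilde{M}_\zeta/\widetilde{M}'_\zeta)$. The elementary formula $H_i=H\cap G_i$, together with the break of $H$ at $p^e+1$, gives $H\subseteq G_i$ for $i\leq p^e+1$ and $H\cap G_i=1$ for $i\geq p^e+2$. On the other hand, Herbrand's theorem $G_iH/H=(G/H)_{\varphi_{\widetilde{N}_\zeta/\widetilde{M}_\zeta}(i)}$ (here $\varphi$ is the Herbrand function, inverse to $\psi$), combined with $\varphi_{\widetilde{N}_\zeta/\widetilde{M}_\zeta}(i)=i$ for $i\leq p^e+1$ and the break of $G/H$ at $1$, gives $G_i\subseteq H$ for $i\geq2$. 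Therefore $G_i=H=\Gal(\widetilde{N}_\zeta/\widetilde{M}_\zeta)$ for $2\leq i\leq p^e+1$ and $G_i=H\cap G_i=1$ for $i\geq p^e+2$, which is part 2; part 1 then follows by integrating $t\mapsto\lvert G_t\rvert$ to obtain $\varphi_{\widetilde{N}_\zeta/\widetilde{M}'_\zeta}$ and inverting it, which yields exactly the stated piecewise-linear $\psi_{\widetilde{N}_\zeta/\widetilde{M}'_\zeta}$.

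\textbf{Main obstacle.} The delicate inputs are the two facts about the short step $\widetilde{M}_\zeta/\widetilde{M}'_\zeta$: verifying that $\widetilde{K}$ is large enough for this additive Artin--Schreier extension to be Galois and to contain the needed roots of unity — which is exactly why $\widetilde{K}$ was defined by adjoining $\mu_{p^{4pe}-1}$ — and, in the case $p=2$, keeping track of the auxiliary terms entering $\delta'_\zeta$ so that the clean equation $\beta_\zeta^{2^e}-\beta_\zeta=\delta_\zeta-\sum_{i}\hat b_0^{2^i}$ over $\widetilde{M}'_\zeta$ emerges with the right valuation. Once these are in place, the rest is a routine Herbrand computation.
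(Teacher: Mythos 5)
Your proof is correct and follows essentially the same route as the paper: decompose $\widetilde{N}_{\zeta}/\widetilde{M}'_{\zeta}$ through $\widetilde{M}_{\zeta}$, identify the single ramification break of each step ($1$ for the bottom, $p^e+1$ for the top), and combine via transitivity of $\psi$ and Herbrand's theorem — the paper simply states the two intermediate $\psi$-functions and composes them, deducing part 2 from part 1, whereas you prove part 2 first, but the content is identical. One small precision: for $p=2$ the element satisfying the clean equation $X^{p^e}-\hat r X=\delta_\zeta$ over $\widetilde{M}'_\zeta$ is not $\beta_\zeta$ itself but a nearby $\beta'_\zeta\equiv\beta_\zeta\bmod\fp_{\widetilde{M}_\zeta}$ (as in the paper's proof of Lemma \ref{galex}), which changes nothing in the valuation computation.
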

\begin{proof}
We have 
\begin{align*} 
 \psi_{\widetilde{M}_{\zeta}/\widetilde{M}'_{\zeta}}(v) 
 &=
 \begin{cases}
 v \quad & \textrm{if $v \leq 1$, } \\ 
 p^e (v-1) +1 \quad & \textrm{if $v>1$}, 
 \end{cases} \\ 
 \psi_{\widetilde{N}_{\zeta}/\widetilde{M}_{\zeta}}(v)
 &=
 \begin{cases}
 v \quad & \textrm{if $v \leq p^e+1$,} \\ 
 p (v-p^e-1) +p^e +1 \quad & \textrm{if $v>p^e+1$} 
 \end{cases}
\end{align*}
by \eqref{extgen}, \eqref{beta'} and Lemma \ref{Arfilt} 
noting that $\hat{r}$ has a $(p^e -1)$-st root in 
$\widetilde{M}'_{\zeta}$. 
Hence, 
the claim 1 follows from 
$\psi_{\widetilde{N}_{\zeta}/\widetilde{M}'_{\zeta}}=
 \psi_{\widetilde{N}_{\zeta}/\widetilde{M}_{\zeta}} 
 \circ \psi_{\widetilde{M}_{\zeta}/\widetilde{M}'_{\zeta}}$. 
The claim 2 follows from 
the claim 1 and 
$\Gal (\widetilde{N}_{\zeta}/\widetilde{M}'_{\zeta} )_{p^e+1} 
 \supset \Gal (\widetilde{N}_{\zeta}/\widetilde{M}_{\zeta} )_{p^e+1} 
 =\Gal (\widetilde{N}_{\zeta}/\widetilde{M}_{\zeta} )$. 
\end{proof}
We set 
$\varpi_{\widetilde{M}'_{\zeta}}=\delta_{\zeta}^{-1}$, 
$\varpi_{\widetilde{M}_{\zeta}}=\beta_{\zeta}^{-1}$ 
and 
$\varpi_{\widetilde{N}_{\zeta}}
 =(\gamma_{\zeta} \varpi_{\widetilde{M}_{\zeta}}^{p^{e-1}} )^{-1}$. 
Then the elements 
$\varpi_{\widetilde{M}'_{\zeta}}$, $\varpi_{\widetilde{M}_{\zeta}}$ 
and $\varpi_{\widetilde{N}_{\zeta}}$ are uniformizers of 
$\widetilde{M}'_{\zeta}$, $\widetilde{M}_{\zeta}$ and $\widetilde{N}_{\zeta}$ respectively. 
Let $\widetilde{k}$ be the residue field of $\widetilde{K}$. 
\begin{lem}\label{galex}
We have a commutative diagram
\[
 \xymatrix@C=40pt{
% 0 \ar[r]  & 
% \mathrm{Gal}(\widetilde{N}_{\zeta}/\widetilde{M}_{\zeta}) 
% \ar[r]^{i}\ar[d]^{\rotatebox{90}{$\sim$}} & 
 U_{\widetilde{N}_{\zeta}}^{p^e+1}
% /U_{\widetilde{N}_{\zeta}}^{p^e+2} 
 \ar[r]^-{\Nr_{\widetilde{N}_{\zeta}/\widetilde{M}'_{\zeta}}}
 \ar[d] &   
 U_{\widetilde{M}'_{\zeta}}^2
% /U_{\widetilde{M}'_{\zeta}}^{3} 
 \ar[d]
 %^{\rotatebox{90}{$\sim$}} 
 & 
 \\
% 0 \ar[r] & \mathbb{F}_p \ar[r] & 
 \widetilde{k} \ar[r]^{P} & \widetilde{k} 
 } 
\]
%and each row is exact 
where the map $P$ is given by 
%$i$ and $P$ 
%are given by 
%$\sigma \mapsto  \sigma(\varpi_{\widetilde{N}_{\zeta}})/
% \varpi_{\widetilde{N}_{\zeta}}$ and 
$x \mapsto x^p -x$ 
and 
the vertical maps 
are given by 
\begin{align*}
% &\mathrm{Gal}(\widetilde{N}_{\zeta}/\widetilde{M}_{\zeta}) 
% \xrightarrow{\sim} 
% \mathbb{F}_p;\ \sigma 
% \mapsto  
% \overline{\sigma(\gamma_{\zeta})-\gamma_{\zeta}}, \\ 
 & p_{\widetilde{N}_{\zeta},-\gamma_{\zeta}^{-1}} 
 \colon 
 U_{\widetilde{N}_{\zeta}}^{p^e+1}
% /U_{\widetilde{N}_{\zeta}}^{p^e+2} 
 \longrightarrow \widetilde{k} ;\ 
1-x \gamma_{\zeta}^{-1} \mapsto \bar{x}, 
\\ 
 & 
 p_{\widetilde{M}'_{\zeta},\hat{r} \varpi_{\widetilde{M}'_{\zeta}}^2} \colon 
U_{\widetilde{M}'_{\zeta}}^{2}
%/U_{\widetilde{M}'_{\zeta}}^{3} 
 \longrightarrow \widetilde{k} ;\ 
 1+x \hat{r} \varpi_{\widetilde{M}'_{\zeta}}^2 \mapsto \bar{x} .
\end{align*}
\end{lem}
\begin{proof}
The norm maps 
$\mathrm{Nr}_{\widetilde{N}_{\zeta}/\widetilde{M}_{\zeta}}$ 
and 
$\mathrm{Nr}_{\widetilde{M}_{\zeta}/\widetilde{M}'_{\zeta}}$ 
induce 
\begin{gather}
\begin{align} \notag
 & U_{\widetilde{N}_{\zeta}}^{p^e+1}/
 U_{\widetilde{N}_{\zeta}}^{p^e+2} \to 
 U_{\widetilde{M}_{\zeta}}^{p^e+1}/
 U_{\widetilde{M}_{\zeta}}^{p^e+2};\ 
 1-u \gamma_{\zeta}^{-1} \mapsto 
 1-(u^p -u) \varpi_{\widetilde{M}_{\zeta}}^{p^e+1}, \\ \notag
 & U_{\widetilde{M}_{\zeta}}^{p^e+1}/
 U_{\widetilde{M}_{\zeta}}^{p^e+2} 
 \to U_{\widetilde{M}'_{\zeta}}^2
 /U_{\widetilde{M}'_{\zeta}}^3;\ 
 1-u \varpi_{\widetilde{M}_{\zeta}}^{p^e+1} 
 =1-u \beta_{\zeta}'^{-1} \varpi_{\widetilde{M}'_{\zeta}} 
 \mapsto 1+ u \hat{r} 
 \varpi_{\widetilde{M}'_{\zeta}}^2 
\end{align}
\end{gather}
respectively 
by Lemma \ref{rswcal}.1 and calculations of the norms. 
Hence, the claim follows. 
\end{proof}

For any finite extension 
$M$ of $K$, we write $\psi_M$
for the composite $\psi_K \circ \mathrm{Tr}_{M/K}$.

\begin{lem}\label{rswxi}
We have 
$\mathrm{rsw}(\xi_{n,\zeta}|_{W_{M'^{\mathrm{u}}_{\zeta}}},
 \psi_{M'^{\mathrm{u}}_{\zeta}})=
 -n' \delta_{\zeta}^{-(p^e+1)} \mod U_{M'^{\mathrm{u}}_{\zeta}}^1$. 
\end{lem}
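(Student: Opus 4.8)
The plan is to read off $\mathrm{rsw}(\xi_{n,\zeta},\psi_{M'^{\mathrm{u}}_{\zeta}})$ from the defining property of the refined Swan conductor, using the shape of $\xi_{n,\zeta}$ encoded by $\Theta_{\zeta}$ and $\phi_n$, the ramification data of Lemma \ref{galfil}, and the class field theory diagram of Lemma \ref{galex}. For a character $\xi$ of $W_{M}$ with $\mathrm{sw}(\xi)=s\geq1$, the coset $\mathrm{rsw}(\xi,\psi_M)\in M^{\times}/U_M^1$ is characterized by $\xi\bigl(\mathrm{Art}_M(1+y)\bigr)=\psi_M\bigl(\mathrm{rsw}(\xi,\psi_M)^{-1}y\bigr)$ for all $y$ with $v_M(y)>s/2$, and its valuation is $s$ plus the wild part of the different exponent of $M/K$ (the excess coming from $\psi_M=\psi_K\circ\Tr_{M/K}$). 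Lemma \ref{galfil} shows $\xi_{n,\zeta}$ is (by construction) inflated from $\Gal(\widetilde{N}_{\zeta}/\widetilde{M}'_{\zeta})$ and nontrivial exactly on $\Gal(\widetilde{N}_{\zeta}/\widetilde{M}_{\zeta})$, whence $\mathrm{sw}(\xi_{n,\zeta})=2$; since $v_{M'^{\mathrm{u}}_{\zeta}}(\delta_{\zeta})=-1$ by \eqref{delz} and a direct computation along $E_{\zeta}\subset T^{\mathrm{u}}_{\zeta}\subset M'^{\mathrm{u}}_{\zeta}$ gives wild different $p^e-1$, the valuation $v_{M'^{\mathrm{u}}_{\zeta}}(-n'\delta_{\zeta}^{-(p^e+1)})=p^e+1=2+(p^e-1)$ is the forced one. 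So it remains to pin down the element modulo $U_{M'^{\mathrm{u}}_{\zeta}}^1$, and for this one only needs $\xi_{n,\zeta}$ on $U_{M'^{\mathrm{u}}_{\zeta}}^2/U_{M'^{\mathrm{u}}_{\zeta}}^3$.

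First I would evaluate $\xi_{n,\zeta}$ there. If $\sigma\in W_{M'^{\mathrm{u}}_{\zeta}}$ with $\mathrm{Art}_{M'^{\mathrm{u}}_{\zeta}}^{-1}(\sigma)\in U_{M'^{\mathrm{u}}_{\zeta}}^2$, then $\sigma$ lies in $\Gal(\widetilde{N}_{\zeta}/\widetilde{M}_{\zeta})$ by Lemma \ref{galfil}, hence fixes $\alpha_{\zeta}$ and $\beta_{\zeta}$; so $a_{\sigma}=1$, $b_{\sigma}=0$, $n_{\sigma}=0$, and \eqref{abcdef} gives $c_{\sigma}=\sigma(\gamma_{\zeta})-\gamma_{\zeta}$. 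As $\phi_n$ extends $\psi_0$ on $F$, this yields $\xi_{n,\zeta}(\sigma)=\psi_0\bigl(\overline{\sigma(\gamma_{\zeta})-\gamma_{\zeta}}\bigr)$, i.e. on this layer $\xi_{n,\zeta}$ is $\psi_0$ composed with the isomorphism $\Gal(\widetilde{N}_{\zeta}/\widetilde{M}_{\zeta})\xrightarrow{\sim}\bF_p$ of Lemma \ref{galex}. Now run $\sigma=\mathrm{Art}_{M'^{\mathrm{u}}_{\zeta}}(1+x\delta_{\zeta}^{-2})$ (with $x$ a unit) through the commutative diagram of Lemma \ref{galex}: its bottom row $0\to\bF_p\to\widetilde{k}\xrightarrow{P}\widetilde{k}$ has $P(z)=r(z^p-z)$, and, since $\mathrm{Art}_{M'^{\mathrm{u}}_{\zeta}}$ kills norms from $\widetilde{N}_{\zeta}$, the induced surjection $U_{\widetilde{M}'_{\zeta}}^2/U_{\widetilde{M}'_{\zeta}}^3\cong\widetilde{k}\to\Gal(\widetilde{N}_{\zeta}/\widetilde{M}_{\zeta})\cong\bF_p$ is $z\mapsto\Tr_{\widetilde{k}/\bF_p}(z/r)$ up to a unit of $\bF_p$ (computed from the normalizations $1+x\varpi_{\widetilde{M}'_{\zeta}}^2\mapsto\bar x$ and $1+x\varpi_{\widetilde{N}_{\zeta}}^{p^e+1}\mapsto-\bar x$ of Lemma \ref{galex}). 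Thus $\xi_{n,\zeta}\bigl(\mathrm{Art}_{M'^{\mathrm{u}}_{\zeta}}(1+x\delta_{\zeta}^{-2})\bigr)$ is $\psi_0$ of an explicit $\bF_p$-linear form in $\bar x\in\widetilde{k}$.

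Finally I would match this with $\psi_{M'^{\mathrm{u}}_{\zeta}}\bigl(\gamma^{-1}x\delta_{\zeta}^{-2}\bigr)=\psi_K\bigl(\Tr_{M'^{\mathrm{u}}_{\zeta}/K}(\gamma^{-1}x\delta_{\zeta}^{-2})\bigr)$ for $\gamma=-n'\delta_{\zeta}^{-(p^e+1)}$, i.e. with $\psi_K\bigl(\Tr_{M'^{\mathrm{u}}_{\zeta}/K}(-(n')^{-1}x\delta_{\zeta}^{p^e-1})\bigr)$. Here $-(n')^{-1}x\delta_{\zeta}^{p^e-1}$ has $M'^{\mathrm{u}}_{\zeta}$-valuation $1-p^e$, and because $\Tr_{M'^{\mathrm{u}}_{\zeta}/K}$ factors through $\Tr_{E_{\zeta}/K}$ the reduction of this trace is $\Tr_{\widetilde{k}/\bF_p}$ of $\bar x$ against an explicit element of $\widetilde{k}$ obtained from the residue of $\delta_{\zeta}^{p^e-1}$ relative to the canonical differential of the tower; equating the two $\bF_p$-linear forms in $\bar x$ forces $\gamma=-n'\delta_{\zeta}^{-(p^e+1)}$ modulo $U_{M'^{\mathrm{u}}_{\zeta}}^1$. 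In this comparison the power $p^e+1$ is the one already fixed by valuations, the factor $n'=[E_{\zeta}:K]$ appears because $\Tr_{E_{\zeta}/K}$ of an element of $E_{\zeta}$-valuation $-n'$ multiplies by $n'$, and the sign comes from the $-\tfrac{1}{2r}$ in $\theta_{\zeta}^p-\theta_{\zeta}=-\tfrac{1}{2r}\delta_{\zeta}^2$ of Lemma \ref{oddfac} together with the Gauss-sum normalizations built into $\phi_n$ via \eqref{chartwi}. For $p=2$ one argues the same way, replacing $\theta_{\zeta}$ by the cyclic degree-$4$ Artin--Schreier tower of Lemma \ref{tileq} and using \eqref{delz} with $\varepsilon_1=1$.

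The main obstacle is this last matching: tracking the sign and the factor $n'$ through the chain of isomorphisms in Lemma \ref{galex} and through the residue/trace computation, in particular computing how $\psi_{M'^{\mathrm{u}}_{\zeta}}=\psi_K\circ\Tr$ pairs against powers of the uniformizer $\delta_{\zeta}^{-1}$ when $M'^{\mathrm{u}}_{\zeta}/K$ is wildly ramified. It is cleanest to first reduce, as elsewhere in the paper, to $\ch K=p$ and $f=1$, where $\xi_{n,\zeta}$ has the transparent Artin--Schreier description of Lemma \ref{oddfac} (resp. Lemma \ref{tileq}), so the pairing above becomes a residue computation handled by the Schmid--Witt formula; the general case then follows since $\mathrm{rsw}(\xi_{n,\zeta},\psi_{M'^{\mathrm{u}}_{\zeta}})$ depends only on $\xi_{n,\zeta}$ on $U_{M'^{\mathrm{u}}_{\zeta}}^2/U_{M'^{\mathrm{u}}_{\zeta}}^3$, which is unaffected by the characteristic.
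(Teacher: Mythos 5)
Your plan is essentially the paper's proof: the paper also pins down $\xi_{n,\zeta}$ on $U_{M'^{\mathrm{u}}_{\zeta}}^2/U_{M'^{\mathrm{u}}_{\zeta}}^3$ by running the reciprocity map through the commutative diagram of Lemma \ref{galex}, rewrites the resulting $\psi_0\circ\Tr_{\widetilde{k}/\bF_p}(r^{-1}\bar u)$ as $\psi_{M'^{\mathrm{u}}_{\zeta}}(-n'^{-1}\delta_{\zeta}^{p^e+1}x)$ using $\ol{\Tr_{\widetilde{M}'_{\zeta}/\widetilde{T}_{\zeta}}(\delta_{\zeta}^{p^e-1}u)}=-r^{-1}\bar u$ together with the tame trace down to $K$, and concludes by the stability theorem. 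Two small corrections: the paper needs no reduction to $\ch K=p$, $f=1$ (Lemma \ref{galex} is characteristic-free), and the sign does not come from $\theta_{\zeta}$ or the twist \eqref{chartwi} — neither enters here, since on $U^2$ one has $a_{\sigma}=1$, $n_{\sigma}=0$; it comes from the normalizations and norm computations in Lemma \ref{galex} combined with $\Tr_{M'^{\mathrm{u}}_{\zeta}/T^{\mathrm{u}}_{\zeta}}(\delta_{\zeta}^{p^e-1})=\hat r^{-1}(p^e-1)\equiv-\hat r^{-1}$.
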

\begin{proof}
We put 
$\widetilde{\xi}_{n,\zeta} = 
 \xi_{n,\zeta}|_{W_{\widetilde{M}'_{\zeta}}}$, 
and regard it 
as a character of 
$\widetilde{M}_{\zeta}'^{\times}$. 
By \eqref{hom}, 
Lemma \ref{rswcal}.1 and Lemma \ref{galfil}, 
the restriction of 
$\widetilde{\xi}_{n,\zeta}$ 
to 
$U_{\widetilde{M}'_{\zeta}}^2$ 
is given by the composition 
\[
 U_{\widetilde{M}'_{\zeta}}^2 
 \xrightarrow{\Art_{\widetilde{M}'_{\zeta}}}
 \mathrm{Gal}(\widetilde{N}_{\zeta}/\widetilde{M}_{\zeta}) 
 \simeq  \mathbb{F}_p 
 \xrightarrow{\psi_0}
 \overline{\mathbb{Q}}_{\ell}^{\times}, 
\]
where the isomorphism 
$\mathrm{Gal}(\widetilde{N}_{\zeta}/\widetilde{M}_{\zeta}) 
 \simeq  \mathbb{F}_p$ 
is given by 
$\sigma \mapsto \overline{\sigma(\gamma_{\zeta}) -\gamma_{\zeta}}$. 
We define 
$p_{\widetilde{N}_{\zeta},-\gamma_{\zeta}^{-1}}$ 
as in Lemma \ref{galex}. 
For $u \in \cO_{\widetilde{M}'_{\zeta}}$, 
we put $\sigma_u=\Art_{\widetilde{M}'_{\zeta}}(1+u \hat{r}\varpi_{\widetilde{M}'_{\zeta}}^2)$ 
and then have 
\begin{align}
 \widetilde{\xi}_{n,\zeta} (1+u \hat{r} \varpi_{\widetilde{M}'_{\zeta}}^2 )
 &=\psi_0 \Bigl( \overline{\sigma_u (\gamma_{\zeta}) -\gamma_{\zeta}} \Bigr)  
 =\psi_0 \biggl( p_{\widetilde{N}_{\zeta},-\gamma_{\zeta}^{-1}} \biggl(
  \frac{\gamma_{\zeta}}{\sigma_u (\gamma_{\zeta})} \biggr)  
 \biggr) \notag \\ 
 &=\psi_0 \biggl( p_{\widetilde{N}_{\zeta},-\gamma_{\zeta}^{-1}} \biggl(
  \frac{\sigma_u (\varpi_{\widetilde{N}_{\zeta}})}{\varpi_{\widetilde{N}_{\zeta}}} \biggr)  
 \biggr) 
 =\psi_0 \circ \mathrm{Tr}_{\widetilde{k}/\mathbb{F}_p} 
 (\bar{u} ), \label{txiTr}
\end{align}
where we use Lemma \ref{rswcal}.2 and 
Lemma \ref{galex} at the last equality. 
Since we have 
\[ 
 \ol{\Tr_{\widetilde{M}'_{\zeta}/\widetilde{T}_{\zeta}}
 (\delta_{\zeta}^{p^e -1}u )}
 = -r^{-1} \bar{u} 
\] 
for $u \in \mathcal{O}_{\widetilde{M}'_{\zeta}}$, 
we obtain 
\[
 \widetilde{\xi}_{n,\zeta} (1+x )=
 \psi_{\widetilde{M}'_{\zeta}}(-n'^{-1} \delta_{\zeta}^{p^e+1} x) 
\]
for 
$x \in \mathfrak{p}_{\widetilde{M}'_{\zeta}}^2$ 
by \eqref{txiTr}. 
This implies 
\begin{equation}\label{xipsi}
 \xi_{n,\zeta} (1+x )=
 \psi_{M'^{\mathrm{u}}_{\zeta}}(-n'^{-1} \delta_{\zeta}^{p^e+1} x) 
\end{equation}
for 
$x \in \mathfrak{p}_{M'^{\mathrm{u}}_{\zeta}}^2$, 
because 
$\Tr_{\widetilde{k}/k_N} \colon \widetilde{k} \to k_N$ is surjective. 
The claim follows from \eqref{xipsi} and 
Proposition \ref{chare}.1. 
\end{proof}

\begin{lem}\label{rswtau}
We have 
$\mathrm{rsw}(\tau_{n,\zeta,\chi,c},\psi_{E_{\zeta}})
 =n' \varphi_{\zeta}' \mod U_{E_{\zeta}}^1$. 
\end{lem}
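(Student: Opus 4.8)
The plan is to transport the computation of Lemma~\ref{rswxi}, which takes place over the imprimitive field, down to $E_{\zeta}$, after first discarding the tame twists. The character $\chi\circ\lambda_{\zeta}$ factors through the reduction map $\cO_{E_{\zeta}}^{\times}\to k^{\times}$, hence is tamely ramified, while $\phi_{c}$ is unramified; so $(\chi\circ\lambda_{\zeta})\otimes\phi_{c}$ is a tamely ramified character of $W_{E_{\zeta}}$, and twisting $\tau_{n,\zeta}$ by such a character does not change the refined Swan conductor (this is immediate from Lemma~\ref{rswW}, since the twist enters the defining identity only through a multiplicative factor that cancels). Hence $\mathrm{rsw}(\tau_{n,\zeta,\chi,c},\psi_{E_{\zeta}})=\mathrm{rsw}(\tau_{n,\zeta},\psi_{E_{\zeta}})$, and it suffices to show $\mathrm{rsw}(\tau_{n,\zeta},\psi_{E_{\zeta}})=n'\varphi_{\zeta}'\mod U_{E_{\zeta}}^{1}$. (The hypothesis $\mathrm{sw}\geq 1$ that makes these refined Swan conductors meaningful also follows from Lemma~\ref{rswxi}, since $\mathrm{sw}(\xi_{n,\zeta})=p^{e}+1$ and $T_{\zeta}^{\mathrm{u}}/E_{\zeta}$ is tame; it reappears in the computation below.)

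I would then compute over $T_{\zeta}^{\mathrm{u}}$. By Proposition~\ref{taunzind}, $\tau_{n,\zeta}|_{W_{T_{\zeta}^{\mathrm{u}}}}\simeq\mathrm{Ind}_{M'^{\mathrm{u}}_{\zeta}/T_{\zeta}^{\mathrm{u}}}\xi_{n,\zeta}$. Since $M'^{\mathrm{u}}_{\zeta}/T_{\zeta}^{\mathrm{u}}$ is totally wildly ramified, restricting a tamely ramified character of $W_{T_{\zeta}^{\mathrm{u}}}$ to $W_{M'^{\mathrm{u}}_{\zeta}}$ again yields a tamely ramified character; combining this with the projection formula $\chi\otimes\mathrm{Ind}\,\xi_{n,\zeta}\simeq\mathrm{Ind}(\chi|_{W_{M'^{\mathrm{u}}_{\zeta}}}\otimes\xi_{n,\zeta})$, the inductivity of $\epsilon$-factors in degree zero \cite{DelconstL} (the $\lambda$-constants cancelling), and the compatibility of local class field theory with the norm, one obtains $\mathrm{rsw}(\tau_{n,\zeta}|_{W_{T_{\zeta}^{\mathrm{u}}}},\psi_{T_{\zeta}^{\mathrm{u}}})=\mathrm{Nr}_{M'^{\mathrm{u}}_{\zeta}/T_{\zeta}^{\mathrm{u}}}(\mathrm{rsw}(\xi_{n,\zeta}|_{W_{M'^{\mathrm{u}}_{\zeta}}},\psi_{M'^{\mathrm{u}}_{\zeta}}))\mod U_{T_{\zeta}^{\mathrm{u}}}^{1}$. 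By Lemma~\ref{rswxi} the argument of the norm is $-n'\delta_{\zeta}^{-(p^{e}+1)}$; from the minimal equation \eqref{delz} of $\delta_{\zeta}$ over $T_{\zeta}^{\mathrm{u}}$ one reads off $\mathrm{Nr}_{M'^{\mathrm{u}}_{\zeta}/T_{\zeta}^{\mathrm{u}}}(\delta_{\zeta})=(-1)^{p^{e}}(\alpha_{\zeta}^{-1}-\varepsilon_{1})\equiv(-1)^{p^{e}}\alpha_{\zeta}^{-1}\mod U_{T_{\zeta}^{\mathrm{u}}}^{1}$ (as $\alpha_{\zeta}$ has positive valuation), together with $\mathrm{Nr}_{M'^{\mathrm{u}}_{\zeta}/T_{\zeta}^{\mathrm{u}}}(-n')=(-n')^{p^{e}}\equiv(-1)^{p^{e}}n'\mod U_{T_{\zeta}^{\mathrm{u}}}^{1}$ and $\alpha_{\zeta}^{p^{e}+1}=-\varphi_{\zeta}'$ from \eqref{extgen}. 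The norm then works out to $-(-1)^{p^{e}}n'\varphi_{\zeta}'$, which equals $n'\varphi_{\zeta}'\mod U_{T_{\zeta}^{\mathrm{u}}}^{1}$ in both cases — for odd $p$ because $p^{e}$ is odd, and for $p=2$ because then $-1\in U_{T_{\zeta}^{\mathrm{u}}}^{1}$.

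It then remains to descend from $T_{\zeta}^{\mathrm{u}}$ to $E_{\zeta}$. The extension $T_{\zeta}^{\mathrm{u}}/E_{\zeta}$ is tamely ramified (its ramification index $p^{e}+1$ is prime to $p$), so the wild inertia subgroups of $W_{E_{\zeta}}$ and $W_{T_{\zeta}^{\mathrm{u}}}$ coincide and the refined Swan conductor is compatible with this base change: the natural map $E_{\zeta}^{\times}/U_{E_{\zeta}}^{1}\to(T_{\zeta}^{\mathrm{u}})^{\times}/U_{T_{\zeta}^{\mathrm{u}}}^{1}$ is injective and carries $\mathrm{rsw}(\tau_{n,\zeta},\psi_{E_{\zeta}})$ to $\mathrm{rsw}(\tau_{n,\zeta}|_{W_{T_{\zeta}^{\mathrm{u}}}},\psi_{T_{\zeta}^{\mathrm{u}}})$. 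Since $n'\varphi_{\zeta}'$ already belongs to $E_{\zeta}^{\times}$ and its image is exactly the element computed above, injectivity forces $\mathrm{rsw}(\tau_{n,\zeta},\psi_{E_{\zeta}})=n'\varphi_{\zeta}'\mod U_{E_{\zeta}}^{1}$.

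The norm bookkeeping and the sign-chasing above (in particular separating the cases $p=2$ and $p$ odd) are routine. The delicate points are the inductivity formula for $\mathrm{rsw}$ along the wild extension $M'^{\mathrm{u}}_{\zeta}/T_{\zeta}^{\mathrm{u}}$ — one has to confirm that the $\lambda$-constants really cancel and that tameness of characters is preserved under the relevant restriction — and, above all, the tame-descent compatibility in the last step, where one must rule out a spurious unit coming from the different of $T_{\zeta}^{\mathrm{u}}/E_{\zeta}$ or from the normalization $\psi_{M'^{\mathrm{u}}_{\zeta}}=\psi_{E_{\zeta}}\circ\mathrm{Tr}$. I expect this descent step to be the main obstacle; it can be settled by reducing, via Brauer induction, to the case of a character, for which the stability theorem together with $\psi_{T_{\zeta}^{\mathrm{u}}}=\psi_{E_{\zeta}}\circ\mathrm{Tr}$ makes the compatibility immediate.
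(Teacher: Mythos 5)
Your proposal is correct and follows essentially the same route as the paper: reduce to $\chi=1$, $c=1$ since the extra twist is tame, compute $\mathrm{rsw}$ over $T_{\zeta}^{\mathrm{u}}$ as the norm of $\mathrm{rsw}(\xi_{n,\zeta},\psi_{M'^{\mathrm{u}}_{\zeta}})$ from Lemma \ref{rswxi} via Proposition \ref{taunzind}, and descend through the tame extension $T_{\zeta}^{\mathrm{u}}/E_{\zeta}$ (the paper handles the first and last steps by citing \cite[48.1 Theorem (2) and (3)]{BHLLCGL2}, where you re-derive them, and it leaves the norm computation from \eqref{delz} and \eqref{extgen} implicit, which you carry out correctly).
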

\begin{proof}
By Proposition \ref{refSwres}.1, 
we may assume that $\chi=1$ and $c=1$. 
By Proposition \ref{taunzind} and Lemma \ref{rswxi}, 
we have
\begin{equation}\label{rswNr}
 \mathrm{rsw}(
 \tau_{n,\zeta} 
 |_{W_{T^{\mathrm{u}}_{\zeta}}},\psi_{T^{\mathrm{u}}_{\zeta}}) 
 =\mathrm{Nr}_{M'^{\mathrm{u}}_{\zeta}/T^{\mathrm{u}}_{\zeta}} 
 \bigl( \mathrm{rsw}(\xi_{n,\zeta},
 \psi_{M'^{\mathrm{u}}_{\zeta}}) \bigr) 
 =
 n' \varphi_{\zeta}' 
 \mod U_{T^{\mathrm{u}}_{\zeta}}^1 . 
\end{equation}
Since $T^{\mathrm{u}}_{\zeta}$ is a tamely ramified 
extension of $E_{\zeta}$, 
we have 
\begin{equation}\label{rswres}
 \mathrm{rsw}(\tau_{n,\zeta} ,\psi_{E_{\zeta}})
 =\mathrm{rsw}(\tau_{n,\zeta} 
 |_{W_{T^{\mathrm{u}}_{\zeta}}},\psi_{T^{\mathrm{u}}_{\zeta}}) 
 \mod U_{T^{\mathrm{u}}_{\zeta}}^1 
\end{equation}
by Proposition \ref{refSwres}.2. 
The claim follows from \eqref{rswNr} and \eqref{rswres}. 
\end{proof}
\begin{prop}\label{rswdet}
We have 
$\mathrm{rsw}(\tau_{\zeta,\chi,c},\psi_K) 
 = \mathrm{rsw}(\pi_{\zeta,\chi,c},\psi_K)$. 
\end{prop}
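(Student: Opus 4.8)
The plan is to compute both refined Swan conductors explicitly as cosets in $K^{\times}/U_K^{1}$; each comes out as a unit times $\zeta\varpi$, and the two units differ by a power of $-n'$ whose image in $k^{\times}$ is trivial. On the Galois side I would first establish the induction formula
\[
 \mathrm{rsw}(\tau_{\zeta,\chi,c},\psi_K)=\Nr_{E_{\zeta}/K}\bigl(\mathrm{rsw}(\tau_{n,\zeta,\chi,c},\psi_{E_{\zeta}})\bigr)\pmod{U_K^{1}}.
\]
Let $\mu$ be a tamely ramified character of $W_K$. The projection formula gives $\mu\otimes\tau_{\zeta,\chi,c}\simeq\Ind_{E_{\zeta}/K}\bigl(\mu|_{W_{E_{\zeta}}}\otimes\tau_{n,\zeta,\chi,c}\bigr)$, and $\mu|_{W_{E_{\zeta}}}$ is again tamely ramified because $E_{\zeta}/K$ is totally tamely ramified of degree $n'$, so the wild inertia subgroup of $W_{E_{\zeta}}$ equals that of $W_K$. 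Using the inductivity of the Deligne--Langlands local constants in degree zero together with the Langlands constant $\lambda(E_{\zeta}/K,\psi_K)$ (cf.\ \cite[30.4]{BHLLCGL2}), Lemma \ref{rswW} applied to $\tau_{n,\zeta,\chi,c}$ over $E_{\zeta}$ (whose hypothesis $\mathrm{sw}\geq 1$ is part of the content of Lemma \ref{rswtau}), and the compatibility $\mu|_{W_{E_{\zeta}}}=\mu\circ\Nr_{E_{\zeta}/K}$ of the reciprocity maps, the factor $\lambda(E_{\zeta}/K,\psi_K)^{\dim\tau_{n,\zeta,\chi,c}}$ cancels in the ratio $\epsilon(\mu\otimes\tau_{\zeta,\chi,c},s,\psi_K)/\epsilon(\tau_{\zeta,\chi,c},s,\psi_K)$, which is therefore $\mu\bigl(\Nr_{E_{\zeta}/K}\mathrm{rsw}(\tau_{n,\zeta,\chi,c},\psi_{E_{\zeta}})\bigr)$. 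Since $\mathrm{rsw}(\tau_{n,\zeta,\chi,c},\psi_{E_{\zeta}})$ has $E_{\zeta}$-valuation $1$, the right-hand side has $K$-valuation $1$, so $\mathrm{sw}(\tau_{\zeta,\chi,c})\geq 1$ and the displayed formula follows by comparison with the defining relation for $\mathrm{rsw}(\tau_{\zeta,\chi,c},\psi_K)$. Then Lemma \ref{rswtau} gives $\mathrm{rsw}(\tau_{n,\zeta,\chi,c},\psi_{E_{\zeta}})=n'\varphi_{\zeta}'$, and as $\varphi_{\zeta}'$ has minimal polynomial $X^{n'}-\zeta\varpi$ over $K$,
\[
 \mathrm{rsw}(\tau_{\zeta,\chi,c},\psi_K)=\Nr_{E_{\zeta}/K}(n'\varphi_{\zeta}')=(-1)^{n'-1}n'^{\,n'}\zeta\varpi\pmod{U_K^{1}}.
\]

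On the $\iGL_{n}(K)$ side I would apply Lemma \ref{rswGL}.2 to the simple stratum $\bigl[\mathfrak{I},1,0,\varphi_{\zeta,n}^{-1}\bigr]$ contained in $\pi_{\zeta,\chi,c}$: this gives $\gamma_{\pi_{\zeta,\chi,c},\psi_K}\equiv\det(\varphi_{\zeta,n}^{-1})\pmod{U_K^{1}}$, hence $\mathrm{rsw}(\pi_{\zeta,\chi,c},\psi_K)\equiv\det(\varphi_{\zeta,n})\pmod{U_K^{1}}$. Since $\varphi_{\zeta}$ is the companion matrix of $X^{n}-\zeta\varpi$ and $\varphi_{\zeta,n}=n'\varphi_{\zeta}$, we get $\mathrm{rsw}(\pi_{\zeta,\chi,c},\psi_K)=(-1)^{n-1}n'^{\,n}\zeta\varpi\pmod{U_K^{1}}$.

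It remains to compare the two answers: their quotient is $(-1)^{n-n'}n'^{\,n-n'}=(-n')^{\,n'(p^{e}-1)}$, using $n=p^{e}n'$. Modulo $U_K^{1}$ the class of $-n'$ is the image of an integer, hence lies in $\bF_{p}^{\times}\subset k^{\times}$ and is therefore killed by $p-1$; since $p-1\mid p^{e}-1$, the element $(-n')^{\,n'(p^{e}-1)}$ lies in $U_K^{1}$, and so $\mathrm{rsw}(\tau_{\zeta,\chi,c},\psi_K)=\mathrm{rsw}(\pi_{\zeta,\chi,c},\psi_K)$ in $K^{\times}/U_K^{1}$. The main obstacle is the first step, namely the induction formula for the refined Swan conductor together with the careful bookkeeping of the reciprocity-map normalizations; the determinant computation and the final congruence are elementary.
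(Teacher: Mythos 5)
Your proposal is correct and follows the paper's own route: the induction formula $\mathrm{rsw}(\tau_{\zeta,\chi,c},\psi_K)=\Nr_{E_{\zeta}/K}\bigl(\mathrm{rsw}(\tau_{n,\zeta,\chi,c},\psi_{E_{\zeta}})\bigr)$, Lemma \ref{rswtau} on the Galois side, and Lemma \ref{rswGL}.2 applied to the stratum $\bigl[\mathfrak{I},1,0,\varphi_{\zeta,n}^{-1}\bigr]$ on the $\iGL_n(K)$ side. You merely make explicit what the paper leaves implicit, namely the norm and determinant computations and the final verification that the discrepancy $(-n')^{n'(p^e-1)}$ lies in $U_K^1$.
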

\begin{proof}
By 
$\tau_{\zeta,\chi,c}=
 \mathrm{Ind}_{E_{\zeta}/K} \tau_{n,\zeta,\chi,c}$, 
we have 
\begin{equation}\label{rswNrtau}
 \mathrm{rsw}(\tau_{\zeta,\chi,c},\psi_K)= 
 \mathrm{Nr}_{E_{\zeta}/K} \bigl( 
 \mathrm{rsw}(\tau_{n,\zeta,\chi,c}, 
 \psi_{E_{\zeta}} ) \bigr). 
\end{equation}
Hence, the claim follows from 
Lemma \ref{rswGL} and 
Lemma \ref{rswtau}. 
\end{proof}

\begin{lem}\label{Swtau}
We have 
$\mathrm{Sw} (\tau_{\zeta,\chi,c})=1$. 
\end{lem}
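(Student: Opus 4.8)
The plan is to read the Swan conductor off the refined Swan conductor already computed above. By the Remark following Definition \ref{rswtaudef}, for an irreducible smooth representation $\tau$ of $W_K$ with $\mathrm{sw}(\tau)\geq 1$ one has $v_K\bigl(\mathrm{rsw}(\tau,\psi_K)\bigr)=\mathrm{Sw}(\tau)$. So I would simply verify that $\mathrm{rsw}(\tau_{\zeta,\chi,c},\psi_K)$ has $v_K$-valuation equal to $1$.

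Combining the displayed equality \eqref{rswNrtau} in the proof of Proposition \ref{rswdet} with Lemma \ref{rswtau} gives
\[
 \mathrm{rsw}(\tau_{\zeta,\chi,c},\psi_K)\equiv\mathrm{Nr}_{E_{\zeta}/K}(n'\varphi_{\zeta}')\mod U_K^1 .
\]
Here $\varphi_{\zeta}'$ is a uniformizer of $E_{\zeta}$: since $\varphi_{\zeta}'^{\,n'}=\zeta\varpi$ with $\zeta\in\mu_{q-1}(K)\subset\mathcal{O}_K^{\times}$ and $\varpi$ a uniformizer of $K$, the extension $E_{\zeta}/K$ is totally (tamely) ramified of degree $n'$ and $v_{E_{\zeta}}(\varphi_{\zeta}')=1$. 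As $n'$ is prime to $p$, it is a unit in $\mathcal{O}_{E_{\zeta}}$, hence $v_{E_{\zeta}}(n'\varphi_{\zeta}')=1$. Because $E_{\zeta}/K$ is totally ramified, the norm map satisfies $v_K\circ\mathrm{Nr}_{E_{\zeta}/K}=v_{E_{\zeta}}$, so $v_K\bigl(\mathrm{rsw}(\tau_{\zeta,\chi,c},\psi_K)\bigr)=1$, and the Remark after Definition \ref{rswtaudef} yields $\mathrm{Sw}(\tau_{\zeta,\chi,c})=1$.

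I do not expect a genuine obstacle here; the substance is already contained in Lemma \ref{rswtau} and Proposition \ref{rswdet}. The only point to be careful about is that the refined Swan conductor of $\tau_{\zeta,\chi,c}$ is legitimately defined, i.e.\ that $\mathrm{sw}(\tau_{\zeta,\chi,c})\geq 1$; this is automatic, since $\tau_{n,\zeta,\chi,c}$ is wildly ramified (its refined Swan conductor in Lemma \ref{rswtau} has positive valuation) and a representation of $W_K$ induced from a wildly ramified representation of $W_{E_{\zeta}}$ is again wildly ramified. Taken together with Proposition \ref{rswdet}, the same computation re-confirms that $\mathrm{rsw}(\pi_{\zeta,\chi,c},\psi_K)$ has valuation $1$, in accordance with $\pi_{\zeta,\chi,c}$ being simple epipelagic.
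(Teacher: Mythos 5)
Your proposal is correct and is exactly the paper's argument: the paper's proof of this lemma simply cites Lemma \ref{rswtau} and the norm identity \eqref{rswNrtau}, i.e.\ it reads off $\mathrm{Sw}(\tau_{\zeta,\chi,c})=v_K\bigl(\mathrm{Nr}_{E_{\zeta}/K}(n'\varphi_{\zeta}')\bigr)=1$ just as you do. Your added remarks on $\varphi_{\zeta}'$ being a uniformizer of the totally ramified extension $E_{\zeta}/K$ and on $\mathrm{sw}\geq 1$ only make explicit what the paper leaves implicit.
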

\begin{proof}
This follows from 
Lemma \ref{rswtau} and \eqref{rswNrtau}. 
\end{proof}

\begin{lem}\label{tauirr}
The representation 
$\tau_{\zeta,\chi,c}$ is irreducible. 
\end{lem}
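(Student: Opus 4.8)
The plan is to deduce irreducibility of $\tau_{\zeta,\chi,c}=\Ind_{E_{\zeta}/K}\tau_{n,\zeta,\chi,c}$ from the structure of its restriction to the wild inertia subgroup together with the additivity of the Swan conductor. Write $\sigma=\tau_{n,\zeta,\chi,c}$ and let $P\subset W_K$ be the wild inertia subgroup. Since $E_{\zeta}/K$ is tamely ramified, $P$ is also the wild inertia of $W_{E_{\zeta}}$ and is normal in $W_K$.

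The first step is to identify $\sigma|_P$. The character $\phi_c$ is unramified and $\chi\circ\lambda_{\zeta}$ factors through the reduction map $\mathcal{O}_{E_{\zeta}}^{\times}\to k^{\times}$, so both are trivial on $P$ and $\sigma|_P=(\tau_n\circ\Theta_{\zeta})|_P$. As $\Theta_{\zeta}$ is surjective and $Q_0$ is the normal pro-$p$ part of $Q$ while $P$ is wild inertia, $\Theta_{\zeta}$ restricts to a surjection $P\twoheadrightarrow Q_0$; since the twisting character in \eqref{chartwi} is trivial on $Q_0$, Lemma~\ref{restQ} gives $\sigma|_P\simeq(\tau^0|_{Q_0})\circ\Theta_{\zeta}$. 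By \eqref{cht}, $\tau^0|_{Q_0}$ restricts to $\psi_0^{\oplus p^e}$ on the centre $F$ and has dimension $p^e$, so by Lemma~\ref{Q0Fsur} and the Heisenberg fact recalled in Section~\ref{Hgrp} it is the irreducible representation $\rho_{\psi_0}$ of $Q_0$, of dimension $p^e$. Hence $\sigma|_P$ is irreducible of dimension $p^e\ge p\ge 2$; in particular $\sigma$ is irreducible, and every $W_K$-conjugate ${}^g(\sigma|_P)$ (using $P\trianglelefteq W_K$) is again irreducible of dimension $p^e$.

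Next, apply Mackey's restriction formula for the normal subgroup $P\subset W_{E_{\zeta}}$. For every $g\in W_K$ the field $gE_{\zeta}=K(g\varphi_{\zeta}')$ is again totally tamely ramified of degree $n'$ over $K$ (because $g$ fixes $K$, hence fixes $\zeta\varpi$, and $X^{n'}-\zeta\varpi$ is Eisenstein), so $P=P_{gE_{\zeta}}$; thus the double cosets $P\backslash W_K/W_{E_{\zeta}}$ coincide with the left cosets $W_K/W_{E_{\zeta}}$, and
\[
 \tau_{\zeta,\chi,c}|_P\simeq\bigoplus_{i=1}^{n'}{}^{g_i}(\sigma|_P),
\]
where $g_1,\dots,g_{n'}$ represent $W_K/W_{E_{\zeta}}$. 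Therefore $\tau_{\zeta,\chi,c}|_P$ is a semisimple representation of a finite quotient of $P$ all of whose irreducible constituents have dimension $p^e>1$, so it has no trivial constituent. Now suppose $V:=\tau_{\zeta,\chi,c}$ had a subrepresentation $0\neq V_1\subsetneq V$. Additivity of the Swan conductor in $0\to V_1\to V\to V/V_1\to 0$ together with $\mathrm{Sw}(V)=1$ (Lemma~\ref{Swtau}) and $\mathrm{Sw}\ge 0$ forces $\mathrm{Sw}(V_1)=0$ or $\mathrm{Sw}(V/V_1)=0$, i.e.\ one of $V_1$, $V/V_1$ is tamely ramified, hence trivial on $P$. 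But then $V|_P$ would contain the trivial representation as a constituent, contradicting the previous sentence. Hence $\tau_{\zeta,\chi,c}$ is irreducible.

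The step requiring the most care is the identification $\Theta_{\zeta}(P)=Q_0$ and the resulting description $\sigma|_P\simeq\rho_{\psi_0}\circ\Theta_{\zeta}$; once this is in place the argument is formal. As an alternative to the Swan-conductor argument one could invoke Mackey's irreducibility criterion directly: for $g\in W_K\setminus W_{E_{\zeta}}$ one has $g(\varphi_{\zeta}')=\eta\,\varphi_{\zeta}'$ with $\eta\in\mu_{n'}(K^{\mathrm{ac}})$ and $\eta\neq 1$, so $W_{E_{\zeta}}\cap{}^gW_{E_{\zeta}}=W_{E_{\zeta}(\eta)}$ with $E_{\zeta}(\eta)/E_{\zeta}$ unramified, on which both $\sigma$ and $\sigma^g$ remain irreducible (each contains the irreducible $\sigma|_P$); and by Lemma~\ref{rswtau} their refined Swan conductors $n'\varphi_{\zeta}'$ and $n'\eta\varphi_{\zeta}'$ are distinct modulo $U^1_{E_{\zeta}(\eta)}$ since $\eta$ is a nontrivial root of unity of order prime to $p$. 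Thus $\sigma$ and $\sigma^g$ are disjoint on the intersection, and $\tau_{\zeta,\chi,c}$ is irreducible.
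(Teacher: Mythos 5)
Your argument is correct and is essentially the paper's proof: both deduce from $\mathrm{Sw}(\tau_{\zeta,\chi,c})=1$ and additivity that a reducible $\tau_{\zeta,\chi,c}$ would have a constituent trivial on wild inertia, and contradict this with the irreducibility (via Lemma \ref{restQ}) of $\tau_{n,\zeta,\chi,c}$ on wild inertia. The only cosmetic difference is that the paper reaches the contradiction by Frobenius reciprocity rather than by your Mackey restriction to $P$; your closing Mackey-criterion alternative via refined Swan conductors is a valid bonus but not what the paper does.
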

\begin{proof}
We know that the restriction of 
$\tau_{n,\zeta,\chi,c}$ 
to the wild inertia subgroup of $W_{E_{\zeta}}$ 
is irreducible by Corollary \ref{cor:irrQ0}. 
Assume that $\tau_{\zeta,\chi,c}$ 
is not irreducible. 
Then we have an irreducible factor $\tau'$ of 
$\tau_{\zeta,\chi,c}$ such that 
$\mathrm{Sw} (\tau')=0$, 
by Lemma \ref{Swtau} and 
the additivity of $\mathrm{Sw}$. 
Then, the restriction of 
$\tau'$ to the wild inertia subgroup of $W_K$ 
is trivial by $\mathrm{Sw} (\tau')=0$. 
On the other hand, 
we have an injective homomorphism 
$\tau_{n,\zeta,\chi,c} \to \tau'|_{W_{E_{\zeta}}}$ 
by Frobenius reciprocity. 
This is a contradiction. 
\end{proof}

\begin{prop}\label{tause}
The representation 
$\tau_{\zeta,\chi,c}$ is irreducible of Swan conductor $1$. 
\end{prop}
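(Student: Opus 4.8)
The plan is to obtain the statement as the formal conjunction of two results already proved in this section. By definition, a finite dimensional irreducible continuous representation $\tau$ of $W_K$ over $\bC$ is simple epipelagic precisely when $\mathrm{Sw}(\tau) = 1$, so there are exactly two things to check: irreducibility and the value of the Swan conductor. First I would invoke Lemma \ref{tauirr}, which gives that $\tau_{\zeta,\chi,c}$ is irreducible; note that $\tau_{\zeta,\chi,c}$ is by construction a finite dimensional continuous representation of $W_K$ over $\bC$, being $\Ind_{E_{\zeta}/K}$ of the representation $\tau_{n,\zeta,\chi,c}$, which is the inflation of $\tau_n$ along the continuous surjection $\Theta_{\zeta}$ of \eqref{hom} twisted by the continuous characters $\chi \circ \lambda_{\zeta}$ and $\phi_c$. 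Then I would invoke Lemma \ref{Swtau}, which gives $\mathrm{Sw}(\tau_{\zeta,\chi,c}) = 1$. Combining these, $\tau_{\zeta,\chi,c}$ meets the defining condition, hence is simple epipelagic.

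There is no genuine obstacle at this point, since the substance has already been placed into the supporting lemmas: the computation of $\mathrm{Sw}$ sits in Lemma \ref{Swtau}, which reads off the valuation of the refined Swan conductor from Lemma \ref{rswtau} (itself built on the norm-compatibility of $\mathrm{rsw}$ under induction and the explicit computation $\mathrm{rsw}(\tau_{n,\zeta,\chi,c},\psi_{E_{\zeta}}) = n'\varphi_{\zeta}' \bmod U_{E_{\zeta}}^1$), while irreducibility sits in Lemma \ref{tauirr}, which uses that $\tau_{n,\zeta,\chi,c}$ already restricts irreducibly to the wild inertia subgroup of $W_{E_{\zeta}}$ together with additivity of $\mathrm{Sw}$ and Frobenius reciprocity to exclude a splitting. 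Thus the proof of Proposition \ref{tause} is a one-line citation of Lemma \ref{Swtau} and Lemma \ref{tauirr}.
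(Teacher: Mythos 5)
Your proof is correct and is exactly the paper's argument: the paper's proof of Proposition \ref{tause} is the one-line citation of Lemma \ref{Swtau} and Lemma \ref{tauirr}, precisely as you propose. The additional remarks on where the substance lies (the refined Swan conductor computation and the wild-inertia irreducibility argument) accurately reflect the supporting lemmas.
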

\begin{proof}
This follows from Lemma \ref{Swtau} and 
Lemma \ref{tauirr}. 
\end{proof}

\section{Epsilon factor}\label{Epsfac}
\subsection{Reduction to special cases}
In this subsection, we show the equality 
$\varepsilon (\tau_{\zeta,\chi,c}, \psi_K )
 =\varepsilon (\pi_{\zeta,\chi,c}, \psi_K)$ 
of epsilon factors assuming some results in 
the special case where $n=p^e$, $\ch K =p$ and 
$f=1$. The results in the special case will be proved 
in the next subsection. 

\begin{lem}\label{lconstLT}
We have 
\begin{align*}
 \lambda (E_{\zeta}/K, \psi_K) &=
 \begin{cases}
 \bigl( \frac{q}{n'} \bigr) & \textrm{if $n'$ is odd,}\\
 -\bigl(-\epsilon(p)
 \bigl( \frac{2n'}{p} \bigr) 
 \bigl( \frac{-1}{p} \bigr)^{\frac{n'}{2}-1} 
 \bigr)^f & \textrm{if $n'$ is even,}
 \end{cases}\\
 \lambda (T^{\mathrm{u}}_{\zeta}/E_{\zeta}, \psi_{E_{\zeta}}) &=
 \begin{cases}
 - (-1)^{\frac{(p-1)fN}{4}} & \textrm{if $p \neq 2$,}\\
 \bigl( \frac{q}{p^e +1} \bigr) 
 & \textrm{if $p=2$.} 
 \end{cases}
\end{align*}
\end{lem}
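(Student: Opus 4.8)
The plan is to compute the two Langlands constants separately, reducing in each case to the theory of tamely ramified extensions already used for Lemma~\ref{lconsttame}.

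For $\lambda(E_{\zeta}/K,\psi_K)$, note that $E_{\zeta}=K(\varphi_{\zeta}')$ with $\varphi_{\zeta}'^{n'}=\zeta\varpi$ is totally tamely ramified of degree $n'$ prime to $p$, hence of exactly the shape treated in Lemma~\ref{lconsttame} with the uniformizer $\zeta\varpi$ in place of $\varpi$. If $n'$ is odd, the argument of Lemma~\ref{deltatame}.1 shows that $\delta_{E_{\zeta}/K}$ is the unramified quadratic character with $\delta_{E_{\zeta}/K}(\zeta\varpi)=\bigl(\frac{q}{n'}\bigr)$, and then $\lambda(E_{\zeta}/K,\psi_K)=\epsilon(\delta_{E_{\zeta}/K},\psi_K)=\bigl(\frac{q}{n'}\bigr)$ by \cite[Proposition~2]{Henepsmod}. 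If $n'$ is even (so $p\neq 2$), I would run the even-degree part of the proof of Lemma~\ref{lconsttame}: compute the multiplicative discriminant $d_{E_{\zeta}/K}$ from Proposition~\ref{dw2f} applied to the generator $\varphi_{\zeta}'$ with minimal polynomial $x^{n'}-\zeta\varpi$, evaluate $\epsilon(\delta_{E_{\zeta}/K},\psi_K)$ as a quadratic Gauss sum over $k$ by the Hasse--Davenport and Gauss formulas, and assemble
\[
 \lambda(E_{\zeta}/K,\psi_K)=\epsilon(\delta_{E_{\zeta}/K},\psi_K)\Bigl(\frac{n'}{q}\Bigr)\Bigl(\frac{-1}{q}\Bigr)^{\frac{n'}{2}-1}(d_{E_{\zeta}/K},2)_K
\]
from \cite[Theorem~II.2B]{SaiLcInd}. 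One must check that the contributions of $\zeta$ entering $d_{E_{\zeta}/K}$, the Gauss sum, and the Hilbert symbol cancel, so that the value depends only on $n'$, $p$ and $f$.

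For $\lambda(T_{\zeta}^{\mathrm{u}}/E_{\zeta},\psi_{E_{\zeta}})$, I would factor the extension through its unramified subextension $E_{\zeta}^{\mathrm{u}}=E_{\zeta}K^{\mathrm{u}}$, of degree $N$ over $E_{\zeta}$, so that $T_{\zeta}^{\mathrm{u}}/E_{\zeta}^{\mathrm{u}}$ is totally tamely ramified of degree $p^e+1$, generated by $\alpha_{\zeta}$ with $\alpha_{\zeta}^{p^e+1}=-\varphi_{\zeta}'$. By the inductivity of Langlands constants in towers (cf.~\cite[30.4]{BHLLCGL2}),
\[
 \lambda(T_{\zeta}^{\mathrm{u}}/E_{\zeta},\psi_{E_{\zeta}})=\lambda(T_{\zeta}^{\mathrm{u}}/E_{\zeta}^{\mathrm{u}},\psi_{E_{\zeta}^{\mathrm{u}}})\cdot\lambda(E_{\zeta}^{\mathrm{u}}/E_{\zeta},\psi_{E_{\zeta}})^{p^e+1}.
\]
The unramified factor lies in $\{\pm1\}$, since $\delta_{E_{\zeta}^{\mathrm{u}}/E_{\zeta}}$ is trivial or unramified quadratic; hence its $(p^e+1)$-th power is $1$ when $p\neq 2$ (as $p^e+1$ is even), and $N=1$ makes it $1$ when $p=2$. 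For the ramified factor: when $p=2$ we have $p^e+1=2^e+1$ odd and $T_{\zeta}^{\mathrm{u}}=T_{\zeta}$, so the odd-degree case above, now over $E_{\zeta}$ (residue field $k$), gives $\bigl(\frac{q}{p^e+1}\bigr)$, the level $n'-1$ of $\psi_{E_{\zeta}}$ being harmless because $\delta_{T_{\zeta}/E_{\zeta}}$ is unramified of odd degree; when $p\neq 2$ we have $p^e+1$ even and I would apply \cite[Theorem~II.2B]{SaiLcInd} over $E_{\zeta}^{\mathrm{u}}$ (residue field $k_N$, $\lvert k_N\rvert=q^N$), computing $d_{T_{\zeta}^{\mathrm{u}}/E_{\zeta}^{\mathrm{u}}}$ from Proposition~\ref{dw2f} with generator $\alpha_{\zeta}$ and minimal polynomial $x^{p^e+1}+\varphi_{\zeta}'$, together with the quadratic Gauss sum over $k_N$ and the Hilbert symbol over $E_{\zeta}^{\mathrm{u}}$, to arrive at $-(-1)^{\frac{(p-1)fN}{4}}$.

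The main obstacle is the even-degree tamely ramified cases, which occur only for $p\neq 2$: there one must push through the explicit bookkeeping of signs, powers of $\sqrt{-1}$, quadratic residue symbols $\bigl(\frac{\cdot}{p}\bigr)$, Hilbert symbols $(\cdot,2)$, the Hasse--Davenport passage between $k$ (resp.\ $k_N$) and $\bF_p$, and the level of $\psi_{E_{\zeta}}$, and verify that every $\zeta$-dependent term cancels. When $\ch K=2$ the situation is much simpler --- $p^e+1$ is odd and, by Theorem~\ref{SWeps}, the relevant quadratic $\epsilon$-factors are trivial --- which is consistent with the clean shape of the $p=2$ formulas.
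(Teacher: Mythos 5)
Your proposal is correct and follows essentially the same route as the paper: the paper's proof consists precisely of the tower identity $\lambda(T^{\mathrm{u}}_{\zeta}/E_{\zeta},\psi_{E_{\zeta}})=\lambda(T^{\mathrm{u}}_{\zeta}/E^{\mathrm{u}}_{\zeta},\psi_{E^{\mathrm{u}}_{\zeta}})\,\lambda(E^{\mathrm{u}}_{\zeta}/E_{\zeta},\psi_{E_{\zeta}})^{p^e+1}$ with the unramified factor killed exactly as you argue, followed by a direct citation of Lemma~\ref{lconsttame} (applied with $m'=1$ for $E_{\zeta}/K$ and with $m=p^e+1$, $m'=n'$ over $E^{\mathrm{u}}_{\zeta}$, whose residue field is $k_N$), rather than re-running its even-degree computation as you propose. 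One small correction: $\psi_{E_{\zeta}}$ does not have level $n'-1$ --- since $E_{\zeta}/K$ is tame, $\psi_{E_{\zeta}}$ is trivial on $\fp_{E_{\zeta}}$ and equals $\psi_0(\Tr_{k/\bF_p}(n'\bar{x}))$ on $\cO_{E_{\zeta}}$, which is exactly the hypothesis of Lemma~\ref{lconsttame} with $m'=n'$, so your conclusion stands.
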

\begin{proof}
We have 
\[
 \lambda (T^{\mathrm{u}}_{\zeta}/E_{\zeta}, \psi_{E_{\zeta}}) 
 =
 \lambda (T^{\mathrm{u}}_{\zeta}/E^{\mathrm{u}}_{\zeta}, 
 \psi_{E^{\mathrm{u}}_{\zeta}})
 \lambda (E^{\mathrm{u}}_{\zeta}/E_{\zeta}, \psi_{E_{\zeta}})^{p^e +1} 
 =\lambda (T^{\mathrm{u}}_{\zeta}/E^{\mathrm{u}}_{\zeta}, 
 \psi_{E^{\mathrm{u}}_{\zeta}}). 
\]
If $p \neq 2$, we have
\[
\lambda (T^{\mathrm{u}}_{\zeta}/E^{\mathrm{u}}_{\zeta}, 
 \psi_{E^{\mathrm{u}}_{\zeta}})=
 -\biggl(-\epsilon(p)\left(\frac{2n'}{p}\right)\left(\frac{-1}{p}\right)^{\frac{p^e-1}{2}}\biggr)^{f N}= - (-1)^{\frac{(p-1)fN}{4}}
\]
by Lemma \ref{lconsttame}, 
since $f N$ is even. The other assertions immediately follow from 
Lemma \ref{lconsttame}. 
\end{proof}

\begin{lem}\label{lconstM}
We have 
\begin{equation*}
 \lambda (M'^{\mathrm{u}}_{\zeta}/T^{\mathrm{u}}_{\zeta}, 
 \psi_{T^{\mathrm{u}}_{\zeta}}) =
 \begin{cases}
 (-1)^f & \textrm{if $p =2$ and $e \leq 2$,}\\ 
 \bigl( \frac{r}{k_N} \bigr) & \textrm{otherwise.}
 \end{cases}\\
\end{equation*}
\end{lem}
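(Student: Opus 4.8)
Write $L=M'^{\mathrm{u}}_{\zeta}$ and $K'=T^{\mathrm{u}}_{\zeta}$. Since $\alpha_{\zeta}$ is a uniformizer of $K'$ and $\varepsilon_{1}\in\{0,1\}$, the element $-\alpha_{\zeta}^{-1}+\varepsilon_{1}$ has $K'$-valuation $-1$, which is prime to $p$; hence $L=K'(\delta_{\zeta})$ is totally ramified over $K'$ of degree $p^{e}$, with a single ramification break at $1$, so the discriminant of $L/K'$ has the even valuation $2(p^{e}-1)$. The plan is to compute $\lambda(L/K',\psi_{K'})$ through the formalism of Section \ref{SWdisc}. Decomposing $\Ind_{L/K'}\mathbf{1}\simeq\mathbf{1}\oplus V$ one has $\det V=\delta_{L/K'}$; form $W=V\ominus(p^{e}-2)\mathbf{1}\ominus\delta_{L/K'}\in R(W_{K'},\bR)$, of dimension $0$ and determinant $1$ ($\delta_{L/K'}$ being quadratic, hence real). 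By Theorem \ref{SWeps}, $\epsilon(W,\psi_{K'})=\exp\!\bigl(2\pi\sqrt{-1}\,\mathrm{cl}(w_{2}(W))\bigr)$; combined with the inductivity and multiplicativity of $\epsilon$-factors and a routine normalization check (harmless because $K'/K$ is tamely ramified and $L/K'$ is totally ramified of $p$-power degree), this reduces the statement to
\[
 \lambda(L/K',\psi_{K'})=\epsilon(\delta_{L/K'},\psi_{K'})\cdot\exp\!\bigl(2\pi\sqrt{-1}\,\mathrm{cl}(w_{2}(W))\bigr),
\]
so that it remains to identify $\delta_{L/K'}$ and to evaluate the Stiefel--Whitney term.

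Suppose first $p\neq 2$ (which is exactly the ``otherwise'' case when $p$ is odd). Use $\delta_{\zeta}$ as generator; its minimal polynomial over $K'$ is $f(X)=X^{p^{e}}-\hat{r}^{-1}X+\alpha_{\zeta}^{-1}$, and $D:=f'(\delta_{\zeta})=-\hat{r}^{-1}$ if $\ch K=p$, while in general $D\equiv-\hat{r}^{-1}\bmod\fp_{L}$ since $v_{L}(p^{e}\delta_{\zeta}^{p^{e}-1})>0$. Proposition \ref{dw2f} then gives, modulo squares and after identifying the unit part of $K'^{\times}/(K'^{\times})^{2}$ with $k_{N}^{\times}/(k_{N}^{\times})^{2}$,
\[
 d_{L/K'}\equiv(-1)^{\binom{p^{e}}{2}}\Nr_{L/K'}(-\hat{r}^{-1})\equiv(-1)^{(p^{e}+1)/2}\,r ,
\]
so $\delta_{L/K'}=\kappa_{d_{L/K'}}$ is the unramified quadratic character attached to $(-1)^{(p^{e}+1)/2}r$. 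A short computation shows that the chosen $N$ forces $-1$ to be a square in $k_{N}$ (equivalently $p^{fN}\equiv 1\bmod 4$); consequently $\{-1\}_{K'}=0$ and $\{d_{L/K'},2\}_{K'}=0$ (units pair trivially in residue characteristic $\neq 2$), so the second clause of Proposition \ref{dw2f} together with the comparison of $\Ind_{L/K'}\mathbf{1}$ with $\Ind_{L/K'}\kappa_{D}$ makes the Stiefel--Whitney term trivial, while $\epsilon(\delta_{L/K'},\psi_{K'})=\bigl(\frac{-1}{k_{N}}\bigr)^{(p^{e}+1)/2}\bigl(\frac{r}{k_{N}}\bigr)=\bigl(\frac{r}{k_{N}}\bigr)$. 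Hence $\lambda(L/K',\psi_{K'})=\bigl(\frac{r}{k_{N}}\bigr)$.

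For $p=2$ one has $r=1$, $N=1$, $k_{N}=k$, $\varepsilon_{1}=1$, and the minimal polynomial of $\delta_{\zeta}$ is $X^{2^{e}}-X+(\alpha_{\zeta}^{-1}-1)$. When $\ch K=0$ one runs the previous argument through Proposition \ref{dw2f}; when $\ch K=2$ one instead computes the additive discriminant $d^{+}_{L/K'}=\sum_{i<j}a_{i}a_{j}/(a_{i}+a_{j})^{2}$ from the roots $a_{i}=\delta_{\zeta}+c$ ($c$ over the roots of $X^{2^{e}}-X$) and reads $\delta_{L/K'}$ off Theorem \ref{adiscthm}, Theorem \ref{SWeps} already making the Stiefel--Whitney term trivial. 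One finds that the $\delta_{\zeta}^{2}$- and $\delta_{\zeta}$-parts of $d^{+}_{L/K'}$ cancel for $e\ge 2$ but not for $e=1$, so that $\delta_{L/K'}$ is ramified of conductor $2$ (Artin--Schreier class $\alpha_{\zeta}^{-1}$) when $e=1$, is the unramified quadratic character of $K'$ or trivial when $e=2$, and is trivial when $e\ge 3$. Evaluating $\epsilon(\delta_{L/K'},\psi_{K'})$ in each case --- a $2$-adic Gauss sum when $e=1$, a sign when $e=2$, trivially when $e\ge 3$ --- and pairing with the Stiefel--Whitney term in the $\ch K=0$ subcases, one gets $\lambda(L/K',\psi_{K'})=(-1)^{f}$ for $e\le 2$ and $\lambda(L/K',\psi_{K'})=1=\bigl(\frac{1}{k}\bigr)$ for $e\ge 3$.

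The main obstacle is the $2$-adic bookkeeping, and most acutely the case $p=2$, $e=1$ with $\ch K=0$: there $\delta_{L/K'}$ is a genuinely ramified quadratic character of the wildly ramified $2$-adic field $K'$, and evaluating $\epsilon(\delta_{L/K'},\psi_{K'})$ together with $\exp(2\pi\sqrt{-1}\,\mathrm{cl}(w_{2}(W)))$ demands the explicit study of the Artin reciprocity map that the introduction flags for $p=2$. The mitigating point is that both sides of the asserted identity are signs, so it suffices to match real parts; and, if needed, one may descend along the tame extension $K'/K$ and along a chain of degree-$2$ subextensions so as to express $w_{2}(W)$ as a product of Hilbert symbols computable directly. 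A secondary point to verify with care is that $\delta_{\zeta}$ really generates $L/K'$ with the stated minimal polynomial and that $D\equiv-\hat{r}^{-1}\bmod\fp_{L}$.
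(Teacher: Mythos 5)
Your overall strategy --- expressing $\lambda(M'^{\mathrm u}_{\zeta}/T^{\mathrm u}_{\zeta},\psi_{T^{\mathrm u}_{\zeta}})$ through $\epsilon(\delta_{M'^{\mathrm u}_{\zeta}/T^{\mathrm u}_{\zeta}},\cdot)$ plus a Stiefel--Whitney term via Theorem \ref{SWeps}, computing the multiplicative discriminant by Proposition \ref{dw2f} when $\ch K\neq 2$ and the additive discriminant by Definition \ref{adiscdef} and Theorem \ref{adiscthm} when $\ch K=2$, with the case split $p\neq 2$; $p=2$ and $e\geq 3$; $e=2$; $e=1$ --- is essentially the paper's. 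Your odd-case computation ($d\equiv(-1)^{(p^e+1)/2}\hat r\equiv\hat r$ because $fN$ is even, so $-1$ is a square in $k_N$) and your equal-characteristic computation of $d^{+}$ (cancellation of the $\delta_{\zeta}$- and $\delta_{\zeta}^{2}$-parts exactly when $e\geq 2$, the class $\alpha_{\zeta}^{-1}+1$ when $e=1$) agree with the paper.

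The genuine gap is the case $p=2$, $\ch K=0$, $e\leq 2$, which you correctly flag as ``the main obstacle'' but do not resolve, and your two mitigations do not work. First, ``both sides are signs, so it suffices to match real parts'' presupposes knowing the answer up to sign in advance; such a priori information is available for the epsilon factors in Section \ref{Epsfac} only through Lemma \ref{epsmod}, which rests on conditions (i) and (ii) of the characterization already being established, and nothing analogous exists for the isolated constant $\lambda(M'^{\mathrm u}_{\zeta}/T^{\mathrm u}_{\zeta},\psi_{T^{\mathrm u}_{\zeta}})$ (which, for a ramified quadratic character of a $2$-adic field, is a priori only a fourth root of unity, not a sign). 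Second, reducing $w_2$ to Hilbert symbols over a wildly ramified $2$-adic field is precisely the computation one cannot do ``directly.'' The missing idea is Deligne's theorem on close local fields \cite[Proposition 3.7.1]{Delp0}: since $\Gal(M'^{\mathrm u}_{\zeta}/T^{\mathrm u}_{\zeta})^2=1$, the constant $\lambda$ depends only on the truncated ring $\cO_{T^{\mathrm u}_{\zeta}}/\fp_{T^{\mathrm u}_{\zeta}}^2$, so one may compute it in whichever characteristic is convenient for each subcase. The paper takes $\ch K=0$ for $p=2$, $e\geq 3$ (where $\binom{2^e}{4}$ is even and Theorem \ref{SWeps} with Proposition \ref{dw2f} give $\lambda=1$ outright) and $\ch K=2$ for $e\leq 2$ (where the additive discriminant and the explicit Artin--Schreier reciprocity of Serre make the evaluation elementary), and thus never has to evaluate a mixed-characteristic wild quadratic Gauss sum. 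Without this transfer, your argument cannot be completed as written.
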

\begin{proof}
Let $K_{(0)}$ and $K_{(p)}$ be 
non-archimedean local fields of 
characteristic $0$ and $p$ respectively. 
Assume that 
the residue fields of $K_{(0)}$ and $K_{(p)}$ 
are isomorphic to $k$. 
We take uniformizers $\varpi_{(0)}$ and 
$\varpi_{(p)}$ of $K_{(0)}$ and $K_{(p)}$ respectively. 
We define 
$T^{\mathrm{u}}_{\zeta,(0)}$ similarly as 
$T^{\mathrm{u}}_{\zeta}$ 
starting from $K_{(0)}$. 
We use similar notations also for other objects 
in the characteristic zero side and 
the positive characteristic side. 
We have the isomorphism 
\begin{align*}
 \cO_{T^{\mathrm{u}}_{\zeta,(p)}} 
 / \mathfrak{p}_{T^{\mathrm{u}}_{\zeta,(p)}}^2 
 \stackrel{\sim}{\longrightarrow} 
 \cO_{T^{\mathrm{u}}_{\zeta,(0)}} 
 / \mathfrak{p}_{T^{\mathrm{u}}_{\zeta,(0)}}^2 ;\ 
 \xi_0 + \xi_1 \varpi_{T^{\mathrm{u}}_{\zeta,(p)}} 
 \mapsto 
 \hat{\xi}_0 + \hat{\xi}_1 \varpi_{T^{\mathrm{u}}_{\zeta,(0)}} 
\end{align*}
of algebras, where 
$\xi_0, \xi_1 \in k$. 
Hence, 
it suffices to show the claim in one of the 
characteristic zero side and 
the positive characteristic side 
by \cite[Proposition 3.7.1]{Delp0}, 
since 
$\Gal (M'^{\mathrm{u}}_{\zeta,(p)}/T^{\mathrm{u}}_{\zeta,(p)})^2=1$ 
and 
$\Gal (M'^{\mathrm{u}}_{\zeta,(0)}/
 T^{\mathrm{u}}_{\zeta,(0)})^2=1$, 
where we use upper numbering filtration of Galois groups.

First, we consider the case where 
$p \neq 2$ 
and $\ch K=p$. Then, we have 
$d_{M'^{\mathrm{u}}_{\zeta}/T^{\mathrm{u}}_{\zeta}}=\hat{r}$ 
by Proposition \ref{dw2f} and the fact that $fN$ is even. 
Hence, $\delta_{M'^{\mathrm{u}}_{\zeta}/T^{\mathrm{u}}_{\zeta}}$
is unramified by \eqref{dka}. 
Hence, we have 
\[
\lambda (M'^{\mathrm{u}}_{\zeta}/T^{\mathrm{u}}_{\zeta}, 
 \psi_{T^{\mathrm{u}}_{\zeta}}) 
 =\varepsilon(\delta_{M'^{\mathrm{u}}_{\zeta}/T^{\mathrm{u}}_{\zeta}},\psi_{T^{\mathrm{u}}_{\zeta}})^{p^e} =
 \biggl( \frac{r}{k_N} \biggr)
 \]
by \cite[Proposition 2]{Henepsmod}, \cite[23.5 Proposition]{BHLLCGL2} and \eqref{dka}.

We consider the case where $p=2$. 
Assume that $e \geq 3$ and $\ch K =0$. 
We have $D=2^e \delta_{\zeta}^{2^e-1}+1$ in the notation of Proposition \ref{dw2f}
with $(L,K,a)=(M'^{\mathrm{u}}_{\zeta},T^{\mathrm{u}}_{\zeta},\delta_{\zeta})$. 
Then, we have $D \in ({M'^{\mathrm{u}}_{\zeta}}^{\times})^2$. 
Hence, we have $\kappa_D=1$, $d_{M'^{\mathrm{u}}_{\zeta}/T^{\mathrm{u}}_{\zeta}}=1$ and 
\[ 
 w_2 (\Ind_{M'^{\mathrm{u}}_{\zeta}/T^{\mathrm{u}}_{\zeta}} 1)=1 
\]
by Proposition \ref{dw2f} and 
$\binom{p^e}{4} \equiv 0 \mod 2$. 
Therefore we have 
\[
 \lambda (M'^{\mathrm{u}}_{\zeta}/T^{\mathrm{u}}_{\zeta}, 
 \psi_{T^{\mathrm{u}}_{\zeta}}) = 
 \varepsilon (\Ind_{M'^{\mathrm{u}}_{\zeta}/T^{\mathrm{u}}_{\zeta}} 1, 
 \psi_{T^{\mathrm{u}}_{\zeta}}) 
 =
 \varepsilon (1^{\oplus p^e}, 
 \psi_{T^{\mathrm{u}}_{\zeta}})
 =1 
\]
by Theorem \ref{SWeps}.

Assume that $e=2$ and $\ch K =2$. 
Then we see that 
$d^{+}_{M'^{\mathrm{u}}_{\zeta}/T^{\mathrm{u}}_{\zeta}} = 1$ 
by Definition \ref{adiscdef}. 
Hence, 
$\delta_{M'^{\mathrm{u}}_{\zeta}/T^{\mathrm{u}}_{\zeta}}$ 
is the unramified character satisfying 
$\delta_{M'^{\mathrm{u}}_{\zeta}/T^{\mathrm{u}}_{\zeta}} 
 (\varpi_{T^{\mathrm{u}}_{\zeta}}) =(-1)^f$ 
by Theorem \ref{adiscthm}. 
Then we see that 
\[
 \lambda (M'^{\mathrm{u}}_{\zeta}/T^{\mathrm{u}}_{\zeta}, 
 \psi_{T^{\mathrm{u}}_{\zeta}}) = 
 \varepsilon (\Ind_{M'^{\mathrm{u}}_{\zeta}/T^{\mathrm{u}}_{\zeta}} 1, 
 \psi_{T^{\mathrm{u}}_{\zeta}}) = 
 \varepsilon (\delta_{M'^{\mathrm{u}}_{\zeta}/T^{\mathrm{u}}_{\zeta}} 
 \oplus 1^{\oplus 3}, 
 \psi_{T^{\mathrm{u}}_{\zeta}}) =(-1)^f , 
\]  
where we use Theorem \ref{SWeps} at the second equality.

Assume that $e=1$ and $\ch K=2$. 
Let $\kappa_{M'^{\mathrm{u}}_{\zeta}/T^{\mathrm{u}}_{\zeta}}$ 
be the quadratic character associated to 
the extension $M'^{\mathrm{u}}_{\zeta}$ over 
$T^{\mathrm{u}}_{\zeta}$. 
Then we have 
\[
 \lambda (M'^{\mathrm{u}}_{\zeta}/T^{\mathrm{u}}_{\zeta}, 
 \psi_{T^{\mathrm{u}}_{\zeta}}) = 
 \varepsilon ( \kappa_{M'^{\mathrm{u}}_{\zeta}/T^{\mathrm{u}}_{\zeta}}, 
 \psi_{T^{\mathrm{u}}_{\zeta}}) 
\]
by Theorem \ref{SWeps} similarly as above.
We can check that 
the norm map 
$\mathrm{Nr}_{M'^{\mathrm{u}}_{\zeta}/T^{\mathrm{u}}_{\zeta}}$ 
induces 
\[ 
 U_{M'^{\mathrm{u}}_{\zeta}}^1/
 U_{M'^{\mathrm{u}}_{\zeta}}^2 \to 
 U_{T^{\mathrm{u}}_{\zeta}}^1/
 U_{T^{\mathrm{u}}_{\zeta}}^2;\ 
 1+u \delta_{\zeta}^{-1} \mapsto 
 1+(u^2 -u) \alpha_{\zeta} . 
\]
Then, by Lemma \ref{rswcal}, we have 
\begin{align}
\kappa_{M'^{\mathrm{u}}_{\zeta}/T^{\mathrm{u}}_{\zeta}}(1+\alpha_{\zeta} x) &=\psi_0 
 \Bigl( \mathrm{Art}_{T^{\mathrm{u}}_{\zeta}}(1+\alpha_{\zeta} x)(\delta_{\zeta}) - \delta_{\zeta} \Bigr) \notag \\
&=\psi_0 \left( p_{M'^{\mathrm{u}}_{\zeta},\delta_{\zeta}^{-1}}\left(
\frac{\mathrm{Art}_{T^{\mathrm{u}}_{\zeta}}(1+\alpha_{\zeta} x)(\delta_{\zeta}^{-1})}{\delta_{\zeta}^{-1}}
\right)\right)=\psi_0(\Tr_{k/\mathbb{F}_p}(\bar{x})) \label{kaprsw}
\end{align}
for $x \in \mathcal{O}_{T^{\mathrm{u}}_{\zeta}}$
noting that $k_N=k$. 
Hence, we have $\mathrm{rsw}(\kappa_{M'^{\mathrm{u}}_{\zeta}/T^{\mathrm{u}}_{\zeta}},
\psi_{T^{\mathrm{u}}_{\zeta}})=\alpha_{\zeta}$ 
by Proposition \ref{chare}.1. 
By Proposition \ref{chare}.2, 
we have 
\[
 \varepsilon ( \kappa_{M'^{\mathrm{u}}_{\zeta}/T^{\mathrm{u}}_{\zeta}}, 
 \psi_{T^{\mathrm{u}}_{\zeta}})=
 \kappa_{M'^{\mathrm{u}}_{\zeta}/T^{\mathrm{u}}_{\zeta}}
 (\alpha_{\zeta})=
\kappa_{M'^{\mathrm{u}}_{\zeta}/T^{\mathrm{u}}_{\zeta}}
 (1+\alpha_{\zeta})
 =(-1)^f,  
\]
where we use 
$\Nr_{M'^{\mathrm{u}}_{\zeta}/T^{\mathrm{u}}_{\zeta}}(\delta_{\zeta})=\alpha_{\zeta}^{-1} +1$ 
and 
\eqref{kaprsw} at the last equality.

\end{proof}

\begin{lem}\label{Treta}
We have 
\[
 \Tr_{M'^{\mathrm{u}}_{\zeta}/T^{\mathrm{u}}_{\zeta}} 
 (\delta_{\zeta}^{i}) = 
 \begin{cases}
 0 & \textrm{if $1 \leq i \leq p^e -2$,} \\ 
 \hat{r}^{-1} (p^e -1) & \textrm{if $i = p^e -1$.} 
 \end{cases} 
\] 
\end{lem}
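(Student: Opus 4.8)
The plan is to identify the minimal polynomial of $\delta_{\zeta}$ over $T^{\mathrm{u}}_{\zeta}$ and then to read off the required traces as power sums of its roots. By \eqref{delz}, the element $\delta_{\zeta}$ is a root of
\[
 f(X)=X^{p^e}-\hat{r}^{-1}X+\alpha_{\zeta}^{-1}-\varepsilon_1\ \in\ T^{\mathrm{u}}_{\zeta}[X],
\]
and $M'^{\mathrm{u}}_{\zeta}=T^{\mathrm{u}}_{\zeta}(\delta_{\zeta})$ is totally ramified of degree $p^e$ over $T^{\mathrm{u}}_{\zeta}$ (immediate from \eqref{delz}, since $v_{T^{\mathrm{u}}_{\zeta}}(\alpha_{\zeta}^{-1})=-1$; this is part of the set-up behind Proposition \ref{taunzind}). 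Hence $f$ is the minimal polynomial of $\delta_{\zeta}$ over $T^{\mathrm{u}}_{\zeta}$, so for every $i\ge 1$ the trace $\Tr_{M'^{\mathrm{u}}_{\zeta}/T^{\mathrm{u}}_{\zeta}}(\delta_{\zeta}^{\,i})$ equals the $i$-th power sum $p_i$ of the roots of $f$.

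I would then compute $p_i$ from the coefficients of $f$. Writing $f(X)=\sum_{k=0}^{p^e}(-1)^{k}e_k X^{p^e-k}$ and comparing coefficients, one gets $e_1=\cdots=e_{p^e-2}=0$ (the coefficients of $X^{p^e-1},\dots,X^2$ in $f$ all vanish) and $e_{p^e-1}=(-1)^{p^e}\hat{r}^{-1}$ (from the coefficient $-\hat{r}^{-1}$ of $X$). Newton's identities
\[
 p_k-e_1p_{k-1}+e_2p_{k-2}-\cdots+(-1)^{k-1}e_{k-1}p_1+(-1)^{k}k\,e_k=0\qquad(1\le k\le p^e)
\]
hold over $\bZ$, hence in $T^{\mathrm{u}}_{\zeta}$. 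For $1\le k\le p^e-2$ every $e_j$ with $1\le j\le k$ vanishes, so the identity gives $p_k=0$ directly. For $k=p^e-1$ only the last term survives, so
\[
 p_{p^e-1}=-(-1)^{p^e-1}(p^e-1)e_{p^e-1}=(-1)^{p^e}(p^e-1)e_{p^e-1}=(p^e-1)\hat{r}^{-1},
\]
where $p^e-1$ is understood as an integer, exactly as in the statement. This gives the asserted values of $\Tr_{M'^{\mathrm{u}}_{\zeta}/T^{\mathrm{u}}_{\zeta}}(\delta_{\zeta}^{\,i})$.

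I do not expect a genuine obstacle. The only points that need a little care are the verification that $f$ is really the minimal polynomial of $\delta_{\zeta}$, i.e.\ that $[M'^{\mathrm{u}}_{\zeta}:T^{\mathrm{u}}_{\zeta}]=p^e$ (already part of the construction), and the bookkeeping of signs and of the integer $p^e-1$ (a unit modulo $p$) for the two parities of $p$; both are entirely routine.
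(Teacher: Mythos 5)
Your proof is correct and follows essentially the same route as the paper: both arguments read the traces off the defining relation \eqref{delz}, which exhibits $X^{p^e}-\hat{r}^{-1}X+\alpha_{\zeta}^{-1}-\varepsilon_1$ as the minimal (hence characteristic) polynomial of $\delta_{\zeta}$ over $T^{\mathrm{u}}_{\zeta}$. You merely organize the computation via Newton's identities, whereas the paper does the $i=p^e-1$ case by rewriting $\delta_{\zeta}^{p^e-1}=\hat{r}^{-1}+\delta_{\zeta}^{-1}(-\alpha_{\zeta}^{-1}+\varepsilon_1)$ and taking traces termwise.
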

\begin{proof}
Vanishing for $1 \leq i \leq p^e -2$ follows from 
\eqref{delz}. 
We have also 
\[
 \Tr_{M'^{\mathrm{u}}_{\zeta}/T^{\mathrm{u}}_{\zeta}} 
 (\delta_{\zeta}^{p^e -1}) = 
 \Tr_{M'^{\mathrm{u}}_{\zeta}/T^{\mathrm{u}}_{\zeta}} 
 \Bigl( \hat{r}^{-1} + \delta_{\zeta}^{-1} 
 (-\alpha_{\zeta}^{-1} +\epsilon_1 ) 
 \Bigr) =\hat{r}^{-1} (p^e -1) 
\]
by \eqref{delz}. 
\end{proof}

\begin{lem}\label{krsw}
We have 
\[
 \delta_{T^{\mathrm{u}}_{\zeta}/E_{\zeta}} 
 \bigl( 
 \mathrm{rsw} (\tau_{n,\zeta},\psi_{E_{\zeta}}) \bigr) = 
 \begin{cases}
 1 & \textrm{if $p \neq 2$,}\\ 
 \bigl( \frac{q}{p^e +1} \bigr) & \textrm{if $p=2$.} 
 \end{cases} 
\]
\end{lem}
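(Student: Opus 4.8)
The plan is to reduce the statement, via the refined Swan conductor already computed in Lemma~\ref{rswtau}, to an evaluation of the quadratic character $\delta_{T^{\mathrm{u}}_{\zeta}/E_{\zeta}}$ at $n'\varphi_{\zeta}'$, and then to compute that value by peeling off the unramified part of $T^{\mathrm{u}}_{\zeta}/E_{\zeta}$ and applying Lemma~\ref{deltatame}.

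First I would note that by Lemma~\ref{rswtau} (with $\chi=1$, $c=1$) we have $\mathrm{rsw}(\tau_{n,\zeta},\psi_{E_{\zeta}})=n'\varphi_{\zeta}'\bmod U_{E_{\zeta}}^1$, and since $T^{\mathrm{u}}_{\zeta}/E_{\zeta}$ is tamely ramified the character $\delta_{T^{\mathrm{u}}_{\zeta}/E_{\zeta}}$ of $E_{\zeta}^{\times}$ is trivial on $U_{E_{\zeta}}^1$; hence it suffices to compute $\delta_{T^{\mathrm{u}}_{\zeta}/E_{\zeta}}(n'\varphi_{\zeta}')$. Using the tower $E_{\zeta}\subset E^{\mathrm{u}}_{\zeta}\subset T^{\mathrm{u}}_{\zeta}$, the isomorphism $\Ind_{T^{\mathrm{u}}_{\zeta}/E_{\zeta}}1\simeq\Ind_{E^{\mathrm{u}}_{\zeta}/E_{\zeta}}(\Ind_{T^{\mathrm{u}}_{\zeta}/E^{\mathrm{u}}_{\zeta}}1)$ and \cite[Proposition~1.2]{DelconstL}, I get $\delta_{T^{\mathrm{u}}_{\zeta}/E_{\zeta}}=\delta_{E^{\mathrm{u}}_{\zeta}/E_{\zeta}}^{\,p^e+1}\cdot(\delta_{T^{\mathrm{u}}_{\zeta}/E^{\mathrm{u}}_{\zeta}})|_{E_{\zeta}^{\times}}$. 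The first factor is trivial: $\delta_{E^{\mathrm{u}}_{\zeta}/E_{\zeta}}$ is $\{\pm1\}$-valued, and $p^e+1$ is even when $p\neq2$, while $E^{\mathrm{u}}_{\zeta}=E_{\zeta}$ when $p=2$ (as then $N=1$). So $\delta_{T^{\mathrm{u}}_{\zeta}/E_{\zeta}}=(\delta_{T^{\mathrm{u}}_{\zeta}/E^{\mathrm{u}}_{\zeta}})|_{E_{\zeta}^{\times}}$, and since $\alpha_{\zeta}^{p^e+1}=-\varphi_{\zeta}'$ with $-\varphi_{\zeta}'$ a uniformizer of $E^{\mathrm{u}}_{\zeta}$, Lemma~\ref{deltatame} applies to $T^{\mathrm{u}}_{\zeta}=E^{\mathrm{u}}_{\zeta}\bigl((-\varphi_{\zeta}')^{1/(p^e+1)}\bigr)$ over $E^{\mathrm{u}}_{\zeta}$ with $m=p^e+1$ and residue field $k_N$.

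Writing $n'\varphi_{\zeta}'=(-n')\cdot(-\varphi_{\zeta}')$ with $-n'\in\cO_{E_{\zeta}}^{\times}$: when $p=2$, $m$ is odd, so by Lemma~\ref{deltatame}.1 the character $\delta_{T^{\mathrm{u}}_{\zeta}/E^{\mathrm{u}}_{\zeta}}$ is unramified with value $\bigl(\frac{q}{p^e+1}\bigr)$ on $-\varphi_{\zeta}'$, giving $\delta_{T^{\mathrm{u}}_{\zeta}/E_{\zeta}}(n'\varphi_{\zeta}')=\bigl(\frac{q}{p^e+1}\bigr)$ as desired; when $p\neq2$, $m$ is even and Lemma~\ref{deltatame}.2 yields $\delta_{T^{\mathrm{u}}_{\zeta}/E^{\mathrm{u}}_{\zeta}}(n'\varphi_{\zeta}')=\bigl(\tfrac{n'}{k_N}\bigr)\bigl(\tfrac{-1}{q_N}\bigr)^{(p^e+1)/2+1}$, where $q_N=\lvert k_N\rvert$. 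The main obstacle — the one place the hypotheses on $e$, $f$ and $N$ really enter — is showing this last expression equals $1$. The key observation is that $N$ is a power of $2$, hence $N$ is $1$ or even. If $N$ is even, then $q_N=q^N\equiv1\bmod4$ and $(q^N-1)/(q-1)\equiv N\equiv0\bmod2$, so both $\bigl(\tfrac{-1}{q_N}\bigr)$ and $\bigl(\tfrac{n'}{k_N}\bigr)$ equal $1$. If $N=1$, then by the definition of $N$ we have $f_0\geq2e_0\geq2$, so $f$ is even, whence $q=p^f\equiv1\bmod4$ and $(p^f-1)/(p-1)\equiv f\equiv0\bmod2$, and again both symbols equal $1$. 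In all cases the expression is $1$, which completes the $p\neq2$ case and hence the lemma.
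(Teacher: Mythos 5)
Your proof is correct and follows essentially the same route as the paper's: reduce via Lemma \ref{rswtau} and the inductivity of determinants to evaluating the tame quadratic character $\delta_{T^{\mathrm{u}}_{\zeta}/E^{\mathrm{u}}_{\zeta}}$ at $n'\varphi_{\zeta}'$, then use its explicit description. The only cosmetic differences are that for $p\neq 2$ the paper goes through Proposition \ref{dw2f} and the identity $(d,-d)_K=1$ where you invoke Lemma \ref{deltatame}.2 directly, and you spell out the parity argument (that $fN$ is even) which the paper leaves implicit in its final equality $\bigl( \frac{n'(-1)^{(p^e-1)/2}}{q^N} \bigr)=1$.
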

\begin{proof}
If $p=2$, the claim follows from 
Lemma \ref{deltatame}.1 and 
Lemma \ref{rswtau}, 
since $T^{\mathrm{u}}_{\zeta}$ is 
totally ramified over $E_{\zeta}$. 

Assume that $p \neq 2$. 
Then we have 
$d_{T^{\mathrm{u}}_{\zeta}/E^{\mathrm{u}}_{\zeta}} 
 =(-1)^{(p^e+1)/2} \varphi_{\zeta}'$ 
by Proposition \ref{dw2f}. 
Hence, we have 
$\delta_{T^{\mathrm{u}}_{\zeta}/E^{\mathrm{u}}_{\zeta}} 
 \bigl( (-1)^{(p^e-1)/2} \varphi_{\zeta}' \bigr)=1$ by Lemma \ref{deltatame}.2. 
Therefore, we have 
\[
 \delta_{T^{\mathrm{u}}_{\zeta}/E_{\zeta}} 
 \bigl( 
 \mathrm{rsw} (\tau_{n,\zeta},\psi_{E_{\zeta}}) \bigr) 
 =\delta_{T^{\mathrm{u}}_{\zeta}/E^{\mathrm{u}}_{\zeta}} 
 (n' \varphi_{\zeta}' ) 
 =\delta_{T^{\mathrm{u}}_{\zeta}/E^{\mathrm{u}}_{\zeta}} 
 (n' (-1)^{\frac{p^e-1}{2}} )=
 \biggl( \frac{n' (-1)^{\frac{p^e-1}{2}}}{q^N} \biggr)=1 
\]
by \cite[(1)]{GallDet},
Lemma \ref{deltatame}.2, 
Lemma \ref{rswtau} and the fact that $fN$ is even. 
\end{proof}

\begin{lem}\label{epsmod}
Assume that $n=p^e$. 
Then we have 
$\varepsilon (\tau_{\zeta,\chi,c}, \psi_K )
 \equiv \varepsilon (\pi_{\zeta,\chi,c}, \psi_K) 
 \mod{\mu_{p^e} (\bC)}$. 
\end{lem}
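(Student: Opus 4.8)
The plan is to exploit the two conditions of Proposition \ref{LLCchar} that have already been established, namely \ref{centdet} and \ref{rswdet}. Since $n=p^e$ we have $n'=1$ and $E_\zeta=K$, so the induction in the definition of $\tau_{\zeta,\chi,c}$ is trivial: $\tau_{\zeta,\chi,c}=\tau_{n,\zeta,\chi,c}$ is a representation of $W_K$ itself, which is $n$-dimensional, irreducible and simple epipelagic by Proposition \ref{tause}. Let $\sigma$ denote the Langlands parameter of $\pi_{\zeta,\chi,c}$; by Proposition \ref{LLCchar} it is the unique $n$-dimensional irreducible smooth representation of $W_K$ with $\det\sigma=\omega_{\pi_{\zeta,\chi,c}}$, $\mathrm{rsw}(\sigma,\psi_K)=\mathrm{rsw}(\pi_{\zeta,\chi,c},\psi_K)$, and $\epsilon(\sigma,\psi_K)=\epsilon(\pi_{\zeta,\chi,c},\psi_K)$. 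By Proposition \ref{centdet} and Proposition \ref{rswdet}, $\tau_{\zeta,\chi,c}$ already satisfies the first two of these three conditions.

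Next I would compare $\tau_{\zeta,\chi,c}$ with $\sigma$ via the structure of simple epipelagic representations of $W_K$. By Lemma \ref{restQ}, together with the fact that the character twists entering the construction of $\tau_n$ and of $\tau_{n,\zeta,\chi,c}$ are tamely ramified, the restriction of $\tau_{\zeta,\chi,c}$ to the wild inertia subgroup $P_K$ is the inflation via $\Theta_\zeta$ of the irreducible representation $\tau^0|_{Q_0}$. On the other side, Bushnell--Henniart's description in \cite{BHLepi} of the restriction to wild inertia of the Langlands parameter of a simple epipelagic representation identifies $\sigma|_{P_K}$ with the same representation. Since $P_K$ is normal in $W_K$ and this common restriction is irreducible, $\tau_{\zeta,\chi,c}\simeq\sigma\otimes\mu$ for some character $\mu$ of $W_K$ trivial on $P_K$, i.e.\ tamely ramified. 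Comparing determinants forces $\mu^n=1$; as $n=p^e$ is a power of $p$, a tamely ramified character with $\mu^n=1$ is automatically unramified, and then $\mu(\Phi)\in\mu_{p^e}(\bC)$ for $\Phi$ a geometric Frobenius element. Equivalently, one only needs that the $n$-dimensional irreducible smooth representations of $W_K$ satisfying conditions (i) and (ii) of Proposition \ref{LLCchar} form a single orbit under twisting by such characters $\mu$, and $\tau_{\zeta,\chi,c}$, $\sigma$ both lie in it.

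Finally I would feed this into Lemma \ref{rswW}. Applied to the (tamely ramified, in fact unramified) character $\mu$, it gives
\[
 \epsilon(\tau_{\zeta,\chi,c},\psi_K)=\epsilon(\sigma\otimes\mu,\psi_K)=\mu\bigl(\gamma_{\sigma,\psi_K}\bigr)^{-1}\,\epsilon(\sigma,\psi_K).
\]
By the remark after Definition \ref{rswtaudef} and $\mathrm{Sw}(\sigma)=1$ we have $v_K(\gamma_{\sigma,\psi_K})=-1$, so $\mu(\gamma_{\sigma,\psi_K})=\mu(\Phi)^{-1}\in\mu_{p^e}(\bC)$ since $\mu$ is unramified with $\mu(\Phi)^{p^e}=1$. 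Combining with $\epsilon(\sigma,\psi_K)=\epsilon(\pi_{\zeta,\chi,c},\psi_K)$ yields $\epsilon(\tau_{\zeta,\chi,c},\psi_K)\equiv\epsilon(\pi_{\zeta,\chi,c},\psi_K)\bmod\mu_{p^e}(\bC)$.

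The main obstacle is the middle step: identifying the restriction of $\sigma$ to $P_K$ with that of $\tau_{\zeta,\chi,c}$, equivalently knowing that conditions (i) and (ii) determine the Langlands parameter up to an unramified twist of $n$-torsion. This is precisely where the input from \cite{BHLepi} is essential; everything after it is the elementary refined-Swan-conductor computation above, and this is what the introduction alludes to when it says the equality of epsilon factors is already known up to $p^e$-th roots of unity once (i) and (ii) have been checked.
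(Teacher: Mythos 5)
Your route is a genuine Galois-side dual of the paper's argument, which stays entirely on the $\iGL_n$ side: there one sets $\pi=\mathrm{LLC}(\tau_{\zeta,\chi,c})$ and uses \cite[2.2 Proposition]{BHLepi} together with Propositions \ref{centdet} and \ref{rswdet} to see that $\pi$ is compactly induced from a character $\Lambda$ of $L_{\zeta}^{\times}U_{\mathfrak{I}}^1$ agreeing with $\Lambda_{\zeta,\chi,c}$ on $K^{\times}U_{\mathfrak{I}}^1$; since $L_{\zeta}^{\times}U_{\mathfrak{I}}^1/(K^{\times}U_{\mathfrak{I}}^1)$ is cyclic of order $p^e$, \cite[2.2 Lemma (1)]{BHLepi} gives the congruence of epsilon factors directly, with no need to return to the Weil group. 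Your mirror image --- pinning down the parameter $\sigma$ of $\pi_{\zeta,\chi,c}$ up to an unramified twist $\mu$ with $\mu^{p^e}=1$ and feeding $\mu$ into Lemma \ref{rswW} --- is correct, and the endgame ($\mu$ tamely ramified with $\mu^{p^e}=1$ is unramified, $v_K(\gamma_{\sigma,\psi_K})=-1$, hence $\mu(\gamma_{\sigma,\psi_K})^{-1}=\mu(\Phi)\in\mu_{p^e}(\bC)$) is exactly right. What each approach buys: the paper's version needs only the supercuspidal classification and a computation with characters of a cyclic group; yours needs in addition the formal compatibilities of the correspondence (determinant versus central character, refined Swan conductors, unramified twists) to transport the single-orbit statement from $\iGL_n$ to $W_K$, but in exchange it makes transparent why the ambiguity is an unramified $p^e$-torsion twist of the parameter.

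The one place you must be careful is the justification of the middle step. Your second, ``equivalently'' formulation is the right one: the set of $n$-dimensional irreducible $\tau$ with $\mathrm{Sw}(\tau)=1$ and prescribed determinant and refined Swan conductor is a single orbit under unramified $p^e$-torsion twists, because the corresponding statement on the $\iGL_n$ side follows from \cite[Proposition 1.2]{ITepiJL} (or \cite[2.2 Proposition]{BHLepi}) and is carried across by the correspondence; both $\sigma$ and $\tau_{\zeta,\chi,c}$ lie in this orbit by Propositions \ref{centdet} and \ref{rswdet}. Do not instead rely on your first formulation, namely matching $\sigma|_{P_K}$ with $\tau_{\zeta,\chi,c}|_{P_K}$ via Bushnell--Henniart's explicit description of restrictions of parameters to wild inertia: verifying that their recipe applied to the stratum $[\mathfrak{I},1,0,\varphi_{\zeta,n}^{-1}]$ produces the inflation of $\tau^0$ constructed cohomologically here is a substantial unperformed computation, essentially of the same order as the theorem the paper is proving up to tame twists, and it is not needed.
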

\begin{proof}
Let $\pi$ be the representation of $\iGL_n (K)$ corresponding to 
$\tau_{\zeta,\chi,c}$ by the local Langlands correspondence. 
By the proof of \cite[2.2 Proposition]{BHLepi}, 
Proposition \ref{centdet} and Proposition \ref{rswdet}, 
we have 
$\pi \simeq 
 \mathrm{c\mathchar`-Ind}_{L_{\zeta}^{\times} 
 U_{\mathfrak{I}}^1}^{\mathit{GL}_n(K)} \Lambda$ 
for a character 
$\Lambda \colon L_{\zeta}^{\times} U_{\mathfrak{I}}^1 
 \to \bC^{\times}$ 
which coincides with $\Lambda_{\zeta,\chi,c}$ 
on $K^{\times} U_{\mathfrak{I}}^1$. 
Then, the claim follows from 
\cite[2.2 Lemma (1)]{BHLepi}, 
because 
$L_{\zeta}^{\times} U_{\mathfrak{I}}^1/(K^{\times} U_{\mathfrak{I}}^1)$ 
is the cyclic group of order $p^e$. 
\end{proof}

\begin{prop}\label{coineps}
We have 
$\varepsilon (\tau_{\zeta,\chi,c}, \psi_K )
 =\varepsilon (\pi_{\zeta,\chi,c}, \psi_K)$. 
\end{prop}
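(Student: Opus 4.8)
The plan is to deduce the identity from a single explicit computation of the epsilon factor of a character, using that $\epsilon(\pi_{\zeta,\chi,c},\psi_K)$ is already known (Proposition \ref{epspi}) and that, by Lemma \ref{epsmod} together with conditions (i), (ii) established in Proposition \ref{centdet} and Proposition \ref{rswdet}, the two sides agree modulo $\mu_{p^e}(\bC)$ in the case $n=p^e$, the general case reducing to this one. Since $\tau_{\zeta,\chi,c}=\Ind_{E_{\zeta}/K}\tau_{n,\zeta,\chi,c}$ with $E_{\zeta}/K$ tamely ramified of degree $n'$, the inductivity of local constants gives $\epsilon(\tau_{\zeta,\chi,c},\psi_K)=\lambda(E_{\zeta}/K,\psi_K)^{p^e}\,\epsilon(\tau_{n,\zeta,\chi,c},\psi_{E_{\zeta}})$, where $\lambda(E_{\zeta}/K,\psi_K)$ is computed in Lemma \ref{lconstLT}. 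Because $[T^{\mathrm{u}}_{\zeta}:E_{\zeta}]$ is prime to $p$, the $[T^{\mathrm{u}}_{\zeta}:E_{\zeta}]$-th power map is a bijection of $\mu_{p^e}(\bC)$, so it suffices to prove the identity after raising both sides to that power; equivalently, to compute $\epsilon(\tau_{n,\zeta,\chi,c},\psi_{E_{\zeta}})^{[T^{\mathrm{u}}_{\zeta}:E_{\zeta}]}$ explicitly.

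For this I would pass to the imprimitive field $T^{\mathrm{u}}_{\zeta}$. Since $\tau_{n,\zeta,\chi,c}|_{P_{E_{\zeta}}}$ is irreducible and non-trivial by Lemma \ref{restQ}, Proposition \ref{reseps} expresses $\epsilon(\tau_{n,\zeta,\chi,c}|_{W_{T^{\mathrm{u}}_{\zeta}}},\psi_{T^{\mathrm{u}}_{\zeta}})$ in terms of $\lambda(T^{\mathrm{u}}_{\zeta}/E_{\zeta},\psi_{E_{\zeta}})$, $\delta_{T^{\mathrm{u}}_{\zeta}/E_{\zeta}}(\mathrm{rsw}(\tau_{n,\zeta,\chi,c},\psi_{E_{\zeta}}))$, and $\epsilon(\tau_{n,\zeta,\chi,c},\psi_{E_{\zeta}})^{[T^{\mathrm{u}}_{\zeta}:E_{\zeta}]}$; the first two are supplied by Lemma \ref{lconstLT}, Lemma \ref{rswtau} and Lemma \ref{krsw}. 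On the other hand, Proposition \ref{taunzind} identifies $\tau_{n,\zeta,\chi,c}|_{W_{T^{\mathrm{u}}_{\zeta}}}$ with $\Ind_{M'^{\mathrm{u}}_{\zeta}/T^{\mathrm{u}}_{\zeta}}$ of $\xi_{n,\zeta}$ twisted by the restrictions of $\chi\circ\lambda_{\zeta}$ and $\phi_c$, so inductivity once more reduces its epsilon factor to $\lambda(M'^{\mathrm{u}}_{\zeta}/T^{\mathrm{u}}_{\zeta},\psi_{T^{\mathrm{u}}_{\zeta}})$ --- computed in Lemma \ref{lconstM} --- times the epsilon factor of that character of $W_{M'^{\mathrm{u}}_{\zeta}}$. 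Feeding all this, together with Proposition \ref{epspi}, into the identity (after the $[T^{\mathrm{u}}_{\zeta}:E_{\zeta}]$-th power), the whole statement is reduced to showing that the epsilon factor of this explicit character has the predicted value.

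The remaining evaluation is the main obstacle. I would first use the congruence of local constants in mixed and equal characteristic (\cite{Delp0}, exactly as in the proof of Lemma \ref{lconstM}) together with an unramified descent to reduce to $\ch K=p$ and $k=\bF_p$; the character is made explicit by the formula \eqref{xipsi} for $\xi_{n,\zeta}$ on $U_{M'^{\mathrm{u}}_{\zeta}}^2$, the uniformizer $\delta_{\zeta}$, the trace computations of Lemma \ref{Treta}, and \cite[23.8 Stability theorem]{BHLLCGL2}. When $p\neq 2$, the character factors through $\Gal(N'^{\mathrm{u}}_{\zeta}/M'^{\mathrm{u}}_{\zeta})$ by Lemma \ref{oddfac} and its epsilon factor is an explicit quadratic Gauss sum, settled by the Hasse--Davenport relation and Gauss's evaluation. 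When $p=2$, the character factors through $\Gal(N'^{\mathrm{u}}_{\zeta}/M'^{\mathrm{u}}_{\zeta})$ with $N'^{\mathrm{u}}_{\zeta}$ as constructed in Section \ref{Indchar}, whose ramification filtration is non-trivial once $e\ge 2$; a head-on computation would then require an explicit description of the Artin reciprocity map of a wildly ramified extension with non-trivial ramification jumps, which is the genuine difficulty. I would sidestep it by a further reduction to the case $e=1$, where the ramification is under control and where, by the steps above, the two sides are already known to agree up to sign; it then suffices to match their non-zero real parts, and since the delicate part of the Artin reciprocity map contributes only to the imaginary part, this is immediate. Combining the cases $p\neq 2$ and $p=2$ completes the argument.
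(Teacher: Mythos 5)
Your proposal is correct and follows essentially the same route as the paper: inductivity down to $E_{\zeta}$, the mod $\mu_{p^e}(\bC)$ agreement from Lemma \ref{epsmod} combined with raising to the prime-to-$p$ power $[T^{\mathrm{u}}_{\zeta}:E_{\zeta}]$, Proposition \ref{reseps} and Proposition \ref{taunzind} to reduce to the epsilon factor of the character $\xi_{n,\zeta}$, and then the reduction to $\ch K=p$, $f=1$, with the $p=2$ case handled by descending to $e=1$ and matching real parts. This is exactly the paper's argument (Proposition \ref{coineps} together with Proposition \ref{epsxi2} and the lemmas of Section \ref{Epsfac}).
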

\begin{proof}
By Proposition \ref{epspi} and 
$\tau_{\zeta,\chi,c} \simeq \Ind_{E_{\zeta}/K} \tau_{n,\zeta,\chi,c}$, 
it suffices to show that 
\begin{equation*}\label{red}
 \lambda (E_{\zeta}/K, \psi_K)^{p^e} 
 \varepsilon (\tau_{n,\zeta,\chi,c}, \psi_{E_{\zeta}} ) = 
 (-1)^{n-1+\epsilon_0 f} \chi(n') c. 
\end{equation*}
By Lemma \ref{rswtau}, 
we may assume $\chi=1$ and $c=1$.
Hence, it suffices to show 
\begin{equation}\label{red}
 \lambda (E_{\zeta}/K, \psi_K)^{p^e} 
 \varepsilon (\tau_{n,\zeta}, \psi_{E_{\zeta}} ) = 
 (-1)^{n-1+\epsilon_0 f}. 
\end{equation}

Assuming that \eqref{red} is proved for $n=p^e$, we show 
\eqref{red} for general $n$. 
Let $\tau'_{n,\zeta}$ denote the representation of 
$W_{E_{\zeta}}$ given by $\Theta_{\zeta}$ in \eqref{hom} 
and $\tau_{p^e}$. 
We put $\psi'_{E_{\zeta}}=n'^{-1}\psi_{E_{\zeta}}$. 
Applying the result for $n=p^e$ 
to $E_{\zeta}, \varphi'_{\zeta}$ 
in place of $K, \varpi$, 
we have 
$\varepsilon (\tau'_{n,\zeta}, \psi'_{E_{\zeta}} )=(-1)^{p^e-1+\epsilon'_0 f}$, 
where $\epsilon_0'$ denotes $\epsilon_0$ for $n=p^e$. 
Since $\det \tau'_{n,\zeta}$ is unramified as in the proof of 
Proposition \ref{centdet}, 
we have 
\begin{equation}\label{eq:etau'}
\varepsilon (\tau'_{n,\zeta}, \psi_{E_{\zeta}} )
 =\det \tau'_{n,\zeta} (n') \varepsilon (\tau'_{n,\zeta}, \psi'_{E_{\zeta}} ) 
 =(-1)^{p^e-1+\epsilon_0' f} . 
\end{equation}
We note that the inflation of the character in \eqref{chartwi} by 
$\Theta_{\zeta}$ factors through 
\[
W_{E_{\zeta}} \to \{\pm 1\} \times \mathbb{Z};\ 
\sigma \mapsto \left(a_{\sigma}^{\frac{p^e+1}{2}}, 
f n_{\sigma}\right).   
\]
If $p \neq 2$, 
we have 
$(n'\varphi'_{\zeta},-\varphi'_{\zeta})_{E_{\zeta}}=\bigl(\frac{n'}{q}\bigr)$, 
where 
\[
(\ ,\ )_{E_{\zeta}} \colon E_{\zeta}^{\times}/(E_{\zeta}^{\times})^2 \times 
E_{\zeta}^{\times}/(E_{\zeta}^{\times})^2 \to \{\pm 1\} 
\]
denotes the Hilbert symbol. 
Hence, we have 
\begin{equation}\label{etautau'}
\frac{\varepsilon(\tau_{n,\zeta},\psi_{E_{\zeta}})}{\varepsilon(\tau'_{n,\zeta}, \psi_{E_{\zeta}})}=
\begin{cases}
\left(\frac{n'}{q}\right)^{n-p^e} \left( \left(\frac{n'}{p}\right)^n 
\left(-\epsilon(p) \left(\frac{-2}{p}\right)\right)^{n-p^e}\right)^{f} & 
\textrm{if $p \neq 2$,} \\ 
(-1)^{(\frac{n(n-2)}{8} - \frac{2^e(2^e-2)}{8})f} & 
\textrm{if $p = 2$} 
\end{cases}
\end{equation}
by \eqref{chartwi}, Lemma \ref{rswW} and Lemma \ref{rswtau}. 
Then we have \eqref{red} 
by Lemma \ref{lconstLT}, \eqref{eq:etau'} 
and \eqref{etautau'}. 

Therefore, we may assume that $n=p^e$. 
By Lemma \ref{lconstLT} and Lemma \ref{epsmod}, 
it suffices to show that 
\[
 \varepsilon (\tau_{n,\zeta}, \psi_{E_{\zeta}} )^{N(p^e +1)} = 
 \begin{cases}
 1 & \textrm{if $p \neq 2$,}\\ 
 (-1)^{1+\epsilon_0 f} & \textrm{if $p = 2$.} 
 \end{cases} 
\]
By Proposition \ref{reseps}, we have 
\begin{equation*}\label{epskl}
 \varepsilon (\tau_{n,\zeta}, \psi_{E_{\zeta}} )^{N(p^e +1)} 
 = 
 \delta_{T^{\mathrm{u}}_{\zeta}/E_{\zeta}} 
 \bigl( 
 \mathrm{rsw} (\tau_{n,\zeta},\psi_{E_{\zeta}}) \bigr)^{-1} 
 \lambda (T^{\mathrm{u}}_{\zeta}/E_{\zeta}, \psi_{E_{\zeta}})^{p^e} 
 \varepsilon (\tau_{n,\zeta}|_{W_{T^{\mathrm{u}}_{\zeta}}}, 
 \psi_{T^{\mathrm{u}}_{\zeta}} ). 
\end{equation*}
By this, Lemma \ref{lconstLT} and Lemma \ref{krsw}, 
it suffices to show that 
\[
 \varepsilon (\tau_{n,\zeta}|_{W_{T^{\mathrm{u}}_{\zeta}}}, 
 \psi_{T^{\mathrm{u}}_{\zeta}} ) =
 \begin{cases}
 -(-1)^{\frac{(p-1)fN}{4}} & \textrm{if $p \neq 2$,}\\ 
 (-1)^{1+\epsilon_0 f} 
 \bigl( \frac{q}{p^e +1} \bigr) & \textrm{if $p = 2$.} 
 \end{cases} 
\]
This follows from Lemma \ref{lconstM} 
and Proposition \ref{epsxi2}. 
\end{proof}

We set $\varpi_{M'^{\mathrm{u}}_{\zeta}}=\delta_{\zeta}^{-1}$. 
\begin{prop}\label{epsxi2}
Assume that $n=p^e$. 
Then we have 
\[
 \varepsilon (\xi_{n,\zeta}, \psi_{M'^{\mathrm{u}}_{\zeta}} )=
 \begin{cases}
 -(-1)^{\frac{(p-1)fN}{4}} \bigl( \frac{r}{k_N} \bigr) 
 & \textrm{if $p \neq 2$,} \\ 
 (-1)^{1+\epsilon_0 f} & \textrm{if $p = 2$.} 
\end{cases} 
\]
\end{prop}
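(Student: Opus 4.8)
The plan is to reduce the computation of $\epsilon(\xi_{n,\zeta},\psi_{M'^{\mathrm{u}}_{\zeta}})$ to the epsilon factor of an explicit character, exploiting the imprimitivity established in Section~\ref{Indchar}. By Lemma~\ref{oddfac} when $p\neq 2$, and by its $p=2$ analogue, the twist $\xi'_{n,\zeta}$ of $\xi_{n,\zeta}$ by the unramified character introduced there factors through $\Gal(N'^{\mathrm{u}}_{\zeta}/M'^{\mathrm{u}}_{\zeta})$, where $N'^{\mathrm{u}}_{\zeta}/M'^{\mathrm{u}}_{\zeta}$ is the explicit small-degree extension described in Section~\ref{Indchar}. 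Since the twisting character is unramified, $\epsilon(\xi_{n,\zeta},\psi_{M'^{\mathrm{u}}_{\zeta}})$ differs from $\epsilon(\xi'_{n,\zeta},\psi_{M'^{\mathrm{u}}_{\zeta}})$ by an explicit root of unity, whose exponent is read off from $\mathrm{rsw}(\xi_{n,\zeta},\psi_{M'^{\mathrm{u}}_{\zeta}})$ (Lemma~\ref{rswxi}) and the different of $M'^{\mathrm{u}}_{\zeta}/K$, so it suffices to compute $\epsilon(\xi'_{n,\zeta},\psi_{M'^{\mathrm{u}}_{\zeta}})$.

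Before that, I would reduce to $\ch K=p$ and $k=\bF_{p}$. The characteristic reduction uses that $\xi_{n,\zeta}$ and $\psi_{M'^{\mathrm{u}}_{\zeta}}$ depend only on a fixed finite truncation of $M'^{\mathrm{u}}_{\zeta}$, so Deligne's comparison \cite{Delp0} applies as in the proof of Lemma~\ref{lconstM}; the residue-field reduction compares $\xi_{n,\zeta}$ over $k$ with its $\bF_{p}$-counterpart and uses the behaviour of epsilon factors of Artin-Schreier characters under residue-field extension, after checking that the asserted right-hand side transforms compatibly. After these reductions we are in the setting of the subsection ``Study of character'' of Section~\ref{Indchar}, and may use its results.

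For $p\neq 2$ one has $N'^{\mathrm{u}}_{\zeta}=M'^{\mathrm{u}}_{\zeta}(\theta_{\zeta})$ with $\theta_{\zeta}^{p}-\theta_{\zeta}=-\frac{1}{2r}\delta_{\zeta}^{2}$, and, tracing through the definitions of $\phi_{n}$, $\xi_{n,\zeta}$, $\xi'_{n,\zeta}$ and the computation in Lemma~\ref{oddfac}, one identifies $\xi'_{n,\zeta}$ with the Artin-Schreier character $\sigma\mapsto\psi_{0}(\sigma(\theta_{\zeta})-\theta_{\zeta})$. Its epsilon factor with respect to $\psi_{M'^{\mathrm{u}}_{\zeta}}$ is then computed, using the refined Swan conductor (Lemma~\ref{rswxi}) and the stability theorem \cite[23.8]{BHLLCGL2}, as a quadratic Gauss sum over $k_{N}$, evaluated by the Hasse-Davenport relation and Gauss's formula; the symbol $\bigl(\frac{r}{k_{N}}\bigr)$ enters through the coefficient $-\frac{1}{2r}$, and, combining with the unramified twist, one gets $-(-1)^{(p-1)fN/4}\bigl(\frac{r}{k_{N}}\bigr)$. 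The bookkeeping of signs, Jacobi symbols and powers of $\sqrt{-1}$ is of the type carried out in the proofs of Lemma~\ref{detFr} and Lemma~\ref{lconsttame}.

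For $p=2$ a direct evaluation is obstructed: the analogous Gauss sum degenerates (compare Lemma~\ref{TrFr}), and one is forced to make explicit the Artin reciprocity map of the wildly ramified extension $N'^{\mathrm{u}}_{\zeta}/M'^{\mathrm{u}}_{\zeta}$, whose ramification filtration is non-trivial. To avoid this I would first reduce to $e=1$ --- the case in which $\mu_{p^{e}}(\bC)=\mu_{2}(\bC)$ --- by relating $\xi_{n,\zeta}$ for general $e$ to the case $e=1$, and then invoke Lemma~\ref{epsmod}: for $n=p^{e}=2$ it gives $\epsilon(\tau_{\zeta,\chi,c},\psi_{K})\equiv\epsilon(\pi_{\zeta,\chi,c},\psi_{K})\bmod\mu_{2}(\bC)$, which, unwound through the reductions in the proof of Proposition~\ref{coineps}, shows that $\epsilon(\xi_{2,\zeta},\psi_{M'^{\mathrm{u}}_{\zeta}})$ agrees with the asserted value up to sign. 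It then remains only to fix this sign, i.e.\ to compute the non-zero real part of $\epsilon(\xi_{2,\zeta},\psi_{M'^{\mathrm{u}}_{\zeta}})$; this is elementary, essentially because the contributions to the relevant Gauss sum that would require the explicit Artin reciprocity map are purely imaginary, while the rest is governed by the explicit formula \eqref{xipsi}. I expect this $p=2$ step --- the reduction to $e=1$ together with the extraction of the real part --- to be the main obstacle; the case $p\neq 2$ is, by contrast, a long but essentially routine Gauss-sum calculation.
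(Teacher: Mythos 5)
Your proposal follows essentially the same route as the paper: reduction to $\ch K=p$ and $f=1$ via \cite{Delp0} and the induction formula of Proposition \ref{reseps}, then for $p\neq 2$ an explicit Gauss-sum evaluation over $k_N$ resting on Lemma \ref{oddfac} and the norm computation (the paper's Lemmas \ref{psi=1}, \ref{Nrmod}, \ref{sumxi}, \ref{epsxiodd}), and for $p=2$ a reduction to $e=1$ (the paper's Lemma \ref{indeph}) followed by Lemma \ref{epsmod} and extraction of the real part (Lemma \ref{epsxipos}). The only difference is cosmetic ordering of the unramified-twist step; the strategy and all key ingredients match.
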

\begin{proof}
First, we reduce the problem to the positive characteristic case. 
Assume that $\ch K =0$. 
Take a positive characteristic 
local field $K_{(p)}$ 
whose residue field is isomorphic to $k$. 
We define 
$M'^{\mathrm{u}}_{\zeta,(p)}$ 
similarly as 
$M'^{\mathrm{u}}_{\zeta}$ 
starting from $K_{(p)}$. 
We use similar notations also 
for other objects in the positive characteristic side. 
Then we have the isomorphism 
\[
 \cO_{M'^{\mathrm{u}}_{\zeta,(p)}} 
 / \mathfrak{p}_{M'^{\mathrm{u}}_{\zeta,(p)}}^3 
 \stackrel{\sim}{\longrightarrow} 
 \cO_{M'^{\mathrm{u}}_{\zeta}} 
 / \mathfrak{p}_{M'^{\mathrm{u}}_{\zeta}}^3 ;\ 
 \xi_0 + \xi_1 \varpi_{M'^{\mathrm{u}}_{\zeta,(p)}} 
 + \xi_2 \varpi_{M'^{\mathrm{u}}_{\zeta,(p)}}^2 
 \mapsto 
 \hat{\xi}_0 + \hat{\xi}_1 \varpi_{M'^{\mathrm{u}}_{\zeta}} 
 + \hat{\xi}_2 \varpi_{M'^{\mathrm{u}}_{\zeta}}^2 
\] 
of algebras, 
where $\xi_1, \xi_2, \xi_3 \in k$. 
Hence, the problem is reduced to 
the positive characteristic case 
by \cite[Proposition 3.7.1]{Delp0}. 

We may assume $K=\bF_q ((t))$. 
We put $K_{\langle 1 \rangle} =\bF_p ((t))$. 
We define 
$M'^{\mathrm{u}}_{\zeta,\langle 1 \rangle}$ 
similarly as 
$M'^{\mathrm{u}}_{\zeta}$ 
starting from $K_{\langle 1 \rangle}$. 
We use similar notations also 
for other objects in the $K_{\langle 1 \rangle}$-case. 
We put 
$f' =[M'^{\mathrm{u}}_{\zeta} : 
 M'^{\mathrm{u}}_{\zeta,\langle 1 \rangle}]$. 
We have 
\[
 \delta_{M'^{\mathrm{u}}_{\zeta}/
 M'^{\mathrm{u}}_{\zeta,\langle 1 \rangle}} 
 (\mathrm{rsw} (\xi_{n,\zeta,\langle 1 \rangle},
 \psi_{M'^{\mathrm{u}}_{\zeta,\langle 1 \rangle}} )) 
 =(-1)^{f'-1} 
\] 
by Lemma \ref{rswxi}. 
We have 
$\lambda (M'^{\mathrm{u}}_{\zeta}/M'^{\mathrm{u}}_{\zeta,\langle 1 \rangle},
 \psi_{M'^{\mathrm{u}}_{\zeta,\langle 1 \rangle}} ) =1$, 
since the level of 
$\psi_{M'^{\mathrm{u}}_{\zeta,\langle 1 \rangle}}$ is 
$2-p^e$ by Lemma \ref{Treta}. 
Then, we obtain 
\begin{equation}\label{epsf1}
 \varepsilon (\xi_{n,\zeta},\psi_{M'^{\mathrm{u}}_{\zeta}} ) 
 =(-1)^{f'-1} 
 \varepsilon (\xi_{n,\zeta,\langle 1 \rangle},
 \psi_{M'^{\mathrm{u}}_{\zeta,(1)}} )^{f'} 
\end{equation}
by Proposition \ref{reseps}. 
By \eqref{epsf1}, 
the problem is reduced to the case where $f=1$. 
In this case, the claim follows from 
Lemma \ref{epsxiodd} and Lemma \ref{epsxipos}. 
\end{proof}

\subsection{Special cases}
We assume that $n=p^e$, $\ch K =p$ and 
$f=1$ in this subsection. 

\subsubsection{Odd case}
Assume that $p \neq 2$ in this subsubsection. 
\begin{lem}\label{psi=1}
We have 
$\psi_{M'^{\mathrm{u}}_{\zeta}} \bigl( 
 - \delta_{\zeta}^{p^e +1} 
 (1+x \varpi_{M'^{\mathrm{u}}_{\zeta}}) \bigr)=1$ 
for 
$x \in k_N$. 
\end{lem}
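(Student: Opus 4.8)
The plan is to evaluate $\psi_{M'^{\mathrm{u}}_{\zeta}}$ on the given element by reducing it to a trace down to $K$ and showing that trace vanishes. Since $\delta_{\zeta}^{-1}$ is a uniformizer of $M'^{\mathrm{u}}_{\zeta}$ (cf.\ Section~\ref{RefSwan}), I take $\varpi_{M'^{\mathrm{u}}_{\zeta}}=\delta_{\zeta}^{-1}$, so that $-\delta_{\zeta}^{p^e+1}(1+x\varpi_{M'^{\mathrm{u}}_{\zeta}})=-\delta_{\zeta}^{p^e+1}-x\delta_{\zeta}^{p^e}$. By \eqref{delz} with $\varepsilon_1=0$ one has $\delta_{\zeta}^{p^e}=\hat{r}^{-1}\delta_{\zeta}-\alpha_{\zeta}^{-1}$, hence $\delta_{\zeta}^{p^e+1}=\hat{r}^{-1}\delta_{\zeta}^2-\alpha_{\zeta}^{-1}\delta_{\zeta}$, and the element becomes a $T^{\mathrm{u}}_{\zeta}$-linear combination of $1$, $\delta_{\zeta}$ and $\delta_{\zeta}^2$.

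Next I apply $\Tr_{M'^{\mathrm{u}}_{\zeta}/T^{\mathrm{u}}_{\zeta}}$. Since $[M'^{\mathrm{u}}_{\zeta}:T^{\mathrm{u}}_{\zeta}]=p^e$ is divisible by $p$, this trace annihilates $T^{\mathrm{u}}_{\zeta}$, and $\Tr_{M'^{\mathrm{u}}_{\zeta}/T^{\mathrm{u}}_{\zeta}}(\delta_{\zeta})=0$ by Lemma~\ref{Treta}; therefore $\Tr_{M'^{\mathrm{u}}_{\zeta}/T^{\mathrm{u}}_{\zeta}}(\delta_{\zeta}^{p^e})=0$ and $\Tr_{M'^{\mathrm{u}}_{\zeta}/T^{\mathrm{u}}_{\zeta}}\bigl(-\delta_{\zeta}^{p^e+1}-x\delta_{\zeta}^{p^e}\bigr)=-\hat{r}^{-1}\Tr_{M'^{\mathrm{u}}_{\zeta}/T^{\mathrm{u}}_{\zeta}}(\delta_{\zeta}^2)$; in particular the variable $x$ disappears. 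By Lemma~\ref{Treta}, $\Tr_{M'^{\mathrm{u}}_{\zeta}/T^{\mathrm{u}}_{\zeta}}(\delta_{\zeta}^2)=0$ unless $p^e=3$, so when $p^e\geq5$ the element already has zero trace to $T^{\mathrm{u}}_{\zeta}$ and the claim follows at once.

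The case $p^e=3$ (i.e.\ $p=3$, $e=1$) is the only real obstacle. Here Lemma~\ref{Treta} gives $\Tr_{M'^{\mathrm{u}}_{\zeta}/T^{\mathrm{u}}_{\zeta}}(\delta_{\zeta}^2)=2\hat{r}^{-1}$, so the trace to $T^{\mathrm{u}}_{\zeta}$ equals $-2\hat{r}^{-2}\neq0$ and must be pushed down by hand. In this case $N=2$, $k_N=\bF_9$ and $r\in\mu_8(k^{\rmac})\subset k_N^{\times}$, so $\hat{r}=r$ and $-2\hat{r}^{-2}\in K^{\mathrm{u}}$; since $[T^{\mathrm{u}}_{\zeta}:K^{\mathrm{u}}]=p^e+1=4$ and $\ch K=3$, we get $\Tr_{T^{\mathrm{u}}_{\zeta}/K^{\mathrm{u}}}(-2\hat{r}^{-2})=-8r^{-2}=r^{-2}$, and then, as $f=1$, $\Tr_{K^{\mathrm{u}}/K}(r^{-2})=\Tr_{\bF_9/\bF_3}(r^{-2})=r^{-2}+r^{-6}=r^6+r^2=r^2(r^4+1)=0$ because $r^4=-1$. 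Hence $\Tr_{M'^{\mathrm{u}}_{\zeta}/K}$ of the element is $0$, so $\psi_{M'^{\mathrm{u}}_{\zeta}}$ of it is $1$, as claimed. Apart from this last arithmetic step — where the order of $r$ in $\bF_9$ is what matters — everything is a routine trace computation built on \eqref{delz} and Lemma~\ref{Treta}.
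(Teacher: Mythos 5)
Your proof is correct and follows essentially the same route as the paper: reduce via $\varpi_{M'^{\mathrm{u}}_{\zeta}}=\delta_{\zeta}^{-1}$ and \eqref{delz} to the trace of $-\hat{r}^{-1}\delta_{\zeta}^{2}$, kill everything else with Lemma \ref{Treta} and $\ch K=p$, and handle $p^{e}=3$ separately using $\Tr(\delta_{\zeta}^{2})=2\hat{r}^{-1}$ and $r^{4}=-1$. The only cosmetic difference is that you push the residual term all the way down to $K$ by explicit traces, where the paper evaluates $\psi_{T^{\mathrm{u}}_{\zeta}}(-2r^{-2})$ directly via $\Tr_{k_N/\bF_p}$.
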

\begin{proof}
For $x \in k_N$, 
we have 
\[
 \psi_{M'^{\mathrm{u}}_{\zeta}} \bigl( 
 - \delta_{\zeta}^{p^e +1} 
 (1+x \varpi_{M'^{\mathrm{u}}_{\zeta}}) \bigr) = 
 \psi_{M'^{\mathrm{u}}_{\zeta}} \bigl( 
 -(r^{-1} \delta_{\zeta} 
 -\alpha_{\zeta}^{-1} ) 
 (\delta_{\zeta} +x ) \bigr) = 
 \psi_{M'^{\mathrm{u}}_{\zeta}} ( 
 -r^{-1} \delta_{\zeta}^2 ), 
\]
because 
$\Tr_{M'^{\mathrm{u}}_{\zeta}/T^{\mathrm{u}}_{\zeta}} 
 (\delta_{\zeta})=0$ and $[M'^{\mathrm{u}}_{\zeta}:T^{\mathrm{u}}_{\zeta}]=p^e$. 
If $p^e \neq 3$, then we have 
the claim, because 
$\Tr_{M'^{\mathrm{u}}_{\zeta}/T^{\mathrm{u}}_{\zeta}} 
 (\delta_{\zeta}^2)=0$. 
 
We assume that $p^e = 3$. 
Then we have 
\[
 \psi_{M'^{\mathrm{u}}_{\zeta}} \bigl( 
 -r^{-1} \delta_{\zeta}^2 \bigr)= 
 \psi_{T^{\mathrm{u}}_{\zeta}} \bigl( 
 -2 r^{-2} \bigr) = 
 \psi_0 \bigl( \Tr_{k_N /\bF_p} 
 (-2 r^{-2} ) \bigr) = 
 \psi_0 \bigl( - N \Tr_{\bF_{p^2}/\bF_p} 
 ( r^{-2} ) \bigr) =1 
\] 
by 
$\Tr_{M'^{\mathrm{u}}_{\zeta}/T^{\mathrm{u}}_{\zeta}} 
 (\delta_{\zeta}^2) =2r^{-1}$ 
and 
$r^4=-1$. 
\end{proof}

Let $\theta_{\zeta}$ be as in \eqref{tzeta}. 

\begin{lem}\label{Nrmod}
We have 
\[
 \Nr_{N'^{\mathrm{u}}_{\zeta}/M'^{\mathrm{u}}_{\zeta}} 
 (1 +x \theta_{\zeta}^{\frac{p-1}{2}} 
 \varpi_{M'^{\mathrm{u}}_{\zeta}}) 
 \equiv 
 1 + ( -2r )^{\frac{1-p}{2}} x^p 
 \varpi_{M'^{\mathrm{u}}_{\zeta}} 
 + \frac{x^2}{2} \varpi_{M'^{\mathrm{u}}_{\zeta}}^2 
 \mod{\mathfrak{p}_{M'^{\mathrm{u}}_{\zeta}}^3} 
\]
for 
$x \in k_N$. 
\end{lem}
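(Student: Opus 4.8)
The plan is to compute the norm as a product over the Galois conjugates, expand it in the elementary symmetric functions of those conjugates, and show that all but three of these functions are irrelevant modulo $\fp_{M'^{\mathrm{u}}_{\zeta}}^{3}$.

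First I would fix notation. Put $\varpi=\varpi_{M'^{\mathrm{u}}_{\zeta}}=\delta_{\zeta}^{-1}$, so that $\theta_{\zeta}^{p}-\theta_{\zeta}=-\tfrac{1}{2r}\varpi^{-2}$ and in particular $v_{M'^{\mathrm{u}}_{\zeta}}(\theta_{\zeta})=-2/p$. Thus $N'^{\mathrm{u}}_{\zeta}/M'^{\mathrm{u}}_{\zeta}$ is the totally wildly ramified Artin--Schreier extension of degree $p$ with $\Gal(N'^{\mathrm{u}}_{\zeta}/M'^{\mathrm{u}}_{\zeta})=\{\sigma^{j}\mid j\in\bF_{p}\}$, where $\sigma^{j}(\theta_{\zeta})=\theta_{\zeta}+j$; moreover $\varpi_{N}:=\theta_{\zeta}^{(p-1)/2}\varpi$ is a uniformizer of $N'^{\mathrm{u}}_{\zeta}$ and $x\in k_{N}$ is fixed by the Galois action. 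Hence
\[
 \Nr_{N'^{\mathrm{u}}_{\zeta}/M'^{\mathrm{u}}_{\zeta}}(1+x\varpi_{N})
 =\prod_{j\in\bF_{p}}\bigl(1+x(\theta_{\zeta}+j)^{(p-1)/2}\varpi\bigr)
 =\sum_{k=0}^{p}E_{k}\,x^{k}\varpi^{k},
\]
where $E_{k}$ is the $k$-th elementary symmetric function of $\{(\theta_{\zeta}+j)^{(p-1)/2}\mid j\in\bF_{p}\}$.

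The core of the argument is the determination of the $E_{k}$. Writing $(\theta_{\zeta}+j)^{(p-1)/2}=\theta_{\zeta}^{(p-1)/2}u_{j}$ with $u_{j}=(1+j\theta_{\zeta}^{-1})^{(p-1)/2}$, we have $E_{k}=\theta_{\zeta}^{k(p-1)/2}\widetilde{e}_{k}$ with $\widetilde{e}_{k}=e_{k}(\{u_{j}\})$. I would first compute the power sums $\widetilde{p}_{m}=\sum_{j\in\bF_{p}}u_{j}^{m}=\sum_{j\in\bF_{p}}(1+j\theta_{\zeta}^{-1})^{m(p-1)/2}$: expanding the binomial and using that $\sum_{j\in\bF_{p}}j^{a}=-1$ when $(p-1)\mid a$ and $a>0$ and $=0$ otherwise gives $\widetilde{p}_{m}=-\sum_{s\geq1}\binom{m(p-1)/2}{(p-1)s}\theta_{\zeta}^{-(p-1)s}$, and a short analysis by Lucas' theorem of the base-$p$ digits of $m(p-1)/2$ and of $(p-1)s$ shows that every binomial coefficient occurring here is $\equiv0\bmod p$ unless $m$ is even and $s=m/2$. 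Hence $\widetilde{p}_{m}=0$ for odd $m$ with $1\leq m\leq p-1$, and $\widetilde{p}_{m}=-\theta_{\zeta}^{-m(p-1)/2}$ for even $m$ in that range. Feeding this into Newton's identities (legitimate since $k<p$, so the relevant denominators are prime to $p$), which express $\widetilde{e}_{k}$ as an isobaric polynomial of weight $k$ in $\widetilde{p}_{1},\dots,\widetilde{p}_{k}$: for odd $k$ every monomial must contain some $\widetilde{p}_{m}$ with $m$ odd, so $\widetilde{e}_{k}=0$ and thus $E_{k}=0$ for odd $k\leq p-1$; for even $k$ every surviving monomial equals $\pm\theta_{\zeta}^{-k(p-1)/2}$, so $\widetilde{e}_{k}=C_{k}\theta_{\zeta}^{-k(p-1)/2}$ with $C_{k}\in\bF_{p}$ and hence $E_{k}=C_{k}\in\bF_{p}$. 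In particular $E_{0}=1$ and, from $\widetilde{p}_{1}=0$, $\widetilde{p}_{2}=-\theta_{\zeta}^{-(p-1)}$, one gets $E_{2}=\theta_{\zeta}^{p-1}\cdot\tfrac12(\widetilde{p}_{1}^{2}-\widetilde{p}_{2})=\tfrac12$; finally $E_{p}=\bigl(\prod_{j\in\bF_{p}}(\theta_{\zeta}+j)\bigr)^{(p-1)/2}=(\theta_{\zeta}^{p}-\theta_{\zeta})^{(p-1)/2}=\bigl(-\tfrac{1}{2r}\varpi^{-2}\bigr)^{(p-1)/2}=(-2r)^{(1-p)/2}\varpi^{-(p-1)}$.

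To conclude, since the odd-index $E_{k}$ vanish,
\[
 \Nr_{N'^{\mathrm{u}}_{\zeta}/M'^{\mathrm{u}}_{\zeta}}(1+x\varpi_{N})
 =1+\sum_{i=1}^{(p-1)/2}E_{2i}\,x^{2i}\varpi^{2i}+E_{p}\,x^{p}\varpi^{p}.
\]
As $E_{2i}\in\bF_{p}$ we have $v_{M'^{\mathrm{u}}_{\zeta}}(E_{2i}x^{2i}\varpi^{2i})\geq 2i>2$ for $i\geq2$, while $v_{M'^{\mathrm{u}}_{\zeta}}(E_{p}x^{p}\varpi^{p})=v_{M'^{\mathrm{u}}_{\zeta}}(\varpi)=1$; hence modulo $\fp_{M'^{\mathrm{u}}_{\zeta}}^{3}$ only the terms $1$, $E_{2}x^{2}\varpi^{2}=\tfrac{x^{2}}{2}\varpi^{2}$ and $E_{p}x^{p}\varpi^{p}=(-2r)^{(1-p)/2}x^{p}\varpi$ survive, which is precisely the asserted congruence. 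The one delicate point, and where I expect to spend real effort, is the vanishing $\widetilde{p}_{m}=0$ for odd $m$, i.e. the Lucas-theorem bookkeeping on the digits of $m(p-1)/2$ and $(p-1)s$; everything after that is formal.
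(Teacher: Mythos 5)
Your proof is correct, but it takes a genuinely different route from the paper's. The paper sets $T=1+x\theta_{\zeta}^{(p-1)/2}\varpi_{M'^{\mathrm{u}}_{\zeta}}$ and eliminates $\theta_{\zeta}$ using the relation $\theta_{\zeta}^{p}-\theta_{\zeta}=(-2r)^{-1}\delta_{\zeta}^{2}$: solving for $\theta_{\zeta}$ in terms of $T$ and substituting back yields the explicit degree-$p$ minimal polynomial
$\bigl(T^{2}-2T+1-x^{2}\varpi_{M'^{\mathrm{u}}_{\zeta}}^{2}\bigr)^{(p-1)/2}(T-1)-(-2r)^{(1-p)/2}x^{p}\varpi_{M'^{\mathrm{u}}_{\zeta}}=0$,
from whose constant term the norm is read off in one line (in fact exactly, not just modulo $\fp_{M'^{\mathrm{u}}_{\zeta}}^{3}$: the norm equals $(1-x^{2}\varpi^{2})^{(p-1)/2}+(-2r)^{(1-p)/2}x^{p}\varpi$). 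You instead expand the norm as the product over the Artin--Schreier conjugates $\theta_{\zeta}\mapsto\theta_{\zeta}+j$ and determine the elementary symmetric functions $E_{k}$ of the $(\theta_{\zeta}+j)^{(p-1)/2}$ via power sums, Lucas' theorem and Newton's identities. I checked the delicate point you flag: writing $M=m(p-1)/2$ with base-$p$ digits $(t-1,\,p-t)$ for $m=2t$ even and $(t,\,(p-1)/2-t)$ for $m=2t+1$ odd, and $(p-1)s$ with digits $(s-1,\,p-s)$, Lucas does give $\binom{M}{(p-1)s}\equiv 0$ except when $m$ is even and $s=m/2$, so $\widetilde{p}_{m}=0$ for odd $m$ and $\widetilde{p}_{m}=-\theta_{\zeta}^{-m(p-1)/2}$ for even $m$ in the range $1\leq m\leq p-1$; the induction through Newton's identities (valid since $k<p$) and the values $E_{0}=1$, $E_{2}=\tfrac12$, $E_{p}=(-2r)^{(1-p)/2}\varpi^{1-p}$ then all come out right. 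The paper's elimination is shorter and gives a closed form; your argument is longer but systematic, and it exposes the extra structure that every odd-index symmetric function vanishes and every even-index one is a constant in $\bF_{p}$, so it would in principle determine the norm to any precision.
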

\begin{proof}
We put 
$T=1 +x \theta_{\zeta}^{\frac{p-1}{2}} 
 \varpi_{M'^{\mathrm{u}}_{\zeta}}$. 
By 
$\theta_{\zeta}^p -\theta_{\zeta} = 
 (-2r)^{-1} \delta_{\zeta}^2$ in \eqref{thetad}, 
we have 
\[
 \theta_{\zeta} = 
 -\frac{1}{2r} \delta_{\zeta}^2 
 \Bigl( \bigl( x^{-1} (T-1) \delta_{\zeta} \bigr)^2 
 -1 \Bigl)^{-1}. 
\]
Substituting this to 
$x^{-1} (T-1) \delta_{\zeta} =
 \theta_{\zeta}^{\frac{p-1}{2}}$, 
we have 
\[
 \bigl( T^2 -2T +1 -x^2 \varpi_{M'^{\mathrm{u}}_{\zeta}}^2 
 \bigr)^{\frac{p-1}{2}} (T-1) -( -2r )^{\frac{1-p}{2}} x^p  
 \varpi_{M'^{\mathrm{u}}_{\zeta}} 
 =0. 
\]
The claim follows from this. 
\end{proof}

\begin{lem}\label{sumxi}
We have 
\[
 \sum_{x \in k_N} 
 \xi_{n,\zeta} 
 (1+x \varpi_{M'^{\mathrm{u}}_{\zeta}})^{-1} =
 -((-1)^{\frac{p-1}{2}}p)^{e_0} 
 \Bigl( \frac{r}{k_N} \Bigr). 
\]
\end{lem}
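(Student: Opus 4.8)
The plan is to rewrite $S:=\sum_{x\in k_N}\xi_{n,\zeta}(1+x\varpi_{M'^{\mathrm{u}}_{\zeta}})^{-1}$ as a quadratic Gauss sum over $k_N$ and evaluate it by the Hasse--Davenport relation. First I would replace $\xi_{n,\zeta}$ by the twisted character $\xi'_{n,\zeta}$ of Lemma \ref{oddfac}: the twisting character $\sigma\mapsto(-1)^{n\frac{p-1}{2}n_{\sigma}/2}$ depends only on $n_{\sigma}$, hence is unramified and in particular trivial on the principal units, so $\xi_{n,\zeta}$ and $\xi'_{n,\zeta}$ agree on every $1+x\varpi_{M'^{\mathrm{u}}_{\zeta}}$ and $S=\sum_{x\in k_N}\xi'_{n,\zeta}(1+x\varpi_{M'^{\mathrm{u}}_{\zeta}})^{-1}$.

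Next I would pass from level $1$ to level $2$. By Lemma \ref{oddfac} the character $\xi'_{n,\zeta}$ factors through $\Gal(N'^{\mathrm{u}}_{\zeta}/M'^{\mathrm{u}}_{\zeta})$, and since $N'^{\mathrm{u}}_{\zeta}=M'^{\mathrm{u}}_{\zeta}(\theta_{\zeta})$ is the Artin--Schreier extension with $\theta_{\zeta}^{p}-\theta_{\zeta}=-(2r)^{-1}\delta_{\zeta}^{2}$ while $\delta_{\zeta}^{-1}=\varpi_{M'^{\mathrm{u}}_{\zeta}}$ is a uniformizer, this extension is totally ramified of degree $p$ with its unique ramification break at $2$; hence $\xi'_{n,\zeta}$ is trivial on $U_{M'^{\mathrm{u}}_{\zeta}}^{3}$ and, by local class field theory, on $\Nr_{N'^{\mathrm{u}}_{\zeta}/M'^{\mathrm{u}}_{\zeta}}({N'^{\mathrm{u}}_{\zeta}}^{\times})$. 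Feeding Lemma \ref{Nrmod} into this, together with $(1+a\varpi_{M'^{\mathrm{u}}_{\zeta}})(1+b\varpi_{M'^{\mathrm{u}}_{\zeta}}^{2})\equiv 1+a\varpi_{M'^{\mathrm{u}}_{\zeta}}+b\varpi_{M'^{\mathrm{u}}_{\zeta}}^{2}\bmod U_{M'^{\mathrm{u}}_{\zeta}}^{3}$, I obtain for every $x\in k_N$
\[
 \xi'_{n,\zeta}\bigl(1+(-2r)^{\frac{1-p}{2}}x^{p}\varpi_{M'^{\mathrm{u}}_{\zeta}}\bigr)=\xi_{n,\zeta}\bigl(1+\tfrac{x^{2}}{2}\varpi_{M'^{\mathrm{u}}_{\zeta}}^{2}\bigr)^{-1}.
\]

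Now I would make the right-hand side explicit. Using $\varpi_{M'^{\mathrm{u}}_{\zeta}}=\delta_{\zeta}^{-1}$ we have $\delta_{\zeta}^{p^{e}+1}\varpi_{M'^{\mathrm{u}}_{\zeta}}^{2}=\delta_{\zeta}^{p^{e}-1}$, so \eqref{xipsi} gives $\xi_{n,\zeta}(1+\tfrac{x^{2}}{2}\varpi_{M'^{\mathrm{u}}_{\zeta}}^{2})=\psi_{M'^{\mathrm{u}}_{\zeta}}(-\tfrac{x^{2}}{2n'}\delta_{\zeta}^{p^{e}-1})$; computing $\Tr_{M'^{\mathrm{u}}_{\zeta}/K^{\mathrm{u}}}(\delta_{\zeta}^{p^{e}-1})=\Tr_{T^{\mathrm{u}}_{\zeta}/K^{\mathrm{u}}}\bigl(\Tr_{M'^{\mathrm{u}}_{\zeta}/T^{\mathrm{u}}_{\zeta}}(\delta_{\zeta}^{p^{e}-1})\bigr)$ via Lemma \ref{Treta} for the inner trace, the tameness of $T^{\mathrm{u}}_{\zeta}/K^{\mathrm{u}}$ (degree $n'(p^{e}+1)$) for the outer one, and $\ch K=p$, the signs and the factors $n'$, $p^{e}\pm1$ cancel and I expect $\xi_{n,\zeta}(1+\tfrac{x^{2}}{2}\varpi_{M'^{\mathrm{u}}_{\zeta}}^{2})=\psi_{0}\bigl(\Tr_{k_N/\bF_{p}}(\tfrac{x^{2}}{2r})\bigr)$. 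Since $x\mapsto(-2r)^{\frac{1-p}{2}}x^{p}$ is a bijection of $k_N$ (a scalar multiple of the Frobenius), re-indexing then gives $S=\sum_{x\in k_N}\psi_{0}\bigl(\Tr_{k_N/\bF_{p}}((2r)^{-1}x^{2})\bigr)$. Because $[k_N:\bF_{p}]=N=2e_{0}$ is even, every element of $\bF_{p}^{\times}$ is a square in $k_N$, so $\bigl(\tfrac{2}{k_N}\bigr)=1$ and $S=\bigl(\tfrac{r}{k_N}\bigr)\sum_{x\in k_N}\psi_{0}(\Tr_{k_N/\bF_{p}}(x^{2}))$; by the Hasse--Davenport relation and $\bigl(\sum_{x\in\bF_{p}}\psi_{0}(x^{2})\bigr)^{2}=(-1)^{\frac{p-1}{2}}p$, this Gauss sum over $\bF_{p^{2e_{0}}}$ equals $-\bigl((-1)^{\frac{p-1}{2}}p\bigr)^{e_{0}}$, which yields the asserted value.

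The main obstacle is the third step: tracking $\Tr(\delta_{\zeta}^{p^{e}-1})$ through the ramified tower $M'^{\mathrm{u}}_{\zeta}/T^{\mathrm{u}}_{\zeta}/K^{\mathrm{u}}$ and checking that the various signs and norm-residue-symbol contributions combine to leave exactly $\tfrac{x^{2}}{2r}$; the first two steps and the final Gauss-sum evaluation are formal once Lemmas \ref{oddfac}, \ref{Nrmod} and \ref{Treta} and the identity \eqref{xipsi} are available.
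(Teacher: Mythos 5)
Your proposal is correct and follows essentially the same route as the paper: replace $\xi_{n,\zeta}$ by $\xi'_{n,\zeta}$, use Lemma \ref{oddfac} together with the norm computation of Lemma \ref{Nrmod} to trade the level-one sum for a level-two one, convert to $\psi_{M'^{\mathrm{u}}_{\zeta}}$ via \eqref{xipsi}, and evaluate the resulting quadratic Gauss sum over $k_N$ using Lemma \ref{Treta}. The step you flag as the main obstacle is exactly what Lemma \ref{Treta} handles (the paper descends only to $T^{\mathrm{u}}_{\zeta}$ rather than all the way to $K^{\mathrm{u}}$, but the outcome is the same up to squares in $k_N$), so there is no gap.
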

\begin{proof}
Let $\xi'_{n,\zeta}$ be as in subsection \ref{Study}. 
We note that 
the left hand side of the claim does not change even if we replace 
$\xi_{n,\zeta}$ by $\xi'_{n,\zeta}$. 
We have 
\begin{align}
 \sum_{x \in k_N} 
 \xi'_{n,\zeta} 
 (1+x \varpi_{M'^{\mathrm{u}}_{\zeta}})^{-1} &=
 \sum_{x \in k_N} 
 \xi'_{n,\zeta} 
 \bigl( 1+ ( -2r )^{\frac{1-p}{2}} x^p 
 \varpi_{M'^{\mathrm{u}}_{\zeta}} \bigr)^{-1} \notag \\ 
 &=
 \sum_{x \in k_N} 
 \xi'_{n,\zeta} 
 \Bigl( 1-\frac{x^2}{2} \varpi_{M'^{\mathrm{u}}_{\zeta}}^2 \Bigr)^{-1} 
 =\sum_{x \in k_N} 
 \psi_{M'^{\mathrm{u}}_{\zeta}} 
 \Bigl( -\frac{x^2}{2} \delta_{\zeta}^{p^e -1} \Bigr) \label{xitopsi}, 
\end{align} 
where we use 
Lemma \ref{oddfac} and Lemma \ref{Nrmod} 
at the second equality and 
\eqref{xipsi} at the last equality. 
The last expression in \eqref{xitopsi} is equal to 
\[
 \sum_{x \in k_N} 
 \psi_{T^{\mathrm{u}}_{\zeta}} 
 \bigl( -(2r)^{-1} (p^e -1) x^2 \bigr) = 
 \sum_{x \in k_N } \psi_0 (\Tr_{k_N /\bF_p} (r x^2)) 
 =-\bigl( (-1)^{\frac{p-1}{2}} p \bigr)^{e_0}
 \Bigl( \frac{r}{k_N} \Bigr) 
\]
by \eqref{Gss}, \eqref{HD}, Lemma \ref{Treta} and $N=2e_0$. 
\end{proof}

\begin{lem}\label{epsxiodd}
We have 
$\varepsilon (\xi_{n,\zeta},\psi_{M'^{\mathrm{u}}_{\zeta}} )
 =-(-1)^{\frac{(p-1)e_0}{2}} \bigl( \frac{r}{k_N} \bigr)$. 
\end{lem}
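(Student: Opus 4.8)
The plan is to compute $\epsilon(\xi_{n,\zeta},\psi_{M'^{\mathrm{u}}_{\zeta}})$ directly from the Gauss-sum description of the local constant of a character (cf.~\cite[\S 23]{BHLLCGL2}); the computational inputs are Lemmas \ref{psi=1}, \ref{Nrmod} and \ref{sumxi}, and the structural ones are Lemmas \ref{rswxi}, \ref{oddfac} and \ref{Treta}. Put $F=M'^{\mathrm{u}}_{\zeta}$, $\varpi_F=\varpi_{M'^{\mathrm{u}}_{\zeta}}=\delta_{\zeta}^{-1}$, $k_F=k_N$; recall $n'=1$ since $n=p^e$. First I would determine the conductor of $\xi_{n,\zeta}$: Lemma \ref{Treta} computes $\mathrm{Tr}_{M'^{\mathrm{u}}_{\zeta}/T^{\mathrm{u}}_{\zeta}}$ on powers of $\delta_{\zeta}$, hence the different exponent of $F/T^{\mathrm{u}}_{\zeta}$ and the level of $\psi_F=\psi_K\circ\mathrm{Tr}_{F/K}$, which is $1-p^e$. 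Combined with \eqref{xipsi}, which gives $\xi_{n,\zeta}(1+x)=\psi_F(-\delta_{\zeta}^{p^e+1}x)$ for $x\in\fp_F^2$, this shows $\xi_{n,\zeta}$ is trivial on $U_F^3$ (there the argument has valuation $\geq 2-p^e$, where $\psi_F$ is trivial), so $\xi_{n,\zeta}$ has conductor exponent $3$ and $c:=-\delta_{\zeta}^{p^e+1}$ (of valuation $-(p^e+1)$) is the element attached to $\xi_{n,\zeta}$ and $\psi_F$ realizing $\xi_{n,\zeta}$ on $U_F^2/U_F^3$, consistently with $\mathrm{rsw}(\xi_{n,\zeta},\psi_{M'^{\mathrm{u}}_{\zeta}})=-\delta_{\zeta}^{-(p^e+1)}$ of Lemma \ref{rswxi}.

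Next I would apply the Gauss-sum formula for a character of odd conductor exponent $3$: $\epsilon(\xi_{n,\zeta},\psi_F)$ equals a distinguished value of $\xi_{n,\zeta}$ (essentially $\xi_{n,\zeta}(c)^{\pm1}$) times $\lvert k_N\rvert^{-1/2}$ times the quadratic Gauss sum $\sum_{x\in k_N}\xi_{n,\zeta}(1+x\varpi_F)^{-1}\psi_F(cx\varpi_F)$, obtained by localizing the sum over $U_F^1/U_F^3$ at the critical level $U_F^2/U_F^3$. Lemma \ref{psi=1} then handles the entire $\psi_F$-side: with $x=0$ it gives $\psi_F(c)=1$, and for general $x$ it gives $\psi_F(cx\varpi_F)=1$, so the $\psi_F$-factor is identically $1$; and the distinguished $\xi_{n,\zeta}$-value is $1$, which I would check from the definitions of $\Theta_{\zeta}$ and $\phi_n$ --- since $f=1$, the value $\xi_{n,\zeta}(\varpi_F)$ is a product of $\phi_n(\Fr(2e_0))=(-1)^{ne_0(p-1)/2}=(-1)^{e_0(p-1)/2}$ (using that $p^e$ is odd) and a wild contribution read off from \eqref{abcdef}. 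Finally Lemma \ref{sumxi} gives $\sum_{x\in k_N}\xi_{n,\zeta}(1+x\varpi_F)^{-1}=-\bigl((-1)^{(p-1)/2}p\bigr)^{e_0}\bigl(\frac{r}{k_N}\bigr)$; since $f=1$ forces $f_0=1$, $N=2e_0$ and $k_N=\bF_{p^{2e_0}}$, we have $\lvert k_N\rvert^{1/2}=p^{e_0}$, so $\lvert k_N\rvert^{-1/2}$ times this sum is $-(-1)^{(p-1)e_0/2}\bigl(\frac{r}{k_N}\bigr)$, which is the asserted value.

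The genuinely new content --- the quadratic Gauss sum over $k_N$, together with its reduction via the norm map $\mathrm{Nr}_{N'^{\mathrm{u}}_{\zeta}/M'^{\mathrm{u}}_{\zeta}}$ (Lemma \ref{Nrmod}) and the fact that the twist $\xi'_{n,\zeta}$ factors through $\mathrm{Gal}(N'^{\mathrm{u}}_{\zeta}/M'^{\mathrm{u}}_{\zeta})$ (Lemma \ref{oddfac}) --- has already been isolated in the preceding lemmas, so the proof here is essentially an assembly. The main obstacle I anticipate is the bookkeeping of normalizations: deducing that $\xi_{n,\zeta}$ has conductor $3$ and identifying $c$ through the level of $\psi_F$ (Lemma \ref{Treta}), matching the $\psi_F$-argument of the Gauss-sum formula with the one in Lemma \ref{psi=1}, and evaluating the distinguished value of $\xi_{n,\zeta}$ with all signs correct.
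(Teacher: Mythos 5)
Your skeleton coincides with the paper's: the same Gauss--sum expression from \cite[(23.6.2) and 23.6 Proposition]{BHLLCGL2} with the element $c=-\delta_{\zeta}^{p^e+1}$ dictated by \eqref{xipsi}, Lemma \ref{psi=1} to make the $\psi_{M'^{\mathrm{u}}_{\zeta}}$-factor identically $1$, and Lemma \ref{sumxi} together with $\lvert k_N\rvert^{1/2}=p^{e_0}$ (here $f=1$ does force $N=2e_0$) to produce $-(-1)^{\frac{(p-1)e_0}{2}}\bigl(\frac{r}{k_N}\bigr)$. Your conductor bookkeeping is also correct. What remains after these steps is the single factor $\xi_{n,\zeta}(-\delta_{\zeta}^{p^e+1})^{-1}$, and this is where your proposal has a genuine gap. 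You assert the value is $1$ and propose to verify it ``from the definitions of $\Theta_{\zeta}$ and $\phi_n$'', reading a wild contribution off from \eqref{abcdef}. But \eqref{abcdef} only tells you $(a_{\sigma},b_{\sigma},c_{\sigma})$ once you know how $\sigma$ acts on $\alpha_{\zeta},\beta_{\zeta},\gamma_{\zeta}$; to evaluate $\xi_{n,\zeta}$ at the specific element $-\delta_{\zeta}^{p^e+1}$ of ${M'^{\mathrm{u}}_{\zeta}}^{\times}$ you would need the image of $\mathrm{Art}_{M'^{\mathrm{u}}_{\zeta}}(-\delta_{\zeta}^{p^e+1})$ in $Q_0$, i.e.\ an explicit description of the reciprocity map of a wildly ramified extension on that element. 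The Frobenius part is harmless (the exponent $-(p^e+1)$ is even, so $\phi_n(\Fr(2e_0))^{-(p^e+1)}=1$), but the $R_0$-part contributes an a priori arbitrary $p$-th root of unity via $\psi_0$, and nothing in the definitions lets you ``read it off''.

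The paper's way around this is the device you list as an input to Lemma \ref{sumxi} but do not deploy here: Lemma \ref{oddfac} says $\xi'_{n,\zeta}$ factors through $\Gal(N'^{\mathrm{u}}_{\zeta}/M'^{\mathrm{u}}_{\zeta})$, hence by class field theory is trivial on $\Nr_{N'^{\mathrm{u}}_{\zeta}/M'^{\mathrm{u}}_{\zeta}}({N'^{\mathrm{u}}_{\zeta}}^{\times})$, and the explicit norm
\[
 \Nr_{N'^{\mathrm{u}}_{\zeta}/M'^{\mathrm{u}}_{\zeta}}
 \bigl(\theta_{\zeta}^{\frac{p-1}{2}}\varpi_{M'^{\mathrm{u}}_{\zeta}}\bigr)
 =(-2r)^{\frac{1-p}{2}}\varpi_{M'^{\mathrm{u}}_{\zeta}}
\]
converts $\xi_{n,\zeta}(-\delta_{\zeta}^{p^e+1})$ into $\xi_{n,\zeta}\bigl(-(-2r)^{(p^e+1)\frac{1-p}{2}}\bigr)=\xi_{n,\zeta}\bigl(-(-1)^{\frac{p-1}{2}}\bigr)=1$, a value on a root of unity that really is computable from the tame part of the character. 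Replacing your direct evaluation by this norm argument closes the gap; the rest of your assembly is correct and is exactly the paper's proof.
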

\begin{proof}
We have 
\begin{align*}
 \varepsilon (\xi_{n,\zeta},\psi_{M'^{\mathrm{u}}_{\zeta}} ) &= 
 p^{-e_0} 
 \sum_{x \in k_N} 
 \xi_{n,\zeta} \bigl( - \delta_{\zeta}^{p^e +1} 
 (1+x \varpi_{M'^{\mathrm{u}}_{\zeta}}) \bigr)^{-1} 
 \psi_{M'^{\mathrm{u}}_{\zeta}} \bigl( 
 - \delta_{\zeta}^{p^e +1} 
 (1+x \varpi_{M'^{\mathrm{u}}_{\zeta}}) \bigr) \\ 
 &= 
 -(-1)^{\frac{(p-1)e_0}{2}} \Bigl( \frac{r}{k_N} \Bigr) 
 \xi_{n,\zeta} ( - \delta_{\zeta}^{p^e +1} )^{-1} 
\end{align*}
by Proposition \ref{chare}.2, 
Lemma \ref{psi=1} and Lemma \ref{sumxi}. 
We have 
\[
 \xi_{n,\zeta} ( - \delta_{\zeta}^{p^e +1} ) 
 =\xi'_{n,\zeta}(-\delta_{\zeta}^{p^e+1})
(-1)^{\frac{p-1}{2} \frac{p^e+1}{2} N} =
\xi'_{n,\zeta} ( - \delta_{\zeta}^{p^e +1} ) = 
 \xi'_{n,\zeta} \bigl( - ( -2r)^{(p^e +1) \frac{1-p}{2}} \bigr) 
 =1, 
\]
where we use 
\[
 \Nr_{N'^{\mathrm{u}}_{\zeta}/M'^{\mathrm{u}}_{\zeta}} 
 (\theta_{\zeta}^{\frac{p-1}{2}} \varpi_{M'^{\mathrm{u}}_{\zeta}}) 
 = ( -2r )^{\frac{1-p}{2}} 
 \varpi_{M'^{\mathrm{u}}_{\zeta}} 
\]
at the third equality and 
$k_N^{\times} \subset \Nr_{N'^{\mathrm{u}}_{\zeta}/M'^{\mathrm{u}}_{\zeta}}((N'^{\mathrm{u}}_{\zeta})^{\times})$  
at the last equality. 
Thus, we have the claim. 
\end{proof}
\subsubsection{Even case}
Assume that $p = 2$ in this subsubsection. 
\begin{lem}\label{Trpe}
We have 
$\Tr_{M'^{\mathrm{u}}_{\zeta}/K}(\delta_{\zeta}^{2^e +1}) =0$ and 
\[
 \Tr_{M'^{\mathrm{u}}_{\zeta}/K}(\delta_{\zeta}^{2^e}) =
 \begin{cases}
 1 & \textrm{if $e=1$,}\\ 
 0 & \textrm{if $e \geq 2$.} 
 \end{cases}
\]
\end{lem}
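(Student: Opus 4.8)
The plan is to compute $\Tr_{M'^{\mathrm{u}}_{\zeta}/K}$ in two stages through $T^{\mathrm{u}}_{\zeta}$, i.e. $\Tr_{M'^{\mathrm{u}}_{\zeta}/K}=\Tr_{T^{\mathrm{u}}_{\zeta}/K}\circ\Tr_{M'^{\mathrm{u}}_{\zeta}/T^{\mathrm{u}}_{\zeta}}$, using the Artin--Schreier relation \eqref{delz} for $\delta_{\zeta}$ to express $\delta_{\zeta}^{2^e}$ and $\delta_{\zeta}^{2^e+1}$ through low powers of $\delta_{\zeta}$, and then invoking Lemma~\ref{Treta}.

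First I would pin down the conventions of this subsection. Here $n'=1$, so $E_{\zeta}=K$, and (as $f=1$ forces $\mu_{q-1}(K)=\{1\}$) $\zeta=1$, $\varphi_{\zeta}'=\varpi$; since $p=2$ one has $N=1$, so $K^{\mathrm{u}}=K$, $T^{\mathrm{u}}_{\zeta}=T_{\zeta}=K(\alpha_{\zeta})$ with $\alpha_{\zeta}^{2^e+1}=-\varpi=\varpi$ by \eqref{extgen}, and $M'^{\mathrm{u}}_{\zeta}=T_{\zeta}(\delta_{\zeta})$; also $\hat r=1$ (as $2^{e_0}+1$ is odd) and $\varepsilon_1=1$, so \eqref{delz} reads $\delta_{\zeta}^{2^e}+\delta_{\zeta}=\alpha_{\zeta}^{-1}+1$. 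Because $v_{T_{\zeta}}(\alpha_{\zeta}^{-1}+1)=-1$ is prime to $2^e$, the polynomial $X^{2^e}+X-(\alpha_{\zeta}^{-1}+1)$ is irreducible over $T_{\zeta}$, so $M'^{\mathrm{u}}_{\zeta}/T_{\zeta}$ is totally ramified of degree $2^e$ (consistently with Lemma~\ref{galfil}).

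From $\delta_{\zeta}^{2^e}=\delta_{\zeta}+(\alpha_{\zeta}^{-1}+1)$ I obtain $\delta_{\zeta}^{2^e+1}=\delta_{\zeta}^{2}+(\alpha_{\zeta}^{-1}+1)\delta_{\zeta}$, so both powers are $T_{\zeta}$-linear combinations of $1,\delta_{\zeta},\delta_{\zeta}^{2}$. As $[M'^{\mathrm{u}}_{\zeta}:T_{\zeta}]=2^e$ is even, $\Tr_{M'^{\mathrm{u}}_{\zeta}/T_{\zeta}}$ annihilates $T_{\zeta}$, whence $\Tr_{M'^{\mathrm{u}}_{\zeta}/T_{\zeta}}(\delta_{\zeta}^{2^e})=\Tr_{M'^{\mathrm{u}}_{\zeta}/T_{\zeta}}(\delta_{\zeta})$ and $\Tr_{M'^{\mathrm{u}}_{\zeta}/T_{\zeta}}(\delta_{\zeta}^{2^e+1})=\Tr_{M'^{\mathrm{u}}_{\zeta}/T_{\zeta}}(\delta_{\zeta}^{2})+(\alpha_{\zeta}^{-1}+1)\Tr_{M'^{\mathrm{u}}_{\zeta}/T_{\zeta}}(\delta_{\zeta})$. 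By Lemma~\ref{Treta} (with $\hat r=1$), $\Tr_{M'^{\mathrm{u}}_{\zeta}/T_{\zeta}}(\delta_{\zeta})$ and $\Tr_{M'^{\mathrm{u}}_{\zeta}/T_{\zeta}}(\delta_{\zeta}^{2})$ both vanish for $e\ge2$, so for $e\ge2$ the two traces in the statement are $0$. For $e=1$ the relation and Lemma~\ref{Treta} give $\Tr_{M'^{\mathrm{u}}_{\zeta}/T_{\zeta}}(\delta_{\zeta})=\Tr_{M'^{\mathrm{u}}_{\zeta}/T_{\zeta}}(\delta_{\zeta}^{2})=1$ and hence $\Tr_{M'^{\mathrm{u}}_{\zeta}/T_{\zeta}}(\delta_{\zeta}^{3})=1+(\alpha_{\zeta}^{-1}+1)=\alpha_{\zeta}^{-1}$; applying $\Tr_{T_{\zeta}/K}$ then reduces the statement to $\Tr_{T_{\zeta}/K}(1)=[T_{\zeta}:K]=3\equiv1$ and $\Tr_{T_{\zeta}/K}(\alpha_{\zeta}^{-1})=0$, the latter because $T_{\zeta}=K(\alpha_{\zeta})$ with $\alpha_{\zeta}^{3}=\varpi$ is tamely totally ramified of degree $3$, so $\Tr_{T_{\zeta}/K}(\alpha_{\zeta}^{j})=\alpha_{\zeta}^{j}\sum_{\omega^{3}=1}\omega^{j}=0$ for $3\nmid j$.

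The argument is mechanical once these reductions are in place; the one point to be careful about, and the main obstacle, is that $M'^{\mathrm{u}}_{\zeta}/T_{\zeta}$ is not Galois --- its closure adjoins $\mathbb{F}_{2^e}$ --- so $\Tr_{M'^{\mathrm{u}}_{\zeta}/T_{\zeta}}$ has to be taken as the sum over the roots $\delta_{\zeta}+c$ ($c\in\mathbb{F}_{2^e}$) of the minimal polynomial $X^{2^e}+X-(\alpha_{\zeta}^{-1}+1)$, which is exactly the input behind Lemma~\ref{Treta}; and the case $e=1$ additionally uses the tame trace identity $\Tr_{T_{\zeta}/K}(\alpha_{\zeta}^{-1})=0$.
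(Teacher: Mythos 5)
Your proof is correct and is essentially an expanded version of the paper's one-line argument, which likewise derives both traces from the relation $\delta_{\zeta}^{2^e}-\delta_{\zeta}=\alpha_{\zeta}^{-1}+1$ together with the trace computations of Lemma~\ref{Treta} (the paper's citation of Lemma~\ref{epsxiodd} there appears to be a typo for Lemma~\ref{Treta}). Your careful handling of the second stage $\Tr_{T^{\mathrm{u}}_{\zeta}/K}$, including $\Tr_{T_{\zeta}/K}(\alpha_{\zeta}^{-1})=0$ in the case $e=1$, fills in exactly the details the paper leaves implicit.
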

\begin{proof}
These follow from 
$\delta_{\zeta}^{2^e} -\delta_{\zeta} = 
 \alpha_{\zeta}^{-1} +1$.  
\end{proof}

\begin{lem}\label{Nrte}
We have 
$\Nr_{N'^{\mathrm{u}}_{\zeta}/M'^{\mathrm{u}}_{\zeta}} 
 (\theta_{\zeta} \delta_{\zeta}^{-1} ) 
 =  \delta_{\zeta}^{-1}$. 
\end{lem}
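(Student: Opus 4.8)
The plan is to compute $\Nr_{N'^{\mathrm{u}}_{\zeta}/M'^{\mathrm{u}}_{\zeta}}$ step by step through the tower $M'^{\mathrm{u}}_{\zeta} \subset M'^{\mathrm{u}}_{\zeta}(\eta_{\zeta}) \subset N'^{\mathrm{u}}_{\zeta}$, where $N'^{\mathrm{u}}_{\zeta} = M'^{\mathrm{u}}_{\zeta}(\eta_{\zeta},\theta_{\zeta})$, using only the two Artin--Schreier relations available here: $\eta_{\zeta}^2 - \eta_{\zeta} = \delta_{\zeta}$ from Lemma~\ref{tileq} and $\theta_{\zeta}^2 - \theta_{\zeta} = \delta_{\zeta}\eta_{\zeta}$ from its construction. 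Since $\varpi_{\widetilde{M}'_{\zeta}} = \delta_{\zeta}^{-1}$ (Section~\ref{RefSwan}), we have $v_{M'^{\mathrm{u}}_{\zeta}}(\delta_{\zeta}) = -1$; thus $\delta_{\zeta}$ has odd valuation, so $M'^{\mathrm{u}}_{\zeta}(\eta_{\zeta})/M'^{\mathrm{u}}_{\zeta}$ is ramified quadratic, and inside it $\delta_{\zeta}\eta_{\zeta}$ again has odd valuation $-3$, making $N'^{\mathrm{u}}_{\zeta}/M'^{\mathrm{u}}_{\zeta}(\eta_{\zeta})$ quadratic as well, so that $[N'^{\mathrm{u}}_{\zeta}:M'^{\mathrm{u}}_{\zeta}] = 4$. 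Transitivity of the norm then splits the computation into the two quadratic steps.

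For the inner step, $\theta_{\zeta}$ has minimal polynomial $X^2 - X - \delta_{\zeta}\eta_{\zeta}$ over $M'^{\mathrm{u}}_{\zeta}(\eta_{\zeta})$, with conjugate $\theta_{\zeta}+1$, so $\Nr_{N'^{\mathrm{u}}_{\zeta}/M'^{\mathrm{u}}_{\zeta}(\eta_{\zeta})}(\theta_{\zeta}) = \theta_{\zeta}(\theta_{\zeta}+1) = \theta_{\zeta}^2 + \theta_{\zeta} = \delta_{\zeta}\eta_{\zeta}$ (we are in characteristic $2$); since $\delta_{\zeta}^{-1}$ lies in the base, $\Nr_{N'^{\mathrm{u}}_{\zeta}/M'^{\mathrm{u}}_{\zeta}(\eta_{\zeta})}(\theta_{\zeta}\delta_{\zeta}^{-1}) = \delta_{\zeta}\eta_{\zeta}\cdot\delta_{\zeta}^{-2} = \eta_{\zeta}\delta_{\zeta}^{-1}$. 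For the outer step, $\eta_{\zeta}$ has minimal polynomial $X^2 - X - \delta_{\zeta}$ over $M'^{\mathrm{u}}_{\zeta}$, so $\Nr_{M'^{\mathrm{u}}_{\zeta}(\eta_{\zeta})/M'^{\mathrm{u}}_{\zeta}}(\eta_{\zeta}) = \eta_{\zeta}(\eta_{\zeta}+1) = \eta_{\zeta}^2 + \eta_{\zeta} = \delta_{\zeta}$, whence $\Nr_{M'^{\mathrm{u}}_{\zeta}(\eta_{\zeta})/M'^{\mathrm{u}}_{\zeta}}(\eta_{\zeta}\delta_{\zeta}^{-1}) = \delta_{\zeta}\cdot\delta_{\zeta}^{-2} = \delta_{\zeta}^{-1}$. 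Composing the two steps gives $\Nr_{N'^{\mathrm{u}}_{\zeta}/M'^{\mathrm{u}}_{\zeta}}(\theta_{\zeta}\delta_{\zeta}^{-1}) = \delta_{\zeta}^{-1}$.

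I expect essentially no obstacle: this is a couple of lines once the tower and the degrees are in hand. The only point requiring a moment's care is the claim $[N'^{\mathrm{u}}_{\zeta}:M'^{\mathrm{u}}_{\zeta}] = 4$, i.e.\ that $\delta_{\zeta}$ and $\delta_{\zeta}\eta_{\zeta}$ are not of the form $x^2 - x$ in the respective base fields, which is immediate from the valuation bookkeeping above (odd negative valuation). If one prefers to sidestep the degree count, one may instead write $\Nr_{N'^{\mathrm{u}}_{\zeta}/M'^{\mathrm{u}}_{\zeta}}(\theta_{\zeta}\delta_{\zeta}^{-1}) = \bigl(\Nr_{N'^{\mathrm{u}}_{\zeta}/M'^{\mathrm{u}}_{\zeta}}\theta_{\zeta}\bigr)\,\delta_{\zeta}^{-[N'^{\mathrm{u}}_{\zeta}:M'^{\mathrm{u}}_{\zeta}]}$ and compute $\Nr_{N'^{\mathrm{u}}_{\zeta}/M'^{\mathrm{u}}_{\zeta}}(\theta_{\zeta}) = \delta_{\zeta}^{3}$ by the same two steps ($\theta_{\zeta}^2+\theta_{\zeta} = \delta_{\zeta}\eta_{\zeta}$, then $\delta_{\zeta}^2(\eta_{\zeta}^2+\eta_{\zeta}) = \delta_{\zeta}^{3}$), the contributions $\delta_{\zeta}^{3}$ and $\delta_{\zeta}^{-4}$ combining to $\delta_{\zeta}^{-1}$.
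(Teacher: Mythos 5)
Your proof is correct and is exactly the argument the paper intends: its own proof consists of the single sentence that the claim follows from $\theta_{\zeta}^2-\theta_{\zeta}=\delta_{\zeta}\eta_{\zeta}$ and $\eta_{\zeta}^2-\eta_{\zeta}=\delta_{\zeta}$, which is precisely your two-step norm computation through the tower $M'^{\mathrm{u}}_{\zeta}\subset M'^{\mathrm{u}}_{\zeta}(\eta_{\zeta})\subset N'^{\mathrm{u}}_{\zeta}$. Your valuation argument for $[N'^{\mathrm{u}}_{\zeta}:M'^{\mathrm{u}}_{\zeta}]=4$ is a correct (and welcome) filling-in of a detail the paper leaves implicit.
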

\begin{proof}
We have 
$\Nr_{N'^{\mathrm{u}}_{\zeta}/M'^{\mathrm{u}}_{\zeta}} 
 (\theta_{\zeta})=\delta_{\zeta}^3$
 by $\theta_{\zeta}^2 -\theta_{\zeta}=
 \delta_{\zeta} \eta_{\zeta}$ and 
$\eta_{\zeta}^2 -\eta_{\zeta} =\delta_{\zeta}$. The claim follows from this. 
\end{proof}

Let $\sigma_0 \in \Gal (N'^{\mathrm{u}}_{\zeta}/M'^{\mathrm{u}}_{\zeta})$ 
be a generator of 
$\Gal (N'^{\mathrm{u}}_{\zeta}/M'^{\mathrm{u}}_{\zeta})$ 
determined by 
$\sigma_0 (\eta_{\zeta}) -\eta_{\zeta}=1$ 
and 
$\sigma_0 (\theta_{\zeta}) -\theta_{\zeta} =
 \eta_{\zeta}$. 
\begin{lem}\label{indeph}
Let 
$\iota_{n,\zeta} \colon \Gal (N'^{\mathrm{u}}_{\zeta}/M'^{\mathrm{u}}_{\zeta}) \to \bC^{\times}$ be the homomorphism induced by 
$\xi'_{n,\zeta}$ (\cf Lemma \ref{lem:factor}). 
Then we have 
$\iota_{n,\zeta} (\sigma_0) =-\sqrt{-1}$. 
\end{lem}
\begin{proof}
Let $s,t$ be as in \eqref{defst}.
We take $\sigma \in I_{M'^{\mathrm{u}}_{\zeta}}$ such that 
$\Theta_{\zeta} (\sigma)=\bigl( 
 (1,t,s^2) ,0 \bigr)$. 
Recall that 
\[
\phi'\left((1,t,s^2)\right)=\bar{g}\left(1,s^2+\sum_{0 \leq i<j \leq e-1}t^{2^i+2^j}\right) \in R''_0
\] 
is a generator. 
Then it suffices to show that 
$\sigma (\eta_{\zeta}) -\eta_{\zeta}=1$ 
and 
$\sigma (\theta_{\zeta}) -\theta_{\zeta} =
 \eta_{\zeta}$. 
We can check the first equality easily. 
To show the second equality, 
it suffices to show that 
$\sigma (\theta'_{\zeta}) -\theta'_{\zeta} =
 \eta_{\zeta}^{2^{e-1}}$. 
By \eqref{abcdef}, we have
\[
\sigma(\gamma_{\zeta})-\gamma_{\zeta}
 \equiv s^2+\sum_{i=0}^{e-1}(t \beta_{\zeta}+t^2)^{2^i}
\mod \mathfrak{p}_{N'^{\mathrm{u}}_{\zeta}}. 
\]
By $t=\sigma(\beta_{\zeta})-\beta_{\zeta}$, $\Tr_{\mathbb{F}_{2^e}/\mathbb{F}_2}(t)=1$ 
and \eqref{defetagam'}, 
we have 
\[
 \sigma (\gamma_{\zeta}') -\gamma_{\zeta}'=
 \eta_{\zeta} -b_1 
 +s^2 
 +\sum_{0 \leq i \leq j \leq e-1} t^{2^i +2^j} \mod \mathfrak{p}_{N'^{\mathrm{u}}_{\zeta}}. 
\]
Hence, by \eqref{deftheta'} and \eqref{b0td}, 
we have 
\[
\sigma (\theta'_{\zeta}) -\theta'_{\zeta}
\equiv 
\sum_{i=0}^{2^{e-1}} d_i \eta_{\zeta}^i 
\mod \mathfrak{p}_{N'^{\mathrm{u}}_{\zeta}} 
\]
with some $d_i \in k^{\mathrm{ac}}$. 
By \eqref{b0td}, we have 
\[
\sum_{i=0}^{e-1}(t(\sigma(\gamma'_{\zeta})-\gamma'_{\zeta}))^{2^i}
=\sigma(\gamma'_{\zeta})-\gamma'_{\zeta}
+\sum_{1 \leq i \leq j \leq e-1} t^{2^i+2^j}
+\sum_{0 \leq i<j \leq e-1} t^{2^j} (\delta_{\zeta}-s)^{2^i}. 
\]
Therefore, again 
by \eqref{deftheta'} and \eqref{b0td}, we have 
\[
 d_0 = b_1 +s^2 
 +\sum_{0 \leq i \leq j \leq e-1} t^{2^i +2^j} 
 +\sum_{1 \leq i \leq j \leq e-1} t^{2^i +2^j}
 +b_1^2 =s+s^2 +t=0. 
\]
This implies 
$\sigma (\theta'_{\zeta}) -\theta'_{\zeta} =
 \eta_{\zeta}^{2^{e-1}}$, 
since we know that 
$\sigma (\theta'_{\zeta}) - \theta'_{\zeta} 
 -\eta_{\zeta}^{2^{e-1}} \in \bF_2$ 
by Lemma \ref{tileq} and 
$\sigma (\eta_{\zeta}) -\eta_{\zeta}=1$. 
\end{proof}

\begin{lem}\label{epsxipos'}
We have
\[
 \varepsilon 
 (\xi'_{n,\zeta},\psi_{M'^{\mathrm{u}}_{\zeta}} )
 = 
 \begin{cases}
 \frac{1+\sqrt{-1}}{\sqrt{2}} & 
 \textrm{if $e = 1$,}\\ 
 \frac{1-\sqrt{-1}}{\sqrt{2}} 
 & \textrm{if $e \geq 2$.} 
 \end{cases} 
\] 
\end{lem}
\begin{proof}
By Proposition \ref{chare}, 
\eqref{xipsi}, 
Lemma \ref{Treta}, 
Lemma \ref{Trpe} and Lemma \ref{Nrte}, 
we have 
\begin{align}
 \varepsilon (\xi'_{n,\zeta},\psi_{M'^{\mathrm{u}}_{\zeta}} ) &= 
 2^{-\frac{1}{2}} 
 \sum_{x \in \bF_2} 
 \xi'_{n,\zeta} \bigl( 
 \delta_{\zeta}^{2^e +1} (1+x\delta_{\zeta}^{-1} ) \bigr)^{-1} 
 \psi_{M'^{\mathrm{u}}_{\zeta}} \bigl( 
 \delta_{\zeta}^{2^e +1} (1+x\delta_{\zeta}^{-1} ) \bigr) \notag \\
 &=
 \begin{cases}
 2^{-\frac{1}{2}} 
 \bigl( 1 - \xi'_{n,\zeta} ( 1+\delta_{\zeta}^{-1} )^{-1} \bigr) 
 &\textrm{if $e=1$,}\\ 
 2^{-\frac{1}{2}} 
 \bigl( 1 + \xi'_{n,\zeta} ( 1+\delta_{\zeta}^{-1} )^{-1} \bigr) 
 &\textrm{if $e \geq 2$.} 
 \end{cases}\label{epsexp}
\end{align}
First assume that $e=1$. 
Then we know the 
equality in the claim 
modulo $\mu_{2} (\bC)$ 
by Lemma \ref{epsmod}. 
Hence it suffices to show the 
equality of the real parts. 
This follows from \eqref{epsexp}. 
In particular, we have 
$\xi'_{2,\zeta} ( 1+\delta_{\zeta}^{-1} )=\sqrt{-1}$. 

Next, we consider the general case. 
We put 
$\alpha_1'=1/(\delta_{\zeta}^2-\delta_{\zeta}+1)$ 
and $\varpi'=\alpha_1'^3$. 
Let $\xi'_{2,1,\zeta}$ denote 
$\xi'_{2,1}$ 
in the case where 
$K$ and $\varpi$ are replaced by 
$\bF_2((\varpi'))$ and $\varpi'$. 
By applying Lemma \ref{indeph} to 
$\xi'_{n,\zeta}$ and 
$\xi'_{2,1,\zeta}$, 
we have $\xi'_{n,\zeta}=\xi'_{2,1,\zeta}$. 
We know that 
$\xi'_{2,1,\zeta} ( 1+\delta_{\zeta}^{-1} )=\sqrt{-1}$ 
by the result in the case $e=1$. 
Hence, we have 
$\xi'_{n,\zeta}( 1+\delta_{\zeta}^{-1} )=\sqrt{-1}$, 
which shows the claim. 
\end{proof}

\begin{lem}\label{epsxipos}
We have
\[
 \varepsilon 
 (\xi_{n,\zeta},\psi_{M'^{\mathrm{u}}_{\zeta}} )
 = (-1)^{1+\epsilon_0} . 
\] 
\end{lem}
\begin{proof}
The epsilon factor $\varepsilon 
 (\xi_{n,\zeta},\psi_{M'^{\mathrm{u}}_{\zeta}} )$ 
equals 
$\varepsilon 
 (\xi'_{n,\zeta},\psi_{M'^{\mathrm{u}}_{\zeta}} )$ 
times 
\[
\begin{cases}
\left(\frac{1+\sqrt{-1}}{\sqrt{2}}\right)^{-3(2^e+1)} & \textrm{if $e \neq 2$}, \\
-\left(\frac{1+\sqrt{-1}}{\sqrt{2}}\right)^{-3(2^e+1)} & \textrm{if $e=2$}
\end{cases}
\]
by Lemma \ref{rswW}, \eqref{dya}, Lemma \ref{rswxi}. 
Hence, the claim follows from Lemma \ref{epsxipos'}. 
\end{proof}

\appendix
\section{Realization in cohomology of Artin--Schreier variety}\label{RealAS}
We realize $\tau_n$ in the cohomology of an Artin--Schreier variety. 
Let $\nu_{n-2}$ be the 
quadratic form on 
$\mathbb{A}_{k^{\rmac}}^{n-2}$ defined by 
\[
 \nu_{n-2} 
 \bigl( (y_i)_{1 \leq i \leq n-2} \bigr) 
 = -\frac{1}{n'} 
 \sum_{1 \leq i \leq j \leq n-2} y_i y_j . 
\]
Let $X$ be the smooth affine variety over $k^{\mathrm{ac}}$ 
defined by 
\begin{equation*}
 x^p-x=y^{p^e+1} +\nu_{n-2} 
 \bigl( (y_i)_{1 \leq i \leq n-2} \bigr) 
 \ \ \textrm{in}\ \mathbb{A}_{k^{\mathrm{ac}}}^n . 
\end{equation*}
We define a right action of 
$Q \rtimes \mathbb{Z}$ on $X$ by 
\begin{equation*}
\begin{split}
 (x,y,(y_i)_{1 \leq i \leq n-2} )((a,b,c),0) &= 
 \biggl( x + \sum_{i=0}^{e -1} 
 ( b y )^{p^i} + c , 
 a (y + b^{p^e} ) , (a^{\frac{p^e +1}{2}} y_i)_{1 \leq i \leq n-2} \biggr), \\ 
 (x,y,(y_i)_{1 \leq i \leq n-2} )\Fr (1) &= 
 (x^{p},y^{p},(y_i^{p} )_{1 \leq i \leq n-2} ). 
\end{split}
\end{equation*}
We consider the morphism 
\[
 \pi_{n-2} \colon \mathbb{A}_{k^{\mathrm{ac}}}^{n-1} 
 \to \mathbb{A}_{k^{\mathrm{ac}}}^1 ;\ 
 (y,(y_i)_{1 \leq i \leq n-2} ) 
 \mapsto y^{p^e +1} 
 + \nu_{n-2} \bigl( (y_i)_{1 \leq i \leq n-2} \bigr) . 
\]
Then we have a decomposition 
\begin{equation}
 H_{\mathrm{c}}^{n-1} ( X , 
 \ol{\bQ}_{\ell} ) \cong  
 \bigoplus_{\psi \in \bF_p^{\vee} \setminus \{ 1 \} } 
 H_{\mathrm{c}}^{n-1} (\mathbb{A}_{k^{\mathrm{ac}}}^{n-1} ,
 \pi_{n-2}^* \mathcal{L}_{\psi}) 
\end{equation}
as $Q \rtimes \mathbb{Z}$ representations. 
Let $\rho_n$ be the representation over $\bC$ of 
$Q \rtimes \mathbb{Z}$ defined by 
\[
 H_{\mathrm{c}}^{n-1} (\mathbb{A}_{k^{\mathrm{ac}}}^{n-1} ,
 \pi_{n-2}^* \mathcal{L}_{\psi_0}) 
 \Bigl( \frac{n-1}{2} \Bigr) 
\] 
and $\iota$, 
where $\bigl( \frac{n-1}{2} \bigr)$ means the twist by 
the character 
$((a,b,c),m) \mapsto p^{m(n-1)/2}$. 

\begin{lem}\label{detnu}
If $p \neq 2$, then we have 
$\det \nu_{n-2} =
 -(-2n')^n \in \bF_p^{\times}/(\bF_p^{\times})^2$. 
\end{lem}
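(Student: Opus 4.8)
The plan is to write down the Gram matrix of the quadratic form $\nu_{n-2}$ explicitly, compute its determinant by a one-line eigenvalue computation, and then reduce the resulting scalar modulo squares in $\bF_p^\times$, where a single arithmetic observation finishes the argument.

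First I would record the Gram matrix, using the polar-form normalisation in which the $(i,i)$-entry is the coefficient of $y_i^2$ and the $(i,j)$-entry for $i\neq j$ is half the coefficient of $y_iy_j$. With $m=n-2$ and $J_m$ the all-ones $m\times m$ matrix over $\bF_p$, the form $\nu_{n-2}$ then has Gram matrix $-\tfrac{1}{2n'}(I_m+J_m)$; this makes sense since $p\neq 2$ and $(p,n')=1$, so $2n'\in\bF_p^\times$. Since the eigenvalues of $J_m$ are $m$ (multiplicity one) and $0$ (multiplicity $m-1$), the eigenvalues of $I_m+J_m$ are $m+1$ and $1$, so $\det(I_m+J_m)=m+1=n-1$ and hence
\[
 \det\nu_{n-2}=\left(-\tfrac{1}{2n'}\right)^{n-2}(n-1).
\]
The hypothesis $e\geq 1$ now enters through the fact that $p\mid n$, which gives $n-1\equiv -1\pmod p$; in particular $n-1\in\bF_p^\times$ (so $\nu_{n-2}$ is nondegenerate) and $n-1\equiv -1$ modulo squares. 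Finally, modulo $(\bF_p^\times)^2$ one has $-\tfrac{1}{2n'}\equiv -2n'$ and $x^{n-2}\equiv x^n$ for any $x\in\bF_p^\times$, so $\left(-\tfrac{1}{2n'}\right)^{n-2}\equiv(-2n')^n$; combining this with $n-1\equiv -1$ yields $\det\nu_{n-2}=-(-2n')^n$ in $\bF_p^\times/(\bF_p^\times)^2$.

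There is no real obstacle here beyond bookkeeping. The two points worth flagging are: the normalisation convention for the determinant of a quadratic form (the alternative, ``doubled'', convention would instead give $-(-n')^n$), and the observation that $e\geq 1$ forces $n-1\equiv -1\pmod p$, which is precisely what converts the factor $n-1$ into the sign $-1$ appearing in the statement.
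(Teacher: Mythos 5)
Your computation is correct and is exactly the "easy calculation" the paper leaves to the reader: the Gram matrix is $-\tfrac{1}{2n'}(I_{n-2}+J_{n-2})$, its determinant is $\bigl(-\tfrac{1}{2n'}\bigr)^{n-2}(n-1)$, and $p\mid n$ turns $n-1$ into $-1$ modulo squares, giving $-(-2n')^n$. Your choice of the polar (Gram) normalisation is also the right one — it is the convention under which the Gauss sum formula \eqref{gsumy} in the subsequent proposition comes out correctly.
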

\begin{proof}
This is an easy calculation. 
\end{proof}

\begin{prop}
We have $\tau_n \simeq \rho_n$.  
\end{prop}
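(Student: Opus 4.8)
The plan is to split the cohomology defining $\rho_n$ as an external tensor product, exploiting that $\pi_{n-2}$ is assembled from $\pi$ and the quadratic form $\nu_{n-2}$. Write $\mathbb{A}_{k^{\rmac}}^{n-1}=\mathbb{A}_{k^{\rmac}}^{1}\times\mathbb{A}_{k^{\rmac}}^{n-2}$, the first factor carrying the coordinate $y$ and the second the coordinates $(y_i)_{1\le i\le n-2}$, so that $\pi_{n-2}$ is the composite of $\pi\times\nu_{n-2}$ with the addition morphism $m\colon\mathbb{A}_{k^{\rmac}}^{1}\times\mathbb{A}_{k^{\rmac}}^{1}\to\mathbb{A}_{k^{\rmac}}^{1}$. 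First I would invoke the identity $m^{*}\mathcal{L}_{\psi_0}\simeq\mathcal{L}_{\psi_0}\boxtimes\mathcal{L}_{\psi_0}$ characteristic of Artin--Schreier sheaves to get $\pi_{n-2}^{*}\mathcal{L}_{\psi_0}\simeq\pi^{*}\mathcal{L}_{\psi_0}\boxtimes\nu_{n-2}^{*}\mathcal{L}_{\psi_0}$, whence the K\"unneth formula gives
\[
 H_{\mathrm c}^{n-1}\bigl(\mathbb{A}_{k^{\rmac}}^{n-1},\pi_{n-2}^{*}\mathcal{L}_{\psi_0}\bigr)\simeq\bigoplus_{i+j=n-1}H_{\mathrm c}^{i}\bigl(\mathbb{A}_{k^{\rmac}}^{1},\pi^{*}\mathcal{L}_{\psi_0}\bigr)\otimes H_{\mathrm c}^{j}\bigl(\mathbb{A}_{k^{\rmac}}^{n-2},\nu_{n-2}^{*}\mathcal{L}_{\psi_0}\bigr).
\]
Here $\pi^{*}\mathcal{L}_{\psi_0}$ is lisse of rank one on $\mathbb{A}_{k^{\rmac}}^{1}$ and not geometrically constant, so its compactly supported cohomology is concentrated in degree $1$; and $\nu_{n-2}$ is a nondegenerate quadratic form (its discriminant is a unit by Lemma~\ref{detnu} when $p\neq2$, and for $p=2$ the associated alternating form is nondegenerate because $n-2$ is even), so the classical evaluation of quadratic Gauss sums shows $H_{\mathrm c}^{\bullet}(\mathbb{A}_{k^{\rmac}}^{n-2},\nu_{n-2}^{*}\mathcal{L}_{\psi_0})$ is one--dimensional, placed in degree $n-2$. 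Thus only the term $i=1$, $j=n-2$ survives. The right action of $Q\rtimes\mathbb{Z}$ on $X$ respects this product structure: $g(a,b,c)$ acts on the $y$--coordinate, together with the descent datum on the $x$--coordinate, exactly as in the construction of $\tau_{\psi_0}$, and on the $(y_i)$ by the homothety of ratio $a^{(p^e+1)/2}$, an orthogonal automorphism of $\nu_{n-2}$ since $a^{p^e+1}=1$ (trivial when $p=2$), while $\Fr(1)$ is Frobenius on each factor; hence the displayed isomorphism is $Q\rtimes\mathbb{Z}$--equivariant and identifies the $\mathbb{A}^1$--factor with $\tau_{\psi_0}$. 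After the Tate twist $\bigl(\tfrac{n-1}{2}\bigr)$ this reads $\rho_n\simeq\tau_{\psi_0}\otimes W$, where $W:=H_{\mathrm c}^{n-2}(\mathbb{A}_{k^{\rmac}}^{n-2},\nu_{n-2}^{*}\mathcal{L}_{\psi_0})\bigl(\tfrac{n-1}{2}\bigr)$ is one--dimensional, i.e.\ a character $\omega$ of $Q\rtimes\mathbb{Z}$.

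By \eqref{chartwi} we have $\tau_n=\tau_{\psi_0}\otimes\chi_{\mathrm{tw}}$ for the explicit character $\chi_{\mathrm{tw}}$ there, and $\tau_{\psi_0}$ is irreducible by Lemma~\ref{restQ}. So it suffices to show $\omega=\chi_{\mathrm{tw}}$; and since $Q\rtimes\mathbb{Z}$ is generated by $Q$ together with $\Fr(1)$ --- or with $\bom{g}$ when $p=2$ --- it is enough to compare the two characters on $Q$ and at $\Fr(1)$ (resp.\ $\bom{g}$). On $Q$: after diagonalizing $\nu_{n-2}$ (for $p\neq2$) one sees the homothety of ratio $a^{(p^e+1)/2}$ acts on the one--dimensional cohomology $W$ by $(a^{(p^e+1)/2})^{n-2}$, and a direct check identifies this with $\chi_0(a)^{n}=\chi_{\mathrm{tw}}(g(a,b,c))$ (both sides being $1$ when $p=2$). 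Hence $\omega|_Q=\chi_{\mathrm{tw}}|_Q$.

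For the remaining generator I would compute $\Tr\rho_n(\Fr(1))$ (resp.\ $\Tr\rho_n(\bom{g}^{-1})$) from the Lefschetz trace formula for $X$ over $k^{\rmac}$, exactly as in the proofs of Lemma~\ref{TrFr} and Lemma~\ref{2trFr}; the relevant point count factors as the product of the sum over the $y$--line occurring there and the quadratic Gauss sum $\sum_{(y_i)\in\mathbb{F}_p^{n-2}}\psi_0(\nu_{n-2}((y_i)))$, which Lemma~\ref{detnu} and the formulas of Hasse--Davenport and Gauss evaluate explicitly. Since $\Tr\tau_{\psi_0}(\Fr(1))=-\sqrt{(-1)^{(p-1)/2}p}\neq0$ by Lemma~\ref{TrFr} when $p\neq2$ (resp.\ $\Tr\tau_{\psi_0}(\bom{g}^{-1})=-2\neq0$ by Lemma~\ref{2trFr} when $p=2$) and $\rho_n=\tau_{\psi_0}\otimes\omega$, dividing gives $\omega(\Fr(1))$ (resp.\ $\omega(\bom{g}^{-1})$), and one checks it coincides with the value $\chi_{\mathrm{tw}}(\Fr(1))$ (resp.\ $\chi_{\mathrm{tw}}(\bom{g}^{-1})$) prescribed by \eqref{chartwi}; the $p^{-1/2}$--normalization appearing there is precisely what the Tate twist $\bigl(\tfrac{n-1}{2}\bigr)$ contributes in combination with the weight of $\tau_{\psi_0}$.

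The geometric splitting of the first paragraph is routine. The main obstacle is the constant--chasing in the last step: matching all the signs, fourth roots of unity and Jacobi symbols coming out of the quadratic Gauss sum with the Frobenius and Tate--twist conventions, and in particular --- for $p=2$ --- recovering the factor $(-1)^{n(n-2)/8}$ from the Lefschetz computation with the twisted Frobenius attached to $\bom{g}$.
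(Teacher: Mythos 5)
Your proposal follows essentially the same route as the paper: a K\"unneth splitting of $H_{\mathrm{c}}^{n-1}(\bA^{n-1}_{k^{\rmac}},\pi_{n-2}^*\cL_{\psi_0})$ into the curve factor $H_{\mathrm{c}}^1(\bA^1_{k^{\rmac}},\pi^*\cL_{\psi_0})$ and the one-dimensional middle cohomology of the quadratic form, followed by matching the resulting character of $Q\rtimes\bZ$ with \eqref{chartwi} on $Q$ (via the determinant of the orthogonal action, which the paper outsources to Denef--Loeser) and on $\Fr(1)$, resp.\ $\bom{g}$ when $p=2$ (via Gauss sums, which the paper handles by \eqref{gsumy} together with SGA $4\tfrac12$ for $p\neq 2$ and by citing their earlier work for $p=2$). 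The only real difference is that you propose to carry out by hand the constant-matching that the paper delegates to these references; your plan for doing so is sound, and you correctly identify the $p=2$ Arf-invariant computation as the one step still requiring genuine work.
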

\begin{proof}
Let $Y$ be the smooth affine variety over $k^{\mathrm{ac}}$ 
defined by 
\begin{equation*}
 x^{p}-x=  \nu_{n-2} \bigl( (y_i)_{1 \leq i \leq n-2} \bigr) 
 \ \ \textrm{in}\ \mathbb{A}_{k^{\mathrm{ac}}}^{n-1}. 
\end{equation*}
We define a right action of 
$Q \rtimes \mathbb{Z}$ on $Y$ by 
\begin{equation*}
\begin{split}
 \bigl( x,(y_i)_{1 \leq i \leq n-2} \bigr)
 \bigl( (a,b,c),0 \bigr) &= 
 \bigl( x , 
 (a^{\frac{p^e +1}{2}} y_i)_{1 \leq i \leq n-2} \bigr), \\ 
 \bigl( x,(y_i)_{1 \leq i \leq n-2} \bigr) \Fr (1) &= 
 \bigl( x^{p},(y_i^{p} )_{1 \leq i \leq n-2} \bigr) . 
\end{split}
\end{equation*}
Using the action of $Q \rtimes \mathbb{Z}$ on $Y$, 
we can define an action of 
$Q \rtimes \mathbb{Z}$ on 
$H_{\mathrm{c}}^{n-2} (\bA_{k^{\rmac}}^{n-2} ,
 \nu_{n-2}^* \mathcal{L}_{\psi_0})$. 
Then we have 
\begin{equation}\label{Kuiso}
 H_{\mathrm{c}}^{n-1} (\bA_{k^{\rmac}}^{n-1} ,
 \pi_{n-2}^* \mathcal{L}_{\psi_0})
 \cong 
 H_{\mathrm{c}}^{1} (\bA_{k^{\rmac}}^{1} ,
 \pi^* \mathcal{L}_{\psi_0})
 \otimes 
 H_{\mathrm{c}}^{n-2} (\bA_{k^{\rmac}}^{n-2} ,
 \nu_{n-2}^* \mathcal{L}_{\psi_0})
\end{equation}
by the K\"{u}nneth formula, 
where the isomorphism is compatible with 
the actions of $Q \rtimes \mathbb{Z}$. 
By \eqref{Kuiso}, 
it suffices to show the action of 
$Q \rtimes \mathbb{Z}$ on 
\begin{equation}\label{geori}
 H_{\mathrm{c}}^{n-2} (\bA_{k^{\rmac}}^{n-2} ,
 \nu_{n-2}^* \mathcal{L}_{\psi_0}) 
 \Bigl( \frac{n-1}{2} \Bigr) 
\end{equation}
is equal to 
the character \eqref{chartwi} via $\iota$. 

First, consider the case where $p \neq 2$. 
The equality of the actions of $Q$ follows from 
\cite[Lemma 2.2.3]{DenLoCh}. 
We have 
\begin{gather}\label{gsumy}
\begin{aligned}
 (-1)^{n-2} \sum_{\bfy \in \bF_p^{n-2}} 
 \psi_0 \bigl( 
 \nu_{n-2} ( \bfy ) \bigr) 
 &= \Bigl( \frac{-1}{p} \Bigr) 
 \Bigl( -\Bigl( \frac{-2n'}{p} \Bigr) \Bigr)^n 
 (\epsilon(p)\sqrt{p})^{n-2} \\
 &=\left(-\epsilon(p)\left(\frac{-2n'}{p}\right)\right)^n\sqrt{p}^{n-2}
\end{aligned}
\end{gather}
by Lemma \ref{detnu}. 
The equality of the actions of 
$\Fr (1) \in Q \rtimes \bZ$ follows from 
\cite[Sommes trig. Scholie 1.9]{DelCoet} and 
\eqref{gsumy}. 

If $p=2$, the equality follows from 
\cite[Proposition 4.5]{ITsimptame} and 
$\bigl( \frac{2}{n-1} \bigr) =(-1)^{n(n-2)/8}$. 
\end{proof}

\noindent
Naoki Imai\\ 
Graduate School of Mathematical Sciences, 
The University of Tokyo, 3-8-1 Komaba, Meguro-ku, 
Tokyo, 153-8914, Japan\\ 
naoki@ms.u-tokyo.ac.jp \\

\noindent
Takahiro Tsushima\\ 
Department of Mathematics and Informatics, 
Faculty of Science, Chiba University, 
1-33 Yayoi-cho, Inage, Chiba, 263-8522, Japan\\
tsushima@math.s.chiba-u.ac.jp


\begin{thebibliography}{BH05b}
	\providecommand{\url}[1]{\texttt{#1}}
	\providecommand{\urlprefix}{URL }
	\providecommand{\eprint}[2][]{\url{#2}}
	
	\bibitem[AL16]{ALssrGL}
	M.~Adrian and B.~Liu, Some results on simple supercuspidal representations of
	{$\mathrm{GL}_n(F)$}, J. Number Theory 160 (2016), 117--147.
	
	\bibitem[AS10]{AbSaLF}
	A.~Abbes and T.~Saito, Local {F}ourier transform and epsilon factors, Compos.
	Math. 146 (2010), no.~6, 1507--1551.
	
	\bibitem[BF83]{BFGdiv}
	C.~J. Bushnell and A.~Fr{\"o}hlich, Gauss sums and {$p$}-adic division
	algebras, vol. 987 of Lecture Notes in Mathematics, Springer-Verlag,
	Berlin-New York, 1983.
	
	\bibitem[BH99]{BHloctII}
	C.~J. Bushnell and G.~Henniart, Local tame lifting for {${\rm GL}(n)$}. {II}.
	{W}ildly ramified supercuspidals, Ast\'erisque  (1999), no. 254, vi+105.
	
	\bibitem[BH05a]{BHestI}
	C.~J. Bushnell and G.~Henniart, The essentially tame local {L}anglands
	correspondence, {I}, J. Amer. Math. Soc. 18 (2005), no.~3, 685--710.
	
	\bibitem[BH05b]{BHestII}
	C.~J. Bushnell and G.~Henniart, The essentially tame local {L}anglands
	correspondence, {II}: {T}otally ramified representations, Compos. Math. 141
	(2005), no.~4, 979--1011.
	
	\bibitem[BH06]{BHLLCGL2}
	C.~J. Bushnell and G.~Henniart, The local {L}anglands conjecture for {$\rm
		GL(2)$}, vol. 335 of Grundlehren der Mathematischen Wissenschaften,
	Springer-Verlag, Berlin, 2006.
	
	\bibitem[BH10]{BHestIII}
	C.~J. Bushnell and G.~Henniart, The essentially tame local {L}anglands
	correspondence, {III}: the general case, Proc. Lond. Math. Soc. (3) 101
	(2010), no.~2, 497--553.
	
	\bibitem[BH14]{BHLepi}
	C.~J. Bushnell and G.~Henniart, Langlands parameters for epipelagic
	representations of {${\rm GL}_n$}, Math. Ann. 358 (2014), no. 1-2, 433--463.
	
	\bibitem[BK93]{BKadmd}
	C.~J. Bushnell and P.~C. Kutzko, The admissible dual of {${\rm GL}(N)$} via
	compact open subgroups, vol. 129 of Annals of Mathematics Studies, Princeton
	University Press, Princeton, NJ, 1993.
	
	\bibitem[BM85]{BMFquad2}
	A.-M. Berg{\'e} and J.~Martinet, Formes quadratiques et extensions en
	caract\'eristique {$2$}, Ann. Inst. Fourier (Grenoble) 35 (1985), no.~2,
	57--77.
	
	\bibitem[Bou81]{BourAlg47}
	N.~Bourbaki, \'El\'ements de math\'ematique, Masson, Paris, 1981, alg\`ebre.
	Chapitres 4 \`a 7. [Algebra. Chapters 4--7].
	
	\bibitem[Del73]{DelconstL}
	P.~Deligne, Les constantes des \'equations fonctionnelles des fonctions {$L$},
	in Modular functions of one variable, {II} ({P}roc. {I}nternat. {S}ummer
	{S}chool, {U}niv. {A}ntwerp, {A}ntwerp, 1972), pp. 501--597. Lecture Notes in
	Math., Vol. 349, Springer, Berlin, 1973.
	
	\bibitem[Del76]{DelclAr}
	P.~Deligne, Les constantes locales de l'\'equation fonctionnelle de la fonction
	{$L$} d'{A}rtin d'une repr\'esentation orthogonale, Invent. Math. 35 (1976),
	299--316.
	
	\bibitem[Del77]{DelCoet}
	P.~Deligne, Cohomologie \'etale, Lecture Notes in Mathematics, Vol. 569,
	Springer-Verlag, Berlin-New York, 1977, s{\'e}minaire de G{\'e}om{\'e}trie
	Alg{\'e}brique du Bois-Marie SGA 4${\frac{1}{2}}$, Avec la collaboration de
	J. F. Boutot, A. Grothendieck, L. Illusie et J. L. Verdier.
	
	\bibitem[Del84]{Delp0}
	P.~Deligne, Les corps locaux de caract\'eristique {$p$}, limites de corps
	locaux de caract\'eristique {$0$}, in Representations of reductive groups
	over a local field, Travaux en Cours, pp. 119--157, Hermann, Paris, 1984.
	
	\bibitem[DH81]{DHvartor}
	P.~Deligne and G.~Henniart, Sur la variation, par torsion, des constantes
	locales d'\'equations fonctionnelles de fonctions {$L$}, Invent. Math. 64
	(1981), no.~1, 89--118.
	
	\bibitem[DL98]{DenLoCh}
	J.~Denef and F.~Loeser, Character sums associated to finite {C}oxeter groups,
	Trans. Amer. Math. Soc. 350 (1998), no.~12, 5047--5066.
	
	\bibitem[Gal65]{GallDet}
	P.~X. Gallagher, Determinants of representations of finite groups, Abh. Math.
	Sem. Univ. Hamburg 28 (1965), 162--167.
	
	\bibitem[GJ72]{GJzsim}
	R.~Godement and H.~Jacquet, Zeta functions of simple algebras, Lecture Notes in
	Mathematics, Vol. 260, Springer-Verlag, Berlin-New York, 1972.
	
	\bibitem[GR10]{GRAinv}
	B.~H. Gross and M.~Reeder, Arithmetic invariants of discrete {L}anglands
	parameters, Duke Math. J. 154 (2010), no.~3, 431--508.
	
	\bibitem[Hen84]{Henepsmod}
	G.~Henniart, Galois {$\varepsilon $}-factors modulo roots of unity, Invent.
	Math. 78 (1984), no.~1, 117--126.
	
	\bibitem[HT01]{HTsimSh}
	M.~Harris and R.~Taylor, The geometry and cohomology of some simple {S}himura
	varieties, vol. 151 of Annals of Mathematics Studies, Princeton University
	Press, Princeton, NJ, 2001, with an appendix by Vladimir G. Berkovich.
	
	\bibitem[IT17]{ITStab3}
	N.~Imai and T.~Tsushima, Stable models of {L}ubin-{T}ate curves with level
	three, Nagoya Math. J. 225 (2017), 100--151.
	
	\bibitem[IT18]{ITsimpJL}
	N.~Imai and T.~Tsushima, Local {J}acquet--{L}anglands correspondences for
	simple supercuspidal representations, Kyoto J. Math. 58 (2018), no.~3,
	623--638.
	
	\bibitem[IT20]{ITsimptame}
	N.~Imai and T.~Tsushima, Affinoids in the {L}ubin-{T}ate perfectoid space and
	simple supercuspidal representations {I}: {T}ame case, Int. Math. Res. Not.
	IMRN  (2020), no.~22, 8251--8291.
	
	\bibitem[IT21]{ITsimpwild}
	N.~Imai and T.~Tsushima, Affinoids in the {L}ubin-{T}ate perfectoid space and
	simple supercuspidal representations {II}: wild case, Math. Ann. 380 (2021),
	no. 1-2, 751--788.
	
	\bibitem[IT22]{ITreal3}
	N.~Imai and T.~Tsushima, Geometric realization of the local {L}anglands
	correspondence for representations of conductor three, Publ. Res. Inst. Math.
	Sci. 58 (2022), no.~1, 49--77.
	
	\bibitem[IT23]{ITGeomHW}
	N.~Imai and T.~Tsushima, Geometric construction of {H}eisenberg-{W}eil
	representations for finite unitary groups and {H}owe correspondences, Eur. J.
	Math. 9 (2023), no.~2, Paper No. 31, 34.
	
	\bibitem[Kal15]{KalepiL}
	T.~Kaletha, Epipelagic {$L$}-packets and rectifying characters, Invent. Math.
	202 (2015), no.~1, 1--89.
	
	\bibitem[Koc77]{KocCprim}
	H.~Koch, Classification of the primitive representations of the {G}alois group
	of local fields, Invent. Math. 40 (1977), no.~2, 195--216.
	
	\bibitem[Lau87]{LauTFcW}
	G.~Laumon, Transformation de {F}ourier, constantes d'\'equations fonctionnelles
	et conjecture de {W}eil, Inst. Hautes \'Etudes Sci. Publ. Math.  (1987),
	no.~65, 131--210.
	
	\bibitem[LRS93]{LRSellL}
	G.~Laumon, M.~Rapoport and U.~Stuhler, $\mathscr{D}$-elliptic sheaves and the
	{L}anglands correspondence, Invent. Math. 113 (1993), no.~2, 217--338.
	
	\bibitem[RY14]{RYEinv}
	M.~Reeder and J.-K. Yu, Epipelagic representations and invariant theory, J.
	Amer. Math. Soc. 27 (2014), no.~2, 437--477.
	
	\bibitem[Sai95]{SaiLcInd}
	T.~Saito, Local constant of {${\rm Ind}_K^L1$}, Comment. Math. Helv. 70 (1995),
	no.~4, 507--515.
	
	\bibitem[Ser68]{SerCL}
	J.-P. Serre, Corps locaux, Hermann, Paris, 1968, deuxi{\`e}me {\'e}dition,
	Publications de l'Universit{\'e} de Nancago, No. VIII.
	
	\bibitem[Ser84]{SerinvWit}
	J.-P. Serre, L'invariant de {W}itt de la forme {${\rm Tr}(x^2)$}, Comment.
	Math. Helv. 59 (1984), no.~4, 651--676.
	
	\bibitem[Tat79]{TatNtb}
	J.~Tate, Number theoretic background, in Automorphic forms, representations and
	{$L$}-functions ({P}roc. {S}ympos. {P}ure {M}ath., {O}regon {S}tate {U}niv.,
	{C}orvallis, {O}re., 1977), {P}art 2, Proc. Sympos. Pure Math., XXXIII, pp.
	3--26, Amer. Math. Soc., Providence, R.I., 1979.
	
\end{thebibliography}
\end{document}